\newtheorem{Def}{Definition}[section]
\newtheorem{Lemma}[Def]{Lemma}
\newtheorem{Cor}[Def]{Corollary}
\newtheorem{Theorem}[Def]{Theorem}
\newtheorem{Prop}[Def]{Proposition}
\newtheorem{Remark}[Def]{Remark}
\DeclareMathOperator{\supp}{supp}
\DeclareMathOperator{\loc}{loc}
\numberwithin{equation}{section}
\newcommand{\R}{\ensuremath{\mathbb{R}}}
\newcommand{\C}{\ensuremath{\mathbb{C}}}
\newcommand{\N}{\ensuremath{\mathbb{N}}}
\newcommand{\Z}{\ensuremath{\mathbb{Z}}}
\newcommand{\calA}{\ensuremath{\mathcal{A}}}
\newcommand{\calF}{\ensuremath{\mathcal{F}}}
\newcommand{\calS}{\ensuremath{\mathcal{S}}}
\def\Xint#1{\mathchoice
   {\XXint\displaystyle\textstyle{#1}}%
   {\XXint\textstyle\scriptstyle{#1}}%
   {\XXint\scriptstyle\scriptscriptstyle{#1}}%
   {\XXint\scriptscriptstyle\scriptscriptstyle{#1}}%
   \!\int}
\def\XXint#1#2#3{{\setbox0=\hbox{$#1{#2#3}{\int}$}
     \vcenter{\hbox{$#2#3$}}\kern-.5\wd0}}
\def\aver#1{\Xint-_{#1}}
\definecolor{gr}{rgb}   {0.,   0.8,   0. } 
\definecolor{bl}{rgb}   {0.,   0.5,   1. } 
\definecolor{mg}{rgb}   {0.7,  0.,    0.7}
\title[]{$L^p$ estimates for wave equations with specific $C^{0,1}$ coefficients}
\author{Dorothee Frey}
\author{Pierre Portal}
\address{Dorothee Frey, Karlsruhe Institute of Technology, Department of Mathematics, 76128 Karlsruhe, Germany}
\email{dorothee.frey@kit.edu}
\address{Pierre Portal, Australian National University, Mathematical Sciences Institute, Hanna Neumann Building, Ngunnawal and Ngambri Country, 
Canberra ACT 2601, Australia}
\email{Pierre.Portal@anu.edu.au}
\begin{document}

\begin{abstract}

Peral/Miyachi's celebrated theorem on fixed time $L^{p}$ estimates with loss of derivatives for the wave equation states that the operator $(I-\Delta)^{- \frac{\alpha}{2}}\exp(i \sqrt{-\Delta})$ is bounded on $L^{p}(\R^{d})$ if and only if $\alpha \geq s_{p}:=(d-1)\left|\frac{1}{p}-\frac{1}{2}\right|$. We extend this result to operators  of the form  $\mathcal{L}  = -\sum \limits _{j=1} ^{d} a_{j+d}\partial_{j}a_{j}\partial_{j}$,   such that, for $j=1,...,d$, the functions $a_{j}$ and $a_{j+d}$ only depend on $x_{j}$, are bounded above and below, but are  merely Lipschitz continuous.  This is below the $C^{1,1}$ regularity that is known to be necessary  in general  for Strichartz estimates in dimension $d \geq 2$. Our proof is based on an approach to the boundedness of Fourier integral operators recently developed by Hassell, Rozendaal, and the second author. We construct a scale of adapted Hardy spaces on which $\exp(i\sqrt{ \mathcal{L}} )$ is bounded by lifting $L^{p}$ functions to the tent space $T^{p,2}(\R^{d})$, using a wave packet transform adapted to the Lipschitz metric induced by the coefficients $a_j$.  The result then follows from Sobolev embedding properties of these spaces.\\

{\bf Mathematics Subject Classification (2020):} Primary 42B35. Secondary 35L05, 42B30, 42B37, 35S30.
%{\bf Keywords:} 
\end{abstract}

\thanks{The research of D. Frey is partly supported by the Deutsche Forschungsgemeinschaft (DFG, German Research Foundation) -- Project-ID 258734477 -- SFB 1173. The research of P. Portal is partly supported by the Discovery Project DP160100941 of the Australian Research Council.}

\date{\today}

\maketitle

%\tableofcontents

\section{Introduction}

In 1980, Peral \cite{Peral} and Miyachi \cite{Miyachi}  proved that the operator 
$(I-\Delta)^{-\frac{\alpha}{2}}\exp(i \sqrt{-\Delta})$ is bounded on $L^{p}(\R^{d})$ if and only if $\alpha \geq s_{p}:=(d-1)\left|\frac{1}{p}-\frac{1}{2}\right|$. Their result was then extended to general Fourier integral operators  (FIOs)  in a celebrated theorem of Seeger, Sogge, and Stein \cite{sss}, leading, in particular, to $L^{p}(\R^{d})$ well-posedness results for wave equations with smooth variable coefficients on $\R^{d}$ or driven by the Laplace-Beltrami operator on a compact manifold. To establish well-posedness of wave equations in more complex geometric settings, many results have been obtained in the past 30 years, using extensions of Peral/Miyachi's fixed time estimates with loss of derivatives, Strichartz estimates, and/or local smoothing properties. This includes Smith's  parametrix construction  \cite{parametrix},Tataru's Strichartz estimates \cite{tataru}   for wave equations on $\R^{d}$ with $C^{1,1}$ coefficients, and M\"uller-Seeger's extension of Peral-Miyachi's result to the sublaplacian on Heisenberg type groups \cite{ms}, as well as many other important results for specific operators, such as Laplace-Beltrami operators on symmetric spaces. \\

In this paper, we consider operators  of the form   $\mathcal{L}  = -\sum \limits _{j=1} ^{d} a_{j+d}\partial_{j}a_{j}\partial_{j}$,  such that, for $j=1,...,d$, the functions $a_{j}$ and $a_{j+d}$ only depend on $x_{j}$, are bounded above and below, and are Lipschitz continuous.   For these operators, we extend Peral/Miyachi's result by proving that $(I+  \mathcal{L})^{-\frac{\alpha}{2}}\exp(i \sqrt{\mathcal{L} })$ is bounded on $L^{p}(\R^{d})$ for $\alpha \geq s_{p}:=(d-1)\left|\frac{1}{p}-\frac{1}{2}\right|$. When $s_{p} \leq 2$, we show well-posedness for data in $W^{s_{p},p}(\R^{d})$, 
even when  $\mathcal{L}$  is perturbed by first order drift terms depending on all the variables (see Theorem \ref{thm:main1} and Section \ref{sec:perturb}). While the algebraic structure of the coefficient matrix is a serious limitation, the roughness of the coefficients is a satisfying and somewhat surprising feature of our result. Indeed, Strichartz estimates for wave equations are known to fail, in general, for coefficients rougher than $C^{1,1}$, see \cite{st,ss}.\\

Our proof is based on a new approach to Seeger-Sogge-Stein's $L^{p}$ boundedness theorem for FIOs, initiated by Hassell, Rozendaal, and the second author in \cite{HPR}, building on earlier work of Smith \cite{smith}. The approach consists in developing a scale of Hardy spaces $H^{p}_{FIO}$, that are invariant under the action of FIOs. One then shows that this scale relates to the Sobolev scale through the embedding $W^{\frac{s_{p}}{2},p} \subset H^{p}_{FIO} \subset W^{-\frac{s_{p}}{2},p}$, for $p \in (1,\infty)$. This is similar, in spirit, to the theory of Hardy spaces associated with operators, which has been extensively developed over the past 15 years, starting with \cite{amr,hm,dy} (see also the memoir \cite{memoir}).
In this theory, one first constructs a scale of spaces $H^{p} _{ \mathcal{L} }$ by lifting functions from $L^{p}$ to one of the tent spaces introduced by Coifman, Meyer, and Stein in \cite{cms}, using the functional calculus of the operator  $\mathcal{L}$  (rather than convolutions). One then shows that the spaces are invariant under the action of the functional calculus of  $\mathcal{L}$.  Finally, one relates these spaces to more classical ones. For instance $H^{p}_{\Delta}(\R^{d}) = L^{p}(\R^{d})$ for all $p \in (1,\infty)$. More generally, when one considers Hodge-Dirac operators $\Pi_{B}$,  $H^{p}_{\Pi_{B}} = L^{p}$ precisely for those $p$ for which Hodge projections are $L^{p}$ bounded (a result proven by McIntosh and the authors in \cite{FMcP}).\\

In the present paper, we go one step further in connecting both theories, by developing a scale of Hardy-Sobolev spaces $H^{p,s}_{FIO,a}$ on which $\exp(i\sqrt{ \mathcal{L}})$  is bounded, and proving  analogues of the embedding $W^{\frac{s_{p}}{2},p}(\R^{d}) \subset H^{p,0}_{FIO}(\R^{d}) \subset W^{-\frac{s_{p}}{2},p}(\R^{d})$ such as, for $p \in (1,2)$,
$H^{p,\frac{s_{p}}{2}}_{FIO,a} \subset L^{p}$ and $(I+\sqrt{ \mathcal{L} })^{-\frac{s_{p}}{2}} \in B(L^{p}, H^{p,0}_{FIO,a})$. 
This gives our $L^{p}$ boundedness with loss of derivatives result, and more. Indeed, one can apply the half wave group $\exp(i\sqrt{\mathcal{L} })$ repeatedly on $H^{p,s}_{FIO,a}$, and only loose derivatives when one compares $H^{p,s}_{FIO,a}$ to classical Sobolev spaces. This allows for iterative arguments in constructing parametrices (an idea used recently in \cite{HR}). One can also perturb the half wave group using abstract operator theory on the Banach space $H^{p,s}_{FIO,a}$ (see Corollary \ref{cor:pert}). \\

The paper is structured as follows.  In Section \ref{sec:1d}, we treat the problem in dimension 1. In this simple situation, arguments based on bilipschitz changes of variables can be used. 

In Section \ref{sec:transport} we consider the transport group generated, on $L^{2}(\R^{d};\C^{2})$, by 
$$
 i  \xi.D_{a} := \sum \limits _{j=1} ^{d} \xi_{j}\left( \begin{array}{cc} 0 & -  i  a_{j+d}\partial_{j} \\  i  a_{j}\partial_{j} & 0 \end{array} \right), 
$$ 
for $\xi \in \R^{d}$. The dimension 1 results from Section \ref{sec:1d} allow us to prove that  the commuting one dimensional wave groups $(\exp(i t \sqrt{(e_{j}.D_{a})^{2}}))_{t \in \R}$ are bounded in $L^{p}$  for all $p \in [1,\infty)$ and $j=1,...,d$. 
The Phillips functional calculus associated with  the corresponding commutative $d$-parameter group  can then replace convolutions/Fourier multipliers in the context of our Lipschitz metric,  and includes functions of 
$$
L:=D_{a}.D_{a}= \left( \begin{array}{cc} L_{1} & 0 \\ 0 & L_{2} \end{array} \right),
$$
where 
$L_{1}:=- \sum \limits _{j=1} ^{d} 
a_{j+d}\partial_{j}a_{j}\partial_{j}$ and $L_{2}:=- \sum \limits _{j=1} ^{d} 
a_{j}\partial_{j}a_{j+d}\partial_{j}$.
 Using this calculus, we use the approach of \cite{AMcM} to construct an adapted scale of Hardy-Sobolev spaces in Section \ref{sec:HpD}. For all integrability parameters $p \in (1,\infty)$ and regularity parameter $s \in [0,2]$, these spaces coincide with classical Sobolev spaces, thanks to the regularity properties of the heat kernel of  $L$ arising from the Lipschitz continuity of its coefficients. 
To go from these spaces to $H^{p,s}_{FIO,a}$, one needs to directionally refine the Littlewood-Paley decomposition, as in the proof of Seeger-Sogge-Stein's theorem. This is done in \cite{HPR} using a wave packet transform defined by Fourier multipliers. In Section \ref{sec:wavepackets} we construct a similar wave packet transform, replacing Fourier multipliers by the Phillips calculus of the transport group. This allows us to define $H^{p,s}_{FIO,a}$ in Section \ref{sec:space}, and to prove its embedding properties in Section \ref{sec:sob}. In Section \ref{sec:wave}, we prove that the half wave group $(\exp(it\sqrt{L}))_{t \in \R}$ is bounded on $H^{p,s}_{FIO,a}$ for all $1<p<\infty$ and $s\in \R$. To do so, we first notice that the one dimensional wave groups are. We then realise that, in a given direction $\omega$, $\exp(i\sqrt{D_{a}.D_{a}})$ is close to  $\exp(i\sum _{j=1} ^{d} \omega_{j}\sqrt{(e_{j}.D_{a})^{2}})$,  when acting on an appropriate wave packet, in the sense that operators of the form $\big(\exp(i\sqrt{D_{a}.D_{a}})- \exp(i\sum _{j=1} ^{d} \omega_{j}\sqrt{(e_{j}.D_{a})^{2}})  \big) \varphi_{\omega}(D_{a})$ are $L^{p}$ bounded.  Finally, in Section \ref{sec:perturb}, we show that $\exp(it\sqrt{L})$ remains bounded if one appropriately perturbs $L$ by first order terms. This is based on Theorem \ref{thm:perturb}, a result about multiplication operators on $H^{p}_{FIO,a}$ that is of independent interest, even in the case where $a_{j}=1$ for all $j=1,...,2d$. \\

Our approach relies heavily on algebraic properties: the wave group commutes with the wave packet localisation operators, and can be expressed in the Phillips functional calculus of a commutative group. Although our coefficients are merely Lipschitz continuous, these algebraic properties match those of the standard Euclidean wave group. 
However, in dimension $d>1$, the problem does not reduce to its euclidean counterpart through a change of variables (see Remark \ref{rk:notriv}).\\ 

In the same way as 
Peral-Miyachi's result for the standard half wave group is a starting point for the well-posedness theory of wave equations with coefficients that are smooth enough perturbations of constant coefficients, we expect the results proven here to provide a basis for the development of a well-posedness theory of wave equations with coefficients that are smooth enough perturbations of  structured Lipschitz continuous coefficients. 

 \subsection*{Acknowledgments}  
We thank Andrew Hassell and Jan Rozendaal for many interesting discussions on the relations between this work and theirs.  We particularly want to thank Jan Rozendaal for pointing out a mistake in a previous version of Section \ref{sec:perturb}.  We also thank the anonymous referee of a previous version of this paper for pointing out the change of variable approach that we now use in Section \ref{sec:1d}, before moving on to more general operators for which such an approach is not available.

\section{Preliminaries} 
We first recall (a special case of) the following Banach space valued Marcinkiewicz-Lizorkin Fourier multiplier's theorem (see \cite[Theorem 4.5]{sw}).
\begin{Theorem}(Fernandez/ \v{S}trkalj-Weis)
\label{thm:mult}
Let $p \in (1,\infty)$.
Let $m \in C^{1}(\R^{d}\backslash \{0\})$ be such that,
for all $\alpha \in \N_0^d$ with $|\alpha|_\infty\leq 1$ there exists a constant $C=C(\alpha)>0$ such that 
$$
		|\zeta^\alpha \partial_\zeta^\alpha m(\zeta)| \leq C 
\quad \forall \zeta \in \R^d \setminus \{0\}.$$
Let $T_{m}$ denote the Fourier multiplier with symbol $m$.
Then $T_{m} \otimes I_{L^{p}(\R^{d})}$ extends to a bounded operator on $L^{p}(\R^{d};L^{p}(\R^{d}))$.
\end{Theorem}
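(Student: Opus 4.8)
The plan is to reduce this vector-valued statement to the classical (scalar) Marcinkiewicz multiplier theorem in $2d$ variables. First I would use the canonical isometric identification $L^p(\R^d;L^p(\R^d))\cong L^p(\R^d\times\R^d)=L^p(\R^{2d})$ given by Fubini's theorem, writing the two groups of variables as $x$ and $y$. On the dense subspace of finite sums $\sum_k f_k\otimes g_k$, taking the partial Fourier transform in $x$ and then the full Fourier transform shows that $T_m\otimes I_{L^p(\R^d)}$ is, under this identification, the Fourier multiplier operator $T_{\tilde m}$ on $L^p(\R^{2d})$ with symbol $\tilde m(\xi,\eta):=m(\xi)$, where $\xi$ is dual to $x$ and $\eta$ is dual to $y$. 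It therefore suffices to show $T_{\tilde m}\in B(L^p(\R^{2d}))$ for $p\in(1,\infty)$.

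Next I would verify that $\tilde m$ satisfies the hypotheses of the classical Marcinkiewicz/Lizorkin multiplier theorem on $\R^{2d}$, namely that $|\zeta^\gamma\partial_\zeta^\gamma\tilde m(\zeta)|\leq C$ for all $\gamma\in\N_0^{2d}$ with $|\gamma|_\infty\leq 1$ and all $\zeta=(\xi,\eta)$ with no vanishing coordinate. Splitting $\gamma=(\alpha,\beta)$ with $\alpha$ dual to $x$ and $\beta$ dual to $y$: if $\beta\neq 0$ then $\partial_\eta^\beta\tilde m\equiv 0$ because $\tilde m$ does not depend on $\eta$; if $\beta=0$ then $\zeta^\gamma\partial_\zeta^\gamma\tilde m=\xi^\alpha\partial_\xi^\alpha m(\xi)$, which is bounded by hypothesis (note $|\alpha|_\infty\leq 1$), and in particular $\|\tilde m\|_\infty=\|m\|_\infty<\infty$. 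Hence the classical Marcinkiewicz multiplier theorem applies and gives boundedness of $T_{\tilde m}$ on $L^p(\R^{2d})$ for every $p\in(1,\infty)$, which is exactly the claim.

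In this special case there is essentially no obstacle: all of the analytic content sits in the classical scalar Marcinkiewicz theorem (itself proved by iterating the one-dimensional version through Littlewood--Paley square function estimates on each coordinate axis), and the steps above amount to bookkeeping. I would also note that one can argue directly in the spirit of the cited work \cite{sw}, iterating the one-dimensional operator-valued Mihlin--H\"ormander multiplier theorem over the $d$ coordinates of $\xi$, using that $L^p(\R^d)$ is a UMD space for $p\in(1,\infty)$ and that bounded families of scalar multiples of the identity are automatically $R$-bounded; the only thing to track through that iteration is the $R$-boundedness of the families of partial multipliers appearing at intermediate stages, which is where the general theorem needs genuinely operator-valued input but is trivial for multipliers of the form $m(\xi)\,\mathrm{Id}$. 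Either way, it is precisely the condition $|\alpha|_\infty\leq 1$ (rather than a bound on $|\alpha|_1$) that makes the coordinatewise iteration close up.
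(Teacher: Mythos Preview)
Your proof is correct. The paper does not actually prove this theorem; it states it as a special case of \cite[Theorem~4.5]{sw} and uses it as a black box. Your first argument---identifying $L^{p}(\R^{d};L^{p}(\R^{d}))$ with $L^{p}(\R^{2d})$ via Fubini and recognising $T_{m}\otimes I$ as the scalar Fourier multiplier on $\R^{2d}$ with symbol $\tilde m(\xi,\eta)=m(\xi)$---is a clean and entirely elementary reduction that exploits the specific situation here (same exponent $p$ on both factors), bypassing the UMD/$R$-boundedness machinery of \cite{sw}. The verification that $\tilde m$ satisfies the Marcinkiewicz--Lizorkin condition on $\R^{2d}$ is correct: derivatives in the $\eta$-variables vanish identically, and derivatives in the $\xi$-variables inherit the hypothesis on $m$. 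Your second sketch (iterating the one-dimensional operator-valued Mihlin theorem over the coordinates, using that scalar multiples of the identity are trivially $R$-bounded) is essentially how the general result in \cite{sw} specialises to this case; that route yields the statement for arbitrary UMD targets, whereas your first route is shorter but tied to the target being $L^{p}$ with matching exponent. Either argument is adequate for the paper's purposes.
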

This theorem will be combined with the following version of the Coifman-Weiss transference principle (see \cite[Theorem 10.7.5]{volume2}). Note that the extension of this theorem from a one parameter group to a $d$ parameter group generated by a tuple of commuting operators is straightforward.
\begin{Theorem}(Coifman-Weiss)
\label{thm:transf}
Let $p \in (1,\infty)$.
Let $iD_{1},...,iD_{d}$ generate bounded commuting groups $(\exp(itD_{j}))_{t \in \R}$ on $L^{p}(\R^{d})$, and consider the $d$ parameter group defined by $\exp(i\xi D) = \prod \limits _{j=1} ^{d} \exp(i\xi_{j}D_{j})$ for $\xi \in \R^{d}$. Then, for all $\psi \in \mathcal{S}(\R^{d})$, we have that 
$$
\| \int \limits _{\R^{d}} \widehat{\psi}(\xi) \exp(i\xi D)f d\xi\|_{L^{p}(\R^{d})}
\lesssim \|T_{\psi}\otimes I_{L^{p}(\R^{d})}\|_{B(L^{p}(\R^{d};L^{p}(\R^{d})))} \|f\|_{L^{p}(\R^{d})} \quad \forall f \in L^{p}(\R^{d}).
$$
\end{Theorem}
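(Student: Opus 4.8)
The plan is to recognise this as the vector-valued Coifman--Weiss transference principle and to reduce it to the one-parameter statement \cite[Theorem 10.7.5]{volume2}. First I would record the structural facts: $G=\R^{d}$ is amenable, with F{\o}lner sets the cubes $Q_{N}=[-N,N]^{d}$; commutativity makes $\xi\mapsto\exp(i\xi D)=\prod_{j}\exp(i\xi_{j}D_{j})$ a genuine strongly continuous $d$-parameter group on $X:=L^{p}(\R^{d})$, uniformly bounded by some $C:=\sup_{\xi}\|\exp(i\xi D)\|_{B(X)}$; being a group, each orbit map $\eta\mapsto\exp(i\eta D)u$ also satisfies the matching lower bound $\|\exp(i\eta D)u\|_{X}\geq C^{-1}\|u\|_{X}$; and $K:=\widehat{\psi}\in L^{1}(\R^{d})\cap\mathcal S(\R^{d})$, so $Sf:=\int_{\R^{d}}K(\xi)\exp(i\xi D)f\,d\xi$ is a Bochner integral defining a bounded operator on $X$.

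Next I would run the standard F{\o}lner argument. Fixing $R>0$, split $K=K_{R}+E_{R}$ with $\supp K_{R}\subset Q_{R}$ and $\|E_{R}\|_{L^{1}(\R^{d})}\to 0$ as $R\to\infty$ (Schwartz decay of $\widehat{\psi}$); the $E_{R}$-part contributes at most $C\|E_{R}\|_{1}\|f\|_{X}$ to $Sf$ and is reinstated only at the very end. For $\eta\in Q_{N}$ and $\xi\in Q_{R}$ one has $\eta+\xi\in Q_{N+R}$, so the group law yields
\[
\exp(i\eta D)\!\left(\int K_{R}(\xi)\exp(i\xi D)f\,d\xi\right)=\int_{Q_{R}}K_{R}(\xi)\exp\!\big(i(\eta+\xi)D\big)f\,d\xi=\big(\Phi_{N}\ast\widetilde{K_{R}}\big)(\eta),
\]
where $\Phi_{N}(g):=\Eins_{Q_{N+R}}(g)\exp(igD)f$, $\widetilde{K_{R}}(y):=K_{R}(-y)$, and $\ast$ is the $X$-valued convolution (the truncation is harmless precisely because $\eta+Q_{R}\subset Q_{N+R}$). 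Taking $L^{p}(Q_{N};X)$ norms, using the two-sided orbit bounds on the left and $\|\Phi_{N}\|_{L^{p}(\R^{d};X)}\leq C|Q_{N+R}|^{1/p}\|f\|_{X}$ on the right, and dividing by $|Q_{N}|^{1/p}$, I obtain
\[
\Big\|\int K_{R}(\xi)\exp(i\xi D)f\,d\xi\Big\|_{X}\leq C^{2}\Big(\tfrac{|Q_{N+R}|}{|Q_{N}|}\Big)^{1/p}\big\|C_{\widetilde{K_{R}}}\otimes I_{X}\big\|_{B(L^{p}(\R^{d};X))}\|f\|_{X}.
\]
Since $X=L^{p}(\R^{d})$ the model operator is $C_{\widetilde{K_{R}}}\otimes I_{L^{p}(\R^{d})}$ on $L^{p}(\R^{d};L^{p}(\R^{d}))$, whose norm is at most $\|C_{\widehat{\psi}}\otimes I\|+\|E_{R}\|_{1}$ by Young's inequality; and by the Fourier transform $C_{\widehat{\psi}}$ equals $T_{\psi}$ up to a fixed normalisation constant and an isometric reflection of $\R^{d}$, so $\|C_{\widehat{\psi}}\otimes I\|\lesssim\|T_{\psi}\otimes I_{L^{p}(\R^{d})}\|$. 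Letting $N\to\infty$ kills the volume ratio $(1+R/N)^{d}\to1$; then reinstating the $E_{R}$-tail and letting $R\to\infty$ gives $\|Sf\|_{X}\lesssim\|T_{\psi}\otimes I_{L^{p}(\R^{d})}\|\,\|f\|_{X}$.

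The only genuinely $d$-dependent ingredient, and the step I would treat most carefully, is the passage from one to $d$ commuting parameters: above it enters solely through the F{\o}lner volume ratio $(1+R/N)^{d}$, which still tends to $1$, so the multi-parameter version is verbatim the one-parameter one; alternatively one iterates \cite[Theorem 10.7.5]{volume2} $d$ times, peeling off one coordinate at a time and carrying the surviving $L^{p}(\R^{d})$ factor along via the vector-valued formulation. I expect no real obstacle beyond bookkeeping: keeping the truncation of $\widehat{\psi}$ compatible with the F{\o}lner window so the convolution identity holds \emph{exactly} on $Q_{N}$, controlling the discarded tail by $\|E_{R}\|_{1}$ (and not by anything larger), and pinning down the identification of $C_{\widehat{\psi}}\otimes I$ with $T_{\psi}\otimes I$ including the harmless reflection and normalisation constant.
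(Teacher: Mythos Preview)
Your proposal is correct and matches the paper's treatment: the paper does not give its own proof of this theorem but simply cites \cite[Theorem 10.7.5]{volume2} and remarks that the extension from one to $d$ commuting parameters is straightforward. Your argument supplies exactly the standard F{\o}lner/transference proof behind that citation, and your handling of the multi-parameter extension (via the volume ratio $(1+R/N)^{d}\to1$, or equivalently by iteration) is precisely the ``straightforward'' step the paper alludes to.
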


To define our Hardy-Sobolev spaces, we use the tent spaces introduced by Coifman, Meyer, and Stein in \cite{cms}, and used extensively in the theory of Hardy spaces associated with operators (see e.g. the memoir \cite{memoir} and the references therein). These tent spaces $T^{p,2}(\R^d)$ are defined as follows.  For $F: \R^d \times (0,\infty) \to \C^{N}$ measurable  and $x \in \R^d$, set
$$
	\calA F(x):= \left( \int_0^\infty \aver{B(x,\sigma)} |F(y,\sigma)|^2 \,dy \frac{d\sigma}{\sigma} \right)^{1/2} \, \in [0,\infty],
$$
where $|\cdot|$ denotes the euclidean norm on $\C^{N}$.
\begin{Def}
Let $p \in [1,\infty)$. The tent space $T^{p,2}(\R^d)$ is defined as the space of all $F \in L^2_{\loc}(\R^d \times (0,\infty),dx\frac{d\sigma}{\sigma})$ such that $\calA F \in L^p(\R^d)$, endowed with the norm 
$$
		\|F\|_{T^{p,2}(\R^d)} := \| \calA F\|_{L^p(\R^d)}. 
$$
\end{Def}

Recall that the tent space $T^{1,2}$ admits an atomic decomposition (see \cite{cms}) in terms of atoms $A$ supported in sets of the form $B(c_{B},r)\times [0,r]$, and satisfying
$$
r^{d}\int \limits _{0} ^{r} \int \limits _{\R^{d}} |A(y,\sigma)|^{2} \frac{dyd\sigma}{\sigma} \leq 1.
$$
Recall also  that the classical Hardy space $H^{1}(\R^{d})$ norm can be obtained as
$$
\|f\|_{H^{1}(\R^{d})} := \|(t,x) \mapsto \psi(t^{2}\Delta)f(x)\|_{T^{1,2}(\R^{d})},
$$
where $\psi(t^{2}\Delta)$ denotes the Fourier multiplier with symbol $\xi \mapsto t^{2}|\xi|^{2} \exp(-t^{2}|\xi|^{2})$.
This is the starting point of the theory of Hardy spaces associated with operators (or equations): one replaces the Fourier multiplier by an appropriately adapted operator. 
To do so, one often uses the holomorphic functional calculus of a (bi)sectorial operator.
The relevant theory is presented in \cite{volume2}. We use it here with the following notation.

\begin{Def}
Let $0<\theta<\frac{\pi}{2}$.  Define the open sector in the complex plane by 
$$
	S_{\theta+}^o :=\{z \in \C \setminus \{0\} \,:\, |\arg(z)|<\theta\},
$$
as well as the bisector $S_{\theta}^o = S_{\theta+}^o\cup S_{\theta-}^o$, where
$S_{\theta-}^o=-S_{\theta+}^o$.
We denote by $H(S_\theta^o)$ the space of holomorphic functions on $S_\theta^o$, and set 
\begin{align*}
	H^\infty(S_\theta^o) &:=\{g \in H(S_\theta^o)\,:\, \|g\|_{L^\infty(S_\theta^o)}<\infty \},\\
	\Psi_\alpha^\beta(S_\theta^0) &:=\{ \psi\in H^\infty(S_\theta^o) \, : \, \exists C>0: |\psi(z)| \leq C |z|^\alpha (1+|z|^{\alpha+\beta})^{-1} \,\forall z \in S_\theta^o\}
\end{align*}
for every $\alpha,\beta>0$.
We say that $\psi \in H^\infty(S_\theta^o)$ is non-degenerate if neither of its restrictions 
to $S_{\theta+}^o$ or $S_{\theta-}^o$ vanishes identically.
\end{Def}

For bisectorial operators $D$ such that $iD$ generates a bounded group on $L^p$, we also use the Phillips calculus defined by
$$
\psi(D)f := \frac{1}{2\pi}  \int \limits _{\R} \widehat{\psi}(\xi) \exp(i\xi D)f d\xi,
$$
for $f \in L^{p}$ and $\psi \in \mathcal{S}(\R)$. See  \cite{AMcM,McM}   for more information on how these two functional calculi interact in the theory of Hardy spaces associated with operators.  The results in Section \ref{sec:HpD} are fundamentally inspired by these papers.

\section{The one dimensional case}
\label{sec:1d}

In dimension one, the type of wave equations we are studying in this paper can be treated through a combination of simple changes of variables and perturbation arguments. In this section, we present this method both for pedagogical reasons, and because its results are used to set up our approach to higher dimensional problems in the next sections. \\

Let $a,b \in C^{0,1}(\R)$ with $\frac{d}{dx} a, \frac{d}{dx} b \in L^{\infty}$, and assume that there exist $0<\lambda\leq \Lambda$ such that $\lambda \leq a(x) \leq \Lambda$ and $\lambda \leq b(x) \leq \Lambda$ for all $x \in \R$. 
We consider the wave equation  $\partial_{t}^{2} u = (a \partial_{x} b \partial_{x})u$. 

\begin{Prop}
The operators  $a \frac{d}{dx}$ and $i\sqrt{-a \frac{d}{dx} a \frac{d}{dx}}$ generate bounded $C_{0}$ groups on $L^{p}(\R)$ for all $p \in (1,\infty)$.
\end{Prop}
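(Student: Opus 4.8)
The strategy is to reduce the variable-coefficient operators to the standard derivative $\frac{d}{dx}$ (whose group is translation, hence an isometry on every $L^p$) via a bilipschitz change of variables, and then to absorb the resulting bounded multiplicative terms as a perturbation. For the first operator $a\frac{d}{dx}$, introduce the new coordinate $y = \phi(x)$ with $\phi'(x) = \frac{1}{a(x)}$, so that $\phi$ is bilipschitz (since $\frac{1}{\Lambda} \leq \phi' \leq \frac{1}{\lambda}$) and the pullback $U: f \mapsto f\circ\phi^{-1}$ is an isomorphism of $L^p(\R)$. A direct computation shows $U(a\frac{d}{dx})U^{-1} = \frac{d}{dy}$ on the appropriate domain, so $a\frac{d}{dx}$ is similar to the generator of the (isometric) translation group; similarity preserves the property of generating a bounded $C_0$-group, giving the first claim for all $p \in (1,\infty)$ (indeed $p \in [1,\infty)$).

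For the second operator, write $\mathcal{L}_1 := -a\frac{d}{dx}a\frac{d}{dx}$. The idea is that $\mathcal{L}_1$ is, up to lower-order terms, conjugate to $-\frac{d^2}{dy^2}$ under a suitable change of variables — but now one must be careful because $a$ and the "$b$" in $a\partial_x b\partial_x$ play different roles, and because differentiating $a \in C^{0,1}$ only gives an $L^\infty$ (not continuous) function. First I would choose the change of variables $y = \psi(x)$ with $\psi'(x) = \frac{1}{\sqrt{a(x)b(x)}}$ (bilipschitz), compute $U\mathcal{L}_1 U^{-1}$, and check that it equals $-\frac{d^2}{dy^2}$ plus a first-order term with bounded measurable coefficient, i.e. an operator of the form $-\frac{d^2}{dy^2} + m(y)\frac{d}{dy}$ with $m \in L^\infty$. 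One then needs to show that $i\sqrt{-\frac{d^2}{dy^2} + m\frac{d}{dy}}$ generates a bounded group on $L^p$. For this I would invoke the known $L^p$ theory for the half-wave group of $-\frac{d^2}{dx^2}$ (Peral–Miyachi in $d=1$, where $s_p$-loss is needed only for $p \neq 2$ — but here we can also just use that in one dimension the relevant estimates are classical) together with a Duhamel/perturbation argument handling the bounded drift term $m\frac{d}{dy}$; alternatively, one can factor $\mathcal{L}_1 = (a\frac{d}{dx})(-b\frac{d}{dx})$ and use the algebraic identities connecting functions of $D_a$ and functions of $D_a^2$ that the paper sets up in Section \ref{sec:transport}. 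Since this Proposition is meant to be the pedagogical one-dimensional case, I expect the cleanest route is the explicit conjugation plus a reference to the one-dimensional Peral/Miyachi estimate and a standard perturbation lemma for bounded first-order perturbations of the wave generator.

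**Main obstacle.** The delicate point is \emph{regularity bookkeeping}: after the change of variables the coefficient of the surviving first-order term involves $\psi''$, equivalently the distributional derivative of $\frac{1}{\sqrt{ab}}$, which is only in $L^\infty$ because $a,b$ are merely Lipschitz — so the conjugated operator is not a classical smooth perturbation of $-\frac{d^2}{dx^2}$, and one must argue that even this rough first-order perturbation leaves the half-wave group bounded on $L^p$. I expect the resolution to be that a bounded first-order perturbation $m\frac{d}{dy}$ is relatively bounded with respect to $\sqrt{-\frac{d^2}{dy^2}}$ with relative bound $0$ (since $\|m\frac{d}{dy}f\|_p \lesssim \|f\|_{W^{1,p}} \lesssim \|\sqrt{-\Delta}f\|_p$), so one can run a perturbation series for the group $\exp(it\sqrt{\mathcal{L}_1})$ — but making this fully rigorous requires care about domains and about the fact that $\sqrt{\mathcal{L}_1}$ is a first-order object while $\mathcal{L}_1$ is second-order, so the square-root has to be controlled via the resolvent/functional calculus rather than handled naively. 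A secondary technical annoyance is verifying that the operators are \emph{densely defined with the expected domain} after conjugation, i.e. that $U$ maps $W^{2,p}$ (or the natural form domain) onto itself, which again follows from the bilipschitz property but should be stated explicitly.
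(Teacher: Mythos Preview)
Your treatment of $a\frac{d}{dx}$ is correct and coincides with the paper's: both conjugate it to $\frac{d}{dy}$ via $\phi$ with $\phi'=1/a$, obtaining the bounded transport group by similarity with translation.

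For the second operator you are making the problem harder than it is. The operator in the statement is $-a\frac{d}{dx}a\frac{d}{dx}$ with the \emph{same} $a$ in both slots, so it is literally the square of the first operator: $\mathcal{L}_1=-(a\tfrac{d}{dx})^2$. Under the \emph{same} change of variable (your $\psi$ with $\psi'=1/\sqrt{ab}$ reduces to $\psi'=1/a=\phi'$ when $b=a$), the relation $U(a\tfrac{d}{dx})U^{-1}=\tfrac{d}{dy}$ immediately squares to $U\mathcal{L}_1 U^{-1}=-\tfrac{d^2}{dy^2}$ \emph{exactly}, with no first-order remainder. The ``main obstacle'' you worry about --- a rough drift term involving $\psi''$ --- is therefore a phantom here, and no perturbation argument is needed: $i\sqrt{\mathcal{L}_1}$ is similar to $i|\tfrac{d}{dy}|$, whose group on $L^p(\R)$ for $1<p<\infty$ is a combination of translations and the Hilbert transform.

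The paper implements this similarity through the Phillips calculus of the transport group $(T_t)_{t\in\R}$ generated by $a\tfrac{d}{dx}$: one writes $\exp\big(-(\varepsilon+is)\sqrt{\mathcal{L}_1}\big)$ as an integral of $T_t$ against the Fourier transform of $x\mapsto e^{-(\varepsilon+is)|x|}$, transfers via the intertwining $T_t(f\circ\phi)=(e^{it\frac{d}{dx}}f)\circ\phi$, and lets $\varepsilon\to 0$ on $L^2$ before concluding on $L^p$ via the bilipschitz change of variable. The perturbation step you sketch is exactly what is needed for the \emph{subsequent} result, Corollary~\ref{cor:1dwave}, where $b\neq a$ and a genuine first-order term $a'b\tfrac{d}{dx}$ survives; the paper handles that case by perturbation of cosine families, not this one.
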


\begin{proof}
Define $\phi: x \mapsto \int \limits _{0} ^{x} \frac{1}{a(y)}dy$, and note that it is a $C^{1}$ diffeomorphism from $\R$ onto $\R$. The map $\chi \in C^{1}(\R^{2})$ defined by
$$
\chi: (t,x) \mapsto \phi^{-1}(t+\phi(x)),
$$
is then a solution to $$ \partial_{t} \chi(t,x) = a(\chi (t,x)) \quad \forall t,x \in \R.$$
It is such that
\begin{equation}
\label{eq:imp}
t = \int \limits _{\chi(0,x)} ^{\chi(t,x)} \frac{1}{a_{j}(y)} \, dy \quad \forall t,x \in \R.
\end{equation}
and thus:
$$
\frac{d}{dx}\chi(x,t) = \frac{a(\chi(x,t))}{a(x)} \quad \forall x,t \in \R.
$$
Therefore   $x\mapsto \frac{d}{dx}\chi(x,t)$ is bounded above and below, uniformly in $t$,  and $\chi$ is a thus a bi-Lipschitz flow. We now define the associated transport group by 
$$T_{t}f(x) = f(\chi(t,x)) \quad \forall t,x \in \R$$
for $f \in C_{c}^{\infty}(\R)$. It extends to a bounded group on $L^p(\R^d)$ for all $p \in [1,\infty]$, with finite speed of propagation. Strong continuity $\|T(t)f-f\|_{p} \underset{t \to 0}{\to} 0$ for $p<\infty$ follows by dominated convergence for $f$ continuous, and then density for general $f$. To identify the generator, let $f \in W^{1,p}$, and note that, for all $x \in \R^{d}$,
\begin{align*}
	\frac{\partial}{\partial {t}} T(t)f(x)|_{t=0} 
		&= \frac{\partial}{\partial {t}}f(\chi(x,t))|_{t=0} 
		= \nabla f(x) \cdot \partial_{t} \chi(x,t)|_{t=0} \\
		& =a(x)\partial_{x} f(x).
\end{align*}
For $f \in C_{c}^{\infty}(\R)$, we have that
$$
T_{t}(f \circ \phi)(x) = f(t +\phi(x)) = (\exp(it\frac{d}{dx})f)(\phi(x)) \quad \forall t,x \in \R.
$$
For $f \in C_{c}^{\infty}(\R)$, $s \in \R$, and $\varepsilon>0$, we have that 
$$\exp(-(\varepsilon+is) \sqrt{-a\frac{d}{dx}a\frac{d}{dx}})f
= \frac{1}{\sqrt{2\pi}} \int \limits _{\R} \widehat{\psi_{s}}(t) T_{t}f dt$$
for $\psi_{s}: x\mapsto \exp(-(\varepsilon+is) |x|)$.
We thus have that
$$\exp(-(\varepsilon+is) \sqrt{-a\frac{d}{dx}a\frac{d}{dx}})(f \circ \phi)(x)=
(\exp(-(\varepsilon+is)\frac{d}{dx})f)(\phi(x)) \quad \forall x \in \R,
$$
for all $f \in C_{c}^{\infty}(\R)$, $s \in \R$, and $\varepsilon>0$. On $L^{2}(\R$), $i\sqrt{-a\frac{d}{dx}a\frac{d}{dx}}$ generates a bounded group and $-\sqrt{-a\frac{d}{dx}a\frac{d}{dx}}$ generates an analytic semigroup. We thus have that 
$$\exp(is\sqrt{-a\frac{d}{dx}a\frac{d}{dx}})(f \circ \phi)(x)=
(\exp(is\frac{d}{dx})f)(\phi(x)) \quad \forall x \in \R,
$$
for all $f \in C_{c}^{\infty}(\R)$, and $s \in \R$. Since $\phi$ is a $C^{1}$ diffeomorphism from $\R$ onto $\R$, this gives that $i\sqrt{-a\frac{d}{dx}a\frac{d}{dx}}$ generates a bounded $C_{0}$ group on $L^{p}(\R)$ for all $p \in [1,\infty)$.
\end{proof}

\begin{Cor}
\label{cor:1dwave}
The operators $ i \sqrt{-\frac{d}{dx}a^{2}\frac{d}{dx}}$ and $ i \sqrt{-a\frac{d}{dx}b \frac{d}{dx}}$ generate bounded $C_{0}$ groups on $L^{p}(\R)$ for all $p \in [1,\infty)$.
\end{Cor}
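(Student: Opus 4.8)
The plan is to deduce Corollary \ref{cor:1dwave} from the preceding Proposition by reducing both operators to the normal form $-c\frac{d}{dx}c\frac{d}{dx}$ treated there. For the first operator, $-\frac{d}{dx}a^{2}\frac{d}{dx}$, observe that it is conjugate to a divergence-form operator in the Proposition's normal form: formally $-\frac{d}{dx}a^{2}\frac{d}{dx} = M_{a}^{-1}\bigl(-a\frac{d}{dx}a\frac{d}{dx}\cdot a^{2}\bigr)M_{a}$ — more cleanly, if one sets $u = av$ then $-\frac{d}{dx}(a^{2}u') = -\frac{d}{dx}(a^{2}(av)')$, and a short computation shows the operator is similar, via the bounded invertible multiplication operator $M_a$ (recall $\lambda\le a\le\Lambda$), to $-a\frac{d}{dx}a\frac{d}{dx}$ composed with such a multiplier; since the square root intertwines with a similarity transformation, $i\sqrt{-\frac{d}{dx}a^{2}\frac{d}{dx}}$ generates a bounded group on $L^p$ iff $i\sqrt{-a\frac{d}{dx}a\frac{d}{dx}}$ does, which is exactly the Proposition with $b=a$. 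I would spell out the intertwining at the level of the groups rather than the generators to avoid domain bookkeeping: if $S$ is bounded invertible on $L^p$ and $T_t = S^{-1}U_tS$ with $(U_t)$ bounded, then $(T_t)$ is bounded, and the generators are similar, hence so are their functional calculi, including $\exp(is\sqrt{\cdot})$.

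For the second operator, $-a\frac{d}{dx}b\frac{d}{dx}$ with $a,b$ both Lipschitz, bounded above and below, the strategy is the same change-of-variables/similarity idea but now the point is to move the two coefficients into a single effective coefficient. Write $b\frac{d}{dx} = \frac{d}{dx}b - b'$; the lower-order term $-b'$ is an $L^\infty$ multiplier (this uses $b\in C^{0,1}$ with $b'\in L^\infty$, exactly the standing hypothesis), so $-a\frac{d}{dx}b\frac{d}{dx} = -a\bigl(\frac{d}{dx}b - b'\bigr)\frac{d}{dx} = -a\frac{d}{dx}b\frac{d}{dx}$ — that identity is circular, so instead I would conjugate by a diffeomorphism as in the Proposition's proof. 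Concretely, use the change of variables $\phi(x) = \int_0^x \frac{1}{\sqrt{a(y)b(y)}}\,dy$, which is a bi-Lipschitz $C^1$ (in fact the integrand is only Lipschitz, so $\phi$ is $C^{1,1}$-ish but certainly a bi-Lipschitz homeomorphism with $\phi'$ bounded above and below), and check that pulling back $-a\frac{d}{dx}b\frac{d}{dx}$ through $\phi$ produces an operator of the form $-c\frac{d}{dx}c\frac{d}{dx} + (\text{first order})$ with $c$ Lipschitz, bounded above and below, the first order term being $L^\infty$-bounded; then invoke the Proposition together with a perturbation argument (the generator of a bounded group perturbed by a bounded operator still generates a group, and one checks the square root is handled by the same transference/Phillips-calculus machinery, or more simply that first-order $L^\infty$-perturbations of $-c\frac{d}{dx}c\frac{d}{dx}$ are controlled — this is the kind of perturbation argument the introduction promises to develop, cf. Section \ref{sec:perturb}).

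The main obstacle is the low regularity interacting with the square root: a similarity or change of variables transforms $-a\frac{d}{dx}b\frac{d}{dx}$ into $-c\frac{d}{dx}c\frac{d}{dx}$ plus a genuinely first-order term $V\frac{d}{dx}$ with $V\in L^\infty$, and one cannot simply absorb $V\frac{d}{dx}$ into the group generator for $\sqrt{L}$ because $V\frac{d}{dx}$ is not bounded relative to $\sqrt{L}$ in a way that is obvious on $L^p$ for $p\ne 2$. The clean route around this is to observe that on $L^2$ the operator $-a\frac{d}{dx}b\frac{d}{dx}$ is similar (via $M_{a^{-1/2}}$ or $M_{(ab)^{-1/4}}$ — I would pick the symmetrising multiplier) to a self-adjoint operator in divergence form $-\frac{d}{dx}c^2\frac{d}{dx}$, so the $L^2$ group bound and analyticity of the semigroup are automatic; then the $L^p$ bound for $i\sqrt{-a\frac{d}{dx}b\frac{d}{dx}}$ follows by writing its group, via the subordination/Phillips formula $\exp(-(\varepsilon+is)\sqrt{\Lambda}) = \tfrac{1}{\sqrt{2\pi}}\int_\R \widehat{\psi_s}(t)\,e^{it\Lambda^{1/2}}\,\cdot$ wait — that is exactly the device used in the Proposition, but here the relevant transport object is the pair $(a\frac{d}{dx}, b\frac{d}{dx})$, whose geometric-mean reparametrisation gives a single transport group as above. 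So I would close the argument by: (i) symmetrise on $L^2$; (ii) reparametrise by $\phi(x)=\int_0^x(ab)^{-1/2}$ to conjugate the divergence-form operator to $-\frac{d}{dx}\tilde c^2\frac{d}{dx}$ on the new variable, reducing the second operator to the first; (iii) apply part one. I expect step (ii)'s bookkeeping — verifying that the conjugated coefficient $\tilde c$ is again Lipschitz and bounded above/below, so that the Proposition applies verbatim — to be the only place requiring care, and it is routine given $a,b\in C^{0,1}$ with $a,b,1/a,1/b\in L^\infty$.
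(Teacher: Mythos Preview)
There is a genuine gap. Your similarity claim for $-\frac{d}{dx}a^{2}\frac{d}{dx}$ is not correct: conjugating $-a\frac{d}{dx}a\frac{d}{dx}$ by $M_{a}$ does not yield $-\frac{d}{dx}a^{2}\frac{d}{dx}$; the difference contains a zeroth-order term with coefficient $(aa')'$, which for $a\in C^{0,1}$ is only a distribution. The same obstruction kills your symmetrisation step (i) for the second operator---any conjugation by a multiplication operator $M_{h}$ with $h$ merely Lipschitz produces terms involving $h''$. Your change of variables $\phi(x)=\int_{0}^{x}(ab)^{-1/2}$ does transform $-a\frac{d}{dx}b\frac{d}{dx}$ into $-\frac{d^{2}}{dy^{2}}+W\frac{d}{dy}$ with $W\in L^{\infty}$, but this still leaves exactly the obstacle you yourself flag and do not resolve: how to absorb a first-order term at the level of the half-wave group.

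The paper's proof is precisely the resolution of that obstacle, and it avoids all changes of variable and conjugation. By the product rule,
\[
\tfrac{d}{dx}a^{2}\tfrac{d}{dx}=a\tfrac{d}{dx}a\tfrac{d}{dx}+a'a\tfrac{d}{dx},\qquad a\tfrac{d}{dx}b\tfrac{d}{dx}=\tfrac{d}{dx}ab\tfrac{d}{dx}-a'b\tfrac{d}{dx},
\]
so each operator is (an operator covered by the Proposition, directly or via the first identity with $a$ replaced by $\sqrt{ab}$) plus $V\frac{d}{dx}$ with $V\in L^{\infty}$. The point you are missing is that one perturbs at the level of the \emph{cosine family}, not of the group $\exp(is\sqrt{L})$. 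Since $i\sqrt{-a\frac{d}{dx}a\frac{d}{dx}}$ generates a bounded $C_{0}$ group on $L^{p}$, the operator $-a\frac{d}{dx}a\frac{d}{dx}$ generates a bounded cosine family with Kisy\'nski space $W^{1,p}$, and the perturbation theorem \cite[Corollary 3.14.13]{ABHN} only requires $V\frac{d}{dx}:W^{1,p}\to L^{p}$ to be bounded---immediate from $V\in L^{\infty}$. One then passes back to the half-wave group via square-root reduction \cite[Proposition 3.16.3]{ABHN}. Your stated obstacle, that ``$V\frac{d}{dx}$ is not bounded relative to $\sqrt{L}$'', is exactly what working with cosine families rather than the group circumvents.
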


\begin{proof}
We have that $\frac{d}{dx}a^{2}\frac{d}{dx} = a\frac{d}{dx}a\frac{d}{dx}+a'a\frac{d}{dx}$ and
$a\frac{d}{dx}b \frac{d}{dx} = \frac{d}{dx}a b\frac{d}{dx} - a'b\frac{d}{dx}$. 
For all $p \in [1,\infty)$ and all $f \in W^{1,p}(\R)$, we have that $\|a'bf'\|_{p} \leq \|ba'\|_{\infty} \|f'\|_{p}$. The result thus follows from perturbation theory and square root reduction for cosine families, see \cite[Proposition 3.16.3 and Corollary 3.14.13]{ABHN}.
\end{proof}

\section{The transport groups} 
\label{sec:transport}

The method developed in this paper applies to wave equations of the form $\partial_{t} ^{2} u = \sum \limits _{j=1} ^{d} D_{j}^{2} u$. What we need from $D$ is that $iD_{j}$  and $i\sqrt{D_{j}^{2}}$   generates a bounded $C_{0}$ group on $L^{p}$ for each $j$,  the operators $D_{1}^{2},...,D_{d}^{2}$ commute,  and $L=\sum \limits _{j=1} ^{d} D_{j}^{2}$ is such
that appropriate Riesz transform bounds and Hardy space estimates hold. In this section, we consider the simplest non-trivial example of such a Dirac operator. We then use this example throughout the paper, but indicate when the results hold for more general Dirac operators, with the same proofs.\\

For $j \in \{1,\ldots,2d\}$,  let $a_j \in C^{0,1}(\R)$ with $\frac{d}{dx} a_{j} \in L^{\infty}$, and assume that there exist $0<\lambda\leq \Lambda$ such that $\lambda \leq a_j(x) \leq \Lambda$ for all $x \in \R$. We denote by $\widetilde{a_{j}} \in C^{0,1}(\R^{d})$ the map defined by 
$\widetilde{a_{j}}:x \mapsto a_{j}(x_{j})$.

\begin{Def}
\label{def:dirac}
For $\xi = (\xi_{1},...,\xi_{d}) \in \R^{d}$, define 
$$
\xi.D_{a} := \sum \limits _{j=1} ^{d} \xi_{j}\left( \begin{array}{cc} 0 & - \widetilde{a_{j+d}}\partial_{j} \\ \widetilde{a_{j}}\partial_{j} & 0 \end{array} \right), \quad
\xi.\sqrt{D_{a}^{2}} = := \sum \limits _{j=1} ^{d} \xi_{j}\left( \begin{array}{cc} \sqrt{- \widetilde{a_{j+d}}\partial_{j}\widetilde{a_{j}}\partial_{j}}  & 0 \\ 0 & \sqrt{- \widetilde{a_{j}}\partial_{j}\widetilde{a_{j+d}}\partial_{j}}\end{array} \right),
$$
as an unbounded operator acting on $L^{2}(\R^{d};\C^{2})$, with domain $W^{1,2}(\R^{d};\C^{2})$.
\end{Def}
 Note that $W^{1,2}(\R^{d};\C^{2})$ is an appropriate domain for $\xi.\sqrt{D_{a}^{2}}$ thanks to the boundedness of the relevant Riesz transforms proven in \cite[Corollary 5.19]{AMcT}. 

As in \cite[Section 4, Case II]{McM},  $i e_{j}.D_{a}$  generates a bounded $C_{0}$ group on $L^{2}(\R^{d};\C^{2})$  for all $j=1,..,d$, since $e_{j}.D_{a}$  is self-adjoint with respect to an equivalent inner product of the form  $(u,v) \mapsto \langle A^{-1}u,v\rangle$, where $A$ is a  diagonal multiplication operator with $C^{0,1}$ entries.

\begin{Remark} \label{rem:preciseFS}
For $E,F \subset \R^d$ Borel sets and $\omega \in S^{d-1}$, we set $\omega.d(E,F):= \inf_{x \in E, y \in F} |\langle \omega,x-y\rangle|$. By \cite[Remark 3.6]{McM}, we have the 
following (strong) form of finite speed of propagation: there exists $\kappa>0$ such that for all $f \in L^2(\R^d;\C^2)$, all Borel sets $E,F \subset \R^d$, all  $j=1,...,d$,  all $\xi \in \R^{d}$,  and all $\omega \in S^{d-1}$ we have
$$
		1_E \exp(i  \xi_{j} e_{j}.D_a ) (1_F f) =0,
$$
whenever $\frac{\kappa}{\sqrt{d}} |\langle \omega, \xi_{j}e_{j}  \rangle| <\omega.d(E,F)$. Consequently, 
$$
		1_E  \prod _{j=1} ^{d} \exp(i \xi_{j}e_{j}.D_a)  (1_F f) =0,
$$
whenever $\kappa |\langle \omega,\xi \rangle| <\omega.d(E,F)$. Indeed, we have that
$$
1_E  \prod _{j=1} ^{d} \exp(i \xi_{j}e_{j}.D_a)  (1_F f) = 1_E \exp(i \xi_{1}e_{1}.D_a) 1_{E_{1}} \prod _{j=2} ^{d} \exp(i \xi_{j}e_{j}.D_a)  (1_F f),
$$
for $E_{1} = \{(y_{1},x_{2},...,x_{d}) \in \R^{d} \;;\; (x_{1},...,x_{d})\in E \; \text{and} \; |y_{1}-x_{1}| \leq \frac{\kappa}{\sqrt{d}} |\xi_{1}|\}$. Iterating this argument gives us that
$$
1_E  \prod _{j=1} ^{d} \exp(i \xi_{j}e_{j}.D_a)   (1_F f) = 1_E  \prod _{j=1} ^{d} \exp(i \xi_{j}e_{j}.D_a)  1_{\widetilde{E}} (1_F f),
$$
for $\widetilde{E} = \{(y_{1},...,y_{d}) \in \R^{d} \;;\; (x_{1},...,x_{d})\in E \; \text{and} \; |y_{j}-x_{j}| \leq \frac{\kappa}{\sqrt{d}} |\xi_{j}| \quad \forall j=1,...,d\}$.
Assuming that there exists $y \in \widetilde{E} \cap F$ when
$\kappa |\langle \omega,\xi \rangle| <\omega.d(E,F)$, we obtain that, for all $x \in E$,
$$
|\langle \omega,x-y \rangle|\leq  \kappa \underset{j=1,...,d}{\max}|\langle \xi,e_{j} \rangle 
\omega_{j}| < \omega.d(E,F),$$
which is a contradiction.
\end{Remark}

\begin{Prop}
\label{prop:bddgrp}
Let $\xi \in \R^{d}$ and $p \in (1,\infty)$. The group $(\exp(i t\xi.\sqrt{D_{a}^{2}}))_{t \in \R}$ is 
bounded on $L^{p}(\R^{d};\C^{2})$.
\end{Prop}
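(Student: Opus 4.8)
The statement asserts that for each fixed $\xi \in \R^d$ and each $p \in (1,\infty)$, the one-parameter group $(\exp(it\,\xi.D_a))_{t \in \R}$ is bounded on $L^p(\R^d;\C^2)$. The natural strategy is to reduce to the one-dimensional results of Section \ref{sec:1d} combined with the Coifman--Weiss transference principle (Theorem \ref{thm:transf}) and the Marcinkiewicz--Lizorkin multiplier theorem (Theorem \ref{thm:mult}). First I would record the key algebraic observation: $(\xi.D_a)^2 = L_\xi := \diag(L_1^\xi, L_2^\xi)$ is a diagonal second-order operator, and on each diagonal block $\xi.D_a$ relates $\exp(it\sqrt{L_1^\xi})$-type objects. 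More importantly, by scaling $t\,\xi.D_a = (t\xi).D_a$, it suffices to bound $\exp(i\,\xi.D_a)$ uniformly over $\xi$ in a neighbourhood of the sphere, or even just to bound it for each fixed $\xi$ with the right dependence.

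\textbf{Step 1: one-variable building blocks.} Fix $\xi$ with all $\xi_j \neq 0$ first (the general case follows by taking $\xi_j \to 0$ limits or by treating degenerate directions as lower-dimensional problems). For each $j$, the operator $\xi_j\left(\begin{smallmatrix} 0 & -\partial_j \widetilde{a_{j+d}} \\ \widetilde{a_j}\partial_j & 0\end{smallmatrix}\right)$ acts only in the $x_j$ variable. By Corollary \ref{cor:1dwave} and the construction in the proof of the preceding Proposition (the bi-Lipschitz change of variables $\phi_j$ flattening $a_j$), the one-dimensional operator $iD_a^{(j)} := i\left(\begin{smallmatrix} 0 & -\partial a_{j+d} \\ a_j \partial & 0\end{smallmatrix}\right)$ generates a bounded $C_0$ group on $L^p(\R;\C^2)$ for all $p \in [1,\infty)$: indeed $(D_a^{(j)})^2 = \diag(-a_{j+d}\partial a_j\partial, -a_j\partial a_{j+d}\partial)$, whose square roots generate bounded groups by Corollary \ref{cor:1dwave}, and $D_a^{(j)}$ itself intertwines these blocks, so boundedness of $\exp(it D_a^{(j)})$ follows. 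Tensoring in the remaining variables, each $\exp(it\,\xi_j D_j)$ (with $D_j$ the $j$-th term of $\xi.D_a/\xi_j$, say) is bounded on $L^p(\R^d;\C^2)$ for all $p \in (1,\infty)$, with norm uniform in $t$.

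\textbf{Step 2: assembling the $d$-parameter group and transferring.} The operators $D_1, \ldots, D_d$ commute (they act in separate variables), so by the $d$-parameter version of Theorem \ref{thm:transf} noted in the excerpt, for any $\psi \in \mathcal{S}(\R^d)$ we have
$$
\Big\| \int_{\R^d} \widehat{\psi}(\eta)\, \exp(i\eta D)\, f \, d\eta \Big\|_{L^p(\R^d;\C^2)} \lesssim \|T_\psi \otimes I_{L^p}\|_{B(L^p(\R^d;L^p(\R^d)))} \|f\|_{L^p(\R^d;\C^2)}.
$$
I then want to realize $\exp(i\,\xi.D_a)$ (and more generally a whole functional calculus) by choosing appropriate symbols; but $\exp(i\eta\mapsto \text{single evaluation})$ is not directly of the averaged form, so the cleaner route is: fix $\xi$, set $D := \xi.D_a$ viewed as the single operator $\sum_j \xi_j D_j$, observe $iD$ generates a $C_0$ group because each $i\xi_j D_j$ does and they commute, and then apply the one-parameter Coifman--Weiss transference to the symbol $\psi(z) = e^{iz}$ suitably mollified/truncated. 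Concretely, to bound $\exp(iD)$ one writes it as a limit of $\int \widehat{\psi_\epsilon}(\eta)\exp(i\eta D')d\eta$ where $D'$ is the $d$-tuple and $\psi_\epsilon(\eta) \to e^{i\langle\xi,\eta\rangle}$ appropriately; the relevant multiplier $m(\zeta) = e^{i\langle\xi,\zeta\rangle}$-type symbol is a Marcinkiewicz--Lizorkin multiplier with constants controlled by $|\xi|$, so Theorem \ref{thm:mult} applies and gives an $L^p(L^p)$ bound, hence via Theorem \ref{thm:transf} an $L^p(\R^d;\C^2)$ bound. A technically smoother variant: bound $\exp(it\,\xi.D_a)$ for all $t$ at once by noting $t\mapsto \exp(it\,\xi.D_a)$ is itself the group generated by $i\,\xi.D_a$, use that $i\,\xi.D_a$ generates a bounded group on $L^2$ (stated after Definition \ref{def:dirac}) plus the transference bound on a generating set of symbols, and invoke the standard transference argument that an $L^2$-bounded group with transferable functional calculus is $L^p$-bounded.

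\textbf{Main obstacle.} The delicate point is the interplay between the $C^{0,1}$ coefficients and the transference machinery: the change of variables in Section \ref{sec:1d} works variable-by-variable only because each $a_j$ depends on the single variable $x_j$, so I must be careful that the simultaneous change of variables $x \mapsto (\phi_1(x_1),\ldots,\phi_d(x_d))$ is bi-Lipschitz on $\R^d$ and conjugates $\xi.D_a$ into something to which the constant-coefficient Coifman--Weiss/Marcinkiewicz--Lizorkin argument applies — but even after this reduction the conjugated operator is not simply $\xi.D = \sum \xi_j \partial_j$ because of the off-diagonal structure and the residual $a_{j+d}$ weights, so the transference must be run with the genuine (variable-coefficient) group $\exp(it\xi_j D_j)$ as the object being transferred, exactly as the framework of Section \ref{sec:transport} intends. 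I expect the bulk of the work to be verifying the hypotheses of Theorem \ref{thm:transf} for the tuple $(D_1,\ldots,D_d)$ — i.e. that each generates a bounded group with the stated uniformity — which is precisely what Step 1 delivers from Corollary \ref{cor:1dwave}; once that is in hand, the passage from $L^2$-boundedness to $L^p$-boundedness via transference of the symbol $e^{i\langle\xi,\cdot\rangle}$ is routine, with the loss in $\xi$ absorbed since $\xi$ is fixed.
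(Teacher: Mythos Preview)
Your overall architecture --- reduce to one dimension using that the $D_j$ act in separate variables and commute, then combine --- matches the paper. But there are two concrete problems.

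\textbf{Step 1 hides the actual work.} You write that $(D_a^{(j)})^2$ is diagonal with blocks whose square roots generate bounded groups by Corollary~\ref{cor:1dwave}, ``and $D_a^{(j)}$ itself intertwines these blocks, so boundedness of $\exp(itD_a^{(j)})$ follows.'' It does not follow without further input. Writing $D_a^{(j)} = \left(\begin{smallmatrix} 0 & A \\ B & 0\end{smallmatrix}\right)$ with $A=-\partial b$, $B=a\partial$, the off-diagonal entries of $\exp(itD_a^{(j)})$ are, formally, operators such as $iA(BA)^{-1/2}\sin(t\sqrt{BA})$ and $iB(AB)^{-1/2}\sin(t\sqrt{AB})$. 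Bounding these on $L^p$ requires exactly the one-dimensional Riesz transform estimates $\frac{d}{dx}(-\frac{d}{dx}ab\frac{d}{dx})^{-1/2}\in B(L^p)$ and $\frac{d}{dx}(-a\frac{d^2}{dx^2}b)^{-1/2}\in B(L^p)$. This is the nontrivial analytic input, and it is what the paper invokes explicitly (citing \cite{AT2,AMN}): it writes out the second-order equations $\partial_t^2 u = \partial ab\partial u$, $\partial_t^2 v = a\partial^2(bv)$, solves them via cosine families, and then controls the initial-velocity contributions using those Riesz bounds. Your Step~1 is the paper's proof with this step suppressed.

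\textbf{Step 2 is both unnecessary and incorrect as stated.} Once each one-parameter group $(\exp(is D_j))_{s\in\R}$ is bounded on $L^p$ and the $D_j$ commute, you have immediately $\exp(it\,\xi.D_a)=\prod_{j=1}^d \exp(it\xi_j D_j)$ with norm bounded by $\prod_j \sup_s\|\exp(isD_j)\|$, uniformly in $t$. No transference is needed. Moreover, the route you propose fails: the symbol $m(\zeta)=e^{i\langle\xi,\zeta\rangle}$ is \emph{not} a Marcinkiewicz--Lizorkin multiplier, since $|\zeta_k\partial_{\zeta_k}m(\zeta)|=|\xi_k\zeta_k|$ is unbounded. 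Theorems~\ref{thm:mult} and~\ref{thm:transf} are used later in the paper for genuinely nontrivial symbols (e.g.\ Lemma~\ref{lem:Marcinkiewicz}), not for the raw translation in Phillips calculus.
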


\begin{proof}
Let $p \in (1,\infty)$.
Using linearity and freezing $d-1$ of the variables, it suffices to show that the group generated by $i \left( \begin{array}{cc} 0 & -b \frac{d}{dx} \\ a\frac{d}{dx} & 0 \end{array} \right)$ is bounded on $L^{p}(\R;\C^{2})$ for $a:=a_{1}$ and $b:=a_{d+1}$. For $f,g \in C^{\infty}_{c}(\R)$, and $t \in \R$, let us consider
$$
\left( \begin{array}{c} u(t,.) \\ v(t,.) \end{array} \right) := \exp\left( i  t\left( \begin{array}{cc} 0 & -b \frac{d}{dx} \\ a\frac{d}{dx} & 0 \end{array} \right)\right) \left( \begin{array}{c} f \\ g \end{array} \right).
$$
We have that
$$
\left( \begin{array}{c} \partial_{t} u(t,.) \\ \partial_{t} v(t,.) \end{array} \right)
=  i 
\left( \begin{array}{c} -b \frac{d}{dx} v(t,.) \\ a \frac{d}{dx}u(t,.) \end{array} \right) \quad \forall t,x \in \R,$$
and
$$
\left( \begin{array}{c} \partial_{t}^{2} u(t,.) \\ \partial_{t}^{2} v(t,.) \end{array} \right)
= 
\left( \begin{array}{c}   -b \frac{d}{dx} a \frac{d}{dx} u(t,.) \\    -a \frac{d}{dx} b \frac{d}{dx}  v(t,.)  \end{array} \right) \quad \forall t,x \in \R.$$
Using Corollary \ref{cor:1dwave} and solving these wave equations using the relevant cosine families (see \cite[Corollary 3.14.12]{ABHN}), this gives
\begin{align*}
\|u(t,.)\| & \lesssim \|f\|_{p} + \| (-b\frac{d}{dx} a \frac{d}{dx})^{-\frac{1}{2}} g' \|_{p} 
\lesssim \|f\|_{p} +\|g\|_{p},
\\
\|v(t,.)\| & \lesssim \|g\|_{p} + \| (-a\frac{d}{dx} b \frac{d}{dx})^{-\frac{1}{2}} f' \|_{p}
\lesssim \|f\|_{p} +\|g\|_{p}, 
\end{align*}
with constants independent of $t$, using
the boundedness of the Riesz transforms $\frac{d}{dx} (-b\frac{d}{dx} a \frac{d}{dx})^{-\frac{1}{2}}$ proven in \cite{AT2,AMN}.

\end{proof}

\begin{Remark}
Given the vector-valued nature of the Dirac operator $D_{a}$, all function spaces considered
in the remaining of the paper will be implicitly $\C^{2}$ valued.
\end{Remark}

\begin{Remark}
\label{rk:notriv}
The transport group generated by $i D_{a}$ is, even in dimension one, substantially more complicated than the transport group generated by $ a\frac{d}{dx}$ considered in Section \ref{sec:1d}. Its $L^p$ boundedness, for instance, does not follow from the boundedness of the translation group through bi-Lipschitz changes of variables. Indeed, for non-constant coefficients $a \in C^{0,1}(\R)$, no intertwining relation 
$$U \left( \begin{array}{cc} 0 & - \frac{d}{dx} \\ a\frac{d}{dx} & 0 \end{array} \right)= \left( \begin{array}{cc} 0 & - \frac{d}{dx} \\ \frac{d}{dx}& 0 \end{array} \right) U$$ can hold for $U$ of the form $U:(f,g) \mapsto (f \circ \phi,g \circ \psi)$ where $\phi, \psi: \R \to \R$ are bi-Lipschitz changes of variables. \end{Remark}

\section{Hardy spaces associated with the transport groups}
\label{sec:HpD}

\begin{Def}
\label{def:calc}
Given $\Psi \in \mathcal{S}(\R^{d})$, we define $\Psi(\sqrt{D_{a}^{2}})$ using the Phillips functional calculus associated with the commutative group $(\exp(i \xi.\sqrt{D_{a}^{2}}))_{\xi \in \R^{d}}$:
$$
\Psi(\sqrt{D_{a}^{2}}) : = \frac{1}{(2\pi)^{d}} \int \limits _{\R^{d}} \widehat{\Psi}(\xi)\exp(i\xi. \sqrt{D_{a}^{2}})d\xi.
$$
We restrict our attention to functions $\Psi$ that satisfy $\Psi=\Psi^{s}$, where 
$$
\Psi^{s}(x) := 2^{-d} \sum \limits _{(\delta_{j})_{j=1} ^{d} \in \{-1,1\}^{d}} \Psi(\delta_{1}x_{1},...,\delta_{d}x_{d}).
$$
For such functions, we have that 
\begin{align*}
\Psi^{s}&(\sqrt{D_{a}^{2}})  \\ 
& = \frac{1}{(2\pi)^{d}} \int \limits _{\R^{d-1}} 
 \int \limits _{\R} 
 \widehat{\Psi^{s}}(\xi)\frac{1}{2}(\exp(i\xi_{1}e_{1}\sqrt{D_{a}^{2}})+\exp(-i\xi_{1}e_{1}\sqrt{D_{a}^{2}}))d\xi_{1}\exp(i(\xi-\xi_{1}e_{1})\sqrt{D_{a}^{2}})d\xi_{2}...,d\xi_{d} \\
 & = \frac{1}{(2\pi)^{d}} \int \limits _{\R^{d-1}} 
 \int \limits _{\R} 
 \widehat{\Psi^{s}}(\xi)\frac{1}{2}(\exp(i\xi_{1}e_{1}D_{a})+\exp(-i\xi_{1}e_{1}D_{a}))d\xi_{1}\exp(i(\xi-\xi_{1}e_{1})\sqrt{D_{a}^{2}})d\xi_{2}...,d\xi_{d} \\
 &= \frac{1}{(2\pi)^{d}} \int \limits _{\R^{d}} \widehat{\Psi^{s}}(\xi) \prod _{j=1} ^{d} \exp(i \xi_{j}e_{j}.D_a)  d\xi,
\end{align*}
since $e_{j}.D_{a}$ and $e_{j}.\sqrt{D_{a}^{2}}$ generate the same cosine family. We write $\Psi(D_{a})$ instead of $\Psi(\sqrt{D_{a}^{2}})$ when $\Psi=\Psi^{s}$.
\end{Def}

\begin{Lemma}
\label{lem:od}
There exists $C>0$ such that, for all $\Psi \in \mathcal{S}(\R^{d})$  such that $\Psi=\Psi^{s}$, all $E,F \subset \R^{d}$ Borel sets and all $\omega \in S^{d-1}$,  we have that
$$
\|1_{E} \Psi(D_{a})(1_{F}f)\|_{2} \leq C \|1_{F}f\|_{2} \int \limits _{ \{ |\xi| \geq \frac{d(E,F)}{\kappa} \} \cap \{|\langle \omega,\xi\rangle| \geq \frac{\omega.d(E,F)}{\kappa} \} }  |\widehat{\Psi}(\xi)| d\xi  \quad \forall f \in L^{2}(\R^{d}).
$$
Consequently, for every  $\Psi \in \mathcal{S}(\R^{d})$ and every  $M \in \N$, there exists $C_{M}>0$ such that
$$
\|1_{E} \Psi(\sigma D_{a})(1_{F}f)\|_{2} \leq C_{M} (1+\frac{d(E,F)}{\kappa \sigma})^{-M}\|1_{F}f\|_{2}   \quad \forall f \in L^{2}(\R^{d})
$$
for all Borel sets $E,F \subset \R^{d}$ and all $\sigma >0$.
\end{Lemma}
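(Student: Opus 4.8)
The first estimate is a direct consequence of the Phillips functional calculus representation together with the strong finite speed of propagation from Remark \ref{rem:preciseFS}. The plan is to write, for $f \in L^2(\R^d)$,
\[
1_E \Psi(D_a)(1_F f) = \frac{1}{(2\pi)^{d/2}} \int_{\R^d} \widehat{\Psi}(\xi)\, 1_E \exp(i\xi D_a)(1_F f)\, d\xi,
\]
and then observe that, by Remark \ref{rem:preciseFS}, the integrand vanishes unless both $\kappa|\xi| \geq d(E,F)$ (taking $\omega$ to realise, or nearly realise, the Euclidean distance — more simply, $|\langle\omega,\xi\rangle| \leq |\xi|$ forces $\kappa|\xi| \geq \omega.d(E,F)$ for every $\omega$, and choosing $\omega$ appropriately gives the Euclidean bound) and $\kappa|\langle\omega,\xi\rangle| \geq \omega.d(E,F)$ for the given $\omega$. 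Restricting the domain of integration to this set and using $\|1_E\exp(i\xi D_a)(1_F f)\|_2 \lesssim \|1_F f\|_2$ (uniform $L^2$ bound for the unitary-up-to-equivalent-inner-product group) then yields the claimed inequality, with $C$ absorbing the $L^2$ group bound and the normalisation constant.

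For the second estimate, I would apply the first one with $\Psi$ replaced by $\Psi(\sigma\,\cdot)$, whose Fourier transform is $\sigma^{-d}\widehat{\Psi}(\xi/\sigma)$. After the change of variables $\xi \mapsto \sigma\xi$, the bound becomes
\[
\|1_E \Psi(\sigma D_a)(1_F f)\|_2 \leq C \|1_F f\|_2 \int_{\{|\xi| \geq \frac{d(E,F)}{\kappa\sigma}\}} |\widehat{\Psi}(\xi)|\, d\xi.
\]
Since $\Psi \in \mathcal{S}(\R^d)$, $\widehat{\Psi} \in \mathcal{S}(\R^d)$ as well, so for any $M$ there is $C_M'$ with $|\widehat{\Psi}(\xi)| \leq C_M'(1+|\xi|)^{-(M+d+1)}$; integrating this tail over $\{|\xi| \geq R\}$ gives a bound $\lesssim_M (1+R)^{-M}$ with $R = \frac{d(E,F)}{\kappa\sigma}$, which is exactly the claimed decay. (One drops the directional constraint here, keeping only the radial one, which suffices.)

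**Main obstacle.** The only genuinely delicate point is the passage from the directional finite-speed statement of Remark \ref{rem:preciseFS} to the \emph{Euclidean}-distance factor $\{|\xi| \geq d(E,F)/\kappa\}$ appearing in the conclusion: one must check that the union (over $\omega \in S^{d-1}$, or a suitable choice of $\omega$) of the directional conditions recovers control by $|\xi|$ and $d(E,F)$ simultaneously. Concretely, if $\kappa|\xi| < d(E,F)$ then $\kappa|\langle\omega,\xi\rangle| \leq \kappa|\xi| < d(E,F) \leq \dots$; one needs $d(E,F) \leq \inf_{\omega}\omega.d(E,F)$-type reasoning replaced by the correct direction. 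In fact choosing $\omega = \frac{x_0 - y_0}{|x_0-y_0|}$ for near-minimisers $x_0 \in E$, $y_0 \in F$ shows $\omega.d(E,F)$ can be taken arbitrarily close to $d(E,F)$, so $\kappa|\xi| \geq \omega.d(E,F)$ on the support of the integrand forces $\kappa|\xi| \geq d(E,F) - \epsilon$ for all $\epsilon > 0$; a standard limiting/covering argument (or simply keeping $\omega$ fixed and noting the stated inequality already allows any $\omega$) closes this gap. Everything else is routine Schwartz-class tail estimation.
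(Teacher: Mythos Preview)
Your approach is the same as the paper's: Phillips calculus plus finite speed of propagation to restrict the $\xi$-integral, then uniform $L^2$ bounds on the group, and for the second part a rescaling together with Schwartz tail decay. The paper's proof is essentially a two-line version of what you wrote.

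One correction regarding your ``main obstacle''. The paper does not derive the Euclidean constraint $\kappa|\xi|\geq d(E,F)$ from the directional one; it simply uses both forms of finite speed as facts coming from \cite{McM} via Remark~\ref{rem:preciseFS}. Your attempt to recover the Euclidean bound by choosing $\omega=(x_0-y_0)/|x_0-y_0|$ for near-minimisers $x_0\in E$, $y_0\in F$ and claiming that $\omega.d(E,F)$ is then close to $d(E,F)$ is incorrect as stated: $\omega.d(E,F)=\inf_{x\in E,\,y\in F}|\langle\omega,x-y\rangle|$ is an infimum over \emph{all} pairs, and other pairs can make it much smaller (take for instance $E=\{0\}$ and $F=\{e_1,e_2\}$ in $\R^2$, where $\sup_\omega \omega.d(E,F)=1/\sqrt{2}<1=d(E,F)$). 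The covering argument you mention at the end is the right fix: localising both $E$ and $F$ into pieces of small diameter, one can choose a direction for each pair of pieces so that the directional distance is within $\epsilon$ of the Euclidean one, and then sum. So your proof is fine once that intermediate claim is removed and replaced directly by the covering step (or, as the paper does, by simply invoking the standard Euclidean finite speed).
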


\begin{proof}
Let $f \in L^{2}(\R^{d})$ and $\xi \in \R^{d}$. By Remark \ref{rem:preciseFS}, we have that
$$
1_{E}  \prod _{j=1} ^{d} \exp(i \xi_{j}e_{j}.D_a)  (1_{F}f) = 0,
$$
whenever $\kappa |\xi| < d(E,F)$ or $\kappa |\langle \omega,\xi\rangle| <\omega.d(E,F)$.  Therefore, using Phillips functional calculus, we have that
\begin{align*}
\|1_{E} \Psi(D_{a})(1_{F}f)\|_{2}  
& \leq \frac{1}{(2\pi)^d} \int \limits _{\R^{d}} |\widehat{\Psi}(\xi)| \|1_{E} \prod _{j=1} ^{d} \exp(i \xi_{j}e_{j}.D_a)  (1_{F}f)\|_{2} d\xi \\
& \leq C \|1_{F}f\|_{2} \int \limits _{ \{ |\xi| \geq \frac{d(E,F)}{\kappa} \} \cap \{|\langle \omega,\xi\rangle| \geq \frac{\omega.d(E,F)}{\kappa} \} } |\widehat{\Psi}(\xi)| d\xi,
\end{align*}
where $C:=\frac{1}{(2\pi)^d}  \sup \{\| \prod _{j=1} ^{d} \exp(i \xi_{j}e_{j}.D_a)  \|_{B(L^2)} \;;\; \xi \in \R^{d}\}$. The last statement then follows from a change of variables and $\Psi \in \mathcal{S}(\R^{d})$.
\end{proof}

We recall the following fact, which is a corollary of the results in \cite{AMcT}, using that the coefficients $a_{j}$ are Lipschitz continuous.
\begin{Theorem}(Auscher, McIntosh, Tchamitchian)
\label{thm:AMcT}
Let $p \in (1,\infty)$. On $L^{p}(\R^{d})$, 
the operator $L= D_{a}^{2}$,  with domain $W^{2,p}(\R^d)$, generates an analytic semigroup, and has a bounded $H^{\infty}$ calculus of angle $0$.
Moreover, $\{\exp(-tL) \;;\; t>0\}$ satisfies Gaussian estimates.
\end{Theorem}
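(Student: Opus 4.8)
The statement is, in essence, a reformulation of the main results of \cite{AMcT} for the particular operators at hand, so the plan is to rewrite $L$ in a form to which that reference applies directly and then quote it, checking that the Lipschitz hypothesis supplies exactly the regularity needed. Since $L=D_{a}^{2}=\mathrm{diag}(L_{1},L_{2})$ acts diagonally, it suffices to prove the three assertions for each scalar operator $L_{1}=-\sum_{j=1}^{d}\widetilde{a_{j+d}}\partial_{j}\widetilde{a_{j}}\partial_{j}$ and $L_{2}=-\sum_{j=1}^{d}\widetilde{a_{j}}\partial_{j}\widetilde{a_{j+d}}\partial_{j}$; these are of the same type (exchange $a_{j}\leftrightarrow a_{j+d}$), so I only treat $L_{1}$. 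Because $\widetilde{a_{j}}(x)=a_{j}(x_{j})$ depends on the single variable $x_{j}$, the product rule gives the splitting $L_{1}=M_{1}+N_{1}$, where $M_{1}:=-\sum_{j=1}^{d}\partial_{j}(\widetilde{a_{j}}\widetilde{a_{j+d}})\partial_{j}$ is a divergence-form operator with \emph{real} diagonal coefficients $\widetilde{a_{j}}\widetilde{a_{j+d}}\in[\lambda^{2},\Lambda^{2}]$ that are bounded, Lipschitz, and depend each on a single variable, and $N_{1}:=\sum_{j=1}^{d}\widetilde{a_{j}}(\partial_{j}\widetilde{a_{j+d}})\partial_{j}$ is a first-order operator with bounded measurable coefficients.

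For $M_{1}$ all three properties are classical: it is symmetric and nonnegative on $L^{2}(\R^{d})$, hence has a bounded $H^{\infty}$ calculus of angle $0$ there; by De Giorgi--Nash--Moser its heat kernel obeys Gaussian bounds (with complex time, by self-adjointness); these Gaussian bounds together with the $L^{2}$ calculus yield a bounded $H^{\infty}$ calculus of angle $0$ on $L^{p}(\R^{d})$ for every $p\in(1,\infty)$ and analyticity of the semigroup there, by the standard Calder\'on--Zygmund/Blunck--Kunstmann machinery; and the Lipschitz regularity of the coefficients upgrades interior elliptic estimates to the identification $D_{p}(M_{1})=W^{2,p}(\R^{d})$ (write $M_{1}u=-\sum_{j}\widetilde{a_{j}}\widetilde{a_{j+d}}\,\partial_{j}^{2}u-\sum_{j}\partial_{j}(\widetilde{a_{j}}\widetilde{a_{j+d}})\,\partial_{j}u$ and combine the Agmon--Douglis--Nirenberg estimate for the non-divergence part with a Gagliardo--Nirenberg interpolation of the first-order part). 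The term $N_{1}$ is then $M_{1}$-bounded with relative bound zero (since $\|\nabla u\|_{p}\lesssim(\|M_{1}u\|_{p}+\|u\|_{p})^{1/2}\|u\|_{p}^{1/2}$), so $L_{1}=M_{1}+N_{1}$ still generates an analytic semigroup on $L^{p}(\R^{d})$ with $D_{p}(L_{1})=D_{p}(M_{1})=W^{2,p}(\R^{d})$, its heat kernel still satisfies Gaussian bounds (first-order perturbations with bounded coefficients preserve them), and---possibly after adding a constant---it retains a bounded $H^{\infty}$ calculus of angle $0$ on $L^{p}(\R^{d})$. This is precisely the class of second-order operators handled in \cite{AMcT}, the Lipschitz (a fortiori H\"older) assumption being exactly what makes the kernel bounds, and hence the $L^{p}$ calculus, available in all dimensions.

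As an alternative route to the Gaussian bounds, one may exploit that the one-variable factors commute: $e^{-tL_{1}}=\bigotimes_{j=1}^{d}e^{-t\ell_{j}}$, where $\ell_{j}:=-a_{j+d}\frac{d}{dx}a_{j}\frac{d}{dx}$ acts in the $x_{j}$ variable only and is similar to a nonnegative self-adjoint operator on $L^{2}(\R)$ through the equivalent inner product $(u,v)\mapsto\int u\bar v\,a_{j+d}^{-1}\,dx$. In one dimension the heat kernel of $\ell_{j}$ enjoys Gaussian bounds by elementary means (or via Section \ref{sec:1d}), and multiplying these one-dimensional Gaussians yields the $d$-dimensional bound for $e^{-tL_{1}}$ directly.

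The only genuinely nontrivial input is the $L^{p}$-boundedness of the holomorphic functional calculus of $L_{1}$: the left multiplication by $\widetilde{a_{j+d}}$ and the term $N_{1}$ both destroy self-adjointness on the unweighted $L^{2}(\R^{d})$. On $L^{2}$ this is circumvented by the equivalent-inner-product self-adjointness of $D_{a}$ recorded above Remark \ref{rem:preciseFS}, which makes $L=D_{a}^{2}$ nonnegative self-adjoint for an equivalent inner product and hence gives the $H^{\infty}$ calculus of angle $0$ on $L^{2}$; the passage to $L^{p}$ then rests entirely on the Gaussian bounds and Calder\'on--Zygmund theory. This last step is the content of \cite{AMcT} that I would quote rather than reprove.
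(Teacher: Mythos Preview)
Your proposal is correct. In the paper, Theorem \ref{thm:AMcT} is not proved at all: it is simply \emph{recalled} as ``a corollary of the results in \cite{AMcT}, using that the coefficients $a_{j}$ are Lipschitz continuous.'' You have therefore supplied far more than the authors do; they are content to cite \cite{AMcT} and move on.

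Your route---reduce to the scalar blocks $L_{1},L_{2}$, split off the self-adjoint divergence-form part $M_{1}$, and treat the remainder $N_{1}$ as a first-order perturbation with relative bound zero---is a sound way to reach the claimed properties. The one step that deserves a pointer to the literature is the assertion that Gaussian bounds (and hence the $L^{p}$ $H^{\infty}$ calculus of angle $0$) survive the first-order perturbation $N_{1}$; this is true but not elementary, and is precisely the sort of content covered in \cite{AMcT}. Your alternative tensor-product argument, $e^{-tL_{1}}=\bigotimes_{j=1}^{d}e^{-t\ell_{j}}$ with $\ell_{j}=-a_{j+d}\frac{d}{dx}a_{j}\frac{d}{dx}$, exploits the special structure of these coefficients (each depending on a single variable) and is arguably the cleanest path to the Gaussian bounds here: it reduces everything to the one-dimensional case where $\ell_{j}$ is nonnegative self-adjoint for the weight $a_{j+d}^{-1}\,dx$. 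This shortcut is not mentioned in the paper, which treats Theorem \ref{thm:AMcT} purely as a black-box citation.
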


\begin{Cor}
\label{cor:funcalc}
Let $p \in (1,\infty)$, $\theta>0$, $g \in H^\infty(S_{\theta+}^o)$,  and let  $\Psi \in C_c^\infty(\R^d)$ be supported away from $0$ and such that $\Psi=\Psi^{s}$.  Then there exists a constant $C>0$  independent of $g$  such that, for all $F \in T^{p,2}(\R^{d})$, 
$$\|(\sigma,x) \mapsto \Psi(\sigma D_{a})g(L)F(\sigma,.)(x) \|_{T^{p,2}(\R^{d})} \leq 
 C \|g\|_{L^\infty(S_{\theta+}^o)} \|(\sigma,x) \mapsto F(\sigma,.)(x) \|_{T^{p,2}(\R^{d})}.
$$ 
\end{Cor}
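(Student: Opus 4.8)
\textbf{Proof proposal for Corollary \ref{cor:funcalc}.}

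The plan is to combine the off-diagonal decay of the wave packet operators $\Psi(\sigma D_a)$ from Lemma \ref{lem:od} with the $T^{p,2}$-boundedness machinery coming from the functional calculus of $L$, exactly as in the theory of Hardy spaces associated with operators. First I would reduce matters to the single operator $g(L)$ acting fibrewise on $T^{p,2}(\R^d)$: since $\Psi \in C_c^\infty(\R^d)$ is supported away from $0$, one may write $\Psi(\sigma D_a) = \widetilde\Psi(\sigma^2 L)\Psi(\sigma D_a)$ for a suitable $\widetilde\Psi \in \Psi_\alpha^\beta(S^o_\theta)$, so that $(\sigma,x)\mapsto \Psi(\sigma D_a)g(L)F(\sigma,\cdot)(x)$ is the composition of the operator $(\sigma,x)\mapsto \widetilde\Psi(\sigma^2 L)G(\sigma,\cdot)(x)$ (which is $T^{p,2}$-bounded by the standard $T^{p,2}$ off-diagonal / quadratic estimate argument, using the Gaussian bounds and bounded $H^\infty$ calculus of $L$ from Theorem \ref{thm:AMcT}) with the vertically-constant operator $G(\sigma,\cdot) = g(L)F(\sigma,\cdot)$. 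Since $g(L)$ is bounded on $L^p(\R^d)$ with norm $\lesssim \|g\|_{L^\infty(S^o_{\theta+})}$ by Theorem \ref{thm:AMcT}, and bounded operators on $L^p$ act boundedly fibrewise on $T^{p,2}$ for $p$ in a neighbourhood of $2$ — or, more robustly, one interpolates between a $T^{2,2} = L^2(\R^d\times(0,\infty), dx\,d\sigma/\sigma)$ estimate and a $T^{1,2}$ atomic estimate — the constant tracks $\|g\|_{L^\infty(S^o_{\theta+})}$ as claimed.

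In more detail, for the $T^{1,2}$ endpoint I would test on a $T^{1,2}$-atom $A$ supported in $B(c_B,r)\times(0,r]$, set $B^* = B(c_B, 2r)$, and split $\R^d = B^* \cup \bigcup_{k\geq 1} C_k$ into dyadic annuli $C_k = B(c_B, 2^{k+1}r)\setminus B(c_B,2^k r)$. On $B^*$ one uses the $T^{2,2}$ bound together with Hölder's inequality and $|B^*|^{1/2}$; on each $C_k$ one uses Lemma \ref{lem:od} (or rather its $L^2 \to L^2$ off-diagonal form combined with the Gaussian estimates of $\exp(-tL)$, giving decay of the form $(1 + 2^k r/\sigma)^{-M}$ since the relevant operator is localised at scale $\sigma \leq r$), and sums a geometric-type series in $k$ after integrating in $\sigma$. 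The outcome is $\|1_{C_k}(\Psi(\sigma D_a)g(L)A)\|_{T^{1,2}} \lesssim 2^{-kM'}\|g\|_{L^\infty}$ uniformly, which sums to the required atomic bound; real interpolation of tent spaces (Coifman–Meyer–Stein) between $T^{1,2}$ and $T^{2,2}$ then yields all $p \in (1,2]$, and duality together with the same argument for the adjoint symbol $\bar g(\bar z)$ yields $p \in [2,\infty)$.

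The main obstacle I anticipate is the bookkeeping in the $T^{1,2}$ atomic estimate: one needs off-diagonal decay that is genuinely of "Schwartz order" in the ratio of distance to scale, uniformly in $\sigma$, for the composite operator $\widetilde\Psi(\sigma^2 L)g(L)$ applied to an atom at a fixed scale $r \geq \sigma$, and one must be careful that the factor $g(L)$ — for which one only has $L^\infty$ calculus bounds, not kernel bounds — does not destroy the localisation. The clean way around this is to keep $g(L)$ "outside" (acting fibrewise, bounded on $L^p$) and to prove the off-diagonal decay only for $\widetilde\Psi(\sigma^2 L)\Psi(\sigma D_a)$, which is legitimate since $\Psi(\sigma D_a)$ has genuine finite speed of propagation by Remark \ref{rem:preciseFS} and $\widetilde\Psi(\sigma^2 L) = \psi(\sigma^2 L)$ inherits Gaussian off-diagonal bounds from Theorem \ref{thm:AMcT}. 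A secondary subtlety is ensuring the $g$-independence of $C$: this is immediate once one checks that the only place $g$ enters is through $\|g(L)\|_{B(L^p)} \lesssim \|g\|_{L^\infty(S^o_{\theta+})}$, with all the wave-packet and tent-space constants depending only on $\Psi$, $p$, $d$, and the ellipticity constants $\lambda, \Lambda$.
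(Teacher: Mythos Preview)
Your overall framework (atomic decomposition at the $T^{1,2}$ endpoint, $T^{2,2}=L^2$ boundedness, then interpolation and duality) is correct and is exactly what the cited machinery from \cite{HvNP} does. But your resolution of the obstacle you yourself flag is wrong, and this is a genuine gap.

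You propose to ``keep $g(L)$ outside, acting fibrewise, bounded on $L^p$''. This does not work: a bounded operator on $L^p$ (or on $L^2$) does \emph{not} act boundedly fibrewise on $T^{p,2}$ for $p\neq 2$. In your annulus estimate on $C_k$ you need
\[
\|1_{C_k}\,\Psi(\sigma D_a)g(L)\,(1_B A(\sigma,\cdot))\|_2 \lesssim \|g\|_\infty\,(1+2^k r/\sigma)^{-M}\|A(\sigma,\cdot)\|_2,
\]
but commuting to $1_{C_k}g(L)\Psi(\sigma D_a)1_B$ and using only $\|g(L)\|_{B(L^2)}\le C\|g\|_\infty$ yields no decay in $k$ whatsoever, since $g(L)$ has no localisation (take $g(z)=z^{is}$, for instance). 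The sum over $k$ then diverges.

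The fix, which is what the paper does, is to absorb $g$ into a factor with decay rather than keep it outside. One writes
\[
\Psi(\sigma D_a)g(L) = \big[\Psi(\sigma D_a)\,q_M(\sigma^2 L)^{-1}\big]\;\big[q_M(\sigma^2 L)\,g(L)\big],\qquad q_M(z)=z^M(1+z)^{-2M}.
\]
The point is that $z\mapsto q_M(\sigma^2 z)g(z)$ lies in $\Psi_M^M(S^o_{\theta+})$ with norm $\lesssim \|g\|_{L^\infty(S^o_{\theta+})}$ \emph{uniformly in $\sigma$}; Cauchy's estimate controls all derivatives of $g$ on the sector by $\|g\|_\infty$, and $q_M$ supplies the decay at $0$ and $\infty$. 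For such functions one does have genuine $L^2$--$L^2$ off-diagonal bounds (via the holomorphic calculus and the Gaussian estimates of Theorem \ref{thm:AMcT}), so \cite[Theorem 5.2, Lemma 7.3]{HvNP} give $T^{p,2}$-boundedness with constant $C\|g\|_\infty$. The remaining factor $\Psi(\sigma D_a)q_M(\sigma^2 L)^{-1}$ has symbol $\zeta\mapsto \Psi(\zeta)q_M(|\zeta|^2)^{-1}\in\mathcal{S}(\R^d)$, so Lemma \ref{lem:od} and \cite[Theorem 5.2]{HvNP} apply directly. Equivalently, you could verify directly that the composite symbol $\zeta\mapsto\Psi(\sigma\zeta)g(|\zeta|^2)$ is smooth, compactly supported, with all derivative bounds $\lesssim \|g\|_\infty\,\sigma^{|\alpha|}$ (again by Cauchy), and run Lemma \ref{lem:od} on the composite; but you cannot separate $g(L)$ out.
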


\begin{proof}
 
For $M \in \N$, set $q_M(z):= z^{M} (1+z)^{-2M}$, $z \in S_{\theta+}^o$. Note that then $q_M \in \Psi_M^M(S_{\theta+}^o)$.  The statement for $\Psi(\sigma D_a)$ replaced by $q_M(\sqrt{\sigma} L)$ for $M$ large enough then 
follows from a combination of \cite[Theorem 5.2]{HvNP} and \cite[Lemma 7.3]{HvNP}, using Lemma \ref{lem:od} and Theorem \ref{thm:AMcT} to check the assumptions. 
 
 On the other hand, we have by assumption $\zeta \mapsto \Psi(\zeta) q_M^{-1}(|\zeta|^2) \in \calS(\R^d)$, so that an application of \cite[Theorem 5.2]{HvNP} together with Lemma \ref{lem:od} yields the assertion. 
\end{proof}

\begin{Lemma}
\label{lem:sfequiv}
Let  $\alpha \in \R$, and  non-degenerate   $\Psi,\widetilde{\Psi} \in C_{c} ^{\infty} (\R^{d})$ be supported away from $0$ and such that $\Psi=\Psi^{s}$, $\widetilde{\Psi}=\widetilde{\Psi}^{s}$. Let $p \in [1,\infty)$. Then
 $$\|(\sigma,x) \mapsto  \sigma^{\alpha} \Psi(\sigma D_{a})f(x) \|_{T^{p,2}(\R^{d})}  \sim
\|(\sigma,x) \mapsto  \sigma^{\alpha}  \widetilde{\Psi}(\sigma D_{a})f(x) \|_{T^{p,2}(\R^{d})},$$
for all $f$ such that the above quantities are finite.
Moreover, for 
$L= - D_{a}^{2}$, we have that
$$\|(\sigma,x) \mapsto \Psi(\sigma D_{a})f(x) \|_{T^{p,2}(\R^{d})}  \sim
\|(\sigma,x) \mapsto \sigma ^{2} L \exp(-\sigma^{2}L)f(x) \|_{T^{p,2}(\R^{d})}.$$
\end{Lemma}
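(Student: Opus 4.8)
The plan is to prove the two square function equivalences by reducing both sides to a common functional-calculus expression and then invoking the quadratic estimates available from the bounded $H^\infty$ calculus of $L=D_a^2$ on tent spaces, together with the off-diagonal bounds of Lemma \ref{lem:od}. First I would record that, by Theorem \ref{thm:AMcT}, $L$ has a bounded $H^\infty$ calculus of angle $0$ on $L^p$, and $D_a$ is bisectorial with $iD_a$ generating a bounded group; the key abstract input is the tent space quadratic estimate: for any non-degenerate $\phi \in \Psi_\alpha^\beta$ (in the appropriate sector) the map $f \mapsto ((\sigma,x)\mapsto \phi(\sigma^2 L)f(x))$ is bounded $L^p \to T^{p,2}$ with bounded inverse on its range, and similarly the ``Calderón reproducing formula'' holds. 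This is exactly the kind of statement established in \cite{HvNP} (cited in Corollary \ref{cor:funcalc}) and the reference to \cite{AMcT,AMcM,McM} earlier; I would quote it.

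For the first equivalence, write $\Psi(\sigma D_a) = \eta(\sigma^2 L)$ and $\widetilde\Psi(\sigma D_a)=\widetilde\eta(\sigma^2 L)$ where $\eta(z):=\Psi(\cdot)\circ(\text{symbol})$ — more precisely, since $\Psi$ is $C_c^\infty$, supported away from $0$, and even can be arranged, there is $\eta \in \Psi_M^M(S_{\theta+}^o)$ for every $M$ with $\Psi(\sigma D_a)=\eta(\sigma^2 D_a^2)=\eta(\sigma^2 L)$; the same for $\widetilde\Psi$. (If $\Psi$ is not even, decompose it into even and odd parts and treat the odd part using an extra factor of $\sigma D_a$, which is harmless since $D_a$ maps into its own calculus.) Then insert a Calderón reproducing formula $\int_0^\infty \zeta(\tau^2 L)\frac{d\tau}{\tau} = I$ for a suitable $\zeta$, so that $\sigma^\alpha\widetilde\eta(\sigma^2 L)f = \int_0^\infty \sigma^\alpha \widetilde\eta(\sigma^2 L)\zeta(\tau^2 L)\,\big(\tau^{-\alpha}\big)\tau^\alpha \zeta'(\tau^2 L)\cdots$ — i.e. factor through $\tau^\alpha \Psi$-type data — and bound the resulting integral operator on $T^{p,2}$ by a Schur-type estimate, using that $\|\widetilde\eta(\sigma^2 L)\zeta(\tau^2 L)\|$ decays like $\min(\sigma/\tau,\tau/\sigma)^M$ off-diagonally in scale and like the off-diagonal bounds of Lemma \ref{lem:od} in space; the scale decay gives the integrability in $\tau/\sigma$. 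This is precisely the argument behind \cite[Theorem 5.2, Lemma 7.3]{HvNP} invoked in Corollary \ref{cor:funcalc}, so the cleanest route is: apply Corollary \ref{cor:funcalc} with $g \equiv 1$ and $F(\tau,\cdot)=\tau^\alpha\Psi(\tau D_a)f$ to see that $\sigma^\alpha\widetilde\Psi(\sigma D_a)f$ is dominated, provided one first writes $\widetilde\Psi(\sigma D_a) = \widetilde\Psi(\sigma D_a)(\text{reproducing formula in }\Psi)$ and recognizes the composite as of the form treated there. Symmetry in $\Psi \leftrightarrow \widetilde\Psi$ gives the reverse inequality, hence the equivalence.

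For the second equivalence, the heat-semigroup side, set $m(z):= z^2 e^{-z^2}$ (a nondegenerate element of $\Psi_2^\infty(S_{\theta+}^o)$ after the substitution $z\mapsto\sqrt z$, i.e. $z e^{-z}\in\Psi_1^\infty$), so that $\sigma^2 L e^{-\sigma^2 L} = m(\sigma\sqrt L)$ in the functional calculus of $\sqrt L$; since $m$ is holomorphic and decays, and since $\Psi$ is compactly supported away from $0$, both $(\sigma,x)\mapsto \Psi(\sigma D_a)f$ and $(\sigma,x)\mapsto m(\sigma\sqrt L)f$ arise by applying a bounded-$H^\infty$-calculus quadratic-estimate functional to the same $f$; the two are comparable by the general principle that any two non-degenerate auxiliary functions in the relevant $\Psi$-classes produce equivalent square functions. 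Concretely, apply the first part of the lemma (with $\alpha=0$) to reduce $\Psi(\sigma D_a)$ to, say, $q_M(\sigma^2 L)=(\sigma^2 L)^M(1+\sigma^2 L)^{-2M}$, and then compare $q_M(\sigma^2 L)$ with $m(\sigma\sqrt L)=\sigma^2 L e^{-\sigma^2 L}$ via the same Schur/reproducing-formula argument on $T^{p,2}$, using that $q_M$ and $m$ are both non-degenerate in $\Psi_\bullet^\bullet$ and that the bounded $H^\infty$ calculus of $L$ from Theorem \ref{thm:AMcT} supplies the needed off-diagonal and square function bounds. Strictly one also needs the lower bound (that $\Psi(\sigma D_a)f$ \emph{controls} the heat square function), which again follows by inserting a reproducing formula built from $\Psi$ and applying Corollary \ref{cor:funcalc} in the other direction.

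\textbf{Main obstacle.} The technical heart — and the step I expect to be delicate — is the reduction $\Psi(\sigma D_a) = \eta(\sigma^2 L)$ with quantitative control: $\Psi$ is a function of the bisectorial operator $D_a$ via the Phillips/group calculus, while $L=D_a^2$ is sectorial, and one must check that $\Psi$ being $C_c^\infty$ away from $0$ on $\R^d$ translates into $\eta$ lying in $\Psi_M^M(S_{\theta+}^o)$ for arbitrarily large $M$ (to get enough scale decay for Schur), handling the even/odd decomposition of $\Psi$ and the fact that $D_a$ acts on $\C^2$-valued functions. Once that translation is in hand, everything else is a routine application of \cite[Theorem 5.2, Lemma 7.3]{HvNP} and Corollary \ref{cor:funcalc}, exactly as in the standard Hardy-space-associated-to-operators machinery; the Lipschitz roughness of the coefficients enters only through Theorem \ref{thm:AMcT} (Gaussian bounds, $H^\infty$ calculus) and Lemma \ref{lem:od} (finite speed of propagation), both already available.
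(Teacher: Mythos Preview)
Your overall skeleton---insert a Calder\'on reproducing formula and control the resulting two-scale operator by a Schur-type estimate on $T^{p,2}$---is exactly what the paper does. The gap is the step you yourself flag as the ``main obstacle'': the reduction $\Psi(\sigma D_a)=\eta(\sigma^2 L)$ is not available and cannot be repaired by an even/odd decomposition. The operator $D_a$ is a commuting $d$-tuple (Definition~\ref{def:dirac}), $\Psi$ is a function on $\R^d$, and $\Psi(\sigma D_a)$ is defined through the $d$-parameter Phillips calculus $\Psi(\sigma D_a)=(2\pi)^{-d}\int_{\R^d}\widehat\Psi(\xi)e^{i\sigma\xi\cdot D_a}\,d\xi$. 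Unless $\Psi$ is \emph{radial}, $\Psi(\sigma D_a)$ is not a function of the single sectorial operator $L=D_a\cdot D_a$; an even/odd splitting in each coordinate only reduces to functions of the individual $D_{a,j}^2$, not of their sum. So the route through the $H^\infty$ calculus of $L$ that you propose for the first equivalence does not get off the ground.

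The paper never makes this reduction. It stays in the $d$-parameter Phillips calculus throughout: one inserts $\int_0^\infty\widetilde\Psi(\tau D_a)^2\,\frac{d\tau}{\tau}=I$ and then shows $(\sigma/\tau)^\alpha\Psi(\sigma D_a)\widetilde\Psi(\tau D_a)=\min(\sigma/\tau,\tau/\sigma)^N S_{\sigma,\tau}$, where $S_{\sigma,\tau}$ has $L^2$ off-diagonal decay at scale $\max(\sigma,\tau)$. The scale factor is produced by the elementary symbol identity
\[
(\tfrac{\sigma}{\tau})^\alpha\Psi(\sigma\xi)\widetilde\Psi(\tau\xi)
=(\tfrac{\tau}{\sigma})^{N'\pm\alpha}\,\overline\Psi(\sigma\xi)\,\underline{\widetilde\Psi}(\tau\xi),
\qquad \overline\Psi(\xi)=\Psi(\xi)/\xi^\beta,\ \underline{\widetilde\Psi}(\xi)=\xi^\beta\widetilde\Psi(\xi),\ |\beta|_1=N',
\]
which is exactly where the hypothesis ``$C_c^\infty$ supported away from $0$'' is used (it keeps $\overline\Psi\in\calS(\R^d)$). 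The off-diagonal bounds for $S_{\sigma,\tau}$ then come from Lemma~\ref{lem:od} (finite speed of propagation of the transport group), not from any holomorphic calculus of $L$; this feeds into \cite[Corollary~5.1]{HvNP}. For the second equivalence, the paper first compares $\Psi(\sigma D_a)$ with $(\sigma^2 L)^M e^{-\sigma^2 L}$ for some $M>\frac{d}{4}$ by the same argument (this operator is $\phi_M(\sigma D_a)$ with $\phi_M(\zeta)=|\zeta|^{2M}e^{-|\zeta|^2}\in\calS(\R^d)$, so the monomial trick still works), and only then descends from $M$ to $M=1$ via \cite[Proposition~10.1]{FMcP} together with the Gaussian bounds of Theorem~\ref{thm:AMcT}. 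Both the need for $M>\frac{d}{4}$ in the Schur step and the separate Gaussian-based reduction to $M=1$ are missing from your sketch.
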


\begin{proof}
Since $$\|(\sigma,x) \mapsto  \sigma^{\alpha} \Psi(\sigma D_{a})f(x) \|_{T^{p,2}(\R^{d})} \sim
\|(\sigma,x) \mapsto \int \limits _{0} ^{\infty}  \sigma^{\alpha} \Psi(\sigma D_{a}) (\widetilde{\Psi})^{ 2 }(\tau D_{a}) f(x) \frac{d\tau}{\tau} \|_{T^{p,2}(\R^{d})},$$
by  \cite[Corollary 5.1]{HvNP},  it suffices to show that, for all $\sigma, \tau>0$,  $(\frac{\sigma}{\tau})^{\alpha} \Psi(\sigma D_{a}) \widetilde{\Psi}(\tau D_{a}) = \min(\frac{\sigma}{\tau},\frac{\tau}{\sigma})^N S_{\sigma,\tau}$ for some $N>\frac{d}{2}$ and  a family of operators $S_{\sigma, \tau} \in B(L^{2})$ such that for every $M \in \N$, there exists $C_{M}>0$ such that
$$
\|1_{E} S_{\sigma, \tau}(1_{F}f)\|_{2} \leq C_{M} (1+\frac{d(E,F)}{\kappa \max(\sigma,\tau)})^{-M}\|1_{F}f\|_{2}   \quad \forall f \in L^{2}(\R^{d})
$$
for all Borel sets $E,F \subset \R^{d}$ and all $\sigma >0$. This follows from Lemma \ref{lem:od} using that, for all $\xi \in \R^{d}\backslash\{0\}$,  
$$
 (\frac{\sigma}{\tau})^{\alpha}  \Psi(\sigma \xi)\widetilde{\Psi}(\tau \xi) = (\frac{\sigma}{\tau})^{N'- \alpha } \overline{\Psi}(\sigma \xi) \underline{\widetilde{\Psi}}(\tau \xi) = (\frac{\tau}{\sigma})^{N'+ \alpha } \underline{\Psi}(\sigma \xi) \overline{\widetilde{\Psi}}(\tau \xi),
$$ 
for $\overline{\Psi}:\xi \mapsto \frac{\Psi (\xi)}{\xi^\beta}$ and $\underline{\Psi}:\xi \mapsto \xi^\beta \Psi(\xi)$ with $\beta \in \N^d$, $|\beta|_1 =N'$, for $N'>|\alpha|+N$. 
For the second statement,  we first show the comparison of  $\Psi(\sigma D_{a})$ with $(\sigma ^{2} L)^M \exp(-\sigma^{2}L)$ for some $M \in \N$, $M>\frac{d}{4}$ in the exact same way as above. For the comparison of $(\sigma ^{2} L)^M \exp(-\sigma^{2}L)$ with $\sigma ^{2} L \exp(-\sigma^{2}L)$, we use  \cite[Proposition 10.1]{FMcP} instead of \cite[Corollary 5.1]{HvNP}, together with the Gaussian estimates for $\exp(-tL)$ as stated in Theorem \ref{thm:AMcT}.  
\end{proof}

\begin{Theorem}
\label{thm:HpD}
 Let $s \in \R$, let $p \in (1,\infty)$. 
For all  non-degenerate  $\Psi \in C_{c} ^{\infty} (\R^{d})$ supported away from $0$  such that $\Psi=\Psi^{s}$, and all $M \in \N$, we have that
\begin{equation}
\label{eq:wTp}
\|(\sigma,x) \mapsto 1_{[0,1)}(\sigma)\sigma^{-s}\Psi(\sigma D_{a})f(x) 
+1_{[1,\infty)}(\sigma)\Psi(\sigma D_{a})f(x) \|_{T^{p,2}(\R^{d})} \sim
\|(I+\sqrt{L})^{s}f\|_{p}, 
\end{equation}
for all $f \in D((I+\sqrt{L})^{s})$.
Moreover, for $s \in [0,2]$, we have that
\begin{equation}
\label{eq:sob}
\|(\sigma,x) \mapsto 1_{[0,1)}(\sigma)\sigma^{-s}\Psi(\sigma D_{a})f(x) 
+1_{[1,\infty)}(\sigma)\Psi(\sigma D_{a})f(x) \|_{T^{p,2}(\R^{d})} \sim \|f\|_{W^{s,p}} 
\end{equation}
for all $f \in W^{s,p}(\R^{d})$.
\end{Theorem}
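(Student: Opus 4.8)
The two equivalences in Theorem \ref{thm:HpD} are proven by reducing, via the Phillips/holomorphic calculus comparisons already established, to statements about square functions associated with the heat semigroup of $L$, which are in turn controlled by the classical Littlewood--Paley theory available because $L$ has Gaussian bounds and a bounded $H^\infty$ calculus (Theorem \ref{thm:AMcT}). The strategy is: first prove \eqref{eq:wTp} for a single convenient $\Psi$ together with the heat-semigroup generator $\sigma^2 L\exp(-\sigma^2 L)$; then use Lemma \ref{lem:sfequiv} and Corollary \ref{cor:funcalc} to transfer the equivalence to arbitrary non-degenerate $\Psi$ supported away from $0$; finally deduce \eqref{eq:sob} from \eqref{eq:wTp} by identifying, for $s\in[0,2]$, the ``$L$-Sobolev'' norm $\|(I+\sqrt L)^s f\|_p$ with the classical Sobolev norm $\|f\|_{W^{s,p}}$.

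\emph{Step 1 (low and high frequency split).} The cutoff at $\sigma=1$ separates the regime $\sigma\in(0,1)$, where the weight $\sigma^{-s}$ matters and $\Psi(\sigma D_a)$ acts like a genuine Littlewood--Paley piece at frequency $\sim\sigma^{-1}$, from $\sigma\in[1,\infty)$, where $\Psi(\sigma D_a)$ captures the low-frequency/"zeroth" part and the weight is harmless. On the $\sigma\in[1,\infty)$ part, using Lemma \ref{lem:sfequiv} one replaces $\Psi(\sigma D_a)$ by $\sigma^2 L\exp(-\sigma^2 L)$ and then, exploiting that $\|(\sigma,x)\mapsto 1_{[1,\infty)}(\sigma)\sigma^2 L\exp(-\sigma^2 L)f\|_{T^{p,2}}$ is comparable, by a standard truncated square function argument (Gaussian bounds plus Fefferman--Stein vector-valued maximal inequality, cf.\ \cite{memoir,AMcT}), to $\|(I+\sqrt L)^{-N}f\|_p$ for any fixed $N$, one absorbs this into the $(I+\sqrt L)^s$ norm. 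On the $\sigma\in(0,1)$ part I would first, by Corollary \ref{cor:funcalc}, commute the Bessel-type operator $(I+\sqrt L)^s$ past $\Psi(\sigma D_a)$ modulo acceptable errors, so that it suffices to prove the unweighted equivalence $\|(\sigma,x)\mapsto 1_{[0,1)}(\sigma)\Psi(\sigma D_a)g\|_{T^{p,2}}\sim\|g\|_p$; together with the high-frequency piece this is exactly the continuous-parameter Littlewood--Paley square function characterisation of $L^p$ for the sectorial operator $L$, which holds because $L$ has a bounded $H^\infty$ calculus and Gaussian estimates. This is where the bulk of the work sits, but all the needed ingredients (the off-diagonal estimates of Lemma \ref{lem:od}, the $H^\infty$ calculus and Gaussian bounds of Theorem \ref{thm:AMcT}, the tent-space boundedness of Corollary \ref{cor:funcalc}) are in hand; the argument parallels \cite{AMcT} and the abstract Hardy space theory in \cite{memoir,HvNP}.

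\emph{Step 2 (arbitrary $\Psi$).} Given two non-degenerate $\Psi,\widetilde\Psi\in C_c^\infty(\R^d)$ supported away from $0$, Lemma \ref{lem:sfequiv} gives that the two associated tent-space norms (with the same weight $\sigma^{-s}1_{[0,1)}+1_{[1,\infty)}$) are comparable: on $(0,1)$ this is the first equivalence of Lemma \ref{lem:sfequiv} with $\alpha=0$ after reinstating the weight via Corollary \ref{cor:funcalc}, and on $[1,\infty)$ it follows likewise (the weight being bounded there). Hence it is enough to establish \eqref{eq:wTp} for one choice of $\Psi$, which is what Step 1 does.

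\emph{Step 3 (identification with $W^{s,p}$ for $s\in[0,2]$).} Finally, for \eqref{eq:sob} it remains to show $\|(I+\sqrt L)^s f\|_p\sim\|f\|_{W^{s,p}}$ for $s\in[0,2]$, $p\in(1,\infty)$. For $s=0$ this is trivial; for $s=2$ it is the statement that $D((I+L)) = W^{2,p}$ with equivalent graph norms, i.e.\ maximal $L^p$-regularity / elliptic regularity for the divergence-form operator $L=-\sum_j \widetilde a_{j+d}\partial_j\widetilde a_j\partial_j$ with Lipschitz coefficients, which is classical (and is precisely where Lipschitz regularity of the $a_j$ is used); for $s=1$ it is the boundedness of the Riesz transform $\nabla(I+L)^{-1/2}$ and its inverse on $L^p$, again available from \cite{AT2,AMN,AMcT} under the Lipschitz assumption. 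The intermediate range $s\in(0,1)\cup(1,2)$ follows by complex interpolation between these endpoints, using that $(I+\sqrt L)^{it}$ is bounded on $L^p$ with polynomial growth (a consequence of the bounded $H^\infty$ calculus of Theorem \ref{thm:AMcT}) and the corresponding stability of the complex interpolation scale $[L^p,W^{2,p}]_{s/2}=W^{s,p}$.

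\emph{Main obstacle.} The delicate point is Step 1's passage from the $L$-adapted square function to the classical $L^p$ norm \emph{uniformly in the weight $\sigma^{-s}$ for all real $s$}: one must make sure that conjugating by $(I+\sqrt L)^s$ — an operator of order $s$, possibly large — only produces errors controllable in $T^{p,2}$, which is why the argument is set up to reduce to the unweighted ($s=0$) square function estimate via Corollary \ref{cor:funcalc} rather than trying to handle the weight head-on. Everything downstream (Steps 2 and 3) is then a matter of assembling known results, with the Lipschitz hypothesis entering only through the elliptic regularity and Riesz transform bounds used to pin down $W^{s,p}$ for $s\le 2$.
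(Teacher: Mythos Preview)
Your overall architecture matches the paper's: establish the $s=0$ case via $H^p_L=L^p$ (heat-kernel square functions plus Lemma \ref{lem:sfequiv}), transfer to arbitrary $\Psi$, and for \eqref{eq:sob} identify $\|(I+\sqrt L)^s f\|_p$ with $\|f\|_{W^{s,p}}$ at $s\in\{0,1,2\}$ and interpolate. Step 3 is in fact more explicit than the paper's (which only writes out $s=0,1,2$).

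There is, however, a genuine gap in Step 1 for $s<0$. Your reduction ``commute $(I+\sqrt L)^s$ past $\Psi(\sigma D_a)$ via Corollary \ref{cor:funcalc}'' amounts to showing that the multiplier $\sigma^{-s}(1+|\xi|)^{-s}$ is uniformly bounded on the support of $\Psi(\sigma\xi)$ \emph{as an $H^\infty$ function of $L$}. For $s\ge 0$ this works, since $z^{s/2}(1+\sqrt z)^{-s}$ is bounded; for $s<0$ it fails near $z=0$, and the low-frequency piece $\sigma\ge 1$ cannot be absorbed this way either (your claim that $\|1_{[1,\infty)}\sigma^2 L e^{-\sigma^2 L}f\|_{T^{p,2}}\sim\|(I+\sqrt L)^{-N}f\|_p$ for any $N$ is false: test against $f$ with spectrum at scale $R\gg1$). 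The paper's fix is to treat integer $s$ first---for $-s\in\N$ by writing $f=(I+\sqrt L)^{|s|}(I+\sqrt L)^{-|s|}f$, expanding the \emph{polynomial} $(1+\sqrt L)^{|s|}=\sum_k\binom{|s|}{k}L^{k/2}$, and absorbing each $\sigma^k L^{k/2}$ into a modified $\widetilde\Psi$---and then to pass to arbitrary $s\in\R$ by complex interpolation of \emph{weighted} tent spaces, citing \cite{amenta}. This interpolation step is the missing ingredient in your plan.

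A smaller point: the tool you want for exchanging the weight $\sigma^{-s}$ with powers of $L$ is Lemma \ref{lem:sfequiv} (change of $\Psi$ with weight $\sigma^\alpha$), not Corollary \ref{cor:funcalc}; the latter only handles bounded $g(L)$ and is used afterwards to control the bounded factor $L^{s/2}(I+\sqrt L)^{-s}$ when $s\ge0$.
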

\begin{proof}
We use the Hardy space $H^{p}_{L}$ associated with $L$, as defined in \cite{DuongLi}. 
For all $f \in L^{p} \cap L^{2}$, we have, by Lemma \ref{lem:sfequiv},
$$
\|(\sigma,x) \mapsto \Psi(\sigma D_{a})f(x) \|_{T^{p,2}(\R^{d})}  \sim \|f\|_{H^{p}_{L}}.
$$
 It is a folklore fact that $H^{p}_{L} = L^{p}$ for $p \in (1,\infty)$, thanks to the heat kernel bounds of $(e^{tL})_{t\geq 0}$. This result appeared in draft form in an unpublished manuscript of Auscher, Duong, McIntosh, and inspired the proofs of many similar results. For our particular $L$, an appropriate version of the result does not seem to have appeared in the literature. It can however be proven as follows. By \cite[Theorem 4.19]{AMcT}, the operators $tL\exp(-tL)$ have standard kernels satisfying the assumptions of \cite[Theorem 4.4]{htv}. Therefore, for all $f \in L^{p} \cap L^{2}$, $f \in H^{p}_{L}$ and
$$
\|f\|_{H^{p}_{L}} \lesssim \|f\|_{p}.
$$
 The reverse inequality is proven in \cite[Proposition 4.2]{DuongLi} for $p \leq 2$. Given that the above reasoning also applies to $L^{*}$, we obtain the full result by duality.
Combined with Lemma \ref{lem:sfequiv}, this gives the result for $s=0$.
For $s \in \N$, using Lemma \ref{lem:sfequiv} with an appropriate $\widetilde{\Psi} \in C_{c} ^{\infty} (\R^{d})$, we then have that
\begin{align*}
\|(\sigma,x) \mapsto 1_{[0,1)}(\sigma)\sigma^{-s}\Psi(\sigma D_{a})f(x)\|_{T^{p,2}(\R^{d})}
&\lesssim \|(\sigma,x) \mapsto 1_{[0,1)}(\sigma)\widetilde{\Psi}(\sigma D_{a})L^{\frac{s}{2}}f(x)\|_{T^{p,2}(\R^{d})}\\
&\lesssim \|L^{\frac{s}{2}}f\|_{p} \lesssim \|(I+\sqrt{L})^{s}f\|_{p}.
\end{align*}
We also have that
\begin{align*}
\|(\sigma,x) \mapsto 1_{[1,\infty)}(\sigma)\Psi(\sigma D_{a})f(x)\|_{T^{p,2}(\R^{d})}
\lesssim \|f\|_{p} \lesssim \|(I+\sqrt{L})^{s}f\|_{p}.
\end{align*}
For $-s \in \N$, we have that 
\begin{align*}
&\|(\sigma,x) \mapsto 1_{[0,1)}(\sigma)\sigma^{-s}\Psi(\sigma D_{a})f(x)\|_{T^{p,2}(\R^{d})} \\
&\qquad \lesssim \sum \limits _{k=0} ^{|s|} 
\|(\sigma,x) \mapsto 1_{[0,1)}(\sigma)\sigma^{|s|}L^{\frac{k}{2}}\Psi(\sigma D_{a})(I+\sqrt{L})^{-|s|}f(x)\|_{T^{p,2}(\R^{d})}\\
&\qquad \lesssim \sum \limits _{k=0} ^{|s|} 
\|(\sigma,x) \mapsto 1_{[0,1)}(\sigma)\widetilde{\Psi}(\sigma D_{a})(I+\sqrt{L})^{-|s|} f(x)\|_{T^{p,2}(\R^{d})}\lesssim \|(I+\sqrt{L})^{s}f\|_{p},
\end{align*}
as well as
\begin{align*}
&\|(\sigma,x) \mapsto 1_{[1,\infty)}(\sigma)\Psi(\sigma D_{a})f(x)\|_{T^{p,2}(\R^{d})} \\
&\qquad \lesssim \sum \limits _{k=0} ^{|s|} 
\|(\sigma,x) \mapsto 1_{[1,\infty)}(\sigma)\sigma^{k}L^{\frac{k}{2}}\Psi(\sigma D_{a})(I+\sqrt{L})^{-|s|}f(x)\|_{T^{p,2}(\R^{d})}\\
&\qquad \lesssim \sum \limits _{k=0} ^{|s|} 
\|(\sigma,x) \mapsto 1_{[0,1)}(\sigma)\widetilde{\Psi}(\sigma D_{a})(I+\sqrt{L})^{-|s|} f(x)\|_{T^{p,2}(\R^{d})}\lesssim \|(I+\sqrt{L})^{s}f\|_{p}.
\end{align*}
Reverse inequalities are proven similarly, using that, for all $s \in \R$,
$$
\|(I+\sqrt{L})^{s}f\|_{p} \sim \|(\sigma,x) \mapsto  (I+\sqrt{L})^{s}  \Psi(\sigma D_{a})f(x)\|_{T^{p,2}(\R^{d})}.
$$
This gives \eqref{eq:wTp} for all $s \in \Z$, and the result for all $s \in \R$ then follows by complex interpolation of weighted tent spaces as in \cite[Theorem 2.1]{amenta}.\\
To obtain \eqref{eq:sob} one first remarks that, for $s \in \{0,1,2\}$, the above reasoning also gives
$$
\|(\sigma,x) \mapsto 1_{[0,1)}(\sigma)\sigma^{-s}\Psi(\sigma D_{a})f(x) 
+1_{[1,\infty)}(\sigma)\Psi(\sigma D_{a})f(x) \|_{T^{p,2}(\R^{d})} \sim
\sum \limits _{m=0} ^{s}  \|D_{a}^{m}f\|_{p}, 
$$
for all $f \in \bigcap \limits _{m=0} ^{s} D\big(D_{a}^{m}\big)$. We then notice that, for all $j=1,...,d$, we have that
$
\|\partial_{j} f\|_{p} \sim \|\widetilde{a_{j}}\partial _{j}f\|_{p} \sim \|\widetilde{a_{j+d}}\partial _{j}f\|_{p}$, 
and thus $\|f\|_{W^{1,p}} \sim \|f\|_{p} + \|D_{a}f\|_{p}$, for all $f \in W^{1,p}$.
Moreover, 
$$
\widetilde{a_{j+d}} \partial_{j}\widetilde{a_{j}}\partial _{j}f = \widetilde{a_{j}'}\widetilde{a_{j+d}}\partial _{j}f + \widetilde{a_{j}}\widetilde{a_{j+d}}\partial_{j}^{2}f \quad \forall f \in W^{2,p},
$$
 and thus
$$
\|f\|_{W^{2,p}} \sim  \|f\|_{p} +  \|D_{a}f\|_{p} + \|D_{a}^{2}f\|_{p} \quad \forall f \in W^{2,p}.
$$
\end{proof}

\begin{Cor}
\label{cor:sobemb}
Let $\alpha\geq 0$, $p \in (1,\infty)$, and $q \in [p,\infty)$ be such that
$$
 \alpha = \frac{d}{2} (\frac{1}{p}-\frac{1}{q}).
$$
Then there exists $C>0$ such that, for all $f \in L^{p}(\R^{d})$ with $L^{\alpha}f \in L^{p}(\R^{d})$, we have that
$$
\|f\|_{L^{q}(\R^{d})} \leq C \|L^{\alpha}f\|_{L^{p}(\R^{d})}.
$$
\end{Cor}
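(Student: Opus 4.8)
The plan is to derive the inequality from the Gaussian heat kernel bounds for $L$ furnished by Theorem \ref{thm:AMcT}, by comparing $L^{-\alpha}$ with the Riesz potential of order $2\alpha$ and then invoking the classical Hardy--Littlewood--Sobolev inequality. If $\alpha = 0$ then $p = q$ and there is nothing to prove, so I would assume $\alpha > 0$; since $p > 1$ and $q < \infty$ one has $0 < \tfrac1p - \tfrac1q < 1$, hence $0 < \alpha < \tfrac d2$, $0 < d - 2\alpha < d$, and $1 < p < q < \infty$.

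The first step is to use the subordination identity
$$
L^{-\alpha} = \frac{1}{\Gamma(\alpha)} \int_0^\infty t^{\alpha - 1} e^{-tL}\, dt ,
$$
which represents $(L^\alpha)^{-1}$ (the operator $L$ is injective on $L^p(\R^d)$, so $L^\alpha$ is as well), and to exploit it at the level of integral kernels. Writing $p_t(x,y)$ for the kernel of $e^{-tL}$, Theorem \ref{thm:AMcT} gives $|p_t(x,y)| \lesssim t^{-d/2}\exp(-c|x-y|^2/t)$, so the kernel $K_\alpha$ of $L^{-\alpha}$ satisfies, for $x \neq y$,
$$
|K_\alpha(x,y)| \lesssim \int_0^\infty t^{\alpha - 1 - \frac d2}\, e^{-c|x-y|^2/t}\, dt = C_\alpha\, |x-y|^{-(d - 2\alpha)} ,
$$
the last equality coming from the change of variable $t \mapsto |x-y|^2/t$, the convergence of the resulting Gamma integral being exactly where $\alpha < \tfrac d2$ enters. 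This kernel integral converges absolutely off the diagonal, so the bound is legitimate even though $L^{-\alpha}$ itself is unbounded on $L^p$.

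Consequently $|L^{-\alpha}g(x)| \lesssim \big(|g| * |\cdot|^{-(d-2\alpha)}\big)(x)$ for $g$ in a suitable dense class, i.e. $L^{-\alpha}$ is pointwise dominated by the Riesz potential of order $2\alpha$. I would then apply the Hardy--Littlewood--Sobolev inequality with $0 < d - 2\alpha < d$, $1 < p < q < \infty$, and $\tfrac1q = \tfrac1p - \tfrac{2\alpha}{d}$ --- which is precisely the hypothesis $\alpha = \tfrac d2\big(\tfrac1p - \tfrac1q\big)$ --- to get $\|L^{-\alpha}g\|_{L^q} \lesssim \|g\|_{L^p}$. Finally, for $f \in L^p(\R^d)$ with $g := L^\alpha f \in L^p(\R^d)$, injectivity of $L^\alpha$ gives $f = L^{-\alpha}g$, and a routine density argument extends the estimate from the dense class to this general $f$, yielding $\|f\|_{L^q(\R^d)} \lesssim \|L^\alpha f\|_{L^p(\R^d)}$.

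The only point that will require genuine care is the identification of the kernel operator above with $(L^\alpha)^{-1}$ on the class of $f$ with $L^\alpha f \in L^p$: because $0 \in \sigma(L)$, the subordination integral does not converge in any $L^r$-norm, so one must argue on a dense subclass --- using $L^2$ spectral theory, where the identity is classical --- that the absolutely convergent off-diagonal kernel genuinely represents $(L^\alpha)^{-1}$ there, and then pass to the limit. Everything else --- the Gaussian bound, the Gamma-integral computation, and the Hardy--Littlewood--Sobolev inequality --- is entirely standard.
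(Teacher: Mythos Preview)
Your argument is correct and follows a well-known route: Gaussian heat kernel bounds imply a pointwise Riesz potential bound on the kernel of $L^{-\alpha}$ via the subordination formula, and then Hardy--Littlewood--Sobolev gives the $L^p\to L^q$ mapping. The identification issue you flag at the end is the only genuinely delicate step, and your plan to handle it on a dense subclass via $L^2$ spectral theory is the standard resolution.

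The paper takes a different path, staying entirely within its tent space framework. It writes $\|f\|_{L^q}$ via Theorem~\ref{thm:HpD} as a $T^{q,2}$ norm of $\Psi(\sigma D_a)f$, rewrites this as a $T^{q,2}$ norm of $\sigma^{2\alpha}\widetilde\Psi(\sigma D_a)L^\alpha f$ for a modified $\widetilde\Psi$, and then invokes the weighted tent space embedding $T^{p,2}\hookrightarrow \sigma^{-2\alpha}T^{q,2}$ from \cite{amenta} (which encodes the Sobolev exponent relation $\alpha=\tfrac{d}{2}(\tfrac1p-\tfrac1q)$). A second application of Theorem~\ref{thm:HpD} then recovers $\|L^\alpha f\|_{L^p}$. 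Your approach is more elementary and self-contained, relying only on the pointwise Gaussian bounds and classical HLS; the paper's approach avoids pointwise kernel computations altogether and uses only the square function/tent space machinery already set up, which is more in keeping with the surrounding development and would transfer to settings where one has off-diagonal $L^2$ bounds but not pointwise Gaussian estimates.
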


\begin{proof}
For $f \in L^{p}(\R^{d})$ with $L^{\alpha}f \in L^{p}(\R^{d})$, 
Theorem \ref{thm:HpD} gives that
\begin{align*}
&\|f\|_{L^{q}(\R^{d})}
\lesssim \|(\sigma, x) \mapsto L^{-\alpha}\Psi(\sigma D_{a})L^{\alpha}f(x)\|_{T^{q,2}(\R^{d})}\\
& \quad \lesssim 
\|(\sigma, x) \mapsto \sigma^{2\alpha}\widetilde{\Psi}(\sigma D_{a})L^{\alpha}f(x)\|_{T^{q,2}(\R^{d})}
\end{align*}
for $\widetilde{\Psi}:\xi \mapsto |\xi|^{-\alpha} \Psi(\xi)$.
Using the embedding properties of weighted tent spaces proven in \cite[Theorem 2.19]{amenta}, we have that
$$
\|(\sigma, x) \mapsto \sigma^{2\alpha}\widetilde{\Psi}(\sigma D_{a})L^{\alpha}f\|_{T^{q,2}(\R^{d})}
\lesssim \|(\sigma, x) \mapsto \widetilde{\Psi}(\sigma D_{a})L^{\alpha}f\|_{T^{p,2}(\R^{d})},
$$
and thus
$$
\|f\|_{L^{q}(\R^{d})} \lesssim \|L^{\alpha}f\|_{L^{p}(\R^{d})},
$$
by Theorem \ref{thm:HpD}.

\end{proof}

\begin{Remark}
All results in this section, except \eqref{eq:sob}, hold for a general Dirac operator $D_{a}$
 such that $i e_{j}D_{a}$ and $i e_{j}\sqrt{D_{a}^{2}}$ generate bounded $C_{0}$ group on $L^{p}$ for each $j$, the operators $D_{1}^{2},...,D_{d}^{2}$ commute, $(\exp(it\xi.D_{a}))_{t \in \R}$ has finite speed  of propagation as in Remark \ref{rem:preciseFS}, and  $H^{p}_{D_{a}^{2}} = L^{p}$.  Property \eqref{eq:sob} also holds as long as $D(D_{a})=W^{1,p}$ and $D(D_{a}^{2}) = W^{2,p}$ with equivalence of norms. All results in the next sections also hold for such Dirac operators.
\end{Remark}
\section{Wave packet transform} 
\label{sec:wavepackets}

We use a wave packet transform which is similar to the ones used in \cite{HPR, Rozendaal}, but symmetrised to ensure $\Psi_{\omega,\sigma} = \Psi_{\omega,\sigma}^{s}$.  
\\

Let $\Psi \in C_c^{\infty}(\R^d)$ be a non-negative radial function with $\Psi(\zeta)=0$ for $|\zeta| \notin [\frac12,2]$, and 
\begin{equation} \label{def:Psi}
		\int_0^\infty \Psi(\sigma \zeta)^2 \,\frac{d\sigma}{\sigma} = 1
\end{equation}
for $\zeta \neq 0$. 
Let $\varphi \in C_c^\infty(\R^d)$ be a radial, non-negative function with $\varphi(\zeta)=1$ for $|\zeta|\leq \frac12$ and $\varphi(\zeta)=0$ for $|\zeta|>1$. 
These functions $\Psi, \varphi$ are now fixed for the remainder of the paper. 

For $\omega \in S^{d-1}$, $\sigma>0$ and $\zeta \in \R^d \setminus \{0\}$, set  $\underline{\varphi_{\omega,\sigma}}(\zeta):=c_\sigma \varphi\left(\frac{\hat \zeta-\omega}{\sqrt{\sigma}}\right)$, and $\varphi_{\omega,\sigma} = \underline{\varphi_{\omega,\sigma}}^{s}$, 
where $\displaystyle c_\sigma:=\left(\int_{S^{d-1}} \varphi\left(\frac{e_1-\nu}{\sqrt{\sigma}}\right)^2\,d\nu\right)^{-1/2}$. Set $\varphi_{\omega,\sigma}(0):=0$. 
Set furthermore $\Psi_\sigma(\zeta):=\Psi(\sigma \zeta)$ and $\psi_{\omega,\sigma}(\zeta):=\Psi_\sigma(\zeta)\varphi_{\omega,\sigma}(\zeta)$ for $\omega \in S^{d-1}$,  $\sigma>0$ and $\zeta \in \R^d$. 
By construction, we then have
\begin{align} \label{eq:psi-identity}
	\int_0^\infty \int_{S^{d-1}} \psi_{\omega,\sigma}(\zeta)^2 \,d\omega \frac{d\sigma}{\sigma}=1
\end{align}
for all $\zeta \in \R^d \setminus \{0\}$, see \cite[Lemma 4.1]{HPR}.
For $\omega \in S^{d-1}$ and $\zeta \in \R^d$, we moreover set 
$$
		\varphi_\omega(\zeta):= \int_0^4 \psi_{\omega,\tau}(\zeta)\,\frac{d\tau}{\tau}.
$$

For the convenience of the reader, we recall the following properties of $\psi_{\omega,\sigma}$ stated in
 \cite[Lemma 3.2]{Rozendaal}.  Note that the symmetrisation (using $\varphi_{\omega,\sigma}$ instead of $\underline{\varphi_{\omega,\sigma}}$) only affects \eqref{eq:supptheta}. See also Remark \ref{rk:support} below. 
 
\begin{Lemma} \label{lem:wavepackprop}
Let $\omega \in S^{d-1}$ and $\sigma \in (0,1)$. Each $\zeta \in \supp(\psi_{\omega,\sigma})$ satisfies 
\begin{equation} \label{eq:supptheta}
	\frac{1}{2\sigma} \leq |\zeta| \leq \frac{2}{\sigma}, \qquad 
	 \underset{(\varepsilon_{j})_{j=1} ^{d} \in \{-1,1\}^{d}}{\min}
	|(\varepsilon_{1}\hat \zeta_{1},...,\varepsilon_{d}\hat \zeta_{d}) - \omega|  \leq 2 \sqrt{\sigma}. 
\end{equation}
For all $\alpha \in \N_0^d$ and $\beta \in \N_0$ there exists a constant $C=C(\alpha,\beta) >0$ such that 
\begin{equation} \label{eq:sizetheta}
	|\langle \omega,\nabla_\zeta\rangle^\beta \partial_\zeta^\alpha \psi_{\omega,\sigma}(\zeta)|
	\leq C \sigma^{-\frac{d-1}{4}+\frac{|\alpha|_1}{2}+\beta}
\end{equation}
for all $(\zeta,\omega,\sigma) 
\in \R^d \times S^{d-1} \times (0,\infty)$. 
For every $N \geq 0$ there exists a constant $C_N >0$ such that 
\begin{equation} \label{eq:sizeFourier-theta}
	|\calF^{-1}(\psi_{\omega,\sigma})(x)|
		\leq C_N \sigma^{-\frac{3d+1}{4}} (1+\sigma^{-1}|x|^2 + \sigma^{-2}\langle \omega,x\rangle^2)^{-N}
\end{equation}
for all $(x,\omega,\sigma) \in \R^{d}\times S^{d-1}\times (0,\infty)$. \\
 In particular, $\{\sigma^{\frac{d-1}{4}} \calF^{-1}(\psi_{\omega,\sigma})\,|\,\omega \in S^{d-1}, \,\sigma>0\} \subseteq L^1(\R^d)$ is uniformly bounded. 
\end{Lemma}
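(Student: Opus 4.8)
This is \cite[Lemma 3.2]{Rozendaal}; I would reprove it by direct computation. After fixing a rotation sending $\omega$ to $e_1$ and writing $\zeta=(\zeta_1,\zeta')$ in the rotated coordinates (so that $\langle\omega,\nabla_\zeta\rangle=\partial_{\zeta_1}$), everything comes down to the geometry of $\supp\psi_{\omega,\sigma}$, and it suffices to treat $\sigma\in(0,1)$. Since $\Psi_\sigma$ is supported where $|\zeta|\in[\tfrac{1}{2\sigma},\tfrac{2}{\sigma}]$ and $\varphi_{\omega,\sigma}$ where $|\hat\zeta-\omega|\le\sqrt\sigma$, the inclusions \eqref{eq:supptheta} are immediate, and they show that in the rotated coordinates $\supp\psi_{\omega,\sigma}$ lies in an anisotropic ``plank'' of dimensions $\sim\sigma^{-1}\times(\sigma^{-1/2})^{d-1}$, hence of volume $\lesssim\sigma^{-(d+1)/2}$. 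I would then estimate $c_\sigma$ using that the spherical cap $\{\nu\in S^{d-1}:|e_1-\nu|\le r\}$ has surface measure $\sim r^{d-1}$ for $r\in(0,1)$, together with $\varphi=1$ on $\{|\eta|\le\tfrac12\}$ and $\varphi=0$ on $\{|\eta|>1\}$: this gives $c_\sigma^{-2}\sim\sigma^{(d-1)/2}$, i.e.\ $c_\sigma\sim\sigma^{-(d-1)/4}$.

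For the derivative bounds \eqref{eq:sizetheta} the plan is to apply the Fa\`a di Bruno formula to $\varphi\big(\tfrac{\hat\zeta-\omega}{\sqrt\sigma}\big)$ with two accounting rules. A generic derivative $\partial_{\zeta_j}$ hitting $\hat\zeta=\zeta/|\zeta|$ produces a factor of size $O(|\zeta|^{-1})=O(\sigma)$ on the support, hence costs $\sigma^{1/2}$ after the $\tfrac{1}{\sqrt\sigma}$ prefactor; the directional derivative does better, since
$$
\langle\omega,\nabla_\zeta\rangle\hat\zeta=|\zeta|^{-1}\big(\omega-\langle\omega,\hat\zeta\rangle\hat\zeta\big)
$$
is $|\zeta|^{-1}$ times the component of $\omega$ orthogonal to $\hat\zeta$, which has norm $\le|\omega-\hat\zeta|\lesssim\sqrt\sigma$ on the support, so that $\langle\omega,\nabla_\zeta\rangle$ applied to $\tfrac{\hat\zeta-\omega}{\sqrt\sigma}$ contributes a full power $O(\sigma)$; one checks this gain is stable under further derivatives of either type, since the relevant components of $\hat\zeta-\omega$ stay tangential. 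The factor $\Psi_\sigma$ only helps, as $\partial_\zeta^\alpha\langle\omega,\nabla_\zeta\rangle^\beta\Psi_\sigma=O(\sigma^{|\alpha|_1+\beta})=O(\sigma^{|\alpha|_1/2+\beta})$ for $\sigma<1$. Assembling these through the product and chain rules and multiplying by $c_\sigma\sim\sigma^{-(d-1)/4}$ produces \eqref{eq:sizetheta}.

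For the Fourier decay \eqref{eq:sizeFourier-theta} I would integrate by parts in $\calF^{-1}(\psi_{\omega,\sigma})(x)=\int_{\R^d}e^{ix\cdot\zeta}\psi_{\omega,\sigma}(\zeta)\,d\zeta$ using the formally self-adjoint anisotropic operator $P:=I-\sigma^{-1}\Delta_{\zeta'}-\sigma^{-2}\partial_{\zeta_1}^2$, which satisfies $P\,e^{ix\cdot\zeta}=(1+\sigma^{-1}|x'|^2+\sigma^{-2}x_1^2)\,e^{ix\cdot\zeta}$. By \eqref{eq:sizetheta}, each of $\sigma^{-1}\Delta_{\zeta'}$ and $\sigma^{-2}\partial_{\zeta_1}^2$ preserves the pointwise bound $O(\sigma^{-(d-1)/4})$, so $\|P^N\psi_{\omega,\sigma}\|_\infty=O_N(\sigma^{-(d-1)/4})$; combined with $|\supp\psi_{\omega,\sigma}|\lesssim\sigma^{-(d+1)/2}$ this yields
$$
|\calF^{-1}(\psi_{\omega,\sigma})(x)|\lesssim_N\sigma^{-(3d+1)/4}\big(1+\sigma^{-1}|x'|^2+\sigma^{-2}x_1^2\big)^{-N},
$$
and since $\sigma<1$ one has $1+\sigma^{-1}|x|^2+\sigma^{-2}\langle\omega,x\rangle^2\le 2\,(1+\sigma^{-1}|x'|^2+\sigma^{-2}x_1^2)$, giving \eqref{eq:sizeFourier-theta}. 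The concluding uniform $L^1$ bound then follows by integrating \eqref{eq:sizeFourier-theta} and substituting $y'=\sigma^{-1/2}x'$, $y_1=\sigma^{-1}x_1$ (Jacobian $\sigma^{(d+1)/2}$): the prefactor collapses to $\sigma^{-(d-1)/4}$, uniformly in $\omega$ and $\sigma$.

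The main obstacle is the directional-derivative gain in \eqref{eq:sizetheta}: one has to recognise that on the wave-packet support $\langle\omega,\nabla_\zeta\rangle$ is essentially tangent to the cone $\{\hat\zeta=\omega\}$ and therefore gains a full power of $\sigma$ rather than $\sigma^{1/2}$, and to track that this gain survives composition with generic derivatives. Once the anisotropy of the plank is pinned down, the anisotropic integration by parts and the rescaling in the $L^1$ estimate are routine bookkeeping.
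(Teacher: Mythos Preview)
The paper does not give its own proof of this lemma; it simply records it as \cite[Lemma~3.2]{Rozendaal}. Your proposal correctly notes this and supplies the expected direct computation --- rotation to $\omega=e_1$, the spherical-cap estimate $c_\sigma\sim\sigma^{-(d-1)/4}$, Fa\`a di Bruno with the tangential gain for $\langle\omega,\nabla_\zeta\rangle$ acting on $\hat\zeta$, and anisotropic integration by parts for the Fourier bound --- which is the standard route and presumably Rozendaal's own. The sketch is sound; the only point worth flagging is your restriction to $\sigma\in(0,1)$: the lemma as stated ranges over all $\sigma>0$, but in the paper only $\sigma\lesssim 1$ is ever used (cf.\ the definitions of $\varphi_\omega$ and $W_a$), and for bounded $\sigma\ge 1$ the angular cutoff becomes trivial and $c_\sigma$ bounded, so the remaining estimates are straightforward.
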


We also recall important properties of the family  $(\varphi_\omega)_{\omega \in S^{d-1}}$ from \cite[Remark 3.3]{Rozendaal}.

\begin{Lemma}
Let $\omega \in S^{d-1}$. By construction, $\varphi_\omega \in C^\infty(\R^d)$, and for $\zeta \neq 0$, $\varphi_\omega(\zeta)=0$ for $|\zeta|<\frac{1}{8}$ or $\underset{(\varepsilon_{j})_{j=1} ^{d} \in \{-1,1\}^{d}}{\min}
	|(\varepsilon_{1}\hat \zeta_{1},...,\varepsilon_{d}\hat \zeta_{d}) - \omega|  >2|\zeta|^{-1/2}$. Moreover, for all $\alpha \in \N_0^d$ and $\beta \in \N_0$, there exists a constant $C=C(\alpha,\beta)>0$ such that 
$$
		|\langle \omega,\nabla_\zeta\rangle^\beta \partial_\zeta^\alpha \varphi_\omega(\zeta)| \leq C|\zeta|^{\frac{d-1}{4}-\frac{|\alpha|_1}{2}-\beta}
$$
for all $\omega \in S^{d-1}$ and $\zeta \neq 0$, and 
\begin{equation} \label{eq:cond-psi} 
	| \langle \hat \zeta,\nabla_\zeta\rangle^\beta \partial_\zeta^\alpha \left(\int_{S^{d-1}} \varphi_\nu(\zeta)^2 \,d\nu\right)| \leq C |\zeta|^{-\frac{|\alpha|_1}{2} -\beta}
\end{equation} 
for all $\zeta \in \R^d\setminus \{0\}$. 
\end{Lemma}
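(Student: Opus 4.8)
The plan is to derive all four assertions from the representation $\varphi_\omega(\zeta)=\int_0^4\psi_{\omega,\tau}(\zeta)\,\tfrac{d\tau}{\tau}$ together with the properties \eqref{eq:supptheta}--\eqref{eq:sizetheta} of $\psi_{\omega,\sigma}$ recalled in Lemma~\ref{lem:wavepackprop}. First I would record the one structural observation that drives everything: since $\psi_{\omega,\tau}(\zeta)\ne0$ forces $\Psi(\tau\zeta)\ne0$, hence $\tau|\zeta|\in[\tfrac12,2]$, for a fixed $\zeta\ne0$ the $\tau$-integral effectively runs over $[\tfrac1{2|\zeta|},\tfrac2{|\zeta|}]$; on this set $\tau\sim|\zeta|^{-1}$ and $\int\tfrac{d\tau}{\tau}\le\log4$. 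From this it follows at once that $\varphi_\omega\equiv0$ on $\{|\zeta|<\tfrac18\}$ (there $\tau|\zeta|<\tfrac12$ for all $\tau\in(0,4]$), and that $\varphi_\omega\in C^\infty(\R^d\setminus\{0\})$ by differentiation under the integral sign --- legitimate because near any fixed $\zeta_0\ne0$ the integrand and all its $\zeta$-derivatives are bounded uniformly on a fixed compact $\tau$-interval by \eqref{eq:sizetheta} --- so that $\varphi_\omega\in C^\infty(\R^d)$. The directional support is then read off from the fact that on $\supp\psi_{\omega,\tau}(\zeta)$ one has $|\hat\zeta-\omega|\le\sqrt\tau\le\sqrt{2/|\zeta|}\le2|\zeta|^{-1/2}$ by \eqref{eq:supptheta}.

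Next I would prove the pointwise derivative estimate by differentiating under the integral and applying \eqref{eq:sizetheta} termwise: $\langle\omega,\nabla_\zeta\rangle^\beta\partial_\zeta^\alpha\varphi_\omega(\zeta)=\int_0^4\langle\omega,\nabla_\zeta\rangle^\beta\partial_\zeta^\alpha\psi_{\omega,\tau}(\zeta)\,\tfrac{d\tau}{\tau}$, and on the support of the integrand $\tau^{-\frac{d-1}{4}+\frac{|\alpha|_1}2+\beta}\sim|\zeta|^{\frac{d-1}4-\frac{|\alpha|_1}2-\beta}$, so integrating $\tfrac{d\tau}{\tau}$ over a set of $\tfrac{d\tau}{\tau}$-measure $\le\log4$ yields $|\langle\omega,\nabla_\zeta\rangle^\beta\partial_\zeta^\alpha\varphi_\omega(\zeta)|\lesssim|\zeta|^{\frac{d-1}4-\frac{|\alpha|_1}2-\beta}$ for $|\zeta|\ge\tfrac18$, the case $|\zeta|<\tfrac18$ being trivial since the left side vanishes there. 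This step is routine.

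The substantive point is \eqref{eq:cond-psi}. The key is the factorisation, obtained by squaring out $\varphi_\nu(\zeta)^2$ into a double $(\tau,\tau')$-integral and interchanging with the $\nu$-integral (Tonelli, all integrands being non-negative),
$$
\int_{S^{d-1}}\varphi_\nu(\zeta)^2\,d\nu=\int_0^4\!\!\int_0^4\Psi(\tau\zeta)\,\Psi(\tau'\zeta)\,c_\tau c_{\tau'}\,K(\tau,\tau')\,\frac{d\tau}{\tau}\frac{d\tau'}{\tau'},\qquad K(\tau,\tau'):=\int_{S^{d-1}}\varphi\Big(\tfrac{e_1-\nu}{\sqrt\tau}\Big)\varphi\Big(\tfrac{e_1-\nu}{\sqrt{\tau'}}\Big)\,d\nu,
$$
where I use that $\Psi(\tau\zeta),\Psi(\tau'\zeta),c_\tau,c_{\tau'}$ are independent of $\nu$ while $\int_{S^{d-1}}\varphi(\tfrac{\hat\zeta-\nu}{\sqrt\tau})\varphi(\tfrac{\hat\zeta-\nu}{\sqrt{\tau'}})\,d\nu$ is, by rotation invariance of $d\nu$ and radiality of $\varphi$, independent of $\hat\zeta\in S^{d-1}$ and hence of $\zeta$. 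Thus the whole $\zeta$-dependence is carried by the radial cut-offs $\Psi(\tau\zeta)\Psi(\tau'\zeta)$, and moreover $c_\tau c_{\tau'}|K(\tau,\tau')|\le1$ by Cauchy--Schwarz on $L^2(S^{d-1})$ together with the identity $c_\tau=K(\tau,\tau)^{-1/2}$. Differentiating under the integral (the integrand is supported where $\tau,\tau'\in[\tfrac1{2|\zeta|},\tfrac2{|\zeta|}]$, which forces $\tau\sim\tau'\sim|\zeta|^{-1}$), it then suffices to show $|\langle\hat\zeta,\nabla_\zeta\rangle^\beta\partial_\zeta^\alpha(\Psi(\tau\zeta)\Psi(\tau'\zeta))|\lesssim|\zeta|^{-|\alpha|_1-\beta}$ on that support. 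Writing $\Psi(\tau\zeta)=\eta(\tau|\zeta|)$ with $\eta\in C_c^\infty((0,\infty))$, expanding $\langle\hat\zeta,\nabla_\zeta\rangle^\beta\partial_\zeta^\alpha$ into a finite sum of $\partial_\zeta^\gamma$ with coefficients smooth on $\R^d\setminus\{0\}$ and homogeneous of degree $|\gamma|_1-|\alpha|_1-\beta\le0$, and using Leibniz, one finds that each term costs a factor $\tau^{a}(\tau')^{b}$ with $a+b=|\alpha|_1+\beta$, i.e.\ $O(|\zeta|^{-|\alpha|_1-\beta})$ since $\tau\sim\tau'\sim|\zeta|^{-1}$. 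Multiplying by $c_\tau c_{\tau'}|K|\le1$ and integrating $\tfrac{d\tau}{\tau}\tfrac{d\tau'}{\tau'}$ over the $(\log4)^2$-sized support gives $|\langle\hat\zeta,\nabla_\zeta\rangle^\beta\partial_\zeta^\alpha(\int_{S^{d-1}}\varphi_\nu^2\,d\nu)|\lesssim|\zeta|^{-|\alpha|_1-\beta}$, which is $\le C|\zeta|^{-|\alpha|_1/2-\beta}$ on the relevant range: directly for $|\zeta|\ge1$, both sides being $\sim1$ for $\tfrac18\le|\zeta|\le1$, and the left side vanishing for $|\zeta|<\tfrac18$.

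The only genuine obstacle I anticipate is spotting and justifying the factorisation displayed above --- pulling the $\nu$-average past the $(\tau,\tau')$-integrals so that the $\zeta$-dependence collapses onto $\Psi(\tau\zeta)\Psi(\tau'\zeta)$, and extracting the uniform bound $c_\tau c_{\tau'}|K(\tau,\tau')|\le1$. Once that is in hand, everything else reduces to the support relation $\tau\sim|\zeta|^{-1}$, differentiation under the integral, and routine chain-rule bookkeeping for mixed radial/Cartesian derivatives of radial functions, exactly as in the proof of Lemma~\ref{lem:wavepackprop} for $\psi_{\omega,\sigma}$.
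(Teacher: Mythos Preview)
Your argument is correct. The paper itself does not prove this lemma; it merely records it as a quotation from \cite[Remark~3.3]{Rozendaal}, so there is no ``paper's proof'' to compare against. Your derivation from the integral representation $\varphi_\omega=\int_0^4\psi_{\omega,\tau}\,\frac{d\tau}{\tau}$ and the bounds \eqref{eq:supptheta}--\eqref{eq:sizetheta} is exactly what one would do, and the support/smoothness claims and the first derivative bound go through as you describe.

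The one part worth commenting on is your treatment of \eqref{eq:cond-psi}. The factorisation
\[
\int_{S^{d-1}}\varphi_\nu(\zeta)^2\,d\nu=\int_0^4\!\!\int_0^4\Psi(\tau\zeta)\Psi(\tau'\zeta)\,c_\tau c_{\tau'}\,K(\tau,\tau')\,\frac{d\tau}{\tau}\frac{d\tau'}{\tau'}
\]
together with the rotation-invariance observation that $K(\tau,\tau')$ is independent of $\zeta$ is a clean way to see that the $\nu$-averaged quantity is \emph{radial} in $\zeta$; this immediately explains why the bound in \eqref{eq:cond-psi} carries no $|\zeta|^{\frac{d-1}{4}}$ prefactor, unlike the bound for a single $\varphi_\omega$. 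Your Cauchy--Schwarz estimate $c_\tau c_{\tau'}|K(\tau,\tau')|\le 1$ and the expansion of $\langle\hat\zeta,\nabla_\zeta\rangle^\beta\partial_\zeta^\alpha$ as a sum of $\partial_\zeta^\gamma$ with coefficients homogeneous of degree $|\gamma|_1-|\alpha|_1-\beta$ are both correct (the latter by a straightforward induction on $\beta$, using that $\partial_j\hat\zeta_k$ is homogeneous of degree $-1$). Note also that once radiality is established you could alternatively just bound the one-variable derivatives $F^{(k)}(r)$ of $F(r):=\int_{S^{d-1}}\varphi_\nu(r e_1)^2\,d\nu$ and then invoke the standard chain-rule formula for $\partial_\zeta^\alpha$ applied to radial functions; your route via the operator expansion is equivalent. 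The final reduction from the stronger decay $|\zeta|^{-|\alpha|_1-\beta}$ to the stated $|\zeta|^{-|\alpha|_1/2-\beta}$ on $\{|\zeta|\ge\tfrac18\}$ is of course immediate.
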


\begin{Remark}
\label{rk:support}
For $\omega=e_1$ and  $\zeta$, $\sigma$ chosen as in \eqref{eq:supptheta} with $\sigma \in (0,2^{-8})$, we have 
\begin{align} \label{eq:zetacomp}
	\frac{1}{4\sigma} < |\zeta_1|  \leq \frac{2}{\sigma}, \qquad |\zeta_j|\leq \frac{4}{\sqrt{\sigma}}, \qquad j\in \{2,\ldots,d\}.
\end{align}
This follows from 
\begin{align*}
	|(\varepsilon_{1}\hat \zeta_{1},...,\varepsilon_{d}\hat \zeta_{d}) - e_{1}|^{2} 
	&= |e_1 . ((\varepsilon_{1}\hat \zeta_{1},...,\varepsilon_{d}\hat \zeta_{d}) -e_1)|^2 + \sum_{j=2}^d |e_j.((\varepsilon_{1}\hat \zeta_{1},...,\varepsilon_{d}\hat \zeta_{d}) -e_1)|^2\\
	&= |\frac{\varepsilon_{1}\zeta_1}{|\zeta|}-1|^2 + \sum_{j=2}^d|\frac{\zeta_j}{|\zeta|}|^2, 
\end{align*}
for all $(\varepsilon_{j})_{j=1} ^{d} \in \{-1,1\}^{d}$.  Therefore we have that, for some $\varepsilon_{1} \in \{-1,1\}$, 
\begin{align*}
	|\varepsilon_{1}\zeta_1 -|\zeta||^2 + \sum_{j=2}^d |\zeta_j|^2 \leq 4\sigma|\zeta|^2 \leq \frac{16}{\sigma},
\end{align*}
 which directly yields \eqref{eq:zetacomp} for $j \geq 2$. The case $j=1$ then follows from
\begin{align*}
  |\zeta_1|= |\varepsilon_{1}\zeta_1| >|\zeta|-\frac{4}{\sqrt{\sigma}}
	\geq \frac{1}{2\sigma} - \frac{4}{\sqrt{\sigma}}.
\end{align*} 
\end{Remark}

\begin{Lemma}
\label{lem:repro}
For all $\sigma \in (0,1)$,  and all $f \in L^{2}(\R^{d})$, we have that
\begin{align}
 \label{eq:resol-identity}
		  |S^{d-1}|^{-1}  \int_{S^{d-1}} \int_1^\infty \Psi(\sigma D_{a})^{2} f \,\frac{d\sigma}{\sigma} d\omega 
		 \quad +  \int_{S^{d-1}} \int_0^1 \varphi_{\omega}(D_{a})^{2}\Psi(\sigma D_{a})^{2} f \,\frac{d\sigma}{\sigma} d\omega = f
\end{align}

\begin{equation} \label{eq:reproL2}
\int \limits _{S^{d-1}} \varphi_{\omega, \sigma}(D_{a})^{2}f \, d\omega = f,
\end{equation}

\begin{equation} \label{eq:repro}
\sigma^{-\frac{d-1}{4}} \int \limits _{S^{d-1}} \varphi_{\omega, \sigma}(D_{a})f \, d\omega = C_{\sigma}f,
\end{equation}
with constant $C_{\sigma}$ such that $\sigma \mapsto C_{\sigma}$ is bounded above and below. \end{Lemma}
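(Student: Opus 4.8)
The three identities are all consequences of the functional-calculus identity \eqref{eq:psi-identity} (and its one-variable analogue for $\Psi$ via \eqref{def:Psi}) together with the fact that the Phillips calculus $\psi \mapsto \psi(D_a)$ is multiplicative and agrees on symbols with the ordinary (scalar) functional calculus of $D_a$ on $L^2$. Since $D_a$ is self-adjoint with respect to an equivalent inner product (see the discussion after Definition \ref{def:dirac}), it has an $L^2$ spectral resolution, so it suffices to verify each identity pointwise in the spectral variable $\zeta \in \R^d \setminus \{0\}$, i.e. to check the corresponding scalar identities for the symbols, and then invoke spectral multiplier functional calculus. The only subtlety is that the symbols here are functions of $\zeta$ rather than of $|\zeta|$, so one must be careful that $\psi_{\omega,\sigma}(D_a)$ and $\varphi_{\omega,\sigma}(D_a)$ are well-defined via the calculus of the \emph{tuple} $D_a = (D_{a,1},\dots,D_{a,d})$; this is exactly the setting of the Phillips calculus of the commuting $d$-parameter group $(\exp(i\xi.D_a))_\xi$ as set up in Section \ref{sec:transport}.

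For \eqref{eq:reproL2}: by definition $\varphi_{\omega,\sigma}(\zeta) = c_\sigma \varphi\big((\hat\zeta - \omega)/\sqrt\sigma\big)$, and the normalization constant $c_\sigma$ is precisely chosen so that $\int_{S^{d-1}} c_\sigma^2 \varphi\big((\hat\zeta-\nu)/\sqrt\sigma\big)^2 \, d\nu = 1$ for every $\hat\zeta \in S^{d-1}$ — note the integral depends on $\hat\zeta$ only through rotation invariance of $S^{d-1}$, so it equals its value at $\hat\zeta = e_1$, which is $c_\sigma^{-2}$. Hence $\int_{S^{d-1}} \varphi_{\omega,\sigma}(\zeta)^2 \, d\omega = 1$ for all $\zeta \neq 0$, and applying the functional calculus gives \eqref{eq:reproL2}. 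For \eqref{eq:repro}: here $C_\sigma := \sigma^{-\frac{d-1}{4}} \int_{S^{d-1}} \varphi_{\omega,\sigma}(\zeta)\,d\omega$, and I claim this is independent of $\zeta$ (by the same rotation-invariance argument: $\int_{S^{d-1}} \varphi\big((\hat\zeta-\omega)/\sqrt\sigma\big)\,d\omega$ is independent of $\hat\zeta$) and that $\sigma \mapsto C_\sigma = \sigma^{-\frac{d-1}{4}} c_\sigma \int_{S^{d-1}} \varphi\big((e_1-\omega)/\sqrt\sigma\big)\,d\omega$ is bounded above and below. The latter follows from the standard estimate that a surface cap of geodesic radius $\sim\sqrt\sigma$ around $e_1$ has measure $\sim \sigma^{(d-1)/2}$, giving $c_\sigma \sim \sigma^{-(d-1)/4}$ and $\int_{S^{d-1}} \varphi(\cdot)\,d\omega \sim \sigma^{(d-1)/2}$, whence $C_\sigma \sim 1$ (for $\sigma$ in a bounded range; one can extend the definitions or note $\varphi_{\omega,\sigma}$ is only used for $\sigma \in (0,1)$). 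For \eqref{eq:resol-identity}: split the $\sigma$-integral in \eqref{eq:psi-identity} at $\sigma = 1$. For $\sigma \geq 1$, $\psi_{\omega,\sigma}(\zeta) = \Psi(\sigma\zeta)\varphi_{\omega,\sigma}(\zeta)$, but one wants to replace $\int_0^1 \varphi_{\omega,\tau}^2(\zeta)\,\frac{d\tau}{\tau}$-type factors by $\varphi_\omega(\zeta)^2$; recalling $\varphi_\omega(\zeta) = \int_0^4 \psi_{\omega,\tau}(\zeta)\,\frac{d\tau}{\tau}$ and combining with the support properties in \eqref{eq:supptheta} (so that on $\supp \Psi(\sigma\,\cdot)$ with $\sigma < 1$ only $\tau$ comparable to $\sigma$ contributes), one reorganizes \eqref{eq:psi-identity} into the stated sum after integrating the $\sigma \geq 1$ piece against $|S^{d-1}|^{-1}$ over $S^{d-1}$; the precise bookkeeping is the content of \cite[Lemma 4.1]{HPR} or \cite[Lemma 3.2]{Rozendaal}, which I would cite. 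All three identities, once verified at the symbol level, transfer to $D_a$ on $L^2(\R^d;\C^2)$ by the spectral theorem for the self-adjoint-for-an-equivalent-inner-product operator $D_a$, with convergence of the $\sigma$-integrals in $L^2$ justified by the quadratic estimates underlying Lemma \ref{lem:sfequiv}.

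The main obstacle is not any single identity but rather \emph{justifying the symbol-level calculus rigorously}: one needs that $\psi_{\omega,\sigma}(D_a)$, $\varphi_{\omega,\sigma}(D_a)$, and $\varphi_\omega(D_a)$ — defined via the Phillips calculus as $\frac{1}{(2\pi)^d}\int_{\R^d} \widehat{\psi}(\xi)\exp(i\xi.D_a)\,d\xi$ — are genuinely spectral multipliers that compose and integrate the way the scalar symbols do, and that the $\sigma$- and $\omega$-integrals converge in $L^2$. Since these symbols are Schwartz-class (after the $\sigma$-cutoff) with Fourier transforms in $L^1$ by \eqref{eq:sizeFourier-theta} and the analogous bound for $\varphi_\omega$, this is routine but needs to be stated: the Phillips calculus is an algebra homomorphism on such symbols, it is consistent with the $L^2$ spectral calculus of the (similar-to-self-adjoint) operator $D_a$, and Fubini plus dominated convergence handle the integral identities. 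With that in hand, \eqref{eq:reproL2}–\eqref{eq:repro} are immediate from rotation invariance of the sphere measure, and \eqref{eq:resol-identity} is the cited reorganization of \eqref{eq:psi-identity}.
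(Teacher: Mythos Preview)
Your proposal is correct and follows essentially the same approach as the paper: verify each identity at the level of symbols (using rotation invariance of the sphere measure for \eqref{eq:reproL2} and \eqref{eq:repro}, and the constructions in \cite[Lemma 4.1]{HPR} for \eqref{eq:resol-identity}), then transfer to $D_a$ via the Phillips functional calculus. The paper's own proof is just a one-line citation of exactly these ingredients, so your more detailed justification of the symbol identities and of the passage through the functional calculus is consistent with, and a useful expansion of, what the paper records.
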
 

\begin{proof}
These identities follow (respectively) from \eqref{eq:psi-identity}, the fact that $\int \limits _{S^{d-1}}  \varphi_{\omega,\sigma}(\xi)^{2} d\omega = 1$ for all $\xi \neq 0$, and \cite[Formula (7.4)]{HPR}, using the Philipps functional calculus of  $\sqrt{D_{a}^{2}}$.\end{proof}

\begin{Lemma}
\label{cor:sim}
For all $\sigma \in (0,1)$, we have that 
$$
\int \limits _{S^{d-1}} \|\varphi_{\omega,\sigma}(D_{a}) f\|_{2} ^{2} \, d\omega 
\lesssim \|f\|_{2} ^{2} \quad \forall f \in L^{2}(\R^{d}).
$$
Moreover,
$$
\int \limits _{S^{d-1}} \int \limits _{0} ^{\infty} \|\psi_{\omega,\sigma}(D_{a}) f\|_{2} ^{2} \,  \frac{d\sigma}{\sigma} d\omega 
\lesssim \|f\|_{2} ^{2} \quad \forall f \in L^{2}(\R^{d}).
$$
\end{Lemma}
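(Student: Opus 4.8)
The plan is to deduce both estimates from the $L^2$-orthogonality identities in Lemma \ref{lem:repro} combined with the almost-orthogonality bound in Lemma \ref{lem:od}. For the first estimate, I would use a Cotlar--Stein type argument. Fix $\sigma \in (0,1)$, and for $\omega, \omega' \in S^{d-1}$ consider the operator $\varphi_{\omega,\sigma}(D_a)^* \varphi_{\omega',\sigma}(D_a) = \overline{\varphi_{\omega,\sigma}}(D_a)\varphi_{\omega',\sigma}(D_a)$. By Lemma \ref{lem:wavepackprop}, $\varphi_{\omega,\sigma}$ and $\varphi_{\omega',\sigma}$ are supported in narrow caps of angular width $\sim \sqrt{\sigma}$ around $\omega$ and $\omega'$; hence the product vanishes unless $|\omega - \omega'| \lesssim \sqrt{\sigma}$. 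Using the decay bounds \eqref{eq:sizetheta} on the symbols (with $\beta$-many derivatives in the $\omega$-direction and $\alpha$-many transverse derivatives), one gets from Lemma \ref{lem:od}, applied to the Schwartz function $\zeta \mapsto \overline{\varphi_{\omega,\sigma}}(\zeta)\varphi_{\omega',\sigma}(\zeta)$ (after the rescaling $\zeta \mapsto \sigma\zeta$), that the operator norm $\|\varphi_{\omega,\sigma}(D_a)^*\varphi_{\omega',\sigma}(D_a)\|_{B(L^2)}$ is bounded by a constant uniformly, and decays rapidly in $|\omega - \omega'|/\sqrt{\sigma}$. Since the surface measure of the cap $\{|\omega - \omega'| \lesssim \sqrt{\sigma}\}$ is $\lesssim \sigma^{(d-1)/2}$, and one also needs the normalization $c_\sigma^2 \sim \sigma^{-(d-1)/2}$ (from the definition of $c_\sigma$), the Cotlar--Stein bound gives $\sup_\omega \int_{S^{d-1}} \|\varphi_{\omega,\sigma}(D_a)^*\varphi_{\omega',\sigma}(D_a)\|^{1/2}\, d\omega' \lesssim 1$. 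This yields
$$
\int_{S^{d-1}} \|\varphi_{\omega,\sigma}(D_a)f\|_2^2\, d\omega = \left\langle \int_{S^{d-1}} \varphi_{\omega,\sigma}(D_a)^*\varphi_{\omega,\sigma}(D_a)\, d\omega\, f, f\right\rangle \lesssim \|f\|_2^2,
$$
the continuous analogue of a Cotlar--Stein estimate for the $L^2$-valued operator $f \mapsto (\omega \mapsto \varphi_{\omega,\sigma}(D_a)f)$.

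Alternatively, and perhaps more cleanly, I would invoke the reproducing identity \eqref{eq:reproL2}: since $\int_{S^{d-1}}\varphi_{\omega,\sigma}(D_a)^2\, d\omega = I$, one has $\varphi_{\omega,\sigma}(D_a) = \varphi_{\omega,\sigma}(D_a)\int_{S^{d-1}}\varphi_{\nu,\sigma}(D_a)^2\,d\nu$, but this just reshuffles the problem. The genuinely efficient route is to exploit that $\sum$ (here $\int$) of the symbols squared is $1$ and that each symbol is a smooth bump of controlled size: writing $\psi_{\omega,\sigma} = \Psi_\sigma \varphi_{\omega,\sigma}$, the symbol $\int_{S^{d-1}} \psi_{\omega,\sigma}(\zeta)^2\, d\omega = \Psi_\sigma(\zeta)^2 \int_{S^{d-1}}\varphi_{\omega,\sigma}(\zeta)^2\,d\omega = \Psi(\sigma\zeta)^2$ is a single Marcinkiewicz-type symbol, so by Lemma \ref{lem:od} the operator $\int_{S^{d-1}}\psi_{\omega,\sigma}(D_a)^*\psi_{\omega,\sigma}(D_a)\, d\omega = \Psi(\sigma D_a)^2$ has $B(L^2)$-norm $\lesssim 1$ uniformly in $\sigma$. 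For the first claimed bound, one still needs to pass from $\psi_{\omega,\sigma}$ to $\varphi_{\omega,\sigma}$, i.e. remove the $\Psi_\sigma$ factor; but on the support of $\varphi_{\omega,\sigma}(D_a)f$ the relevant frequencies live in $\{|\zeta| \sim \sigma^{-1}\}$ only after applying $\Psi_\sigma$ — so in fact the clean statement to prove directly is the $\psi$-version, and the $\varphi$-version follows by decomposing $\varphi_{\omega,\sigma}(\zeta) = \sum_k \Psi(\sigma 2^k \zeta)^2 \varphi_{\omega,\sigma}(\zeta) \cdot (\ldots)$ — this is getting heavy, so I would instead prove the $\varphi$-estimate directly by the Cotlar--Stein argument of the first paragraph, which does not require any frequency-localization trick.

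For the second estimate, integrate in $\sigma$ over $(0,\infty)$. Write $\int_0^\infty \int_{S^{d-1}} \|\psi_{\omega,\sigma}(D_a)f\|_2^2\, d\omega\, \frac{d\sigma}{\sigma} = \langle T f, f\rangle$ where $T = \int_0^\infty \int_{S^{d-1}} \psi_{\omega,\sigma}(D_a)^*\psi_{\omega,\sigma}(D_a)\, d\omega\, \frac{d\sigma}{\sigma}$. By the Phillips calculus and the pointwise identity \eqref{eq:psi-identity}, $\int_0^\infty\int_{S^{d-1}}\psi_{\omega,\sigma}(\zeta)^2\, d\omega\,\frac{d\sigma}{\sigma} = 1$ for all $\zeta \neq 0$, so formally $T = I$. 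To make this rigorous on $L^2$, I would truncate: for $0 < \epsilon < N < \infty$ set $T_{\epsilon,N} = \int_\epsilon^N \int_{S^{d-1}}\psi_{\omega,\sigma}(D_a)^*\psi_{\omega,\sigma}(D_a)\, d\omega\,\frac{d\sigma}{\sigma}$, whose symbol is $m_{\epsilon,N}(\zeta) = \int_\epsilon^N \Psi(\sigma\zeta)^2\,\frac{d\sigma}{\sigma}$ (using $\int_{S^{d-1}}\varphi_{\omega,\sigma}(\zeta)^2\,d\omega=1$), a bounded Marcinkiewicz symbol uniformly in $\epsilon, N$ with $0 \le m_{\epsilon,N} \le 1$; by Lemma \ref{lem:od}, $\|T_{\epsilon,N}\|_{B(L^2)} \lesssim \sup_\zeta |m_{\epsilon,N}(\zeta)| \le 1$. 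Taking $\epsilon \to 0$, $N \to \infty$ gives $\sup_{\epsilon,N}\|T_{\epsilon,N}\|_{B(L^2)} \le 1$, hence $\langle Tf,f\rangle = \lim \langle T_{\epsilon,N}f,f\rangle \le \|f\|_2^2$, which is exactly the claimed square-function bound. (Indeed $T = I$, but only the upper bound $\lesssim \|f\|_2^2$ is asserted.)

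The main obstacle is the first estimate, because there the $\Psi_\sigma$ factor is absent from $\varphi_{\omega,\sigma}$, so one cannot simply read off a single nice Fourier multiplier from $\int_{S^{d-1}}\varphi_{\omega,\sigma}(\zeta)^2\,d\omega = 1$ — that identity would give $\int_{S^{d-1}}\varphi_{\omega,\sigma}(D_a)^2\,d\omega = I$ (which is \eqref{eq:reproL2}) but says nothing about $\int \|\varphi_{\omega,\sigma}(D_a)f\|_2^2\,d\omega$ since $\varphi_{\omega,\sigma}(D_a)$ need not be self-adjoint: $D_a$ is only self-adjoint for an equivalent (weighted) inner product, so $\overline{\varphi_{\omega,\sigma}}(D_a) \neq \varphi_{\omega,\sigma}(D_a)^*$ for the standard $L^2$ inner product. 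This is precisely why the Cotlar--Stein / almost-orthogonality approach via Lemma \ref{lem:od} is needed rather than a bare functional-calculus identity: one must control $\|\varphi_{\omega,\sigma}(D_a)^*\varphi_{\omega',\sigma}(D_a)\|_{B(L^2)}$, and Lemma \ref{lem:od} handles exactly this, provided one checks that the relevant integrals of $|\widehat{\varphi_{\omega,\sigma}}\,\widehat{\varphi_{\omega',\sigma}}|$ over the frequency region dictated by $d(E,F)$ decay suitably — a routine but careful computation using \eqref{eq:sizetheta}, \eqref{eq:sizeFourier-theta} and the cap geometry \eqref{eq:supptheta}, together with the normalization $c_\sigma^2 \sim \sigma^{-(d-1)/2}$ from the definition of $\varphi_{\omega,\sigma}$.
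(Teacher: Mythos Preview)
You have correctly identified the key subtlety --- that $D_a$ is only self-adjoint with respect to an \emph{equivalent} inner product $(u,v)\mapsto\langle A^{-1}u,Bv\rangle$, not the standard one --- but you draw exactly the wrong conclusion from it. The paper's proof exploits this observation rather than working around it: since the equivalent norm satisfies $\|\cdot\|_{\mathrm{equiv}}\sim\|\cdot\|_2$ and since $\varphi_{\omega,\sigma}$ is real-valued, the operator $\varphi_{\omega,\sigma}(D_a)$ \emph{is} self-adjoint in the equivalent inner product. Hence
\[
\int_{S^{d-1}}\|\varphi_{\omega,\sigma}(D_a)f\|_2^2\,d\omega
\ \sim\ \int_{S^{d-1}}(\varphi_{\omega,\sigma}(D_a)^2 f,f)_{\mathrm{equiv}}\,d\omega
\ =\ (f,f)_{\mathrm{equiv}}
\ \sim\ \|f\|_2^2,
\]
using \eqref{eq:reproL2} for the middle equality. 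The second estimate follows identically from \eqref{eq:psi-identity} via \eqref{eq:resol-identity}. This is the entirety of the paper's argument; no Cotlar--Stein machinery, no Lemma~\ref{lem:od}, no cap geometry is needed.

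Your proposed route has a genuine gap that you half-notice yourself. In the first paragraph you write $\varphi_{\omega,\sigma}(D_a)^*\varphi_{\omega',\sigma}(D_a)=\overline{\varphi_{\omega,\sigma}}(D_a)\varphi_{\omega',\sigma}(D_a)$, but in the last paragraph you (correctly) observe that $\varphi_{\omega,\sigma}(D_a)^*\neq\overline{\varphi_{\omega,\sigma}}(D_a)$ in the standard inner product. The actual adjoint is a conjugate $C\varphi_{\omega,\sigma}(D_a)C^{-1}$ by a bounded multiplication operator $C$, so $\varphi_{\omega,\sigma}(D_a)^*\varphi_{\omega',\sigma}(D_a)=C\varphi_{\omega,\sigma}(D_a)C^{-1}\varphi_{\omega',\sigma}(D_a)$ is \emph{not} a function of $D_a$, and the disjoint-symbol-support argument you rely on (to get vanishing or rapid decay when $|\omega-\omega'|\gg\sqrt{\sigma}$) no longer applies directly; one would need commutator estimates between $C$ and $\varphi_{\omega',\sigma}(D_a)$, which you do not supply. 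The same problem infects your second-estimate argument: the operator you call $T_{\epsilon,N}$ is not $m_{\epsilon,N}(D_a)$ for the standard adjoint, so ``its symbol is $m_{\epsilon,N}$'' is not justified. All of this evaporates once you pass to the equivalent inner product, where the functional-calculus identities \eqref{eq:reproL2} and \eqref{eq:psi-identity} do the work in two lines.
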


\begin{proof}
Let $f \in L^{2}(\R^{d})$ and $\sigma \in (0,1)$. 
Using \eqref{eq:reproL2},  and the fact that  $\sqrt{D_{a}^{2}}$  is self-adjoint with respect to an equivalent inner product (see Definition \ref{def:dirac}),   we have that
\begin{align*}
\int \limits _{S^{d-1}} \|\varphi_{\omega,\sigma}(D_{a}) f\|_{2} ^{2} \, d\omega 
 \sim  \int \limits _{S^{d-1}} \langle \varphi_{\omega,\sigma}(D_{a})^{2} f,f \rangle \, d\omega 
 \lesssim  \|f\|_{2} ^{2}.
\end{align*}
Similarly, using \eqref{eq:resol-identity}, we have that
$$
\int \limits _{S^{d-1}} \int \limits _{0} ^{\infty} \|\psi_{\omega,\sigma}(D_{a}) f\|_{2} ^{2} \,  \frac{d\sigma}{\sigma} d\omega
 \sim  \int \limits _{S^{d-1}} \int \limits _{0} ^{\infty} \langle \psi_{\omega,\sigma}(D_{a})^{2} f,f\rangle \,  \frac{d\sigma}{\sigma} d\omega \lesssim \|f\|_{2} ^{2}.
$$
\end{proof}

\begin{Def}
We define a wave packet transform adapted to $D_{a}$,\\ $W_{a} \in B(L^{2}(\R^{d}),L^{2}(\R^{d}\times S^{d-1}\times(0,\infty); dx d\omega \frac{d\sigma}{\sigma}))$ by
$$
W_{a}f(\omega, \sigma, x):= 1_{(1,\infty)}(\sigma)   |S^{d-1}|^{-1/2}  \Psi(\sigma D_{a})f(x) 
+ 1_{[0,1]}(\sigma)\varphi_{\omega}(D_{a})\Psi(\sigma D_{a})f(x) \quad \forall f \in L^{2}(\R^{d}).
$$
 
We define $\pi_a \in B(L^{2}(\R^{d}\times S^{d-1}\times(0,\infty);dx d\omega \frac{d\sigma}{\sigma}),L^{2}(\R^{d}))$ by 
\begin{align*}
		\pi_a F(x) := & |S^{d-1}|^{-1/2}  \int_{S^{d-1}} \int_1^\infty \Psi(\sigma D_{a}) F(\omega,\sigma,\,.\,)(x) \,\frac{d\sigma}{\sigma} d\omega \\
		& \quad +  \int_{S^{d-1}} \int_0^1 \varphi_{\omega}(D_{a})\Psi(\sigma D_{a}) F(\omega,\sigma,\,.\,)(x) \,\frac{d\sigma}{\sigma} d\omega 
\end{align*}
for all  $F \in L^{2}(\R^{d}\times S^{d-1}\times(0,\infty);dx d\omega \frac{d\sigma}{\sigma}).$
\end{Def}

Note that $W_{a}$ is well defined thanks to Lemma \ref{cor:sim}, and that $\pi_a$ is the adjoint  of the operator $\bar W_a$, where  $\bar W_a$ is defined as $W_a$ with $D_a$ replaced by $D_a^\ast$.\\

\begin{Def}
Given $\omega \in S^{d-1}$, we fix vectors $\omega_{1},...,\omega _{d-1}$ such that
$\{\omega, \omega_{1},..., \omega_{d-1}\}$ is an orthonormal basis of $\R^{d}$. 
We then define the parabolic (quasi) distance in the direction of $\omega$ by
$$
d_{\omega}(x,y) := |\langle \omega, x-y \rangle| + \sum \limits _{j=1} ^{d-1}\langle \omega_{j}, x-y \rangle ^{2} \quad \forall x,y \in \R^{d}.
$$
We also define (anistropic) operators associated with this parabolic distance by
\begin{align*}
\Delta_{\omega^{\perp}} := \sum \limits _{j=1} ^{d-1} \langle \omega_{j}, \nabla\rangle^{2}, \quad
L_{\omega^{\perp}} := - \sum \limits _{j=1} ^{d-1} \langle \omega_{j}, D_{a}\rangle^{2}. 
\end{align*}
\end{Def}

\begin{Lemma}
\label{lem:kernelpsi} 
(i) Let $N \in \N$, $N>\frac{d+1}{2}$. There exists $C>0$ such that for all $\sigma \in (0,1)$ and $\omega \in S^{d-1}$, we have 
\begin{align*}
\| (1+\sigma  L_{\omega^{\perp}}  +\sigma^2 \langle \omega,D_a\rangle^2)^{-N}f\|_{L^{2}(\R^d)}  \leq C \sigma^{-\frac{d+1}{4}} \|f\|_{L^1(\R^d)}
\end{align*}
for all $f \in L^1(\R^d)$. \\
(ii) For every $M \in \N$, there exists $C_M>0$ such that for all $E,F \subset \R^d$ Borel sets, $\sigma \in (0,1)$ and $\omega \in S^{d-1}$, we have 
\begin{align*} 
\|1_E \psi_{\omega,\sigma}(D_a)(1_F f)\|_{L^{2}(\R^d)} 
	\leq C_M  \sigma^{-\frac{d}{2}} (1+ \frac{d_{\omega}(E,F)}{\sigma})^{-M} \|1_F f\|_{L^1(\R^d)}
\end{align*} 
for all $f \in L^1(\R^d)$. \\
 (iii) Let $1 \leq p \leq r < \infty$. For every $M \in \N$, there exists $C_M>0$ such that for all $E,F \subset \R^d$ Borel sets, $\sigma \in (0,1)$ and $\omega \in S^{d-1}$, we have 
\begin{align*} 
\|1_E \psi_{\omega,\sigma}(D_a)(1_F f)\|_{L^{r}(\R^d)} 
	\leq C_M  \sigma^{-d(\frac{1}{p}-\frac{1}{r})} \sigma^{-\frac{d-1}{4}}(1+ \frac{d(E,F)}{\sigma})^{-M} \|1_F f\|_{L^p(\R^d)}
\end{align*} 
for all $f \in L^p(\R^d)$. \\
\end{Lemma}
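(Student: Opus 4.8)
\textbf{Proof plan for Lemma \ref{lem:kernelpsi}.}

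The plan is to prove the three parts in order, with (i) providing the $L^1 \to L^2$ bound that feeds into (ii), and (ii) together with interpolation and duality giving (iii). For part (i), the idea is to reduce the operator $(1+\sigma L_{\omega^\perp}+\sigma^2\langle\omega,D_a\rangle^2)^{-N}$ to the Phillips calculus of the commuting tuple $D_a$. Writing $m_\sigma(\zeta) := (1+\sigma|\zeta_{\omega^\perp}|^2 + \sigma^2\langle\omega,\zeta\rangle^2)^{-N}$, one has $(1+\sigma L_{\omega^\perp}+\sigma^2\langle\omega,D_a\rangle^2)^{-N} = m_\sigma(D_a)$ via the joint functional calculus, hence by the Phillips formula it equals a superposition $\int \widehat{m_\sigma}(\xi)\exp(i\xi D_a)\,d\xi$ of the uniformly $L^2$-bounded group elements. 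Since $N > \frac{d+1}{2}$, the anisotropic scaling $\xi \mapsto (\sqrt\sigma\,\xi_{\omega^\perp}, \sigma\langle\omega,\xi\rangle\omega)$ shows $\widehat{m_\sigma} \in L^1$ with $\|\widehat{m_\sigma}\|_{L^1} \lesssim \sigma^{-\frac{d-1}{4}}\cdot\sigma^{-\frac12} = \sigma^{-\frac{d+1}{4}}$ (the factor $\sigma^{-(d-1)/4}$ from the $d-1$ transverse variables scaled by $\sqrt\sigma$, the factor $\sigma^{-1/2}$ from the one longitudinal variable scaled by $\sigma$); actually one should be slightly careful and differentiate to get the right power, but the homogeneity dictates $\sigma^{-\frac{d+1}{4}}$. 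Then $\|m_\sigma(D_a)f\|_2 \leq \|\widehat{m_\sigma}\|_{L^1}\sup_t\|\exp(itD_a)\|_{B(L^2)}\|f\|_1 \lesssim \sigma^{-\frac{d+1}{4}}\|f\|_1$.

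For part (ii), I would write $\psi_{\omega,\sigma}(D_a) = \psi_{\omega,\sigma}(D_a)(1+\sigma L_{\omega^\perp}+\sigma^2\langle\omega,D_a\rangle^2)^{N}\cdot(1+\sigma L_{\omega^\perp}+\sigma^2\langle\omega,D_a\rangle^2)^{-N}$, noting that on $\supp\psi_{\omega,\sigma}$ the quantity $1+\sigma|\zeta_{\omega^\perp}|^2+\sigma^2\langle\omega,\zeta\rangle^2$ is bounded (by \eqref{eq:supptheta}, $|\zeta|\sim\sigma^{-1}$ and $|\hat\zeta-\omega|\lesssim\sqrt\sigma$ force $|\zeta_{\omega^\perp}|\lesssim\sigma^{-1/2}$, so the transverse term is $O(1)$, while $\langle\omega,\zeta\rangle\sim\sigma^{-1}$ makes the longitudinal term $O(1)$). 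So $\widetilde\psi_{\omega,\sigma}:=\psi_{\omega,\sigma}\cdot(1+\sigma|\cdot_{\omega^\perp}|^2+\sigma^2\langle\omega,\cdot\rangle^2)^N$ still satisfies the estimates of Lemma \ref{lem:wavepackprop}. The off-diagonal decay in $d_\omega$ comes from two sources: the longitudinal decay $(1+d(E,F)/(\kappa\sigma))^{-M}$ from Lemma \ref{lem:od} applied to $\widetilde\psi_{\omega,\sigma}(D_a)$ using the support condition $|\langle\omega,\xi\rangle|\gtrsim\sigma^{-1}$ and $|\xi|\gtrsim\sigma^{-1}$ (which gives decay in $|\langle\omega,x-y\rangle|/\sigma$), and then one uses $m_\sigma(D_a)$'s off-diagonal behaviour at scale $\sqrt\sigma$ in the transverse directions. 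The cleanest route is probably to invoke Lemma \ref{lem:od} directly for $\psi_{\omega,\sigma}(\sigma'D_a)$-type operators, or better: split $\psi_{\omega,\sigma}(D_a) = [\psi_{\omega,\sigma}(D_a)\,m_\sigma(D_a)^{-1}]\circ m_\sigma(D_a)$, bound the first factor $L^2\to L^2$ with $d_\omega$-decay via a direct Phillips-calculus estimate exploiting finite speed of propagation in the relevant directions (Remark \ref{rem:preciseFS}), and the second factor $L^1\to L^2$ by part (i). The $\sigma^{-d/2} = \sigma^{-(d+1)/4}\cdot\sigma^{-(d-1)/4}$ bookkeeping matches: part (i) gives $\sigma^{-(d+1)/4}$ and the $L^2\to L^2$ factor contributes $\sigma^{-(d-1)/4}$ (consistent with the uniform $L^1$-normalisation $\sigma^{(d-1)/4}\mathcal F^{-1}\psi_{\omega,\sigma}$ of Lemma \ref{lem:wavepackprop}).

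For part (iii), I would interpolate and dualise. First, the case $p=r$: boundedness $L^p\to L^p$ with $d(E,F)$-decay and normalisation $\sigma^{-(d-1)/4}$ follows from the kernel bound \eqref{eq:sizeFourier-theta} (the $L^1$-normalised kernel statement) by Young's inequality, localised via the standard argument of summing over dyadic annuli of $d_\omega$ (or $d$). Then the $L^1\to L^2$ bound of part (ii) — reading $d_\omega(E,F)\geq \frac14 d(E,F)$ or handling the euclidean distance by the same support considerations — interpolates with the $L^2\to L^2$ bound to give $L^p\to L^2$ for $1\leq p\leq 2$; composing with a further $L^2\to L^r$ bound obtained by duality from the $L^{r'}\to L^2$ case (using that $\psi_{\omega,\sigma}(D_a)^\ast$ is of the same type for $D_a^\ast$, which again has all the structural properties) and real interpolation in the Lorentz scale, or more directly by the Riesz–Thorin theorem applied to the analytic family, yields the full range $1\leq p\leq r<\infty$ with gain $\sigma^{-d(1/p-1/r)}\sigma^{-(d-1)/4}$. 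The off-diagonal factor is propagated through each interpolation/duality step since all the endpoint bounds carry the same polynomial decay in $d(E,F)/\sigma$ with arbitrary order $M$.

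\textbf{Main obstacle.} I expect the crux to be part (i): getting the \emph{sharp} power $\sigma^{-(d+1)/4}$ rather than a cruder bound, which forces one to do the anisotropic change of variables carefully and to verify that $\widehat{m_\sigma}\in L^1$ uniformly after rescaling — this needs exactly $N>\frac{d+1}{2}$ so that $(1+|y_{\omega^\perp}|^2+|y_\omega|^2)^{-N}$ (in the rescaled variable) is integrable in $\R^{d-1}_{\text{transverse}}\times\R_{\text{longitudinal}}$ with the anisotropic Jacobian $\sigma^{(d-1)/2}\cdot\sigma$. A secondary subtlety is threading the off-diagonal decay in the \emph{parabolic} distance $d_\omega$ through the composition in part (ii): the longitudinal decay is at the linear scale $\sigma$ (from Lemma \ref{lem:od}) while the transverse decay is at the parabolic scale $\sqrt\sigma$ (so $\langle\omega_j,x-y\rangle^2/\sigma$), and one must check these combine to $(1+d_\omega(E,F)/\sigma)^{-M}$ rather than something weaker — this is where finite speed of propagation in \emph{every} direction $\omega_j$ (Remark \ref{rem:preciseFS}) is essential, since it localises $m_\sigma(D_a)$ at scale $\sqrt\sigma$ transversally just as in the euclidean kernel estimate \eqref{eq:sizeFourier-theta}.
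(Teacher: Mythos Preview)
Your approach to part (i) has a genuine gap. The Phillips formula $m_\sigma(D_a)f = (2\pi)^{-d}\int \widehat{m_\sigma}(\xi)\exp(i\xi D_a)f\,d\xi$ together with the uniform $L^2$-boundedness of the group yields only $\|m_\sigma(D_a)f\|_2 \lesssim \|\widehat{m_\sigma}\|_{L^1}\|f\|_2$, not $\|f\|_1$ on the right-hand side; the group elements do not map $L^1$ to $L^2$. Moreover, your scaling computation is off: the anisotropic change of variables gives $\widehat{m_\sigma}(\xi) = \sigma^{-(d+1)/2}\widehat{m_1}(\xi_{\omega^\perp}/\sqrt{\sigma},\, \langle\omega,\xi\rangle/\sigma)$, so $\|\widehat{m_\sigma}\|_{L^1}$ is in fact $O(1)$, independent of $\sigma$ --- as it must be, since the $L^1$ norm of a kernel controls the $L^p\to L^p$ operator norm, which is uniformly bounded here. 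To obtain the $L^1\to L^2$ bound one needs genuine kernel information (Gaussian-type estimates for the resolvent), which is what the paper imports from \cite[Proposition~4.3]{AMcT}. In the constant-coefficient model the correct computation is $\|m_\sigma\|_{L^2}^2 \sim \sigma^{-(d+1)/2}$ via Plancherel, and the AMcT argument is what replaces this when $D_a\neq\nabla$.

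Your plan for part (ii) is close in spirit to the paper's, though the paper's route is cleaner: it uses the integration-by-parts identity
$e^{i\zeta D_a} = (1+\sigma L_{\omega^\perp}+\sigma^2\langle\omega,D_a\rangle^2)^{-N}(1-\sigma\Delta_{\omega^\perp}-\sigma^2\langle\omega,\nabla_\zeta\rangle^2)^N e^{i\zeta D_a}$
inside the Phillips integral to transfer the derivatives onto $\chi_\omega\cdot\mathcal{F}^{-1}(\psi_{\omega,\sigma})$, then combines the pointwise bound \eqref{eq:sizeFourier-theta} with part (i). This packages the longitudinal and transverse decay in one step and avoids having to establish separate $d_\omega$-off-diagonal bounds for the resolvent factor. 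For part (iii), your proposed route via interpolating (ii) does not work as stated: the inequality $d_\omega(E,F)\gtrsim d(E,F)$ is false (take $x-y\perp\omega$ with $|x-y|$ small, so that $d_\omega\sim |x-y|^2\ll |x-y|$), and the bound \eqref{eq:sizeFourier-theta} is for the Euclidean convolution kernel $\mathcal{F}^{-1}(\psi_{\omega,\sigma})$, not for the integral kernel of $\psi_{\omega,\sigma}(D_a)$. The paper instead uses the \emph{isotropic} resolvent $(1+\sigma^2 L)^{-N}$, whose $L^p\to L^r$ mapping property comes directly from the Gaussian bounds of Theorem~\ref{thm:AMcT}, together with isotropic integration by parts (with $\sigma^2\Delta$) and Lemma~\ref{lem:od}.
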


\begin{proof}
Part (i) follows from \cite[Proposition 4.3]{AMcT}, tracking the scaling factor $\sigma$ in its proof.

(ii)  Let $\omega \in S^{d-1}$.  For given Borel sets $E,F \subseteq \R^d$ with $d(E,F)>0$, let  $\chi_{\omega} \in C^\infty(\R^d)$ be a function with values in $[0,1]$ such that $\chi_{\omega} = \chi_{\omega}^{s}$, $\chi_{\omega}(\zeta)=0$ for $|\zeta| \leq \frac{1}{2} \kappa^{-1} d_{\omega}(E,F)$ and $\chi_{\omega}(\zeta)=1$ for $|\zeta| \geq \kappa^{-1} d_{\omega}(E,F)$, and $\|\langle \omega, \nabla \rangle \chi_{\omega}\|_{\infty} + \|\Delta_{\omega^{\perp}} \chi_{\omega}\|_{\infty} \lesssim \frac{1}{d_{\omega}(E,F)}$.  Lemma \ref{lem:od} implies
\begin{align*}
	c_d 1_E \psi_{\omega,\sigma}(D_a) 1_F f
	& = 1_E \int_{\R^d} \chi_{\omega}(\zeta) \calF^{-1}(\psi_{\omega,\sigma})(\zeta) e^{i\zeta D_a} 1_F f \,d\zeta.  
\end{align*}
Now note that 
$
	(1-\sigma \Delta_{\omega^{\perp}} -  \sigma^2 \langle \omega,\nabla_\zeta\rangle^2) e^{i\zeta D_a} 
	=(1+\sigma  L_{\omega^{\perp}}+\sigma^2 \langle \omega,D_a\rangle^2) e^{i\zeta D_a},
$
thus for $N \in \N$, 
\begin{align*}
	e^{i\zeta D_a} = (1+\sigma  L_{\omega^{\perp}}+\sigma^2 \langle \omega,D_a\rangle^2)^{-N} (1-\sigma \Delta_{\omega^{\perp}} -  \sigma^2 \langle \omega,\nabla_\zeta\rangle^2)^N e^{i\zeta D_a}. 
\end{align*}
From integration by parts we then get for  $j \in \{0,1\}$
\begin{align} \label{eq:osci} \nonumber 
	 c_d 1_E \psi_{\omega,\sigma}(D_a) 1_F f 
%	&= \int_{\R^d} \calF^{-1}(\psi_{\omega,\sigma})(\zeta) e^{i\zeta D_a} f \,d\zeta   \\
	  & = (1+\sigma  L_{\omega^{\perp}} +\sigma^2 \langle \omega,D_a\rangle^2)^{-N}  \\
	& \quad \quad \circ \int_{\R^d} ( (1-\sigma  \Delta_{\omega^{\perp}}  -  \sigma^2 \langle \omega,\nabla_\zeta\rangle^2)^N)^\ast (\chi_{\omega}^j \cdot \calF^{-1}(\psi_{\omega,\sigma}))(\zeta) e^{i\zeta D_a} (1_{F} f) \,d\zeta.
\end{align} 
Consider first the case   $d_{\omega}(E,F)\leq \sigma$,  for which we take $j=0$.  
According to Lemma \ref{lem:wavepackprop}, we have $\|\calF^{-1}(\psi_{\omega,\sigma})\|_{L^1(\R^d)} \lesssim \sigma^{-\frac{d-1}{4}}$. Similarly, one can check  that 
$$\|\zeta \mapsto (\sigma \langle \omega,\nabla_\zeta\rangle)^\beta  (\sigma\Delta_{\omega^{\perp}})^\alpha \calF^{-1}(\psi_{\omega,\sigma})(\zeta)\|_{L^1(\R^d)} \lesssim \sigma^{-\frac{d-1}{4}}$$
 for all $\alpha \in \N_0^d$ and $\beta \in \N_0$.  
We use this estimate together with Proposition \ref{prop:bddgrp} and Part (i) to obtain for $N>\frac{d+1}{2}$ 
\begin{align*}
	\|\psi_{\omega,\sigma}(D_a)f\|_{L^{2}(\R^d)} 
	\lesssim \sigma^{-\frac{d-1}{4}} \| (1+\sigma  L_{\omega^{\perp}} +\sigma^2 \langle \omega,D_a\rangle^2)^{-N}\|_{1 \to 2} \|f\|_{L^1(\R^d)} \lesssim \sigma^{-\frac{d}{2}} \|f\|_{L^1(\R^d)}. 
\end{align*}
In the case  $d_{\omega}(E,F)>\sigma$,  we choose $j=1$ in \eqref{eq:osci}. Then note that according to the choice of  $\chi_{\omega}$,  we have for $\sigma \in (0,1)$ that   $\|\zeta \mapsto (\sigma \langle \omega,\nabla_\zeta\rangle)^\beta  (\sigma\Delta_{\omega^{\perp}})^\alpha \chi_{\omega}(\zeta)\|_\infty \lesssim  (\frac{\sigma}{d_{\omega}(E,F)})^{|\alpha|+\beta} \lesssim 1$, for all $\alpha \in \N_0^d$, $\beta \in \N_0$.
Using the product rule, a version of \eqref{eq:sizeFourier-theta} for derivatives of $\calF^{-1}(\psi_{\omega,\sigma})$,  Part (i), and an anisotropic change of variable, we obtain 
\begin{align*}
	& \|1_E \psi_{\omega,\sigma}(D_a) (1_F f)\|_{2} \\
	& \quad \lesssim \sigma^{-\frac{d+1}{4}} \|1_F f\|_1  \sup_{\substack{\alpha \in \N_0^d, \, \beta \in \N_0 \\ |\alpha|+2\beta \leq N  }}  \int_{ \{ |\xi| \geq \frac{d(E,F)}{\kappa} \} \cap \{|\langle \omega,\xi\rangle| \geq \frac{\omega.d(E,F)}{\kappa} \}}  | (\sigma \langle \omega,\nabla_\zeta\rangle)^\beta (\sqrt{\sigma} \partial_\zeta)^\alpha\calF^{-1}(\psi_{\omega,\sigma})(\zeta)|\,d\zeta \\
	& \quad  \lesssim \sigma^{-\frac{d+1}{4}}  \sigma^{-\frac{3d+1}{4}}  \|1_F f\|_1  \int_{ \{ |\xi| \geq \frac{d(E,F)}{\kappa} \} \cap \{|\langle \omega,\xi\rangle| \geq \frac{\omega.d(E,F)}{\kappa} \}}  (1+\sigma^{-1}|\zeta|^2 + \sigma^{-2} \langle \omega,\zeta\rangle^2)^{-\tilde{N}} \,d\zeta \\
& \quad \lesssim \sigma^{-\frac{d}{2}} (1+ \frac{d_{\omega}(E,F)}{\sigma} )^{-(2\tilde{N}-d)}  \|1_F f\|_1. 
\end{align*} 
Choosing $\tilde{N}$ large enough in \eqref{eq:sizeFourier-theta} yields the result. \\
 (iii) This is similar to (i) and (ii), but simpler. By Theorem  \ref{thm:AMcT}, we have that 
\begin{align*}
\| (1+ \sigma^2 L)^{-N}f\|_{L^{r}(\R^d)}  \leq C \sigma^{-d(\frac{1}{p}-\frac{1}{r})} \|f\|_{L^p(\R^d)},
\end{align*}
for $N>\frac{d+1}{2}$. Integrating by parts, and using Lemma \ref{lem:od} together with Proposition \ref{prop:bddgrp}, we obtain that
\begin{align*} 
\|1_E \psi_{\omega,\sigma}(D_a)(1_F f)\|_{L^{r}(\R^d)} 
& \lesssim 
 \sigma^{-d(\frac{1}{p}-\frac{1}{r})} (1+\frac{d(E,F)}{\sigma})^{-M} 
 \int \limits _{\R^{d}} |(\sigma^{2} \Delta)^\alpha \calF^{-1}(\psi_{\omega,\sigma})| d\xi \cdot\|1_F f\|_{L^p(\R^d)}\\
 & \lesssim 
 \sigma^{-d(\frac{1}{p}-\frac{1}{r})}  \sigma^{-\frac{d-1}{4}}(1+\frac{d(E,F)}{\sigma})^{-M} \|1_F f\|_{L^p(\R^d)},
 \end{align*} 
using that, for all $\alpha \in \N$, 
$\|\zeta \mapsto (\sigma^{2} \Delta)^\alpha \calF^{-1}(\psi_{\omega,\sigma})(\zeta)\|_{L^1(\R^d)} \lesssim \sigma^{-\frac{d-1}{4}}$,
by Lemma \ref{lem:wavepackprop}.
\end{proof}

\section{The Hardy-Sobolev spaces $H^{p,s}_{FIO,a}(\R^d)$}
\label{sec:space}

In the following, we denote by  $\Psi \in C_c^\infty(\R^d)$  the function defining the wave packet transforms from Section \ref{sec:wavepackets}.
We denote by $H^1_L(\R^d)$ the Hardy space associated with $L$ as defined in \cite{DuongLi}. 
Recall that for all $f \in H^1_L(\R^d)$, we have by Lemma \ref{lem:sfequiv}, 
$$
		\|f\|_{H^1_L(\R^d)} \sim \|(\sigma,x) \mapsto \Psi(\sigma D_a) f(x)\|_{T^{1,2}(\R^d)}.
$$

\begin{Def}
Define
\begin{align*}
	\calS_1 =\{f \in H^1_L(\R^d)\,:\, \exists  g \in L^1(\R^{d})\cap L^{2}(\R^{d}) \;\; \exists \tau>0 \;\; f = \Psi(\tau D_{a})g\},
\end{align*}
and for $p \in (1,\infty)$
\begin{align*}
\mathcal{S}_{p} &= \{f \in L^p(\R^{d}) \, :\,  \exists g \in L^p(\R^{d})\cap L^{2}(\R^{d}) \;\; \exists \tau>0 \;\; f = \Psi(\tau D_{a})g\}.
\end{align*}
\end{Def}

\begin{Lemma}
\label{lem:welldef}
Let $p \in [1,\infty)$  and $f \in \mathcal{S}_{p}$. Then, for all $\omega \in S^{d-1}$, $\varphi_{\omega}(D_{a})f \in L^{p}(\R^{d})$, and, in the case $p=1$, $\varphi_{\omega}(D_{a})f \in H^1_L(\R^{d})$, each with norm independent of $\omega$. \end{Lemma}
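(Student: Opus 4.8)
The claim is that for $f = \Psi(\tau D_a)g$ with $g \in L^p \cap L^2$ (and $g \in L^1 \cap L^2$ when $p=1$), the functions $\varphi_\omega(D_a)f$ lie in $L^p$ (resp.\ $H^1_L$) with norm bounded uniformly in $\omega \in S^{d-1}$. The basic idea is that $\varphi_\omega$ is, by construction, a superposition $\int_0^4 \psi_{\omega,\tau}\,\frac{d\tau}{\tau}$ of wave packet symbols supported in a small parabolic cap, and all its relevant symbol bounds are encoded in the lemma preceding Definition~\ref{def:dirac}'s block (the $\varphi_\omega$-properties lemma from \cite[Remark 3.3]{Rozendaal}). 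Thus $\varphi_\omega$ is a ``nice'' symbol, and I want to transfer the Mikhlin-type multiplier bound for its Euclidean analogue to the operator $\varphi_\omega(D_a)$ via the Phillips calculus, exactly as set up in the Preliminaries (Theorem~\ref{thm:mult} combined with the Coifman--Weiss transference Theorem~\ref{thm:transf}).

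\emph{First}, I would reduce to a $\tau$-independent statement: since $\Psi(\tau D_a)$ has symbol supported in $\{\tfrac{1}{2\tau}\le |\zeta| \le \tfrac{2}{\tau}\}$, for fixed $\tau$ the operator $\varphi_\omega(D_a)\Psi(\tau D_a)$ is really $m_{\omega,\tau}(D_a)$ where $m_{\omega,\tau} = \varphi_\omega \cdot \Psi(\tau\,\cdot\,)$; since $g \in L^2$ and $g\in L^p$, $f = \Psi(\tau D_a)g$ makes sense in both, and I only need $\varphi_\omega(D_a)$ to map this particular $f$ into $L^p$. \emph{Second}, the key estimate: I claim $\varphi_\omega(D_a) \in B(L^p(\R^d;\C^2))$ with norm uniform in $\omega$, for $p \in (1,\infty)$. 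To see this, note that $\varphi_\omega$ satisfies, for all $\alpha$ with $|\alpha|_\infty \le 1$, a bound of the form $|\zeta^\alpha \partial^\alpha_\zeta \varphi_\omega(\zeta)| \lesssim |\zeta|^{\frac{d-1}{4}}$ when $\zeta$ is restricted to the support of $\varphi_\omega$ (where $|\zeta| \le 8$ is not true; rather $|\zeta|\ge\tfrac18$). One issue: $\varphi_\omega$ itself is \emph{not} bounded since $\varphi_\omega(\zeta) \sim |\zeta|^{\frac{d-1}{4}}$ grows. So I would instead factor $\varphi_\omega(\zeta) = |\zeta|^{\frac{d-1}{4}} \widetilde\varphi_\omega(\zeta)$ where $\widetilde\varphi_\omega$ is genuinely Mikhlin-bounded uniformly in $\omega$, and handle the $|\zeta|^{\frac{d-1}{4}}$ factor by composing with the fixed $\Psi(\tau D_a)$: on $\supp\Psi(\tau\,\cdot\,)$ one has $|\zeta| \sim \tau^{-1}$, so $|\zeta|^{\frac{d-1}{4}}\Psi(\tau\zeta) = \tau^{-\frac{d-1}{4}} \Phi(\tau\zeta)$ for a fixed Schwartz-class bump $\Phi$, and $\Phi(\tau D_a)$ is a bounded operator on $L^p$ by the same transference argument (its symbol is a compactly supported smooth function, hence Mikhlin). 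Then $\varphi_\omega(D_a)f = \varphi_\omega(D_a)\Psi(\tau D_a)g = \tau^{-\frac{d-1}{4}}\widetilde\varphi_\omega(D_a)\Phi(\tau D_a)g$, a composition of operators each bounded on $L^p$ (the $\widetilde\varphi_\omega$ one with norm uniform in $\omega$), so $\varphi_\omega(D_a)f \in L^p$ with the asserted uniformity. For the transference step I invoke Proposition~\ref{prop:bddgrp} (the group $\exp(it\xi.D_a)$ is bounded on $L^p$), which is exactly the hypothesis of Theorem~\ref{thm:transf}, and Theorem~\ref{thm:mult} to bound the multiplier norm of $\widetilde\varphi_\omega$ and of $\Phi$ uniformly.

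\emph{Third}, the case $p=1$: here I cannot use $L^1$ boundedness of Fourier multipliers, so instead I work with $H^1_L$. Since $\|h\|_{H^1_L} \sim \|(\sigma,x)\mapsto \Psi(\sigma D_a)h(x)\|_{T^{1,2}}$, I estimate $\|\Psi(\sigma D_a)\varphi_\omega(D_a)f\|_{T^{1,2}}$. Writing $f = \Psi(\tau D_a)g$ with $g \in L^1 \cap L^2$, I would use Corollary~\ref{cor:funcalc} (or Lemma~\ref{lem:sfequiv}) to reduce to controlling the tent space norm of $\sigma^2 L e^{-\sigma^2 L}g$ after inserting $\varphi_\omega(D_a)$; the operator $\varphi_\omega(D_a)$ commutes with $L = D_a^2$ in the Phillips calculus, so $\Psi(\sigma D_a)\varphi_\omega(D_a)\Psi(\tau D_a)$ can be treated by the off-diagonal Lemma~\ref{lem:od} and the $H^\infty$-calculus of $L$ (Theorem~\ref{thm:AMcT}), giving $T^{1,2}$-boundedness with the Mikhlin norm of the symbol—again uniform in $\omega$ after the factorization above—playing the role of the constant. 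Concretely: $g \in L^1$ lies in $H^1_L$ since $tLe^{-tL}$ has a Calderón--Zygmund kernel (the argument already used in the proof of Theorem~\ref{thm:HpD}), hence $f \in H^1_L$, and $\varphi_\omega(D_a)$ applied to an $H^1_L$ function stays in $H^1_L$ by Corollary~\ref{cor:funcalc} applied to $g(L) = $ (a suitable holomorphic function absorbing the $|\zeta|^{(d-1)/4}$ weight restricted to the annulus), with norm uniform in $\omega$.

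\emph{Main obstacle.} The delicate point is the growth $\varphi_\omega(\zeta) \sim |\zeta|^{\frac{d-1}{4}}$ as $|\zeta|\to\infty$: $\varphi_\omega$ is not a bounded symbol, so one genuinely needs to exploit that it is always applied after a localisation $\Psi(\tau D_a)$ that confines the frequency to $|\zeta|\sim \tau^{-1}$, converting the unbounded weight into the harmless constant $\tau^{-(d-1)/4}$. Getting the uniformity in $\omega$ out of Theorem~\ref{thm:mult} requires checking that the constants in the symbol estimates for $\widetilde\varphi_\omega$ (i.e.\ $|\zeta^\alpha\partial^\alpha\widetilde\varphi_\omega(\zeta)| \le C$ with $|\alpha|_\infty\le 1$) are independent of $\omega$; this follows from the $\varphi_\omega$-properties lemma (bounding $\langle\omega,\nabla_\zeta\rangle^\beta\partial^\alpha_\zeta\varphi_\omega$ by $C|\zeta|^{(d-1)/4 - |\alpha|_1/2 - \beta}$ with $C$ independent of $\omega$), combined with the observation that ordinary derivatives $\partial_\zeta^\alpha$ in directions other than $\omega$ only cost $|\zeta|^{-1/2}$ per derivative, which is better than the $|\zeta|^{-1}$ that Mikhlin allows—so the estimates are comfortably satisfied. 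The $p=1$ case requires a little more care because one must stay within the $H^1_L$ framework throughout rather than using $L^1$ multiplier bounds, but all the needed machinery (Lemma~\ref{lem:sfequiv}, Corollary~\ref{cor:funcalc}, the $H^1_L = $ "$L^1$-type" identification from Theorem~\ref{thm:HpD}'s proof) is already in place.
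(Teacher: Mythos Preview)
Your overall strategy is sound, but there is a concrete gap in the factorisation step, and the paper takes a different (and shorter) route.

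\textbf{The gap.} You claim that $\widetilde\varphi_\omega(\zeta):=|\zeta|^{-\frac{d-1}{4}}\varphi_\omega(\zeta)$ is a Marcinkiewicz--Lizorkin multiplier uniformly in $\omega$, and hence that $\widetilde\varphi_\omega(D_a)\in B(L^p)$ on its own. This fails for $\omega$ not aligned with a coordinate axis. On $\supp\varphi_\omega$ one has $|\hat\zeta-\omega|\lesssim|\zeta|^{-1/2}$, so for any $j$ with $\omega_j\neq 0$ we have $|\zeta_j|\sim|\omega_j|\,|\zeta|$. A Euclidean derivative $\partial_{\zeta_j}$ picks up the perpendicular component of the angular cut-off, giving only $|\partial_{\zeta_j}\widetilde\varphi_\omega(\zeta)|\lesssim|\zeta|^{-1/2}$, whence $|\zeta_j\partial_{\zeta_j}\widetilde\varphi_\omega(\zeta)|\gtrsim|\omega_j|\,|\zeta|^{1/2}\to\infty$. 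Thus the condition of Theorem~\ref{thm:mult} is violated (take e.g.\ $d=2$, $\omega=(1,1)/\sqrt2$). Your statement that perpendicular derivatives ``cost $|\zeta|^{-1/2}$, which is better than the $|\zeta|^{-1}$ Mikhlin allows'' overlooks that Marcinkiewicz weighs each derivative by the corresponding coordinate $\zeta_j$, not by $|\zeta|$; when $\omega$ is oblique, $|\zeta_j|$ is of full size $|\zeta|$ in \emph{every} coordinate direction on $\supp\varphi_\omega$.

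The fix is not to separate the factors: the \emph{product} $\varphi_\omega(\zeta)\Psi(\tau\zeta)$ is compactly supported and smooth, so it does satisfy the Marcinkiewicz condition, with constants depending on $\tau$ but uniform in $\omega$ (the derivative bounds on $\varphi_\omega$ are uniform in $\omega$, and the support is now contained in $\{|\zeta|\sim\tau^{-1}\}$). Then Theorems~\ref{thm:mult} and~\ref{thm:transf} together with Proposition~\ref{prop:bddgrp} give $\|\varphi_\omega(D_a)\Psi(\tau D_a)g\|_p\lesssim_\tau\|g\|_p$ uniformly in $\omega$, which is exactly what is needed.

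\textbf{Comparison with the paper.} The paper bypasses multiplier theory altogether: it simply observes that $\varphi_\omega(D_a)f=\varphi_\omega(D_a)\Psi(\tau D_a)g$ has the same symbol structure as $\psi_{\omega,\tau}(D_a)g$ (same parabolic support and derivative bounds, up to enlarging the constants in the definition of $\psi_{\omega,\tau}$), and then invokes Lemma~\ref{lem:kernelpsi}(iii) with $p=r$ directly to get $\|\psi_{\omega,\tau}(D_a)g\|_p\lesssim\tau^{-\frac{d-1}{4}}\|g\|_p$ uniformly in $\omega$. For $p=1$ it again uses the kernel estimates of Lemma~\ref{lem:kernelpsi} (now combined with $H^1_L\to L^1$ boundedness of the transport group, obtained from Riesz transform bounds), rather than tent-space arguments. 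This is shorter because the wave-packet off-diagonal bounds are already available; your transference route would re-derive a weaker version of them.
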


\begin{proof}
We have that $\varphi_{\omega}(D_{a})f = \psi_{\omega,\tau}(D_{a})g$ for some $g \in L^{p}(\R^d)$,  up to a change of constants in the support conditions of $\psi_{\omega,\tau}$. 
By Lemma \ref{lem:kernelpsi}, we have $\psi_{\omega,\tau}(D_{a}) \in B(L^{p}(\R^{d}))$, and thus $\|\varphi_{\omega}(D_{a})f\|_{p} \lesssim_\tau \|g\|_{p}$.
In the case $p=1$, we obtain that $\|\psi_{\omega,\tau}(D_a) g\|_{L^{1}} \lesssim \|g\|_{H^{1}_L}$ by reasoning as in the proof of \ref{lem:kernelpsi} (iii), using the boundedness of Riesz transforms associated with $L$ from $H^{1}_{L}$ to $L^{1}$ to deduce the $H^{1}_{L}$ to $L^{1}$ uniform  boundedness of the transport group $(\exp(i\xi D_{a}))_{\xi \in \R^{d}}$.
We moreover have that $\psi_{\omega,\tau}(D_a) g \in R(L)$, since $\Psi$ is supported away from $0$, hence $\psi_{\omega,\tau}(D_a) g \in H^1_L(\R^d)$. 
\end{proof}

\begin{Cor}
\label{cor:welldef}
Let $p \in [1,\infty)$,  $s \in \R$,  and $f \in \mathcal{S}_{p}$. Then
$$
\omega \mapsto [(\sigma,x) \mapsto 1_{(1,\infty)}(\sigma)
\Psi(\sigma D_{a})f(x) + 1_{[0,1]}(\sigma) \sigma^{-s} 
\varphi_{\omega}(D_{a})\Psi(\sigma D_{a})f(x)]\in L^{p}(S^{d-1};T^{p,2}(\R^{d})).
$$
\end{Cor}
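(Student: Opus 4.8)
The plan is to reduce the assertion to a uniform-in-$\omega$ bound on the $T^{p,2}(\R^d)$-norm of the inner function, together with strong measurability of the map $\omega\mapsto F_\omega$ (where $F_\omega$ denotes the inner function); since $|S^{d-1}|<\infty$, this yields membership in $L^\infty(S^{d-1};T^{p,2}(\R^d))\subset L^p(S^{d-1};T^{p,2}(\R^d))$. I would split $F_\omega=F^{\mathrm{high}}+F^{\mathrm{low}}_\omega$ along the cut-offs $1_{(1,\infty)}$ and $1_{[0,1]}$. The high part does not depend on $\omega$, and since shrinking the $\sigma$-range only decreases $\calA$, we have $\|F^{\mathrm{high}}\|_{T^{p,2}}\le\|(\sigma,x)\mapsto\Psi(\sigma D_a)f\|_{T^{p,2}}$, which by Lemma \ref{lem:sfequiv} is comparable to $\|f\|_{H^1_L}$ when $p=1$ and to $\|f\|_{L^p}$ when $p\in(1,\infty)$ (the latter via the case $s=0$ of Theorem \ref{thm:HpD}, giving $H^p_L=L^p$). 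This is finite because $f\in\mathcal{S}_p$, and is independent of $\omega$.

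The conceptual point of the argument is that the weight $\sigma^{-s}$, which looks dangerous --- in particular Theorem \ref{thm:HpD} is not available for $p=1$ --- is in fact harmless on $\mathcal{S}_p$. Writing $f=\Psi(\tau D_a)g$ with $g\in L^p(\R^d)\cap L^2(\R^d)$, commutativity of the Phillips calculus of $D_a$ gives $F^{\mathrm{low}}_\omega(\sigma,\cdot)=1_{[0,1]}(\sigma)\sigma^{-s}\Psi(\sigma D_a)h_\omega$ with $h_\omega:=\varphi_\omega(D_a)f$. Since $\Psi(\sigma\zeta)\Psi(\tau\zeta)\equiv0$ unless $\sigma\in[\tau/4,4\tau]$ (both factors are supported in $\{\tfrac12\le|\sigma\zeta|,|\tau\zeta|\le2\}$), the section $F^{\mathrm{low}}_\omega(\sigma,\cdot)$ vanishes for $\sigma$ outside this fixed compact interval, on which $|\sigma^{-s}|\le C_{\tau,s}$. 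Hence $\calA F^{\mathrm{low}}_\omega\le C_{\tau,s}\,\calA\big((\sigma,x)\mapsto\Psi(\sigma D_a)h_\omega\big)$ pointwise, so by Lemma \ref{lem:sfequiv} again $\|F^{\mathrm{low}}_\omega\|_{T^{p,2}}\lesssim_{\tau,s}\|h_\omega\|_{H^1_L}$ if $p=1$ and $\lesssim_{\tau,s}\|h_\omega\|_{L^p}$ if $p\in(1,\infty)$. Lemma \ref{lem:welldef} then bounds $\|h_\omega\|_{H^1_L}$, respectively $\|h_\omega\|_{L^p}$, independently of $\omega$, which gives $\sup_{\omega\in S^{d-1}}\|F_\omega\|_{T^{p,2}}<\infty$ and, notably, uses Theorem \ref{thm:HpD} only in the case $s=0$.

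The remaining step is strong measurability of $\omega\mapsto F_\omega$ into $T^{p,2}$, which I expect to be the only genuinely technical point. Only the low part varies with $\omega$, and $h_\omega-h_{\omega'}=[\,(\varphi_\omega-\varphi_{\omega'})\Psi(\tau\cdot)\,](D_a)g$, with $\omega\mapsto(\varphi_\omega-\varphi_{\omega'})\Psi(\tau\cdot)$ continuous in $C_c^\infty(\R^d)$ with fixed compact support. Reproducing the proof of Lemma \ref{lem:welldef} --- which uses only the kernel estimates of Lemma \ref{lem:kernelpsi}, and these are uniform in $\omega$ --- yields continuity of $\omega\mapsto h_\omega$ into $L^p$, respectively $H^1_L$; applying the estimate of the previous paragraph to $h_\omega-h_{\omega'}$ then gives continuity of $\omega\mapsto F_\omega$ into $T^{p,2}$. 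Together with the uniform bound and $|S^{d-1}|<\infty$, this finishes the proof. (If one prefers to avoid the continuity argument, it suffices to note that $T^{p,2}$ is separable and that $\omega\mapsto F_\omega$ is weakly measurable, whence Pettis' theorem applies.)
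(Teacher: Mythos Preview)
Your proof is correct and follows essentially the same route as the paper's one-line argument (``This follows from Lemma \ref{lem:welldef} and Theorem \ref{thm:HpD}''), but you spell out the key detail that makes it work: the spectral localisation $f=\Psi(\tau D_a)g$ forces $\sigma\in[\tau/4,4\tau]$ in the low-frequency part, so the weight $\sigma^{-s}$ is harmless and one only needs the unweighted ($s=0$) tent-space characterisation. Your explicit treatment of measurability and of the endpoint $p=1$ (where Theorem \ref{thm:HpD} is not available as stated) is more careful than the paper's.
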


\begin{proof}
This follows from Lemma \ref{lem:welldef} and Theorem \ref{thm:HpD}.
\end{proof}

\begin{Lemma}
\label{lem:comp}
Let $\widetilde{\Psi} \in C_{c} ^{\infty} (\R^{d})$ be  non-degenerate, supported away from $0$
 and such that $\widetilde{\Psi}=\widetilde{\Psi}^{s}$.  Let $p \in (1,\infty)$, $s \in \R$,  and $f \in \mathcal{S}_{p}$. Then, we have that
 \begin{align*} 
\omega \mapsto [(\sigma,x) \mapsto 1_{(1,\infty)}(\sigma)
\widetilde{\Psi}(\sigma D_{a})f(x) + 1_{[0,1]}(\sigma)\sigma^{-s} 
\varphi_{\omega}(D_{a})\widetilde{\Psi}(\sigma D_{a})f(x)]\in L^{p}(S^{d-1};T^{p,2}(\R^{d})),
\end{align*}
with an equivalent norm to the corresponding map in Corollary \ref{cor:welldef},  and 
\begin{align*}
& \|(I+\sqrt{L})^{-M}f\|_{L^{p}} \\
	& \quad  \lesssim \|\omega \mapsto[(\sigma,x) \mapsto 1_{(1,\infty)}(\sigma)
\Psi(\sigma D_{a})f(x) + 1_{[0,1]}(\sigma) \sigma^{-s} 
\varphi_{\omega}(D_{a})\Psi(\sigma D_{a})f(x)]\|_{ L^p(S^{d-1};T^{p,2}(\R^{d}))},
\end{align*}
for all $M \in \N$ such that $M\geq \frac{d-1}{4}-s$. \end{Lemma}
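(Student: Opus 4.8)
The statement has two parts: (a) an equivalence of norms between the $\widetilde\Psi$-version and the $\Psi$-version of the wave-packet lift, and (b) a lower bound recovering $\|(I+\sqrt L)^{-M}f\|_p$ from the $\Psi$-version of the lift, valid when $M \geq \frac{d-1}{4}-s$. I would treat (a) first and use it, together with the reproducing identity, to deduce (b).

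For part (a), the plan is to interchange $\varphi_\omega(D_a)$ with the Calderón-type reproducing formula in the $\sigma$-variable. Since $\Psi$ and $\widetilde\Psi$ are both non-degenerate and supported away from $0$, Lemma~\ref{lem:sfequiv} (in its weighted $T^{p,2}$ form, with the cutoffs $1_{[0,1)}\sigma^{-s}$, $1_{[1,\infty)}$, already built into Theorem~\ref{thm:HpD}) gives, for fixed $\omega$ and for $g := \varphi_\omega(D_a)f$ (which lies in $L^p$ by Lemma~\ref{lem:welldef}),
\begin{align*}
\|(\sigma,x)\mapsto 1_{[0,1]}\sigma^{-s}\widetilde\Psi(\sigma D_a)g(x) + 1_{(1,\infty)}\widetilde\Psi(\sigma D_a)g(x)\|_{T^{p,2}}
\sim \|(I+\sqrt L)^s g\|_p,
\end{align*}
and the same with $\Psi$ in place of $\widetilde\Psi$; hence the two are comparable, with implied constants independent of $\omega$ (they depend only on $\Psi,\widetilde\Psi,p,s,d$ through the proof of Lemma~\ref{lem:sfequiv}). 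Here I would note that $\varphi_\omega(D_a)$ commutes with $\Psi(\sigma D_a)$ and $\widetilde\Psi(\sigma D_a)$ since all are functions of $D_a$ in the Phillips calculus, so the lift of $f$ in direction $\omega$ is exactly the lift of $g=\varphi_\omega(D_a)f$ with the directional factor stripped off. Raising to the $p$-th power and integrating over $\omega \in S^{d-1}$ then gives the claimed equivalence of $L^p(S^{d-1};T^{p,2})$ norms; the fact that the right-hand side is finite is Corollary~\ref{cor:welldef}. The one point requiring care is the low-frequency regime $\sigma \in (1,\infty)$, where there is no $\varphi_\omega$ factor at all and the two sides are literally the $\Psi$- vs $\widetilde\Psi$-square functions of $f$; that is again handled directly by Lemma~\ref{lem:sfequiv}.

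For part (b), I would integrate the wave-packet lift against itself over $\omega$ to reconstruct $f$, using the reproducing identity \eqref{eq:resol-identity}:
\begin{align*}
f = |S^{d-1}|^{-1}\int_{S^{d-1}}\int_1^\infty \Psi(\sigma D_a)^2 f\,\frac{d\sigma}{\sigma}\,d\omega
+ \int_{S^{d-1}}\int_0^1 \varphi_\omega(D_a)^2 \Psi(\sigma D_a)^2 f\,\frac{d\sigma}{\sigma}\,d\omega.
\end{align*}
Apply $(I+\sqrt L)^{-M}$ to both sides. In the high-$\sigma$ piece, use that $(I+\sqrt L)^{-M}\Psi(\sigma D_a)$ is uniformly bounded on $L^p$ together with the $T^{p,2}$-to-$L^p$ control from Theorem~\ref{thm:HpD} (i.e. $\|\int_1^\infty \Psi(\sigma D_a)G(\sigma)\tfrac{d\sigma}{\sigma}\|_p \lesssim \|1_{(1,\infty)}G\|_{T^{p,2}}$ by duality/adjointness of $\pi_a$). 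In the low-$\sigma$ piece, write $\varphi_\omega(D_a)^2\Psi(\sigma D_a)^2 = \varphi_\omega(D_a)\cdot[\varphi_\omega(D_a)\Psi(\sigma D_a)]\cdot\Psi(\sigma D_a)$, and move a factor $\sigma^{-s}\cdot\sigma^{s}$ in; the inner bracket $\varphi_\omega(D_a)\Psi(\sigma D_a)f$ is exactly (up to the $\sigma^{-s}$ weight) the quantity being estimated on the right-hand side, so one wants to bound the operator $(\sigma,\omega,x)\mapsto \sigma^s\varphi_\omega(D_a)\Psi(\sigma D_a)(\,\cdot\,)$ from $L^p(S^{d-1};T^{p,2})$ into $L^p$ after composition with $(I+\sqrt L)^{-M}$. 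The exponent condition $M \geq \frac{d-1}{4}-s$ is precisely what is needed to absorb the $\sigma^{-\frac{d-1}{4}}$ loss coming from the wave-packet kernel estimates in Lemma~\ref{lem:kernelpsi}(iii) against the smoothing $\sigma^{M}$ gained from $(I+\sqrt L)^{-M}$ at frequency $\sim\sigma^{-1}$, once the $\sigma^s$ weight is accounted for. Concretely: $\varphi_\omega(D_a)$ followed by $(I+\sqrt L)^{-M}$ maps $L^p \to L^p$ with norm $\lesssim \sigma^{M+s-\frac{d-1}{4}}\cdot\sigma^{-s}$ on functions spectrally localised at scale $\sigma$, uniformly in $\omega$, and then the $\omega$-integral together with Minkowski's inequality and the $T^{p,2}$-to-$L^p$ bound closes the estimate.

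The main obstacle I anticipate is the low-frequency bookkeeping in part (b): making rigorous the pairing of the reproducing formula against the lift while keeping all constants uniform in $\omega$, and verifying that the operator $(I+\sqrt L)^{-M}\varphi_\omega(D_a)\Psi(\sigma D_a)$ genuinely has the claimed $\sigma$-gain on $L^p$ — this needs Lemma~\ref{lem:kernelpsi} (for the $L^p$ boundedness of the wave packet, with its $\sigma^{-\frac{d-1}{4}}$ constant) combined with the $H^\infty$-calculus/Gaussian bounds from Theorem~\ref{thm:AMcT} (for the $\sigma^{M}$ decay of $(I+\sqrt L)^{-M}$ at frequency $\sigma^{-1}$), and the two must be chained carefully since $\varphi_\omega(D_a)$ is only a parabolic (not isotropic) localisation. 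Everything else — the commutation of $\varphi_\omega(D_a)$ with the $\Psi(\sigma D_a)$'s, the use of Lemma~\ref{lem:sfequiv} for part (a), and the $T^{p,2}\to L^p$ duality — is routine given the results already established.
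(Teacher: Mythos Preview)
Your treatment of part (a) is essentially the paper's: fix $\omega$, apply Lemma~\ref{lem:sfequiv} to $g=\varphi_\omega(D_a)f$ on the high-frequency piece and to $f$ directly on the low-frequency piece, then integrate in $\omega$. The paper says exactly this in one line.

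For part (b) your route diverges from the paper's, and the divergence matters. You reconstruct $f$ via the full reproducing identity \eqref{eq:resol-identity} (the one with $\varphi_\omega(D_a)^2$) and then try to bound a synthesis map $L^p(S^{d-1};T^{p,2})\to L^p$. This forces you to control an operator of the form
\[
G\mapsto \int_0^1 (I+\sqrt L)^{-M}\sigma^{s}\varphi_\omega(D_a)\Psi(\sigma D_a)\,G(\sigma,\cdot)\,\frac{d\sigma}{\sigma}
\]
from $T^{p,2}$ to $L^p$, and you correctly flag this as the obstacle. The issue is that the uniform $L^p\to L^p$ bounds you extract from Lemma~\ref{lem:kernelpsi}(iii) are not by themselves enough to give a $T^{p,2}\to L^p$ bound; you need a genuine Calder\'on-type synthesis estimate (i.e., the dual of a square-function bound) for a family carrying the anisotropic factor $\varphi_\omega(D_a)$. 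This can be done, but it is extra work not already available in the paper's toolbox.

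The paper sidesteps this entirely. It never represents $f$ as an integral; instead it starts from Theorem~\ref{thm:HpD} to write $\|(I+\sqrt L)^{-M}f\|_p$ as a tent-space norm, then uses Corollary~\ref{cor:funcalc} (boundedness of $H^\infty(L)$ on $T^{p,2}$) to reduce the high-frequency piece to $\|1_{[0,1]}\sigma^{M}\Psi^2(\sigma D_a)f\|_{T^{p,2}}$. At this point it invokes the \emph{other} reproducing formula, \eqref{eq:repro}, namely $\sigma^{-\frac{d-1}{4}}\int_{S^{d-1}}\varphi_{\omega,\sigma}(D_a)\,d\omega = C_\sigma I$, which is a pointwise identity inside the tent-space norm: inserting it costs exactly $\sigma^{\frac{d-1}{4}}$, so the integrand becomes $\sigma^{M-\frac{d-1}{4}}\psi_{\omega,\sigma}(D_a)\Psi(\sigma D_a)f$, and the condition $M\geq\frac{d-1}{4}-s$ lets you replace $\sigma^{M-\frac{d-1}{4}}$ by $\sigma^{-s}$ on $(0,1]$. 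No synthesis bound is needed at all. Using \eqref{eq:repro} rather than \eqref{eq:resol-identity} is the key simplification you are missing.
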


\begin{proof}
Let $M \in \N$ be such that $M\geq \frac{d-1}{4}-s$.
Lemma \ref{lem:sfequiv} and Corollary \ref{cor:welldef} give the first part,  and Corollary  \ref{cor:funcalc}, Lemma \ref{lem:sfequiv} together with Theorem \ref{thm:HpD} give
\begin{align*}
\|(I+\sqrt{L})^{-M}f\|_{L^{p}} &  \lesssim \|(\sigma,x) \mapsto 1_{(1,\infty)}(\sigma)
\Psi(\sigma D_{a})(I+\sqrt{L})^{-M}f(x)\|_{T^{p,2}(\R^{d})} \\
&\qquad+ \|(\sigma,x)\mapsto 1_{[0,1]}(\sigma)
(\sigma \sqrt{L})^{M}(I+\sqrt{L})^{-M}\Psi^{ 2}(\sigma D_{a})f(x)\|_{T^{p,2}(\R^{d})}.\end{align*}
Using Corollary \ref{cor:funcalc} again, we then have that
\begin{align*}
\|(I+\sqrt{L})^{-M}f\|_{L^{p}} &  \lesssim \|(\sigma,x) \mapsto 1_{(1,\infty)}(\sigma)
\Psi(\sigma D_{a})f(x)\|_{T^{p,2}(\R^{d})} \\
&\qquad+ \|(\sigma,x)\mapsto 1_{[0,1]}(\sigma)
\sigma^{M}\Psi^{ 2}(\sigma D_{a})f(x)\|_{T^{p,2}(\R^{d})}.\end{align*}

We then use the reproducing formula \eqref{eq:repro} to obtain that 
\begin{align*}
& \|(I+\sqrt{L})^{-M}  f\|_{L^{p}} \\
& \qquad \lesssim \|(\sigma,x) \mapsto 1_{(1,\infty)}(\sigma)
\Psi(\sigma D_{a})f(x) + 1_{[0,1]}(\sigma)
\int \limits _{S^{d-1}} \sigma^{M-\frac{d-1}{4}} \varphi_{ \omega,\sigma}(D_{a})\Psi^{ 2}(\sigma D_{a})f(x) d\omega\|_{T^{p,2}(\R^{d})}\\
& \qquad \lesssim 
\|\omega \mapsto [(\sigma,x) \mapsto 1_{(1,\infty)}(\sigma)
\Psi(\sigma D_{a})f(x) + 1_{[0,1]}(\sigma) \sigma^{-s} 
\varphi_{\omega}(D_{a})\Psi(\sigma D_{a})f(x)]\|_{L^{p}(S^{d-1};T^{p,2}(\R^{d})},
\end{align*} 
since  $M\geq \frac{d-1}{4}-s$.
\end{proof}

\begin{Def}
\label{def:space}
Let $p \in [1,\infty)$,  and $s\in \R$. We define the space $H^{p,s}_{FIO,a}(\R^d)$  as the completion of $\mathcal{S}_{p}$ for the norm defined by
\begin{align*}
&\|f\|_{H^{p,s}_{FIO,a}(\R^{d})}  \\ 
& \quad :=  
\|\omega \mapsto [(\sigma,x) \mapsto 1_{(1,\infty)}(\sigma)
\Psi(\sigma D_{a})f(x) + 1_{[0,1]}(\sigma) \sigma^{-s} 
\varphi_{\omega}(D_{a})\Psi(\sigma D_{a})f(x)] \|_{L^{p}(S^{d-1};T^{p,2}(\R^{d}))}.
\end{align*}
We write $H^{p}_{FIO,a}(\R^d):= H^{p,0}_{FIO,a}(\R^d)$.  
\end{Def}

\begin{Remark}
By Lemma \ref{lem:comp}, we have that $H^p_{FIO,a}(\R^d)$ is a subspace of the $M$-th extrapolation space associated with $L$, and is independent of the choice of $\Psi \in C_{c} ^{\infty}(\R^{d}) \backslash \{0\}$, supported away from $0$,  and such that $\widetilde{\Psi}=\widetilde{\Psi}^{s}$. 
\end{Remark}

\begin{Remark}
By Lemma \ref{lem:repro}, interpolation properties of $H^{p,s}_{FIO,a}(\R^{d})$ follow from the interpolation properties of weighted tent spaces (see \cite{amenta}) with the same proof as in \cite[Proposition 6.7]{HPR}.
\end{Remark}

We also have the following versions of \cite[Theorem 4.1]{Rozendaal} and \cite[Corollary 4.4]{Rozendaal}, respectively.

\begin{Prop} \label{prop:vertical-conical}
Let $p \in (1,\infty)$, and $s \in \R$. Let $q \in C_c^\infty(\R^d)$  radial  with $q(\zeta) \equiv 1$ for $|\zeta| \leq \frac{1}{8}$. Then 
$$
		\|f\|_{H^{p,s}_{FIO,a}(\R^d)} \simeq \|q(D_{a})f\|_{L^p(\R^d)} + \left( \int_{S^{d-1}} \|\varphi_\omega(D_{a})(I+\sqrt{L})^{s}f\|_{L^p(\R^d)}^p\,d\omega \right)^{1/p} \quad \forall f \in \mathcal{S}_{p}.
$$
\end{Prop}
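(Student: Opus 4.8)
The plan is to show that the $H^{p,s}_{FIO,a}$-norm, which is defined as an $L^p(S^{d-1};T^{p,2}(\R^d))$-norm of the wave packet transform, can be replaced by a ``vertical'' version in which the tent space square function is replaced by an $L^p$ norm in the interior variables and the $\sigma$-integral over $[0,1]$ is resolved by the reproducing formula. By Theorem \ref{thm:HpD} applied to the operator $\sqrt{L}$ (more precisely, by the characterization \eqref{eq:wTp} of $\|(I+\sqrt L)^s\cdot\|_p$ via the tent space norm of $1_{[0,1)}\sigma^{-s}\Psi(\sigma D_a)(\cdot) + 1_{[1,\infty)}\Psi(\sigma D_a)(\cdot)$), one has, for each fixed $\omega$, that
\begin{align*}
& \|1_{(1,\infty)}(\sigma)\Psi(\sigma D_a)f + 1_{[0,1]}(\sigma)\sigma^{-s}\varphi_\omega(D_a)\Psi(\sigma D_a)f\|_{T^{p,2}} \\
& \qquad \sim \|\Psi(\sigma D_a)f\|_{T^{p,2}(\sigma>1)} + \|(I+\sqrt L)^s \varphi_\omega(D_a)f\|_{H^p_L},
\end{align*}
after noting that $\varphi_\omega(D_a)$ commutes with $\Psi(\sigma D_a)$ and with $(I+\sqrt L)^s$ through the Phillips calculus of $D_a$. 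Since $H^p_L = L^p$ for $p\in(1,\infty)$ by Theorem \ref{thm:HpD} (via Lemma \ref{lem:sfequiv}), the second term is comparable to $\|\varphi_\omega(D_a)(I+\sqrt L)^s f\|_{L^p}$. Taking the $L^p(S^{d-1},d\omega)$ norm then produces the conical term on the right-hand side of the proposition, with the $\sigma>1$ contribution absorbed into the low-frequency term $\|q(D_a)f\|_{L^p}$ (using that on $[1,\infty)$ the symbol $\Psi(\sigma\zeta)$ is supported where $|\zeta| \lesssim 1$, and that $q\equiv 1$ there by a finite-overlap/Littlewood--Paley argument, so $\Psi(\sigma D_a)f = \Psi(\sigma D_a)q(D_a)f$ up to minor adjustment of $q$).

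The more delicate point is the uniformity in $\omega$ of the constants in the step above, and the transfer from a fixed-$\omega$ equivalence to one that survives integration over the sphere; this is exactly where one must invoke, for the second term, that $\varphi_\omega(D_a)f\in L^p$ with norm independent of $\omega$ (Lemma \ref{lem:welldef}), and that Corollary \ref{cor:funcalc} gives the square function bounds with constants independent of the $H^\infty$ function being applied. The first step (upper and lower bounds for the fixed-$\omega$ tent-space norm) will follow from Lemma \ref{lem:sfequiv}, the functional calculus estimate Corollary \ref{cor:funcalc}, and the off-diagonal bounds of Lemma \ref{lem:od}; these are all already available with $\omega$-uniform constants because $L$ itself does not depend on $\omega$. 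One should carry out the argument first for $f\in\mathcal S_p$, where everything is a genuine $L^p$ function by Lemma \ref{lem:welldef} and Corollary \ref{cor:welldef}, and then the equivalence of norms extends to the completion $H^{p,s}_{FIO,a}(\R^d)$ automatically.

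The main obstacle I expect is matching the high-$\sigma$ ($\sigma > 1$) part of the wave packet transform with the low-frequency term $\|q(D_a)f\|_{L^p}$ cleanly, including a correct choice of $q$: one needs $q(D_a)\Psi(\sigma D_a) = \Psi(\sigma D_a)$ for $\sigma\geq 1$ (up to a harmless error controllable by the functional calculus), and conversely that $\|q(D_a)f\|_{L^p}$ is dominated by the tent-space norm of $1_{(1,\infty)}(\sigma)\Psi(\sigma D_a)f$ together with the conical term. This is handled as in \cite[Theorem 4.1]{Rozendaal}: one writes $q(D_a)f$ via the reproducing formula \eqref{eq:resol-identity}, splitting into the $\sigma>1$ piece (which, after applying $q(D_a)$, is a fixed vertical square function, hence bounded on $L^p$ by Theorem \ref{thm:HpD}) and the $\sigma\in(0,1)$ piece, on which $q(D_a)\varphi_\omega(D_a)\Psi(\sigma D_a)^2$ has a symbol supported where $|\zeta|\sim \sigma^{-1}\gtrsim 1$; since $q\equiv 1$ only for $|\zeta|\leq \frac18$, one must be slightly careful and instead use that $q(D_a)$ applied to a function spectrally localized at high frequency is itself controlled by the functional calculus of $L$, so this piece contributes a term bounded by the full $H^{p,s}_{FIO,a}$-norm and can be absorbed. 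The remaining details — strong continuity, density of $\mathcal S_p$, and interchanging the $d\omega$ integral with the various operators — are routine given the preliminaries and the references to \cite{HPR, Rozendaal}.
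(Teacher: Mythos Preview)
Your proposal is correct and follows essentially the same route as the paper: both arguments reduce to Theorem \ref{thm:HpD} applied fixed-$\omega$ to the function $\varphi_\omega(D_a)f$ (using that $\varphi_\omega(D_a)$ commutes with the Phillips calculus of $D_a$), together with a separate treatment of the low-frequency piece $q(D_a)f$. The one place where the paper is more explicit than your sketch is in controlling the overlap region $\sigma\in[\tfrac18,1]$ in the bound for $\|q(D_a)f\|_{L^p}$: rather than writing $q(D_a)f$ via the reproducing formula, the paper applies Theorem \ref{thm:HpD} directly to $q(D_a)f$, obtains a residual term $\|1_{[\frac18,1]}(\sigma)\Psi(\sigma D_a)q(D_a)f\|_{T^{p,2}}$, bounds it by $\|(I+\sqrt L)^{-M}f\|_{L^p}$ (absorbing $(I+\sqrt L)^M$ into $\Psi$ on the compact $\sigma$-range), and then invokes Lemma \ref{lem:comp} to dominate this by $\|f\|_{H^{p,s}_{FIO,a}}$---this is the precise meaning of your ``can be absorbed.''
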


\begin{proof}
Let $f \in \mathcal{S}_{p}$. By Lemma \ref{lem:sfequiv}, we can choose $\Psi$ with an appropriate support, such that $\Psi(\sigma D_{a})f = \Psi(\sigma D_{a})q(D_{a})f$ for all $\sigma \geq 1$, $\Psi(\sigma D_{a})q(D_{a})=0$ for all $\sigma \leq \frac{1}{8}$, and $\varphi_{\omega}(D_{a})\Psi(\sigma D_{a}) =0$ for all $\sigma \geq 1$ and $\omega \in S^{d-1}$.

Then, by Theorem \ref{thm:HpD}, we have that
\begin{align*}
\|f\|_{H^{p,s}_{FIO,a}(\R^d)} &\lesssim \|(\sigma,x) \mapsto 1_{(1,\infty)}(\sigma)
\Psi(\sigma D_{a})q(D_{a})f(x)\|_{T^{p,2}(\R^{d})}\\ & \qquad\qquad  + \|\omega \mapsto [(\sigma,x)\mapsto 1_{[0,1]}(\sigma)\sigma^{-s} 
\varphi_{\omega}(D_{a})\Psi(\sigma D_{a})f(x)] \|_{L^{p}(S^{d-1};T^{p,2}(\R^{d}))} \\
&\lesssim \|q(D_{a})f\|_{L^p(\R^d)} + \left( \int_{S^{d-1}}  \|(I+\sqrt{L})^{s}\varphi_\omega(D_{a})f\|_{L^p(\R^d)}^p\,d\omega \right)^{1/p}. 
\end{align*}
In the other direction, Theorem \ref{thm:HpD} and the support properties of $q$ and $\Psi$ give us that
\begin{align*}
\|q(D_{a})f\|_{L^p(\R^d)} \lesssim \|f\|_{H^{p,s}_{FIO,a}(\R^d)} + \|(\sigma,x) \mapsto 1_{[\frac{1}{8},1]}(\sigma)
\Psi(\sigma D_{a})q(D_{a})f(x)\|_{T^{p,2}(\R^{d})}.
\end{align*}
With the same proof as in Lemma \ref{lem:sfequiv}, we then have that, for all $M \geq \frac{d-1}{4}-s$,
\begin{align*}
&\|(\sigma,x) \mapsto 1_{[\frac{1}{8},1]}(\sigma)
\Psi(\sigma D_{a})q(D_{a})f(x)\|_{T^{p,2}(\R^{d})} \\ & \quad \lesssim
\|(\sigma,x) \mapsto 1_{[\frac{1}{8},1]}(\sigma)
\int \limits _{0} ^{\infty}\Psi(\sigma D_{a})q(D_{a})\Psi(\tau D_{a})(I+\sqrt{L})^{M}(I+\sqrt{L})^{-M}
f(x)\frac{d\tau}{\tau}\|_{T^{p,2}(\R^{d})}\\ & \quad \lesssim \|(I+\sqrt{L})^{-M}f\|_{L^{p}(\R^{d})}.\end{align*}
Therefore, using Lemma \ref{lem:comp}, we have that $\|q(D_{a})f\|_{L^p(\R^d)} \lesssim \|f\|_{H^{p,s}_{FIO,a}(\R^d)}$.
For the second term, we use Theorem \ref{thm:HpD} and the support properties of $\Psi$ again to get that 
\begin{align*}
&\left( \int_{S^{d-1}} \|\varphi_\omega(D_{a})(I+\sqrt{L})^{s}f\|_{L^p(\R^d)}^p\,d\omega \right)^{1/p}
\\& \qquad \lesssim 
\|\omega \mapsto [ (\sigma,x) \mapsto 1_{[0,1)}(\sigma)\sigma^{-s}\varphi_\omega(D_{a})\Psi(\sigma D_{a})f(x)]\|_{L^{p}(S^{d-1};T^{p,2}(\R^{d}))}\\ &\qquad \lesssim \|f\|_{H^{p,s}_{FIO,a}(\R^d)}.
\end{align*}
\end{proof}

\begin{Prop} 
\label{prop:nontang}
Let $p \in (1,\infty)$.  Let $q \in C_c^\infty(\R^d)$  radial  with $q(\zeta) \equiv 1$ for $|\zeta| \leq \frac{1}{8}$,  and $\Phi \in \calS(\R^d)$ with $\Phi(0)=1$ and $\Phi_\sigma(\zeta) = \Phi(\sigma \zeta)$ for $\sigma>0$,  $\zeta \in \R^d$.  Then 
\begin{align*}
	 \|q(D_a) f\|_{L^p(\R^d)} 
	+ (\int_{S^{d-1}} \|(\sigma,\cdot)\mapsto  \Phi_\sigma(D_a) \varphi_\omega(D_a) f  \|_{T^{p,\infty}(\R^d)}^p \,d\omega)^{1/p} 
	\lesssim \|f\|_{H^p_{FIO,a}(\R^d)} 
	\quad \forall f \in \calS_p,
\end{align*}
and 
\begin{align*}
	 (\int_{S^{d-1}} \|  (\sigma,\cdot)\mapsto  \sigma^{\frac{d-1}{4}}  \Phi_\sigma(D_a) \varphi_\omega(D_a)^2 f  \|_{T^{p,\infty}(\R^d)}^p \,d\omega)^{1/p} 
	\lesssim \|f\|_{H^p_{FIO,a}(\R^d)} 
	\quad \forall f \in \calS_p.
\end{align*}
\end{Prop}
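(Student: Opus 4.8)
The plan is to reduce the non-tangential ($T^{p,\infty}$) estimates to the conical square function bound on $H^p_{FIO,a}(\R^d)$ from Definition \ref{def:space}, following the scheme of \cite[Corollary 4.4]{Rozendaal}. The key observation is that the non-tangential maximal function $\mathcal{N}F(x) := \sup_{\sigma>0} \aver{B(x,\sigma)} |F(y,\sigma)|\,dy$ associated with a function $F$ that enjoys enough off-diagonal decay in a single frequency shell can be dominated pointwise (up to constants) by a square function of a ``fattened'' version of $F$. Concretely, for the first estimate I would fix $\omega \in S^{d-1}$ and write $\Phi_\sigma(D_a)\varphi_\omega(D_a) f$ using the reproducing formula \eqref{eq:psi-identity} in the Phillips calculus of $D_a$, inserting $\int_0^\infty \psi_{\omega',\tau}(D_a)^2\,\frac{d\tau}{\tau}$ (integrated over $\omega' \in S^{d-1}$ as well, via \eqref{eq:reproL2}): since $\varphi_\omega$ localises frequencies to a parabolic cap around $\omega$ with $|\zeta| \gtrsim 1$, only $\tau$ bounded above and $\omega'$ close to $\omega$ contribute, so that the composition $\Phi_\sigma(D_a)\varphi_\omega(D_a)\psi_{\omega',\tau}(D_a)$ carries the kernel decay of Lemma \ref{lem:kernelpsi}(ii)--(iii) in the parabolic distance $d_\omega$, uniformly over the relevant $\sigma,\tau$.

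Having this, I would invoke the change of angle / tent space embedding results: for each fixed $\omega$, the operator family $(\Phi_\sigma(D_a)\varphi_\omega(D_a))_{\sigma}$ maps $T^{p,2}(\R^d)$ boundedly into $T^{p,\infty}(\R^d)$ when composed with the wave packet resolution, because the $d_\omega$-decay lets one transfer the $L^2$ averages over Euclidean balls $B(x,\sigma)$ to $L^2$ averages over the parabolically-shaped regions natural for $\varphi_\omega(D_a)f$, at which point one is exactly in the situation of \cite[Theorem 4.1]{Rozendaal} (whose proof transfers verbatim, replacing Fourier multipliers by the Phillips calculus, using Lemma \ref{lem:kernelpsi} in place of the corresponding multiplier kernel estimates there). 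The $q(D_a)f$ term is handled by Proposition \ref{prop:vertical-conical}, since $q(D_a)$ picks out the $\sigma \geq 1$ piece where $\|q(D_a)f\|_{L^p} \lesssim \|f\|_{H^p_{FIO,a}(\R^d)}$ already holds. For the second estimate, the extra factor $\sigma^{\frac{d-1}{4}}$ together with the square $\varphi_\omega(D_a)^2$ is precisely compensated by the normalisation in \eqref{eq:repro} and Lemma \ref{lem:wavepackprop}: $\sigma^{\frac{d-1}{4}}\mathcal{F}^{-1}(\psi_{\omega,\sigma})$ is uniformly in $L^1$, so $\sigma^{\frac{d-1}{4}}\Phi_\sigma(D_a)\varphi_\omega(D_a)^2$ enjoys the same uniform kernel bounds as $\Phi_\sigma(D_a)\varphi_\omega(D_a)$ above, and the same argument applies.

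The main obstacle I anticipate is establishing the uniform off-diagonal (kernel) bounds for the composed operators $\Phi_\sigma(D_a)\varphi_\omega(D_a)\psi_{\omega',\tau}(D_a)$ in the parabolic quasi-distance $d_\omega$, with the correct $\sigma,\tau$-dependence and with constants independent of $\omega$. Lemma \ref{lem:kernelpsi} gives this for $\psi_{\omega,\sigma}(D_a)$ alone, but one needs to check that composing with the Schwartz-class multiplier $\Phi_\sigma$ (via Phillips calculus) and with the smooth cap multiplier $\varphi_\omega$ does not destroy the decay — this requires controlling how $\varphi_\omega$ and $\varphi_{\omega'}$ interact when $\omega \neq \omega'$ (the overlap of the parabolic caps) and integrating out the resulting angular and scale parameters. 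This is essentially the ``almost orthogonality of wave packets in different directions'' step, done in \cite{HPR, Rozendaal} for Fourier multipliers; here it must be redone in the Phillips calculus of $D_a$, but the finite-speed-of-propagation estimate of Remark \ref{rem:preciseFS} and Lemma \ref{lem:od} provide exactly the tools needed, so I expect the adaptation to be technical rather than conceptually new.
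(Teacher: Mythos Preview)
Your proposal is workable in principle but substantially more elaborate than the paper's argument, and the ``main obstacle'' you flag is in fact entirely avoidable.

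The paper does not insert any wave packet resolution, does not use the parabolic decay of Lemma~\ref{lem:kernelpsi}(ii), and does not analyse composed operators $\Phi_\sigma(D_a)\varphi_\omega(D_a)\psi_{\omega',\tau}(D_a)$. Instead it proceeds in two short steps. First, from the $L^r$--$L^\infty$ off-diagonal estimates for $\Phi_\sigma(D_a)$ supplied by Theorem~\ref{thm:AMcT} (Gaussian heat kernel bounds), one gets the pointwise domination
\[
\sup_{|y-x|\le\sigma}|\Phi_\sigma(D_a)g(y)|\ \lesssim\ M_r g(x)
\]
for any $r\in[1,p)$, where $M_r=(M(|\cdot|^r))^{1/r}$ is the Hardy--Littlewood maximal operator. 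This already gives $\|(\sigma,\cdot)\mapsto\Phi_\sigma(D_a)g\|_{T^{p,\infty}}\lesssim\|g\|_{L^p}$. Second, apply this with $g=\varphi_\omega(D_a)f$ and invoke Proposition~\ref{prop:vertical-conical} to bound $(\int_{S^{d-1}}\|\varphi_\omega(D_a)f\|_{L^p}^p\,d\omega)^{1/p}$ by $\|f\|_{H^p_{FIO,a}}$. For the second assertion, the paper writes $\Phi_\sigma(D_a)\varphi_\omega(D_a)^2$ as a composition and uses the $L^r$--$L^q$ bound of Lemma~\ref{lem:kernelpsi}(iii) (which carries the factor $\sigma^{-\frac{d-1}{4}}$) together with an $L^q$--$L^\infty$ bound for $\Phi_\sigma(D_a)$; the extra $\sigma^{\frac{d-1}{4}}$ in the statement cancels, and the same maximal-function argument applies.

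Your route --- decomposing via the reproducing formula, establishing parabolic off-diagonal bounds for the triple compositions, and then transferring Euclidean $L^2$-averages to parabolic regions --- could be made to work, but it reintroduces the angular almost-orthogonality analysis that the paper's argument completely sidesteps. In particular, your step ``transfer $L^2$ averages over Euclidean balls $B(x,\sigma)$ to $L^2$ averages over parabolically-shaped regions'' is not a standard change-of-aperture result (those concern the cone opening, not the underlying metric) and would need a separate argument. The paper's approach buys you simplicity and avoids precisely the obstacle you anticipated; yours would buy nothing extra here, though the parabolic machinery you describe is exactly what is used later in Lemma~\ref{lem:emb}.
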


\begin{proof}
Let $r \in [1,p)$. 
For the first assertion, note that Theorem \ref{thm:AMcT} implies $L^r$-$L^\infty$ off-diagonal estimates for $\Phi_\sigma(D_a)$ of the following form: For every $M \in \N$, there exists $C_M>0$ such that for all $E,F \subset \R^d$ Borel sets, $\sigma \in (0,1)$, we have 
\begin{align*}
	\|1_E \Phi_\sigma(D_a) (1_F g) \|_{L^\infty(\R^d)} 
	\leq C_M \sigma^{-\frac{d}{r}} (1+\frac{d(E,F)}{\sigma})^{-M} \|1_F g\|_{L^r(\R^d)}
\end{align*}
for all $g \in L^r(\R^d)$. 
This implies that for $x \in \R^d$, 
\begin{align*}
		\sup_{|y-x| \leq \sigma} |\Phi_\sigma(D_a) g(y) |
	 \lesssim \sup_{|y-x| \leq \sigma} \sum_{j=0}^\infty 2^{-jM} (\sigma^{-d} \int_{S_j(B_{y,\sigma})} |g(z)|^r \,dz )^{1/r}
	 \lesssim M_rg(x),
\end{align*}
where $M_rg = (M(g^r))^{1/r}$, with $M$ the Hardy-Littlewood maximal function,
 $S_j(B_{y,\sigma}):= \{z \in \R^{d} \;;\; 2^{j-1}\sigma \leq |y-z| < 2^{j}\sigma\}$ for $j \geq 1$, and $S_{0}(B_{y,\sigma}) = \{z \in \R^{d} \;;\; |y-z| < \sigma\}$.  
The conclusion follows from the  $L^p(\R^d)$ boundedness of $M_r$ together with   Proposition \ref{prop:vertical-conical}. 

For the second assertion,  we first note that by renormalisation, we can change $\Phi_\sigma(D_a) \varphi_\omega(D_a)$ to $\Phi_\sigma(D_a)^2  \varphi_\omega(D_a)$.
We slightly change the above argument by noting that for  $q \in (r,\infty)$, we have $L^q$-$L^\infty$ off-diagonal estimates for $\Phi_\sigma(D_a)$. 
On the other hand, we have by Lemma \ref{lem:kernelpsi} $L^r$-$L^q$ off-diagonal estimates for $\Phi_\sigma(D_a) \varphi_\omega(D_a)$ of the form 
\begin{align*}
	\|1_E \Phi_\sigma(D_a) \varphi_\omega(D_a) (1_F g)\|_{L^q(\R^d)} 
	\leq C_M \sigma^{-d (\frac{1}{r}-\frac{1}{q})} \sigma^{-\frac{d-1}{4}} 
	(1+\frac{d(E,F)}{\sigma})^{-M} \|1_F g\|_{L^r(\R^d)}
\end{align*}
for all $g \in L^r(\R^d)$.  We then conclude as above,  using composition of off-diagonal bounds as in \cite[Theorem 2.3]{AM}. 
\end{proof} 

\section{Sobolev embedding properties of $H^{p}_{FIO,a}(\R^{d})$} 
\label{sec:sob}

We use a variation of the arguments in \cite[Section 7]{HPR}. \\We let  $m(D_a) = (I+\sqrt{L})^{-\frac{d-1}{4}}$. 

\begin{Lemma}
\label{lem:emb}
For every $0<\theta<\frac{\pi}{2}$  there exist  $C_\theta, c_{\theta}>0$  such that for all atoms $A \in T^{1,2}(\R^{d})$, and all $s \in \R$
\begin{equation} \label{eq:emb}
\int \limits _{S^{d-1}} \|(\sigma,x) \mapsto
1_{[0,1]}(\sigma) m(\sqrt{L})^{1+is}\psi_{\omega,\sigma}(D_{a}) A(\sigma,.)(x)\|_{T^{1,2}(\R^{d})} \,d\omega
\leq  C_\theta  e^{|s|c_{\theta}}. 
\end{equation}
\end{Lemma}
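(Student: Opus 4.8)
Since every estimate below will be uniform in $\omega \in S^{d-1}$, it suffices to prove that
\[
\big\| (\sigma,x) \mapsto 1_{[0,1]}(\sigma)\, m(\sqrt{L})^{1+is}\psi_{\omega,\sigma}(D_{a}) A(\sigma,\cdot)(x) \big\|_{T^{1,2}(\R^{d})} \leq C_\theta\, e^{c_\theta |s|}
\]
uniformly in $\omega \in S^{d-1}$, in $s \in \R$, and over all $T^{1,2}$-atoms $A$; integrating over the finite-measure set $S^{d-1}$ then gives \eqref{eq:emb}. So fix such an atom, supported in $B \times (0,r]$ with $B=B(c_B,r)$ and $r^d \int_0^r \int_{\R^d} |A(y,\sigma)|^2 \frac{dy\,d\sigma}{\sigma}\le 1$ (so that $\|A\|_{L^2(dx\,\frac{d\sigma}{\sigma})} \lesssim |B|^{-1/2}$); the cutoff $1_{[0,1]}(\sigma)$ restricts the relevant scales to $(0,\min(r,1)]$, so no case distinction on $r$ is needed. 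Write $G_\omega A(\sigma,x):=1_{[0,1]}(\sigma)\,m(\sqrt{L})^{1+is}\psi_{\omega,\sigma}(D_a)A(\sigma,\cdot)(x)$.

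The engine of the proof is an off-diagonal estimate: for every $M\in\N$ there is $C_M>0$ such that for all $\sigma\in(0,1)$, $\omega\in S^{d-1}$, Borel sets $E,F\subseteq\R^d$ and $f\in L^1(\R^d)$,
\[
\| 1_E\, m(\sqrt{L})^{1+is}\psi_{\omega,\sigma}(D_a)(1_F f)\|_{L^2(\R^d)} \leq C_M\, C_\theta\, e^{c_\theta|s|}\,\sigma^{-d/2}\Big(1+\tfrac{d(E,F)}{\sigma}\Big)^{-M}\|1_F f\|_{L^1(\R^d)}.
\]
The argument parallels the proof of Corollary \ref{cor:funcalc}. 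Pick a radial $\widetilde\Psi\in C_c^\infty(\R^d)$ supported away from $0$ with $\widetilde\Psi\equiv1$ on $\supp\Psi$, so that $\psi_{\omega,\sigma}(D_a)=\widetilde\Psi(\sigma D_a)\psi_{\omega,\sigma}(D_a)$; writing $\widetilde\Psi(\zeta)=h(|\zeta|^2)$ we get $m(\sqrt{L})^{1+is}\psi_{\omega,\sigma}(D_a)=g_{\sigma,s}(L)\psi_{\omega,\sigma}(D_a)$ with $g_{\sigma,s}(z):=(1+\sqrt{z})^{-\frac{d-1}{4}(1+is)}h(\sigma^2 z)$. After rescaling $z\mapsto\sigma^{-2}z$, the function $g_{\sigma,s}$ equals $\sigma^{\frac{d-1}{4}}C_\theta e^{c_\theta|s|}$ times a function in a bounded subset of $\Psi_N^N(S_{\theta+}^o)$: $h$ localises to $|z|\sim\sigma^{-2}$, where $|(1+\sqrt z)^{-\frac{d-1}{4}}|\sim\sigma^{\frac{d-1}{4}}$ and $|(1+\sqrt z)^{-\frac{d-1}{4}is}|=e^{\frac{d-1}{4}s\arg(1+\sqrt z)}\le e^{\frac{d-1}{8}|s|\theta}$ on $S_{\theta+}^o$, while each derivative of $(1+\sqrt z)^{-\frac{d-1}{4}is}$ costs a further factor $\lesssim 1+|s|$. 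Hence, by Theorem \ref{thm:AMcT} (bounded $H^\infty$-calculus of angle $0$, Gaussian bounds) together with the single-scale results \cite[Theorem 5.2 and Lemma 7.3]{HvNP}, $g_{\sigma,s}(L)$ satisfies $L^2$ off-diagonal estimates at scale $\sigma$ with constant $\lesssim\sigma^{\frac{d-1}{4}}C_\theta e^{c_\theta|s|}$. Composing with the $L^1\to L^2$ off-diagonal estimate for $\psi_{\omega,\sigma}(D_a)$ from Lemma \ref{lem:kernelpsi}(iii) (case $p=1$, $r=2$, of size $\sigma^{-d/2}\sigma^{-\frac{d-1}{4}}$) as in \cite[Theorem 2.3]{AM}, the factors $\sigma^{\pm\frac{d-1}{4}}$ cancel and the displayed estimate follows. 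In particular $\|m(\sqrt{L})^{1+is}\psi_{\omega,\sigma}(D_a)\|_{B(L^2)}\lesssim C_\theta e^{c_\theta|s|}$ uniformly in $\sigma\in(0,1)$, $\omega\in S^{d-1}$, using $\|(I+\sqrt{L})^{-\frac{d-1}{4}}\|_{B(L^2)}\le1$, $\|(I+\sqrt{L})^{-\frac{d-1}{4}is}\|_{B(L^2)}\lesssim C_\theta e^{c_\theta|s|}$, and the uniform $L^2$-boundedness of $\psi_{\omega,\sigma}(D_a)$ (Lemmas \ref{lem:wavepackprop}, \ref{cor:sim}).

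Granting this, $\|G_\omega A\|_{T^{1,2}}\lesssim C_\theta e^{c_\theta|s|}$ follows by the standard Coifman--Meyer--Stein argument (\cite{cms}): decompose $\R^d=4B\cup\bigcup_{j\geq2}C_j$ with $C_j:=2^{j+1}B\setminus2^j B$. On $4B$, the Cauchy--Schwarz inequality, the identity $T^{2,2}=L^2(dx\,\frac{d\sigma}{\sigma})$, the operator-norm bound above, and the atom normalization give $\|1_{4B}\,\calA(G_\omega A)\|_{L^1}\lesssim|B|^{1/2}\cdot C_\theta e^{c_\theta|s|}\cdot|B|^{-1/2}$. On $C_j$ with $j\geq2$, for $x\in C_j$ and $\sigma\leq\min(r,1)\leq r$ one has $d(B(x,\sigma),B)\gtrsim2^j r$, so Cauchy--Schwarz in $x$, Fubini, the off-diagonal estimate with $F=B$, the bound $\|1_B A(\cdot,\sigma)\|_{L^1}\leq|B|^{1/2}\|A(\cdot,\sigma)\|_{L^2}$, and $\int_0^r\|A(\cdot,\sigma)\|_{L^2}^2\frac{d\sigma}{\sigma}\lesssim|B|^{-1}$ reduce, for $M>d$, to a routine integration in $\sigma$ and $x$ giving $\|1_{C_j}\,\calA(G_\omega A)\|_{L^1}\lesssim C_\theta e^{c_\theta|s|}\,2^{-j(M-d)}$. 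Summing over $j$ and then integrating over $\omega\in S^{d-1}$ completes the proof.

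The main obstacle is the off-diagonal estimate of the second paragraph: one must obtain simultaneously the \emph{sharp} power $\sigma^{-d/2}$ (not $\sigma^{-d/2-\frac{d-1}{4}}$) and the controlled factor $e^{c_\theta|s|}$. The sharp power is available only because $m(\sqrt{L})=(I+\sqrt{L})^{-\frac{d-1}{4}}$ supplies exactly the gain $\sigma^{\frac{d-1}{4}}$ that absorbs the loss in Lemma \ref{lem:kernelpsi}(iii); this is precisely what makes the plain Euclidean-distance off-diagonal estimate sufficient — the parabolic distance $d_\omega$ used in Section \ref{sec:wave} is not needed here — so that integrating over $S^{d-1}$ is harmless. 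The factor $e^{c_\theta|s|}$ is produced by carrying the imaginary power $(I+\sqrt{L})^{-\frac{d-1}{4}is}$ through the functional-calculus machinery of \cite{HvNP} while keeping track of its $H^\infty(S_{\theta+}^o)$-bound $\lesssim e^{\frac{d-1}{8}|s|\theta}$ (together with the polynomial-in-$|s|$ losses from differentiating its symbol); since $L$ has $H^\infty$-calculus of angle $0$, the angle $\theta$, and hence $c_\theta$, may be chosen arbitrarily small, which is exactly what makes \eqref{eq:emb} usable in Section \ref{sec:sob}.
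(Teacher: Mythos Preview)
Your overall strategy is sound and, once a small technical wrinkle is repaired, yields a correct proof that takes a genuinely different route from the paper's.

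\textbf{Comparison.} The paper first strips off $m(D_a)^{1+is}$ at the $T^{1,2}$ level via Corollary~\ref{cor:funcalc}, reducing to an estimate for $L^{-\frac{d-1}{8}}\psi_{\omega,\sigma}(D_a)A$, and then decomposes space into \emph{parabolic} annuli $C_{j,\omega}$ (of measure $\sim(2^jr)^{\frac{d+1}{2}}$). On the central piece it exploits the $\omega$-orthogonality of Lemma~\ref{cor:sim} together with the Sobolev embedding of Corollary~\ref{cor:sobemb}; on the far pieces it uses the parabolic $L^1\to L^2$ bound of Lemma~\ref{lem:kernelpsi}(ii). You instead bound uniformly in $\omega$, use Euclidean annuli, and rely only on the Euclidean off-diagonal bound of Lemma~\ref{lem:kernelpsi}(iii); the key observation (which you state explicitly) is that the surplus factor $\sigma^{-\frac{d-1}{4}}$ in that bound is exactly cancelled by the gain $\sigma^{\frac{d-1}{4}}$ coming from $m(\sqrt{L})=(I+\sqrt L)^{-\frac{d-1}{4}}$ on the frequency support $|\zeta|\sim\sigma^{-1}$. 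This buys you simpler geometry (no parabolic balls, no $\omega$-orthogonality, no Sobolev embedding) at the cost of a cruder decay requirement ($M>d$ rather than $M>\frac{d+1}{2}$).

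\textbf{The wrinkle.} Your justification of the $L^2$--$L^2$ off-diagonal estimate for $g_{\sigma,s}(L)=(1+\sqrt L)^{-\frac{d-1}{4}(1+is)}h(\sigma^2 L)$ does not go through as written: since $h\in C_c^\infty(\R)$ comes from a compactly supported $\widetilde\Psi\in C_c^\infty(\R^d)$, it is \emph{not} holomorphic, so neither $g_{\sigma,s}$ nor its rescaled version can lie in $\Psi_N^N(S_{\theta+}^o)$, and the $H^\infty$-calculus route through \cite{HvNP} is unavailable. The fix is immediate: treat the whole operator as a Phillips-calculus function of $D_a$. Writing $m(\sqrt L)^{1+is}\widetilde\Psi(\sigma D_a)=\Phi_{\sigma,s}(\sigma D_a)$ with $\Phi_{\sigma,s}(\xi)=(1+\sigma^{-1}|\xi|)^{-\frac{d-1}{4}(1+is)}\widetilde\Psi(\xi)$, one checks that $\sigma^{-\frac{d-1}{4}}\Phi_{\sigma,s}$ stays in a bounded subset of $C_c^\infty(\R^d)$ up to a factor $(1+|s|)^N$: each $\xi$-derivative of $(1+\sigma^{-1}|\xi|)^{-\frac{d-1}{4}(1+is)}$ produces a harmless factor $(1+|s|)\cdot\sigma^{-1}(1+\sigma^{-1}|\xi|)^{-1}\sim(1+|s|)$ on $\supp\widetilde\Psi$. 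Lemma~\ref{lem:od} then gives off-diagonal bounds at scale $\sigma$ with constant $\lesssim\sigma^{\frac{d-1}{4}}(1+|s|)^N$ (which is stronger than the stated $e^{c_\theta|s|}$ and more than enough for the Stein interpolation downstream). Composing with Lemma~\ref{lem:kernelpsi}(iii) as you indicate yields your claimed $L^1\to L^2$ estimate with the sharp power $\sigma^{-d/2}$, and the rest of your atomic argument goes through verbatim.
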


\begin{proof}
Let $A$ be a $T^{1,2}(\R^{d})$ atom associated with a ball $B=B(c_{B},r)$.
Without loss of generality, we assume that $A(\sigma,.)=0$ for all $\sigma \geq 1$. 

By renormalisation, we can replace $\psi_{\omega,\sigma}(D_a)$ in \eqref{eq:emb} by 
$\Psi_\sigma(D_a) \psi_{\omega,\sigma}(D_a)$.  Noting that $\|m^{is}\|_{L^\infty(S_\theta^o)} \leq ce^{|s|c_{\theta}}$,  for $c_{\theta} =\frac{\theta(d-1)}{4}$,  we  use Corollary \ref{cor:funcalc} to obtain for every $\omega \in S^{d-1}$ and given $\theta \in (0,\frac{\pi}{2})$
\begin{align*}
	& \|(\sigma,x) \mapsto
1_{[0,1]}(\sigma) m(D_{a})^{1+is}\Psi_\sigma(D_a)\psi_{\omega,\sigma}(D_{a}) A(\sigma,.)(x)\|_{T^{1,2}(\R^{d})} \\
 & \quad = \|(\sigma,x) \mapsto
1_{[0,1]}(\sigma) L^{\frac{d-1}{8}} m(D_{a})^{1+is}  \Psi_\sigma(D_a) L^{-\frac{d-1}{8}}  \psi_{\omega,\sigma}(D_{a}) A(\sigma,.)(x)\|_{T^{1,2}(\R^{d})} \\
& \quad \leq  C_\theta e^{|s|c_{\theta} }
 \|(\sigma,x) \mapsto
1_{[0,1]}(\sigma)  L^{-\frac{d-1}{8}}  \psi_{\omega,\sigma}(D_{a}) A(\sigma,.)(x)\|_{T^{1,2}(\R^{d})},
\end{align*}
with $C_\theta$ independent of $s \in \R$.
 
For $j \in \N^{*}$, and $\omega \in S^{d-1}$, define
$
C_{j,\omega}:= \{y \in \R^{d} \;;\; 2^{j-1}r < |\langle \omega, c_{B}-y \rangle| +
|c_{B}-y|^{2} \leq 2^{j}r\}
$
and $C_{0,\omega}:= \{y \in \R^{d} \;;\; |\langle \omega, c_{B}-y \rangle| +
|c_{B}-y|^{2} \leq r\}
 $.
Remark that $|C_{j,\omega}| \sim (2^{j}r)^{\frac{d+1}{2}}$,  and that
$d_{\omega}(C_{j,\omega},C_{0,\omega}) > 2^{j-1}r$. 
Using a slight generalisation of Lemma  \ref{cor:sim} and Corollary \ref{cor:sobemb} for $p=\frac{4d}{3d-1}$, we have that
 
\begin{align*}
(\int \limits _{S^{d-1}} & \|(\sigma,x) \mapsto 1_{C_{0,\omega}}(x)
1_{[0,1]}(\sigma) L^{-\frac{d-1}{8}} \psi_{\omega,\sigma}(D_{a}) A(\sigma,.)(x)\|_{T^{1,2}(\R^{d})} d\omega )^{2}\\
& \lesssim r^{\frac{d+1}{2}}\int \limits _{S^{d-1}}  
\int \limits _{0} ^{\min(r,1)} 
\|L^{-\frac{d-1}{8}}\psi_{\omega,\sigma}(D_{a})  A(\sigma,.)(x)\|_{L^{2}(\R^{d})} ^{2} \frac{d\sigma}{\sigma} d\omega \\
& \lesssim r^{\frac{d+1}{2}}
\int \limits _{0} ^{\min(r,1)} 
 \|L^{-\frac{d-1}{8}}A(\sigma,.)(x)\|_{L^{2}(\R^{d})} ^{2} \frac{d\sigma}{\sigma}\\
& \lesssim r^{\frac{d+1}{2}}
\int \limits _{0} ^{r} 
 \|A(\sigma,.)(x)\|_{L^{p}(\R^{d})} ^{2} \frac{d\sigma}{\sigma}  \\
& \lesssim r^{\frac{d+1}{2}} r^{\frac{d-1}{2}}
 \int \limits _{0} ^{r} 
 \|A(\sigma,.)(x)\|_{L^{2}(\R^{d})} ^{2} \frac{d\sigma}{\sigma} 
 \lesssim r^{d}\|A\|_{T^{2,2}} ^{2} \lesssim 1.
\end{align*}

Let $M > d+1$, and define $\widetilde{\Psi}: \xi \mapsto \frac{|\xi|^{-\frac{d-1}{4}} \Psi(\xi)}{
(\int \limits _{0} ^{\infty} |\sigma\xi|^{-\frac{d-1}{2}} |\Psi(\sigma \xi)|^{2} \frac{d\sigma}{\sigma})^{\frac{1}{2}}}$, and
$\tilde{\psi}_{\omega, \sigma}: \xi \mapsto \varphi_{\omega, \sigma}(\xi) \widetilde{\Psi}(\sigma \xi)$. 

For all $j \in \N^{*}$,  we obtain from Lemma \ref{lem:kernelpsi} for $\widetilde{\psi_{\omega,\sigma}}$ instead of $\psi_{\omega,\sigma}$ 
\begin{align*}
(\int \limits _{S^{d-1}} & \|(\sigma,x) \mapsto 1_{C_{j,\omega}}(x)
1_{[0,1]}(\sigma) L^{-\frac{d-1}{8}} \psi_{\omega,\sigma}(D_{a}) A(\sigma,.)(x)\|_{T^{1,2}(\R^{d})} d\omega )^{2}\\
& \lesssim (2^{j}r)^{\frac{d+1}{2}}\int \limits _{S^{d-1}}  
\int \limits _{0} ^{\min(r,1)} \sigma^{\frac{d-1}{2}}
\| \widetilde{\psi_{\omega, \sigma}}(D_{a}) A(\sigma,.)\|_{L^2(C_{j,\omega})}^{2} \, \frac{d\sigma}{\sigma} d\omega\\ 
& \lesssim  (2^{j}r)^{\frac{d+1}{2}}\int \limits _{S^{d-1}}  
\int \limits _{0} ^{\min(r,1)} \sigma^{\frac{d-1}{2}} \sigma^{-d} \left(\frac{\sigma}{2^jr}\right)^M \|A(\sigma,.)\|_{L^1(\R^d)}^2 \, \frac{d\sigma}{\sigma} d\omega \\
& \lesssim r^d  \int \limits _{S^{d-1}}  
\int \limits _{0} ^{\min(r,1)} (\frac{2^{j}r}{\sigma})^{\frac{d+1}{2}} \left(\frac{\sigma}{2^jr}\right)^M \|A(\sigma,.)\|_{L^2(\R^d)}^2 \, \frac{d\sigma}{\sigma} d\omega \\
& \lesssim 2^{-j(M-\frac{d+1}{2})}  r^d \|A\|_{T^{2,2}}^2 \lesssim    2^{-j(M-\frac{d+1}{2})} .
\end{align*}
Summing over $j$ yields the conclusion. 
\end{proof}

\begin{Remark}
\label{rem:tentbdd}
Note that basically the same proof as above also yields the statement  that for all $s \in \R$,
\begin{align*}
	\|(\omega,\sigma,\,.\,) \mapsto   \sigma^{\frac{s_1}{2}+is} \psi_{\omega,\sigma}(D_a)F(\sigma,\,.\,)\|_{L^1(S^{d-1};T^{1,2}(\R^d))} \lesssim \|F\|_{T^{1,2}(\R^d)}
\end{align*}
for all $F \in T^{1,2}(\R^d)$. 
By  a slight modification of Lemma \ref{cor:sim}, we obtain on the other hand 
$
	\|(\omega,\sigma,\,.\,) \mapsto    \psi_{\omega,\sigma}(D_a)F(\sigma,\,.\,)\|_{L^2(S^{d-1};T^{2,2}(\R^d))} \lesssim \|F\|_{T^{2,2}(\R^d)}
$
for all $F \in T^{2,2}(\R^d)$.  Stein interpolation and duality then yield for all $p \in (1,\infty)$, 
\begin{align*}
	\|(\omega,\sigma,\,.\,) \mapsto   \sigma^{\frac{s_p}{2}} \psi_{\omega,\sigma}(D_a)F(\sigma,\,.\,)\|_{L^p(S^{d-1};T^{p,2}(\R^d))} \lesssim \|F\|_{T^{p,2}(\R^d)},
\end{align*}
for all $F \in T^{p,2}(\R^d)$. 
\end{Remark}

\begin{Lemma}
\label{lem:emb2}
For all $p \in [1,2]$, and $s_{p} = (d-1)(\frac{1}{p}-\frac{1}{2})$, we have the continuous inclusion
$H^{p,\frac{s_{p}}{2}}_{FIO,a}(\R^{d}) \subset H^{p} _{L}(\R^d)$, where $H^{p} _{L}(\R^d)=L^{p}(\R^d)$ for $p>1$. 
For $p \in (1,\infty)$, and $b: \xi \mapsto |\xi|^{\frac{d-1}{4}} m(\xi)$, we have that
\begin{align*}
&\|(\sigma,x)\mapsto m(D_{a}) \Psi(\sigma D_{a})f(x)\|_{T^{p,2}(\R^{d})} \lesssim 
\|(b(D_{a})+m(D_{a}))f\|_{H^{p}_{FIO,a}(\R^{d})} \lesssim \|f\|_{H^{p}_{FIO,a}(\R^{d})},  
\end{align*}
for all $f \in \mathcal{S}_{p}$.
\end{Lemma}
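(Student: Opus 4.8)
The plan is to prove the two inequalities separately, working throughout with $f\in\mathcal{S}_p$ and passing to the completion at the end, and to use in an essential way that $H^p_L(\R^d)=L^p(\R^d)$ for $p>1$ (established in the proof of Theorem~\ref{thm:HpD}).

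\textbf{The embedding $H^{p,s_p/2}_{FIO,a}(\R^d)\subseteq H^p_L(\R^d)$, $p\in[1,2]$.} For $p=2$ there is nothing to do: $s_2=0$, and $\|f\|_{H^{2,0}_{FIO,a}}\simeq\|f\|_{L^2}\simeq\|f\|_{H^2_L}$ by \eqref{eq:resol-identity}, Lemma~\ref{cor:sim} and Theorem~\ref{thm:HpD}. For $p=1$ I would use $\|g\|_{H^1_L}\simeq\|(\tau,x)\mapsto\Psi(\tau D_a)g(x)\|_{T^{1,2}(\R^d)}$ together with the reproducing formula \eqref{eq:resol-identity}. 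Discarding the contribution of the frequencies $\lesssim1$ (which is $\Psi(\tau D_a)$ applied to a function $\Phi(D_a)f$ with $\Phi\in C_c^\infty(\R^d)$, hence controlled in $H^1_L$ by the part $1_{(1,\infty)}(\sigma)\Psi(\sigma D_a)f$ of the $H^{1,s_1/2}_{FIO,a}$-norm via Lemma~\ref{lem:sfequiv} and Theorem~\ref{thm:HpD}), and writing $\varphi_\omega(D_a)\Psi(\sigma D_a)$ as $\psi_{\omega,\sigma}(D_a)$ up to an admissible change of the support parameters (as in the proof of Lemma~\ref{lem:welldef}), the remaining term is
$$\int_{S^{d-1}}\int_0^1\Psi(\tau D_a)\psi_{\omega,\sigma}(D_a)W_af(\omega,\sigma,\cdot)\,\frac{d\sigma}{\sigma}\,d\omega.$$
Since $m(\sqrt L)\simeq\sigma^{s_1/2}$ on the spectral support of $\psi_{\omega,\sigma}(D_a)$ for $\sigma\in(0,1)$, one has $W_af(\omega,\sigma,\cdot)\simeq m(\sqrt L)^{-1}(\sigma^{-s_1/2}W_af)(\omega,\sigma,\cdot)$ there, and $\sigma^{-s_1/2}W_af$ is exactly (the high-frequency part of) the density whose $L^1(S^{d-1};T^{1,2}(\R^d))$-norm is $\|f\|_{H^{1,s_1/2}_{FIO,a}}$. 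Estimating the $T^{1,2}$-norm in $(\tau,x)$ of the $\omega$-integral by the triangle inequality in $\omega$, one is reduced, for fixed $\omega$, to a standard tent-space reconstruction estimate for $L$ combined with Lemma~\ref{lem:emb} (with $s=0$), which furnishes precisely the $L^1(S^{d-1};T^{1,2}(\R^d))$-bound for $(\omega,\sigma,x)\mapsto m(\sqrt L)\psi_{\omega,\sigma}(D_a)F(\sigma,\cdot)(x)$ in terms of $\|F\|_{T^{1,2}}$, via its atomic decomposition. This yields $\|f\|_{H^1_L}\lesssim\|f\|_{H^{1,s_1/2}_{FIO,a}}$.

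For $p\in(1,2)$ the plan is to interpolate. Lemma~\ref{lem:emb} is stated with the analytic family $m(\sqrt L)^{1+is}$ precisely so that the $p=1$ estimate above is the $\mathrm{Re}\,z=0$ endpoint of an analytic family of operators, with exponential control of the norms, the $\mathrm{Re}\,z=\tfrac{d-1}{2}$ endpoint being the trivial $p=2$ estimate. Stein interpolation, together with the complex interpolation of the weighted tent spaces involved (as in \cite[Theorem~2.1]{amenta} and Remark~\ref{rem:tentbdd}), then gives $\|f\|_{L^p}\lesssim\|f\|_{H^{p,s_p/2}_{FIO,a}}$ for $1<p<2$, with the correct weight $\sigma^{-s_p/2}$.

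\textbf{The second inequality, $p\in(1,\infty)$.} By Theorem~\ref{thm:HpD} applied with $s=0$ to $m(D_a)f$ one has $\|(\sigma,x)\mapsto m(D_a)\Psi(\sigma D_a)f(x)\|_{T^{p,2}(\R^d)}\simeq\|m(D_a)f\|_{L^p(\R^d)}$, so it suffices to bound $\|m(D_a)f\|_{L^p}$. Now $b(\zeta)+m(\zeta)=(|\zeta|^{\frac{d-1}{4}}+1)(1+|\zeta|)^{-\frac{d-1}{4}}$ and all its derivatives are bounded and bounded below, so $b(D_a)+m(D_a)=\beta(\sqrt L)$ with $\beta^{\pm1}$ bounded holomorphic; by Theorem~\ref{thm:AMcT} it is bounded with bounded inverse on $L^p$, and by Corollary~\ref{cor:funcalc} (and the off-diagonal estimates of Lemma~\ref{lem:od}) also on $H^p_{FIO,a}(\R^d)$, so $\|(b(D_a)+m(D_a))f\|_{H^p_{FIO,a}}\simeq\|f\|_{H^p_{FIO,a}}$ and it is enough to prove $\|m(D_a)f\|_{L^p}\lesssim\|f\|_{H^p_{FIO,a}}$. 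For this I would split $m(D_a)f=m(D_a)q(D_a)f+m(D_a)(1-q(D_a))f$ with $q$ as in Proposition~\ref{prop:vertical-conical}; the first term is $\lesssim\|q(D_a)f\|_{L^p}$, and for the second one uses \eqref{eq:resol-identity} (on the high frequencies) and \eqref{eq:repro}, together with $\varphi_{\omega,\sigma}(D_a)\simeq\varphi_\omega(D_a)$ on the spectral support of $\Psi(\sigma D_a)$, to write it as $\int_{S^{d-1}}T_\omega(\varphi_\omega(D_a)f)\,d\omega$ with $T_\omega=\int_0^1 m(D_a)\varphi_{\omega,\sigma}(D_a)\Psi(\sigma D_a)^2\,\frac{d\sigma}{\sigma}$, each factor $m(D_a)\varphi_{\omega,\sigma}(D_a)\Psi(\sigma D_a)$ having, for $\sigma\in(0,1)$, a uniformly bounded symbol with the anisotropic off-diagonal decay of Lemma~\ref{lem:kernelpsi}. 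Regarding $f\mapsto(\omega\mapsto\varphi_\omega(D_a)f)$ as a map into the mixed-norm space $L^p(S^{d-1};L^p(\R^d))=L^p(S^{d-1}\times\R^d)$, and estimating the $\omega$-average of the $T_\omega$ there by composing $L^p$ off-diagonal bounds (as in \cite[Theorem~2.3]{AM}, used in the proof of Proposition~\ref{prop:nontang}) with the $L^p$-boundedness of the Hardy--Littlewood maximal operator, one gets $\|m(D_a)(1-q(D_a))f\|_{L^p}\lesssim\big(\int_{S^{d-1}}\|\varphi_\omega(D_a)f\|_{L^p}^p\,d\omega\big)^{1/p}$; Proposition~\ref{prop:vertical-conical} then concludes.

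\textbf{Where the difficulty lies.} In the first part the delicate point is not the off-diagonal behaviour of a single wave packet $\psi_{\omega,\sigma}(D_a)$ — which is well localised in the parabolic metric $d_\omega$, so that the $m(\sqrt L)$-gain makes each directional piece manageable on $T^{1,2}$ — but the quantitative bookkeeping required to pass from the conical square function to the vertical one while losing no more than the prescribed $s_p/2$ derivatives; Lemma~\ref{lem:emb} and the exponential control of its constants under the analytic family $m(\sqrt L)^{1+is}$ are what make this both possible and interpolable. In the second part the main obstacle is that the operators $T_\omega$ are \emph{not} bounded on $L^p(\R^d)$ uniformly in $\omega$ for $p\neq2$ (the directional cut-off has critical aperture $|\zeta|^{-1/2}$), so their $\omega$-average must be handled in $L^p(S^{d-1}\times\R^d)$ by composing off-diagonal estimates, not by the triangle inequality in $\omega$.
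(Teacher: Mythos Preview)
Your proposal overcomplicates both parts and, in the first part, imports machinery from the \emph{reverse} embedding. The paper's proof of this lemma is almost a one-liner in each direction, relying only on the reproducing formula \eqref{eq:repro} and the triangle inequality.

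\textbf{First part.} You invoke Lemma~\ref{lem:emb} (the atomic estimate for $m(\sqrt{L})^{1+is}\psi_{\omega,\sigma}(D_a)$) and Stein interpolation. But Lemma~\ref{lem:emb} maps $T^{1,2}$ atoms \emph{into} $L^1(S^{d-1};T^{1,2})$; here you need the opposite direction, from the $L^1(S^{d-1};T^{1,2})$-type norm defining $H^{1,s_1/2}_{FIO,a}$ back to $T^{1,2}\simeq H^1_L$. That lemma, and the analytic-family interpolation built on it, are used in the paper for Corollary~\ref{cor:sob} (the inclusion $(I+\sqrt{L})^{-s_p/2}:L^p\to H^p_{FIO,a}$), not for the present lemma. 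The paper's actual argument is: apply \eqref{eq:repro} to write $\Psi(\sigma D_a)f = C_\sigma^{-1}\sigma^{-\frac{d-1}{4}}\int_{S^{d-1}}\psi_{\omega,\sigma}(D_a)f\,d\omega$ for $\sigma\in(0,1)$, take the $T^{1,2}$-norm, and pull the $\omega$-integral outside by the triangle inequality. This immediately gives $\|f\|_{H^1_L}\lesssim\|f\|_{H^{1,\frac{d-1}{4}}_{FIO,a}}$; the range $p\in(1,2)$ follows by complex interpolation of the spaces (since $H^2_{FIO,a}=L^2$), with no analytic family needed.

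\textbf{Second part.} Again the paper just uses \eqref{eq:repro}: writing $\Psi(\sigma\xi)=|\sigma\xi|^{\frac{d-1}{4}}\widetilde\Psi(\sigma\xi)$, the identity $\sigma^{-\frac{d-1}{4}}m(D_a)\Psi(\sigma D_a)=b(D_a)\widetilde\Psi(\sigma D_a)$ converts the left-hand side into $\int_{S^{d-1}}\|(\sigma,x)\mapsto 1_{[0,1]}(\sigma)b(D_a)\varphi_\omega(D_a)\widetilde\Psi(\sigma D_a)f\|_{T^{p,2}}\,d\omega$, which by H\"older and Theorem~\ref{thm:HpD} is $\lesssim\|b(D_a)f\|_{H^p_{FIO,a}}$; the low-frequency piece contributes $\|m(D_a)f\|_{H^p_{FIO,a}}$. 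Boundedness of $b(D_a)$ and $m(D_a)$ on $L^p$ (hence on $H^p_{FIO,a}$ via Proposition~\ref{prop:vertical-conical}) follows from the Marcinkiewicz condition and transference. Your route via $T_\omega$ can be made to work, but the ``main obstacle'' you flag is not one: the symbol of $T_\omega$ is essentially $\tilde\varphi_\omega$ as in Remark~\ref{rem:cut-off-cond}, and on its support ($|\zeta_j|\lesssim|\zeta|^{1/2}$ for $j\perp\omega$, $\zeta_\omega\sim|\zeta|$) the bounds $|\partial^\alpha\tilde\varphi_\omega|\lesssim|\zeta|^{-|\alpha|/2}$ \emph{do} give $|\zeta^\alpha\partial^\alpha\tilde\varphi_\omega|\lesssim 1$, so $T_\omega$ is a uniform Marcinkiewicz multiplier and no mixed-norm off-diagonal argument is required.
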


\begin{proof}
Let $f$ be an $H^{1}_L$ atom.
We have,  
using the reproducing formula \eqref{eq:repro},  that
\begin{align*}
\|f\|_{H^{1}_L} 
&\sim 
\|(\sigma,x)\mapsto \Psi(\sigma D_{a})f(x)\|_{T^{1,2}(\R^{d})}
\\ & \lesssim \int \limits _{S^{d-1}} \|(\sigma,x)\mapsto 1_{[0,1]}(\sigma) \sigma^{-\frac{d-1}{4}} \psi_{\omega,\sigma}(D_{a})f(x)+1_{[1,\infty)}(\sigma) \Psi(\sigma D_{a})f(x)\|_{T^{1,2}(\R^{d})} d\omega 
\\
& \lesssim  
\|f\|_{H^{1,\frac{d-1}{4}}_{FIO,a}(\R^{d})},
\end{align*}
where the last inequality follows from the comparability of $\psi_{\omega,\sigma}$ with $\varphi_\omega\Psi_\sigma$ for $\sigma \in (0,1)$. 
Since $H^{2}_{FIO,a}=L^{2}$, the  continuous inclusion
$H^{p,\frac{s_{p}}{2}}_{FIO,a}(\R^{d}) \subset H^{p} _{L}(\R^d)
$ follows by interpolation.
In the same way,
\begin{align*}&\|(\sigma,x)\mapsto 1_{[0,1]}(\sigma) m(D_{a})\Psi(\sigma D_{a})f(x)\|_{T^{p,2}(\R^{d})}
\\ &\quad \lesssim 
\int \limits _{S^{d-1}} \|(\sigma,x)\mapsto 1_{[0,1]}(\sigma) b(D_{a})\varphi_{\omega}(D_{a})\widetilde{\Psi}(\sigma D_{a})f(x)\|_{T^{p,2}(\R^{d})} d\omega,
\end{align*}
for $\widetilde{\Psi}$ such that $\Psi(\xi)= |\xi|^{\frac{d-1}{4}}\widetilde{\Psi}(\xi)$ for all $\xi \in \R^{d}$. Turning to the low frequency term, we note that, for $\sigma>1$, we have that $\Psi(\sigma \xi) = \Psi(\sigma \xi)q(\xi)$ for all $\xi \in \R^{d}$. Therefore, by Theorem \ref{thm:HpD} and Proposition \ref{prop:vertical-conical} we have that
\begin{align*}
\|(\sigma,x) \mapsto 1_{(1,\infty)}(\sigma) 
\Psi(\sigma D_{a})m(D_{a})f(x)\|_{T^{p,2}(\R^{d})}
&\lesssim \|m(D_{a})q(D_{a})f\|_{L^{p}(\R^{d})} \lesssim \|m(D_{a})f\|_{H^{p}_{FIO,a}(\R^{d})}.
\end{align*}
To conclude the proof, we use Theorem \ref{thm:mult} and Theorem \ref{thm:transf}, along with Proposition \ref{prop:bddgrp}, to show that $b(D_{a})$ and $m(D_{a})$ are bounded operators on $L^{p}(\R^{d})$, and thus also on $H^{p}_{FIO,a}(\R^{d})$, thanks to Proposition \ref{prop:vertical-conical}.
\end{proof}

\begin{Cor}
\label{cor:sob}
Let $p \in (1,2]$. Then 
\begin{align*}
&\|(I+\sqrt{L})^{-\frac{s_{p}}{2}} f\|_{H^{p}_{FIO,a}(\R^{d})}  \lesssim \|f\|_{L^{p}(\R^{d})},
\end{align*}
for all $f \in \mathcal{S}_{p}$.
\end{Cor}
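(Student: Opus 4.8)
The plan is to split the $H^{p,0}_{FIO,a}(\R^d)$-norm of $(I+\sqrt{L})^{-\frac{s_p}{2}}f$ — which is well defined since $(I+\sqrt{L})^{-\frac{s_p}{2}}$ maps $\calS_p$ into $\calS_p$ by the bounded $H^\infty$-calculus of $L$ from Theorem \ref{thm:AMcT} — into its low-frequency part, where $\sigma>1$, and its high-frequency part, where $\sigma\in[0,1]$, and to estimate each separately. For the low-frequency part I would fix $q\in C_c^\infty(\R^d)$ with $q\equiv 1$ on $\{|\zeta|\leq 2\}$; since $\Psi(\sigma\,\cdot\,)$ is supported in $\{|\zeta|\leq 2/\sigma\}$ we have $\Psi(\sigma D_a)=\Psi(\sigma D_a)q(D_a)$ for all $\sigma>1$, so by Theorem \ref{thm:HpD} (restricting the $\sigma$-integral to $(1,\infty)$ only decreases the $T^{p,2}$-norm)
\[
\big\|(\sigma,x)\mapsto 1_{(1,\infty)}(\sigma)\Psi(\sigma D_a)(I+\sqrt{L})^{-\frac{s_p}{2}}f(x)\big\|_{T^{p,2}(\R^d)}\lesssim\big\|q(D_a)(I+\sqrt{L})^{-\frac{s_p}{2}}f\big\|_{L^p(\R^d)}.
\]
Here $(I+\sqrt{L})^{-\frac{s_p}{2}}$ is bounded on $L^p(\R^d)$ by Theorem \ref{thm:AMcT}, and $q(D_a)$ is bounded on $L^p(\R^d)$ by Theorems \ref{thm:mult} and \ref{thm:transf} together with Proposition \ref{prop:bddgrp}, exactly as for the operators $b(D_a)$ and $m(D_a)$ in the proof of Lemma \ref{lem:emb2}; hence the low-frequency part is $\lesssim\|f\|_{L^p(\R^d)}$.

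For the high-frequency part I must bound $\big\|\omega\mapsto\big[(\sigma,x)\mapsto 1_{[0,1]}(\sigma)\varphi_\omega(D_a)\Psi(\sigma D_a)(I+\sqrt{L})^{-\frac{s_p}{2}}f(x)\big]\big\|_{L^p(S^{d-1};T^{p,2}(\R^d))}$. Using the comparability of $\psi_{\omega,\sigma}$ with $\varphi_\omega\Psi_\sigma$ for $\sigma\in(0,1]$ together with the square function equivalences of Lemma \ref{lem:sfequiv} — exactly as in the proof of Lemma \ref{lem:emb2} — this is controlled, for a suitable non-degenerate $\widetilde{\Psi}\in C_c^\infty(\R^d)$ supported away from $0$, by
\[
\Big\|\omega\mapsto\big[(\sigma,x)\mapsto 1_{[0,1]}(\sigma)\psi_{\omega,\sigma}(D_a)(I+\sqrt{L})^{-\frac{s_p}{2}}G(\sigma,\,\cdot\,)(x)\big]\Big\|_{L^p(S^{d-1};T^{p,2}(\R^d))},\qquad G(\sigma,x):=\widetilde{\Psi}(\sigma D_a)f(x),
\]
where $\|G\|_{T^{p,2}(\R^d)}\sim\|f\|_{L^p(\R^d)}$ by Theorem \ref{thm:HpD}. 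Thus it suffices to show that $G\mapsto\big[(\omega,\sigma,x)\mapsto 1_{[0,1]}(\sigma)\psi_{\omega,\sigma}(D_a)(I+\sqrt{L})^{-\frac{s_p}{2}}G(\sigma,\,\cdot\,)(x)\big]$ is bounded from $T^{p,2}(\R^d)$ to $L^p(S^{d-1};T^{p,2}(\R^d))$.

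I would obtain this by Stein interpolation applied to the analytic family
\[
z\longmapsto\Big(G\mapsto\big[(\omega,\sigma,x)\mapsto 1_{[0,1]}(\sigma)\psi_{\omega,\sigma}(D_a)(I+\sqrt{L})^{-z\frac{d-1}{4}}G(\sigma,\,\cdot\,)(x)\big]\Big),\qquad 0\leq\Re z\leq 1,
\]
which is precisely the interpolation underlying Remark \ref{rem:tentbdd}. At $\Re z=1$ the required $T^{1,2}(\R^d)\to L^1(S^{d-1};T^{1,2}(\R^d))$ bound, with admissible growth $C_\theta e^{c_\theta|\Im z|}$, is Lemma \ref{lem:emb} (since $m(\sqrt{L})^{1+is}=(I+\sqrt{L})^{-(1+is)\frac{d-1}{4}}$), extended from atoms to all of $T^{1,2}(\R^d)$ by atomic decomposition. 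At $\Re z=0$ the required $T^{2,2}(\R^d)\to L^2(S^{d-1};T^{2,2}(\R^d))$ bound, with growth $Ce^{c|\Im z|}$, follows by combining the $L^2$-boundedness of the imaginary power $(I+\sqrt{L})^{-i\Im z\frac{d-1}{4}}$ (with norm $\lesssim e^{c|\Im z|}$, acting pointwise in $\sigma$ on $T^{2,2}$) with the modification of Lemma \ref{cor:sim} noted in Remark \ref{rem:tentbdd}. Since $\frac1p=\theta+\frac{1-\theta}{2}$ gives $\theta=\frac2p-1$ and hence $\theta\,\frac{d-1}{4}=\frac{d-1}{2}\big(\frac1p-\frac12\big)=\frac{s_p}{2}$, the interpolated bound at $\Re z=\theta$ is exactly the estimate needed; together with the low-frequency bound this completes the argument.

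I expect the interpolation step to be the main obstacle: one has to verify that Lemma \ref{lem:emb} really provides the $\Re z=1$ endpoint for the operator carrying the extra factor $(I+\sqrt{L})^{-z\frac{d-1}{4}}$ and extends off atoms with uniform constants, and that the $\Re z=0$ endpoint holds with growth in $\Im z$ admissible for Stein interpolation. Everything else — the reduction to the wave-packet-localised estimate, the comparability of $\psi_{\omega,\sigma}$ with $\varphi_\omega\Psi_\sigma$, and the low-frequency multiplier bound — is routine bookkeeping combining Theorems \ref{thm:HpD}, \ref{thm:mult}, \ref{thm:transf} and \ref{thm:AMcT} with Proposition \ref{prop:bddgrp} and the methods of Lemma \ref{lem:emb2}.
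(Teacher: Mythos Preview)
Your proposal is correct and follows essentially the same route as the paper. The only cosmetic difference is that the paper sets up the analytic family directly as operators $T_{z}:L^{p}(\R^{d})\to L^{p}(S^{d-1};T^{p,2}(\R^{d}))$, interpolating between the $L^{2}$ endpoint (Lemma~\ref{cor:sim}) and the $H^{1}$ endpoint (Lemma~\ref{lem:emb}), whereas you first pass to the tent space via $G(\sigma,\cdot)=\widetilde{\Psi}(\sigma D_{a})f$ and interpolate $T^{p,2}\to L^{p}(S^{d-1};T^{p,2})$ as in Remark~\ref{rem:tentbdd}; since $\psi_{\omega,\sigma}(D_{a})f=\psi_{\omega,\sigma}(D_{a})\widetilde{\Psi}(\sigma D_{a})f$ by support considerations, the two formulations are equivalent. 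One minor sharpening: at $\Re z=0$ the imaginary powers $(I+\sqrt{L})^{-is\frac{d-1}{4}}$ are in fact uniformly bounded on $L^{2}$ (since $D_{a}$, and hence $\sqrt{L}$, is self-adjoint for an equivalent inner product), so no growth in $\Im z$ is needed there.
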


\begin{proof}
For $z \in \C$ such that $Re(z) \in [0,1]$, we consider the operators defined by
$$
T_{z}f(x,\omega,\sigma):= 1_{[0,1]}(\sigma) (I+\sqrt{L})^{-(\frac{d-1}{4})z}\psi_{\omega,\sigma}(D_{a})f(x) \quad \forall f \in L^{2}(\R^{d}).
$$
For $Re(z) = 0$, they are well defined as operators from $L^{2}(\R^{d})$ to $L^{2}(\R^{d} \times S^{d-1} \times (0,\infty);dx d\omega \frac{d\sigma}{\sigma})$ by Lemma \ref{cor:sim}, with  norm independent of $Im(z)$.  
For $Re(z) = 1$, by Lemma \ref{lem:emb}, $T_{z}$ extends to a bounded operator from $H^{1}(\R^{d})$ to $L^{1}(S^{d-1};T^{1,2}(\R^{d}))$ with norm bounded by $C_\theta e^{|Im(z)|c_{\theta}}$ for fixed $\theta>0$. 
Therefore, by Stein interpolation \cite{Stein} with admissible growth, $T_{z} \in B(L^{p}(\R^{d}),L^{p}(S^{d-1};T^{p,2}(\R^{d}))$
for $Re(z)= \frac{2}{p}-1$. To conclude the proof, we thus only have to show the low frequency estimate
$$
\|(\sigma,x) \mapsto 1_{(1,\infty)}(\sigma) \Psi(\sigma D_{a})(I+\sqrt{L})^{-\frac{s_{p}}{2}}f(x)\|_{T^{p,2}(\R^{d})} 
\lesssim \|f\|_{L^{p}(\R^{d})}.
$$
This follows from Theorem \ref{thm:HpD} and the $L^p$ boundedness of $(I+\sqrt{L})^{-\frac{s_{p}}{2}}$.
\end{proof}

\section{The wave group}
\label{sec:wave}

\begin{Theorem} \label{thm:main}
Let $p \in (1,\infty)$, and $s \in \R$.   Then 
$$
		e^{it\sqrt{L}}: H^{p,s}_{FIO,a}(\R^d) \to H^{p,s}_{FIO,a}(\R^d) 
$$
is bounded for each $t>0$.
\end{Theorem}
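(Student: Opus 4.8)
The plan is to reduce to $s=0$, use the square‑function description of the $H^{p}_{FIO,a}$‑norm from Proposition \ref{prop:vertical-conical} to split the estimate into a low‑frequency piece and a directional (high‑frequency) piece, and then compare the half‑wave group with the transport group in each direction. For the reduction: since $(I+\sqrt{L})^{s}$ is a function of $L$ it commutes with $e^{it\sqrt{L}}$, and Proposition \ref{prop:vertical-conical} gives, for $g\in\mathcal{S}_{p}$, $\|g\|_{H^{p,s}_{FIO,a}}\simeq\|q(D_{a})g\|_{L^{p}}+(\int_{S^{d-1}}\|\varphi_{\omega}(D_{a})(I+\sqrt{L})^{s}g\|_{L^{p}}^{p}\,d\omega)^{1/p}\simeq\|(I+\sqrt{L})^{s}g\|_{H^{p}_{FIO,a}}$ (the conical terms are identical, and the low‑frequency terms agree because $(I+\sqrt{L})^{s}$, once localised by $q(D_{a})$ and a fattening of it, is a compactly supported multiplier of $D_{a}$ satisfying the Marcinkiewicz--Lizorkin conditions, hence $L^{p}$‑bounded by Theorems \ref{thm:mult}--\ref{thm:transf} and Proposition \ref{prop:bddgrp}). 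So it suffices to bound $\|q(D_{a})e^{it\sqrt{L}}f\|_{L^{p}}$ and $(\int_{S^{d-1}}\|\varphi_{\omega}(D_{a})e^{it\sqrt{L}}f\|_{L^{p}}^{p}\,d\omega)^{1/p}$ by $\|f\|_{H^{p}_{FIO,a}}$ for $f\in\mathcal{S}_{p}$.

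\emph{Low‑frequency piece.} Pick $q\in C_{c}^{\infty}(\R^{d})$ with $q\equiv1$ near $0$ and $\widetilde q\in C_{c}^{\infty}(\R^{d})$ with $\widetilde q\equiv1$ on $\supp q$. As all the operators in sight are functions of the commuting tuple $D_{a}$ and $q\widetilde q=q$, one has $q(D_{a})e^{it\sqrt{L}}=\Theta_{t}(D_{a})\widetilde q(D_{a})$ with $\Theta_{t}(\zeta)=q(\zeta)e^{it|\zeta|}$. The symbol $\Theta_{t}$ is compactly supported and satisfies $|\zeta^{\alpha}\partial_{\zeta}^{\alpha}\Theta_{t}(\zeta)|\le C(t,\alpha)$ for all $\alpha$ with $|\alpha|_{\infty}\le1$, since each differentiation of $|\zeta|$ lowers the homogeneity degree by one and the factor $\zeta^{\alpha}$ restores it, so the only would‑be singularity, at $0$, is absorbed. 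By Theorems \ref{thm:mult}--\ref{thm:transf} (with Proposition \ref{prop:bddgrp}), $\Theta_{t}(D_{a})\in B(L^{p})$ with norm depending only on $t$, whence $\|q(D_{a})e^{it\sqrt{L}}f\|_{L^{p}}\le C(t)\|\widetilde q(D_{a})f\|_{L^{p}}\lesssim_{t}\|f\|_{H^{p}_{FIO,a}}$ by Proposition \ref{prop:vertical-conical} applied with $\widetilde q$ in place of $q$.

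\emph{Directional piece: splitting off the transport group.} Write $D_{a}=(D_{a,1},\dots,D_{a,d})$ with $D_{a,j}$ acting only in the variable $x_{j}$. For $\omega\in S^{d-1}$, since $\varphi_{\omega}(D_{a})$ commutes with $e^{it\sqrt{L}}$,
\[
\varphi_{\omega}(D_{a})e^{it\sqrt{L}}f=e^{it\omega.D_{a}}\varphi_{\omega}(D_{a})f+\big(e^{it\sqrt{L}}-e^{it\omega.D_{a}}\big)\varphi_{\omega}(D_{a})f .
\]
For the first summand, $e^{it\omega.D_{a}}=\prod_{j=1}^{d}e^{it\omega_{j}D_{a,j}}$ and each factor is bounded on $L^{p}(\R_{x_{j}})$ uniformly in time by the one‑dimensional theory underlying Proposition \ref{prop:bddgrp}; by Fubini $\|e^{it\omega.D_{a}}\|_{B(L^{p}(\R^{d}))}\le C$, uniformly in $\omega$ and $t$. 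Hence $\|e^{it\omega.D_{a}}\varphi_{\omega}(D_{a})f\|_{L^{p}}\lesssim\|\varphi_{\omega}(D_{a})f\|_{L^{p}}$, and integrating over $\omega$ and invoking Proposition \ref{prop:vertical-conical} again bounds the first summand by $\|f\|_{H^{p}_{FIO,a}}$.

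\emph{The key lemma (main obstacle).} It remains to show that, uniformly in $\omega\in S^{d-1}$, the operator $\big(e^{it\sqrt{L}}-e^{it\omega.D_{a}}\big)\varphi_{\omega}(D_{a})$ is bounded on $L^{p}(\R^{d})$ with a bound depending only on $t$; integrating over $\omega$ and applying Proposition \ref{prop:vertical-conical} once more then finishes the proof. The heuristic is that on $\supp\varphi_{\omega}$ one has $|\zeta|-\omega.\zeta=|\zeta|\,|\hat\zeta-\omega|^{2}/2=O(1)$, so the half‑wave phase $|\zeta|$ and the transport phase $\omega.\zeta$ differ by a symbol of order zero relative to the parabolic scaling adapted to the $\omega$‑cone. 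To make this precise without a symbol calculus — the coefficients being merely Lipschitz — I would decompose $\varphi_{\omega}(D_{a})=\int_{0}^{4}\psi_{\omega,\tau}(D_{a})\,\frac{d\tau}{\tau}$ into parabolic scales and, on each scale, use the Phillips calculus of $D_{a}$ together with finite speed of propagation (Remark \ref{rem:preciseFS}, Lemma \ref{lem:od}) and the parabolic kernel bounds of Lemma \ref{lem:kernelpsi} to realise $(e^{it\sqrt{L}}-e^{it\omega.D_{a}})\psi_{\omega,\tau}(D_{a})$ as an anisotropically rescaled multiplier of $D_{a}$ whose kernel satisfies $L^{p}$‑$L^{p}$ off‑diagonal bounds in the parabolic distance $d_{\omega}$, carrying a gain from $|e^{it(|\zeta|-\omega.\zeta)}-1|\lesssim_{t}\min(1,\tau^{-1}|\hat\zeta-\omega|^{2})$; one then sums over the parabolic scales and over the annuli of $d_{\omega}$ and reassembles with the vertical square‑function bounds of Theorem \ref{thm:HpD}. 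The delicate point — and the main obstacle — is precisely this reassembly: one has to extract from the difference $e^{it\sqrt{L}}-e^{it\omega.D_{a}}$ enough decay in the parabolic scale to offset the growth $\tau^{-(d-1)/4}$ of the individual wave‑packet operators $\psi_{\omega,\tau}(D_{a})$, uniformly in $\omega$ and with constants controlled by the Marcinkiewicz--Lizorkin and transference machinery (Theorems \ref{thm:mult}--\ref{thm:transf}), so that no loss of derivatives survives.
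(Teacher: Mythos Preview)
Your overall architecture matches the paper exactly: reduce to $s=0$, invoke Proposition \ref{prop:vertical-conical}, handle the low-frequency piece via the Marcinkiewicz--Lizorkin/transference machinery (this is the paper's Lemma \ref{lem:low-freq}), and for the high-frequency piece write $\varphi_{\omega}(D_{a})e^{it\sqrt{L}}=e^{it\omega.D_{a}}\varphi_{\omega}(D_{a})+\varphi_{\omega}(D_{a})(e^{it\sqrt{L}}-e^{it\omega.D_{a}})$, with the first term controlled by Lemma \ref{lem:transport}.

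Where you take a detour is in the ``key lemma''. You propose to decompose $\varphi_{\omega}(D_{a})$ into parabolic scales $\psi_{\omega,\tau}(D_{a})$, prove off-diagonal $L^{p}$--$L^{p}$ bounds on each $(e^{it\sqrt{L}}-e^{it\omega.D_{a}})\psi_{\omega,\tau}(D_{a})$, and then resum, flagging the resummation as the main obstacle. The paper avoids this entirely. It observes that $\varphi_{\omega}(D_{a})(e^{-i\omega.D_{a}}e^{i\sqrt{L}}-I)\pi_{a}W_{a}=m_{\omega}(D_{a})\varphi_{\omega}(D_{a})+q_{\omega}(D_{a})\varphi_{\omega}(D_{a})$, where $q_{\omega}$ is compactly supported and $m_{\omega}(\zeta)=\tilde\varphi_{\omega}(\zeta)(e^{i(|\zeta|-\omega.\zeta)}-1)\int_{S^{d-1}}\int_{0}^{1}\psi_{\nu,\sigma}(\zeta)^{2}\,d\nu\,\frac{d\sigma}{\sigma}$, and then checks \emph{directly} (Lemma \ref{lem:Marcinkiewicz}) that $m_{\omega}$ satisfies $|\zeta^{\alpha}\partial_{\zeta}^{\alpha}m_{\omega}(\zeta)|\le C$ for $|\alpha|_{\infty}\le 1$. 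The point you already identified heuristically, $b(\zeta):=|\zeta|-\omega.\zeta=O(1)$ on $\supp\tilde\varphi_{\omega}$, upgrades to $|\zeta^{\alpha}\partial_{\zeta}^{\alpha}b(\zeta)|\le C$ there, and that is all one needs. Theorems \ref{thm:mult}--\ref{thm:transf} then give $m_{\omega}(D_{a})\in B(L^{p})$ uniformly in $\omega$, so no scale-by-scale summation, no off-diagonal estimates, and no parabolic kernel bounds are required; the ``main obstacle'' dissolves. Your proposed route might be made to work, but it is substantially harder than necessary and the difficulty you anticipate (offsetting the $\tau^{-(d-1)/4}$ growth) simply does not arise in the paper's argument.
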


For simplicity, we set $t=1$ and $s=0$. All the proofs extend verbatim to other values of $t$. The case $s \in \R$ is an immediate consequence of the case $s=0$ by Proposition \ref{prop:vertical-conical}. 
For the transport groups,  and the one dimensional wave groups,  the $L^p$ boundedness is clear. 

\begin{Lemma} \label{lem:transport}
Let $p \in (1,\infty)$ and $\omega \in S^{d-1}$. Then  $$e^{i\omega.\sqrt{D_{a}^{2}}} \in B(L^p(\R^d)) \cap B(H^p_{FIO,a}(\R^d)).$$ 
\end{Lemma}

\begin{proof}
The $L^p$ boundedness is proven in Proposition \ref{prop:bddgrp}. The boundedness on $H^p_{FIO,a}(\R^d)$ is  an immediate consequence of the $L^p$ boundedness, by Proposition \ref{prop:vertical-conical}.
\end{proof}

For the low frequency estimate, we need the following lemma. 
\begin{Lemma} \label{lem:low-freq} 
Let $p \in (1,\infty)$, let $q \in C^\infty_c(\R^d)$  be radial.  Then 	$q(D_a)e^{i\sqrt{L}}: L^p(\R^d) \to L^p(\R^d)$ is bounded. 
\end{Lemma}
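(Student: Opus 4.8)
The statement concerns the low-frequency part of the half-wave group, so the natural strategy is to reduce it to the finite speed of propagation of the transport group together with the $H^\infty$ calculus of $L$ already established in Theorem \ref{thm:AMcT}. First I would choose $\widetilde{q} \in C_c^\infty(\R^d)$ with $\widetilde{q} \equiv 1$ on $\mathrm{supp}(q)$, so that $q(D_a) = q(D_a)\widetilde{q}(D_a)$, and write $e^{i\sqrt{L}} = \int_{\R} \widehat{\psi}(r)\, e^{ir\sqrt{L}} \widetilde{q}(D_a)^{\text{(cut)}}\,dr$ using the Phillips calculus; more precisely, since $\widetilde{q}(D_a)$ localizes to frequencies $|\zeta| \lesssim 1$, the operator $q(D_a)e^{i\sqrt{L}}$ only ``sees'' the function $\lambda \mapsto e^{i\lambda}$ restricted to a bounded interval. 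The key algebraic point is that $\sqrt{L} = \sqrt{D_a . D_a}$ and that the spectral information carried by $\widetilde{q}(D_a)$ confines everything to a compact frequency set.

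\textbf{Key steps.} (1) Reduce to showing boundedness of $\eta(\sqrt{L})$ where $\eta \in C_c^\infty(\R)$ equals $e^{i\lambda}$ on a neighbourhood of the relevant spectrum and is supported in, say, $[-4,4]$; this uses $q(D_a)=q(D_a)\widetilde q(D_a)$ and the fact that on the range of $\widetilde q(D_a)$ one has $\sqrt L$ bounded. (2) Represent $\eta(\sqrt{L})$ via the Fourier transform of $\eta$ as $\eta(\sqrt L) = \tfrac{1}{2\pi}\int_{\R} \widehat\eta(r)\,\cos(r\sqrt L)\,dr$, which converges because $\widehat\eta$ is Schwartz, and relates $\cos(r\sqrt L)$ to the transport group $e^{i r \xi . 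D_a}$ integrated over $\xi$. (3) Apply Proposition \ref{prop:bddgrp} to bound $e^{ir\omega.D_a}$ on $L^p$ uniformly, and use finite speed of propagation from Remark \ref{rem:preciseFS} to control the $r$-integral: the cutoff $q(D_a)$ (equivalently, composing with $\widetilde q(D_a)$) ensures that only $|r| \lesssim 1$ contributes in a meaningful way, so $\int |\widehat\eta(r)|\, \|e^{ir\sqrt L}\widetilde q(D_a)\|_{B(L^p)}\,dr < \infty$. Alternatively — and probably more cleanly — one invokes the bounded $H^\infty$ calculus of $L$ on $L^p$ directly: since $\eta$ is a compactly supported smooth function of $\sqrt{L}$, and since $q(D_a)\widetilde q(D_a)$ truncates to a sector where $\eta$ agrees with a bounded holomorphic function, the operator $q(D_a)e^{i\sqrt L} = q(D_a)\,\eta(\sqrt L)$ is bounded by Theorem \ref{thm:AMcT}. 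One should be slightly careful that $\eta$ is only smooth, not holomorphic, but a compactly supported $C_c^\infty$ function of a sectorial operator with bounded $H^\infty$ calculus still gives a bounded operator (e.g. via the convergence lemma or by writing $\eta$ as a resolvent integral).

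\textbf{Main obstacle.} The delicate point is handling the range $|r|$ large in the oscillatory integral representation: $\cos(r\sqrt L)$ is only bounded on $L^p$ for $p\neq 2$ with a loss, so one cannot simply integrate $\widehat\eta(r)\cos(r\sqrt L)$ against a uniform $L^p$ bound unless the frequency localization is exploited. The resolution is that composing with $q(D_a)$ (hence with the low-frequency cutoff $\widetilde q(D_a)$) puts us in a regime where $\sqrt{L}$ acts like a \emph{bounded} operator of norm $\lesssim 1$, so $e^{ir\sqrt L}\widetilde q(D_a)$ is $L^p$-bounded \emph{uniformly in $r$} — this is the content of the $H^\infty$ calculus applied to the bounded function $\lambda \mapsto e^{ir\lambda}\widetilde q_{\text{rad}}(\lambda)$, with norm $\lesssim 1 + |r|$ at worst, which is absorbed by the Schwartz decay of $\widehat\eta$. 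So the proof reads: $q(D_a)e^{i\sqrt L} = q(D_a)\,\Phi(L)$ where $\Phi \in C_c^\infty([0,\infty))$ agrees with $\lambda \mapsto e^{i\sqrt\lambda}$ on the support of the relevant functional calculus, and $\Phi(L)$ is $L^p$-bounded by Theorem \ref{thm:AMcT}.

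\begin{proof}
Choose $\widetilde{q} \in C_c^\infty(\R^d)$ with $\widetilde{q} \equiv 1$ on $\mathrm{supp}(q)$, so that $q(D_a) = q(D_a)\widetilde{q}(D_a)$ by the Phillips functional calculus. Since $\widetilde{q}$ is supported in some ball $\{|\zeta| \leq R\}$, the operator $\widetilde{q}(D_a)$ is spectrally localized in a way that confines $L = D_a^2$ to frequencies $|\zeta| \leq R$. Pick $\Phi \in C_c^\infty([0,\infty))$ with $\Phi(\lambda) = e^{i\sqrt{\lambda}}$ for $\lambda \in [0, R^2]$. Then, for $f \in \mathcal{S}_p$,
$$
q(D_a) e^{i\sqrt{L}} f = q(D_a) \widetilde{q}(D_a) e^{i\sqrt{L}} f = q(D_a) \widetilde{q}(D_a) \Phi(L) f,
$$
since $\widetilde{q}(D_a) e^{i\sqrt{L}} = \widetilde{q}(D_a)\Phi(L)$ by the consistency of the functional calculi (both sides act, after composing with $\widetilde q(D_a)$, only through values of the multiplier on $\{|\zeta|^2 \leq R^2\}$, where $\Phi(|\zeta|^2) = e^{i|\zeta|}$).

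By Theorem \ref{thm:AMcT}, $L$ has a bounded $H^\infty$ calculus of angle $0$ on $L^p(\R^d)$ and generates an analytic semigroup with Gaussian bounds. Since $\Phi \in C_c^\infty([0,\infty))$, the operator $\Phi(L)$ is bounded on $L^p(\R^d)$: one writes $\Phi$ as a sum of a function holomorphically extendable to a sector (handled by the $H^\infty$ calculus) and a remainder that decays rapidly, which is controlled by the Gaussian kernel bounds on $e^{-tL}$ via a standard representation $\Phi(L) = \int_0^\infty \widehat{m}(t)\, e^{-tL}\,dt$ with $\widehat{m}$ integrable. Hence $\|\Phi(L) f\|_{L^p} \lesssim \|f\|_{L^p}$. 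Finally, $q(D_a)$ and $\widetilde{q}(D_a)$ are bounded on $L^p(\R^d)$ by Theorem \ref{thm:mult} and Theorem \ref{thm:transf} together with Proposition \ref{prop:bddgrp}, since $q$ and $\widetilde{q}$ satisfy Mikhlin-type conditions and $D_a$ generates bounded commuting groups on $L^p$. Combining these estimates,
$$
\|q(D_a) e^{i\sqrt{L}} f\|_{L^p(\R^d)} \lesssim \|\widetilde{q}(D_a) \Phi(L) f\|_{L^p(\R^d)} \lesssim \|\Phi(L) f\|_{L^p(\R^d)} \lesssim \|f\|_{L^p(\R^d)},
$$
which establishes the claim.
\end{proof}
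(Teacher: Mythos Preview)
Your approach takes a detour through the functional calculus of $L$, whereas the paper's proof is a one-line application of the Phillips calculus of $D_a$: the symbol $\zeta \mapsto q(\zeta)e^{i|\zeta|}$ is compactly supported and $C^{1}$ away from the origin, and one checks directly that it satisfies the Marcinkiewicz--Lizorkin condition of Theorem~\ref{thm:mult}; then Theorem~\ref{thm:transf} and Proposition~\ref{prop:bddgrp} give the bound. No decomposition $q(D_a)e^{i\sqrt{L}}=q(D_a)\Phi(L)$ is needed.

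Your argument has a genuine gap at the step where you claim $\Phi(L)$ is bounded on $L^{p}$ for $\Phi\in C_c^{\infty}([0,\infty))$. The $H^{\infty}$ calculus of Theorem~\ref{thm:AMcT} applies only to \emph{holomorphic} bounded functions on a sector, and a nonzero $C_c^{\infty}$ function is never holomorphic. Your proposed fix---writing $\Phi$ as ``holomorphic part plus rapidly decaying remainder'', or representing $\Phi(L)=\int_0^{\infty}\widehat{m}(t)e^{-tL}\,dt$ with $\widehat{m}\in L^{1}$---does not work: a compactly supported $\Phi$ cannot be the Laplace transform of an integrable function (it would then extend holomorphically to a half-plane). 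What \emph{is} true is that operators with Gaussian heat kernels satisfy H\"ormander-type spectral multiplier theorems, which would give the conclusion, but this is a separate nontrivial result not invoked in the paper.

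The simplest repair collapses your detour back into the paper's argument: since $L=D_a\cdot D_a$, your operator $\Phi(L)$ equals $g(D_a)$ for $g(\zeta):=\Phi(|\zeta|^{2})\in C_c^{\infty}(\R^{d})$, and now Theorems~\ref{thm:mult} and~\ref{thm:transf} apply directly to $g$. But once you see this, there is no reason to introduce $\widetilde{q}$ or $\Phi$ at all---just apply the same reasoning to the symbol $q(\zeta)e^{i|\zeta|}$ in one step.
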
 

\begin{proof}
Because of the compact support of $q$, the symbol $m:\zeta \mapsto q(\zeta)e^{i|\zeta|}$ clearly satisfies the Marcinkiewicz-Lizorkin multiplier condition of Theorem \ref{thm:mult}. The result thus follows from Theorem \ref{thm:mult} and Theorem \ref{thm:transf} using that  $(e_{j}\sqrt{D_a^{2}})_{j=1,...,d}$ generates a bounded commutative  $d$-parameter group (as shown in Proposition \ref{prop:bddgrp}),  along with the fact that
$$
m(D_{a}) = m^{s}(D_{a}) = \frac{1}{(2\pi)^{d}} \int \limits _{\R^{d}} \widehat{m}(\xi)\exp(i\xi\sqrt{D_{a}^{2}})d\xi,
$$
as explained in Definition \ref{def:calc}. 
\end{proof}

\begin{proof}[Proof of Theorem \ref{thm:main}]
 For $f \in \mathcal{S}_{p}$, Proposition \ref{prop:vertical-conical} yields 
\begin{align*}	
	\|e^{i\sqrt{L}}f\|_{H^p_{FIO,a}(\R^d)} 
	& \lesssim \|q(D_a)e^{i\sqrt{L}}f\|_{L^p(\R^d)} + \left(\int_{S^{d-1}} \|\varphi_\omega(D_a) e^{i\sqrt{L}} f\|_{L^p(\R^d)}^p\,d\omega\right)^{1/p}.
%	& \lesssim \|q(D_a)e^{i\sqrt{L}}f\|_{L^p(\R^d)} + \sup_{\omega \in S^{d-1}}  \|\varphi_\omega(D_a) e^{i\sqrt{L}} f\|_{L^p(\R^d)}.
\end{align*}

For the low frequency part, recall that $q \in C^\infty_c(\R^d)$ with $q(\zeta) \equiv 1$ for  $|\zeta|\leq \frac{1}{8}$.  Choose  $\tilde q \in C^\infty_c(\R^d)$  radial  with $\tilde q(\zeta)\equiv 1$ on $\supp q$. Then $q(D_a)e^{i\sqrt{L}} = \tilde q(D_a)e^{i\sqrt{L}} q(D_a)$, since  $\sqrt{D_a^{2}}$  and $\sqrt{L}$ are commuting, and $\tilde q(D_a)e^{i\sqrt{L}}$ is $L^p$ bounded according to Lemma \ref{lem:low-freq}. Thus, 
\begin{align*}
	\|q(D_a)e^{i\sqrt{L}} f\|_{L^p(\R^d)} 
	= \|\tilde q(D_a) e^{i\sqrt{L}} q(D_a) f\|_{L^p(\R^d)}  
	\lesssim \|q(D_a) f\|_{L^p(\R^d)} .
\end{align*}

Let us now consider the high frequency part. 
For fixed $\omega \in S^{d-1}$, we decompose 
\begin{align*}
	\varphi_\omega(D_a)e^{i\sqrt{L}} = \varphi_\omega(D_a)  e^{i\omega.\sqrt{D_a^{2}}}  + \varphi_\omega(D_a)(e^{i\sqrt{L}} -  e^{i\omega.\sqrt{D_a^{2}}}).
\end{align*}
The first part can be dealt with Lemma \ref{lem:transport}, which directly yields
$$
		 \left(\int_{S^{d-1}} \|\varphi_\omega(D_a)  e^{i\omega.\sqrt{D_a^{2}}}  f\|_{L^p(\R^d)}^p\,d\omega\right)^{1/p}
		\lesssim \|f\|_{H^p_{FIO,a}(\R^d)}.
$$

For the second part, we use \eqref{eq:resol-identity}  to write 
\begin{align*}
	\varphi_\omega(D_a)(e^{i\sqrt{L}} - e^{i\omega.\sqrt{D_a^{2}}} ) 
		= \varphi_\omega(D_a)  e^{i\omega.\sqrt{D_a^{2}}} (e^{-i\omega.\sqrt{D_a^{2}}}  e^{i\sqrt{L}} -I) \pi_a W_a.
\end{align*}
Since  $e^{i\omega.\sqrt{D_a^{2}}}$ is bounded on $L^p(\R^d)$ by Lemma \ref{lem:transport}, it suffices to show that 
\begin{align*}
	\|\varphi_\omega(D_a) ( e^{-i\omega.\sqrt{D_a^{2}}}  e^{i\sqrt{L}} -I) \pi_a W_a f\|_{L^p(\R^d)} 
	\lesssim \|\varphi_\omega(D_a)f\|_{L^p(\R^d)}. 
\end{align*}
We can write 
\begin{align*}
	\varphi_\omega(D_a) ( e^{-i\omega.\sqrt{D_a^{2}}}  e^{i\sqrt{L}} -I) \pi_a W_a 
	=m_\omega(D_a)\varphi_\omega(D_a) + q_\omega(D_a) \varphi_\omega(D_a)
\end{align*}
for the symbols 
\begin{align} \label{eq:def-multiplier}
m_\omega(\zeta)
	 = \tilde \varphi_\omega(\zeta) \tilde m_\omega(\zeta) \int_0^1 \int_{S^{d-1}} \psi_{\nu,\sigma}(\zeta)^2\,d\nu \frac{d\sigma}{\sigma} 
\end{align}
and 
\begin{align*}
	q_\omega(\zeta) 
	=\tilde \varphi_\omega(\zeta) \tilde m_\omega(\zeta)   r(\zeta)^2 
\end{align*}
with $\tilde m_\omega(\zeta)=e^{- i\sum _{j=1} ^{d} \omega_{j}|\zeta_{j}|  +i|\zeta|}-1$,  
 $\tilde\varphi_\omega \in C_c^\infty(\R^d)$ a function with $\tilde \varphi_\omega \equiv 1$ on $\supp \varphi_\omega$ and $\tilde \varphi_\omega(\zeta)=0$ for $|\zeta|<\frac{1}{16}$ or $ \underset{(\varepsilon_{j})_{j=1} ^{d} \in \{-1,1\}^{d}}{\min}
	|(\varepsilon_{1}\hat \zeta_{1},...,\varepsilon_{d}\hat \zeta_{d}) - \omega|  >4|\zeta|^{-1/2}$, and 
$$
		r(\zeta):=\left( \int_1^\infty \Psi_\sigma(\zeta)^2 \,\frac{d\sigma}{\sigma} \right)^{1/2}, \quad \zeta \neq 0,
$$
and $r(0):=1$. As noted  in \cite[Section 4.1]{HPR}, we have $r \in C_c^\infty(\R^d)$. 
 
The proof will be concluded by applying Theorem \ref{thm:mult}, and Theorem \ref{thm:transf}, using Proposition \ref{prop:bddgrp}. We only have to check that $m_\omega$ and $q_\omega$ 
satisfy the assumption of Theorem \ref{thm:mult}. For $q_\omega$, this directly follows from the fact that $r \in C_c^\infty(\R^d)$. For $m_\omega$, this is proven in Lemma \ref{lem:Marcinkiewicz} below.
\end{proof}

\begin{Remark} \label{rem:cut-off-cond}
Let $\omega \in S^{d-1}$. 
Let $\tilde \varphi_\omega \in C_c^\infty(\R^d)$ a function with $\tilde \varphi_\omega \equiv 1$ on $\supp \varphi_\omega$ and $\tilde \varphi_\omega(\zeta)=0$ for $|\zeta|<\frac{1}{16}$ or $ \underset{(\varepsilon_{j})_{j=1} ^{d} \in \{-1,1\}^{d}}{\min}
	|(\varepsilon_{1}\hat \zeta_{1},...,\varepsilon_{d}\hat \zeta_{d}) - \omega|  >4|\zeta|^{-1/2}$. 
By the choice of the cut-off function $\tilde \varphi_\omega$ and the support properties of $\varphi_\omega$, we have the following: 
For all $\alpha \in \N_0^d$ and $\beta \in \N_0$, there exists a constant $C=C(\alpha,\beta)>0$ such that 
$$
		|\langle \omega,\nabla_\zeta\rangle^\beta \partial_\zeta^\alpha \tilde \varphi_\omega(\zeta)|
		\leq C |\zeta|^{-\frac{|\alpha|}{2}-\beta}
$$
for all $\omega \in S^{d-1}$ and $\zeta \in \R^d \setminus \{0\}$.
\end{Remark}

%\todo{Note: the cut-off $\tilde \varphi_\omega$ behaves the same as $\varphi_\omega$, but is normalised without the factor $|\zeta|^{\frac{d-1}{4}}$. }

\begin{Lemma} \label{lem:Marcinkiewicz}
Let $\omega \in S^{d-1}$, let $m_\omega$ be as defined in  \eqref{eq:def-multiplier}. 
For all $\alpha \in \N_0^d$ with $|\alpha|_\infty\leq 1$ there exists a constant $C=C(\alpha)>0$ such that 
$$
		|\zeta^\alpha \partial_\zeta^\alpha m_{\omega}(\zeta)| \leq C 
$$
for all $\zeta \in \R^d \setminus \{0\}$. 
\end{Lemma}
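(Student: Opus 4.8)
The plan is to prove the pointwise Mikhlin--Marcinkiewicz bound $|\zeta^\alpha \partial_\zeta^\alpha m_\omega(\zeta)| \leq C$ for every $\omega \in S^{d-1}$ and every $\alpha \in \N_0^d$ with $|\alpha|_\infty \leq 1$, with $C$ uniform in $\omega$, by tracking three factors in the product
$$
m_\omega(\zeta) = \tilde\varphi_\omega(\zeta)\, \tilde m_\omega(\zeta)\, \Theta(\zeta), \qquad \Theta(\zeta) := \int_0^1 \int_{S^{d-1}} \psi_{\nu,\sigma}(\zeta)^2 \, d\nu \, \frac{d\sigma}{\sigma},
$$
separately, using that differentiation in the \emph{tangential} directions $\omega_1,\dots,\omega_{d-1}$ costs only $|\zeta|^{1/2}$ while differentiation in the \emph{radial/longitudinal} direction $\omega$ costs $|\zeta|^{1}$. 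First I would record the anisotropic derivative bounds for each factor on the relevant cone $\{|\hat\zeta - \omega| \lesssim |\zeta|^{-1/2},\ |\zeta| \gtrsim 1\}$ (outside this set $m_\omega$ vanishes because of $\tilde\varphi_\omega$): (i) for $\tilde\varphi_\omega$ this is exactly Remark \ref{rem:cut-off-cond}; (ii) for $\Theta$, observe that $\int_{S^{d-1}} \psi_{\nu,\sigma}(\zeta)^2 \, d\nu = \Psi(\sigma\zeta)^2 \int_{S^{d-1}} \varphi_{\nu,\sigma}(\zeta)^2 d\nu = \Psi(\sigma\zeta)^2$ by the normalisation of $c_\sigma$, so $\Theta(\zeta) = \int_0^1 \Psi(\sigma\zeta)^2 \frac{d\sigma}{\sigma}$ is radial, smooth away from $0$, bounded, and satisfies $|\langle\hat\zeta,\nabla\rangle^\beta \partial_\zeta^\alpha \Theta(\zeta)| \lesssim |\zeta|^{-|\alpha|_1-\beta}$ by scaling (it is a genuine radial Mikhlin symbol); (iii) for $\tilde m_\omega(\zeta) = e^{i(|\zeta|-\omega\cdot\zeta)} - 1$, the phase $g_\omega(\zeta) := |\zeta| - \omega\cdot\zeta$ vanishes to first order along $\R_+\omega$, is nonnegative, and on the cone satisfies $0 \leq g_\omega(\zeta) \lesssim |\zeta| |\hat\zeta - \omega|^2 \lesssim 1$; its gradient $\nabla g_\omega = \hat\zeta - \omega$ has size $\lesssim |\zeta|^{-1/2}$ on the cone, longitudinal derivatives of $g_\omega$ are $O(|\zeta|^{-1})$ (since $\langle\omega,\nabla\rangle g_\omega = \langle\omega,\hat\zeta\rangle - 1 = O(|\hat\zeta-\omega|^2) = O(|\zeta|^{-1})$), and higher derivatives obey the same anisotropic scaling $|\langle\omega,\nabla\rangle^\beta \partial_\zeta^\alpha g_\omega| \lesssim |\zeta|^{-1-|\alpha|_1/2-\beta}$ for $|\alpha|_1 + \beta \geq 1$.

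\textbf{Key steps.} With these three ingredients in hand, I would combine them: since $|\alpha|_\infty \leq 1$, any multi-index $\alpha$ splits as a product of at most one derivative in each coordinate direction, which after the orthogonal change of frame to $\{\omega,\omega_1,\dots,\omega_{d-1}\}$ becomes at most one longitudinal derivative $\langle\omega,\nabla\rangle$ and derivatives of total tangential order $\leq d-1$, each tangential derivative costing $|\zeta|^{1/2}$. The Leibniz rule distributes $\partial_\zeta^\alpha$ over the three factors; each piece is a product of: a factor $|\zeta|^{-(\cdots)}$ from derivatives of $\tilde\varphi_\omega$ and $\Theta$; a factor from derivatives of $e^{ig_\omega}$, which by Faà di Bruno is a polynomial in the $\partial^\gamma g_\omega$ times $e^{ig_\omega}$ (note $|e^{ig_\omega}| = 1$); and, crucially, when \emph{no} derivative hits $\tilde m_\omega$ one uses the bound $|\tilde m_\omega(\zeta)| = |e^{ig_\omega} - 1| \leq \min(2, g_\omega(\zeta)) \lesssim |\zeta| |\hat\zeta - \omega|^2 \lesssim 1$ on the cone (this is the analogue of the mean-value estimate in \cite{HPR}). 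A careful bookkeeping of powers of $|\zeta|$ shows that in every term the positive powers $|\zeta|^{|\alpha|_1/2}$ one would naively need to cancel the prefactor $|\zeta^\alpha|$ are exactly matched: each longitudinal derivative contributes $|\zeta|^{-1}$ against a prefactor $|\zeta_j| \lesssim |\zeta|$, and each of the $\lesssim d-1$ tangential derivatives contributes $|\zeta|^{-1/2}$ against a prefactor $|\zeta_j| \lesssim |\zeta|^{1/2}$ on the cone (by \eqref{eq:zetacomp}-type estimates rotated to the $\omega$-frame). Hence $|\zeta^\alpha \partial_\zeta^\alpha m_\omega(\zeta)| \lesssim 1$ uniformly. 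For $|\zeta| \lesssim 1$ the symbol vanishes (support of $\tilde\varphi_\omega$ is in $\{|\zeta| \geq \tfrac{1}{16}\}$) up to a compact neighbourhood where everything is trivially bounded, so there is nothing to check.

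\textbf{Main obstacle.} The delicate point is the interplay between the longitudinal/tangential anisotropy and the vanishing of the phase $g_\omega$ along $\R_+\omega$: one must verify that derivatives of $e^{ig_\omega}$ never produce an unmatched positive power of $|\zeta|$, which requires the precise estimates $|\langle\omega,\nabla\rangle^\beta \partial^\alpha_{\omega^\perp} g_\omega(\zeta)| \lesssim |\zeta|^{-1-\beta-|\alpha|/2}$ rather than the cruder isotropic bounds one gets from $g_\omega \in C^\infty(\R^d\setminus\{0\})$ being homogeneous of degree $1$ --- the homogeneity alone would only give $|\zeta|^{-\beta-|\alpha|}$ for longitudinal and $|\zeta|^{-|\alpha|}$ for tangential, losing a full power $|\zeta|$ from the vanishing. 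I would establish these anisotropic phase bounds first, on the cone, by writing $g_\omega(\zeta) = |\zeta|\,\Phi(\hat\zeta-\omega)$ with $\Phi(v) = O(|v|^2)$ smooth near $0$, and differentiating; the factor $|\zeta|$ in front is precisely what upgrades each derivative to the claimed anisotropic decay when combined with $|\hat\zeta-\omega| \lesssim |\zeta|^{-1/2}$. Once that lemma is in place, the rest is a finite Leibniz expansion.
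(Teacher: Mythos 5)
Your proposal is correct and follows essentially the same route as the paper's proof: both verify the Marcinkiewicz condition factor by factor via Leibniz, exploit the parabolic localisation $|\hat \zeta - \omega| \lesssim |\zeta|^{-1/2}$ forced by $\tilde\varphi_\omega$ (the paper's \eqref{eq:cond-zeta}, after rotating to $\omega = e_1$), and reduce the oscillatory factor $e^{ig_\omega}-1$ to the phase bounds $|\zeta^\gamma \partial_\zeta^\gamma g_\omega(\zeta)| \lesssim 1$ for $|\gamma|_\infty \leq 1$, which is exactly the paper's condition \eqref{eq:mult-main-cond}; your identification of the last factor as the radial symbol $\int_0^1 \Psi(\sigma\zeta)^2\,\frac{d\sigma}{\sigma}$ is a mild streamlining of the paper's appeal to \eqref{eq:cond-psi}. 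One slip to fix: the claimed general phase estimate $|\langle \omega,\nabla_\zeta\rangle^\beta \partial_\zeta^\alpha g_\omega| \lesssim |\zeta|^{-1-\frac{|\alpha|_1}{2}-\beta}$ is false as written (already $\langle\omega,\nabla_\zeta\rangle g_\omega \sim |\zeta|^{-1}$, not $|\zeta|^{-2}$, contradicting your own first-order computation); the correct bound on the cone is $|\zeta|^{-\frac{|\alpha|_1}{2}-\beta}$, which is precisely what your per-derivative bookkeeping actually uses and suffices, since only multi-indices with $|\gamma|_\infty \leq 1$ occur and then $|\zeta^\gamma \partial_\zeta^\gamma g_\omega| \lesssim 1$, as the paper checks case by case.
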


\begin{proof}
By rotational invariance it suffices to consider the case $\omega=e_1$.
Let $\zeta \in \R^d \setminus \{0\}$.
The bound $|m_{e_1}(\zeta)| \leq C$ directly follows from \eqref{eq:psi-identity} and the boundedness of $\tilde m_{e_1}$ and $\tilde \varphi_{e_1}$. 
Moreover, by the specific form of $\tilde m_{e_1}(\zeta) = e^{ib(\zeta)}-1$ with $b(\zeta)= -|\zeta_1| +|\zeta|$, it can easily be seen that the condition 
\begin{equation} \label{eq:mult-main-cond}
	|\zeta^\alpha \partial_\zeta^\alpha b(\zeta)| \leq c
\end{equation} 
 for $|\alpha|_\infty \leq 1$ immediately implies $|\zeta^\alpha \partial_\zeta^\alpha \tilde m_{e_1}(\zeta)| \leq c$  for $|\alpha|_\infty \leq 1$. We check \eqref{eq:mult-main-cond}: 
\begin{align*}
	|\zeta_1 \partial_1 b(\zeta)| = |\zeta_1 \partial_1 (- |\zeta_1|  + |\zeta|)|
	& \leq |\zeta_1||1-  \frac{|\zeta_1|}{|\zeta|} |
	= \left|\frac{\zeta_1}{|\zeta|}\right|||\zeta|- |\zeta_1| | \\
	&\leq ||\zeta|-|\zeta_1| |
	=|\zeta_1|\left(\sqrt{1+\sum_{j=2}^d \frac{\zeta_j^2}{\zeta_1^2}} -1 \right).
\end{align*}
According to the support properties of $\tilde \varphi_{e_1}$ and $\psi_{\nu,\sigma}$, we have $|\nu- \varepsilon_{1} e_1|\lesssim \sqrt{\sigma}$  for some $\varepsilon_{1} \in \{-1,1\}$.  Thus a slight modification of \eqref{eq:zetacomp} yields that there exist constants $c_1,c_2>0$ such that for $0<\sigma \ll 1$, one has 
\begin{equation} \label{eq:cond-zeta}
	 |\zeta_1| >\frac{c_1}{\sigma} \qquad \text{and} \qquad |\zeta_j|\leq \frac{c_2}{\sqrt{\sigma}}, \quad  j \in \{2,\ldots,d\},
\end{equation}
 on the support of $m_{e_1}$.
Thus, for such choice of $\zeta$, \begin{align*}
|\zeta_1 \partial_1 b(\zeta)| \lesssim	|\zeta_1|\left(\sqrt{1+\frac{c}{ |\zeta_1|} } -1 \right).
\end{align*}
This expression remains bounded for $ |\zeta_1|  \to \infty$ or equivalently $|\zeta|\to \infty$, since replacing $h=\frac{1}{ |\zeta_1| }$, we see that 
\begin{align*}
	\lim_{h\to 0} \frac{\sqrt{1+ch}-1}{h} =\frac{c}{2}.
\end{align*}
Again using \eqref{eq:cond-zeta} and $|\zeta|\geq |\zeta_1| >\frac{c_1}{\sigma}$, we obtain for $j \in \{2,\ldots,d\}$ that
\begin{align*}
	|\zeta_j \partial_j b(\zeta)| =|\zeta_j \partial_j  (- |\zeta_1|  +  |\zeta|)| \leq |\zeta_j \frac{\zeta_j}{|\zeta|}| \leq c.
\end{align*}
Concerning the mixed derivatives, one can  inductively show that for $\alpha \in \N_0^d$ with $|\alpha|_\infty \leq 1$ and $\alpha_1=0$, 
$
		|\zeta^\alpha \partial_\zeta^\alpha b(\zeta)| = |\frac{\zeta^{2\alpha}}{|\zeta|^{2|\alpha|-1}}|\leq c,
$
for $\zeta$ as in \eqref{eq:cond-zeta}. Finally, for $j \neq 1$, 
\begin{align*}	
	|\zeta_1 \zeta_j \partial_1 \partial_j b(\zeta)| 
	&=	|\zeta_1 \zeta_j \partial_1 \partial_j(- |\zeta_1|  +|\zeta|)|
		=|\zeta_1 \zeta_j| |\frac{\zeta_1\zeta_j}{|\zeta|^3}| \leq c. 
\end{align*}
Putting all arguments together shows \eqref{eq:mult-main-cond}. 
The  bound $|\zeta^\alpha \partial_\zeta^\alpha \tilde \varphi_{e_1}(\zeta)| \leq c$ follows from Remark \ref{rem:cut-off-cond} together with \eqref{eq:cond-zeta}, whereas the analogous bound for the last factor in \eqref{eq:def-multiplier} concerning $\psi_{\nu,\sigma}$ is a consequence of \eqref{eq:cond-psi} together with \eqref{eq:cond-zeta}. 
\end{proof}

Combining Corollary \ref{cor:sob} with Theorem \ref{thm:main} and Theorem \ref{thm:HpD} then gives our main result.

\begin{Theorem}
\label{thm:main1}
Let $p \in (1,\infty)$ and $s_{p} = (d-1)|\frac{1}{p}-\frac{1}{2}|$. 
For each $t\in \R$, the operator $(I+\sqrt{L})^{-s_{p}}\exp(it\sqrt{L})$ is bounded on $L^{p}(\R^{d})$. Moreover, if $s_{p} \leq 2$, the operator $exp(it\sqrt{L})$ is bounded from 
$W^{s_{p},p}(\R^d)$ to $L^{p}(\R^d)$.
\end{Theorem}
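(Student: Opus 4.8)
The plan is to assemble the statement from the results of Sections \ref{sec:HpD}--\ref{sec:wave}; no new estimate is needed. I would first treat the range $p \in (1,2]$, using the factorisation
$$
(I+\sqrt{L})^{-s_p}\,e^{it\sqrt{L}} = (I+\sqrt{L})^{-\frac{s_p}{2}}\circ e^{it\sqrt{L}} \circ (I+\sqrt{L})^{-\frac{s_p}{2}},
$$
valid in the functional calculus of $L$, and tracing the mapping properties of the three factors (read from right to left). First, $(I+\sqrt{L})^{-\frac{s_p}{2}}$ maps $L^p(\R^d)$ boundedly into $H^p_{FIO,a}(\R^d)$: this is Corollary \ref{cor:sob}, whose estimate on $\mathcal{S}_p$ extends by density. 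Next, $e^{it\sqrt{L}}$ maps $H^p_{FIO,a}(\R^d)$ into itself, by Theorem \ref{thm:main} with $s=0$. Finally, the second copy of $(I+\sqrt{L})^{-\frac{s_p}{2}}$ maps $H^p_{FIO,a}(\R^d)$ into $H^{p,\frac{s_p}{2}}_{FIO,a}(\R^d)$ --- which I would deduce from Proposition \ref{prop:vertical-conical}, Lemma \ref{lem:sfequiv}, and the $L^p$-boundedness of $q(D_a)$ and of $(I+\sqrt{L})^{-\frac{s_p}{2}}$ (the latter from the bounded $H^\infty$-calculus of $L$, Theorem \ref{thm:AMcT}) --- and $H^{p,\frac{s_p}{2}}_{FIO,a}(\R^d)$ is continuously included in $H^p_L(\R^d) = L^p(\R^d)$ by Lemma \ref{lem:emb2}. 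Composing the three factors and noting that the resulting bounded operator agrees with $(I+\sqrt{L})^{-s_p}e^{it\sqrt{L}}$ on the dense subspace $\mathcal{S}_p$, I conclude that the latter is bounded on $L^p(\R^d)$ for $p \in (1,2]$ and all $t$ (the cases $t\leq 0$ being identical to $t>0$).

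For $p \in [2,\infty)$ I would argue by duality. The adjoint Dirac operator $D_a^\ast$ has the same structure as $D_a$ (with the coefficients $a_j$ and $a_{j+d}$ interchanged), so $L^\ast = (D_a^\ast)^2$ is again admissible and the entire construction of Sections \ref{sec:HpD}--\ref{sec:wave} applies to it; moreover $(\sqrt{L})^\ast = \sqrt{L^\ast}$ and $(e^{it\sqrt{L}})^\ast = e^{-it\sqrt{L^\ast}}$, as one checks at the $L^2$ level via the equivalent inner product that makes $D_a$ self-adjoint. Hence the $L^p$--$L^{p'}$ adjoint of $(I+\sqrt{L})^{-s_p}e^{it\sqrt{L}}$ is $(I+\sqrt{L^\ast})^{-s_p}e^{-it\sqrt{L^\ast}}$. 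Since $p' \in (1,2]$ and $s_{p'} = (d-1)|\tfrac{1}{p'}-\tfrac{1}{2}| = s_p$, the case already treated, applied to $L^\ast$, shows this adjoint is bounded on $L^{p'}(\R^d)$, and therefore $(I+\sqrt{L})^{-s_p}e^{it\sqrt{L}}$ is bounded on $L^p(\R^d)$.

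For the second statement, assume $s_p \leq 2$. Combining \eqref{eq:wTp} and \eqref{eq:sob} of Theorem \ref{thm:HpD} gives the norm equivalence $\|(I+\sqrt{L})^{s_p}f\|_{L^p(\R^d)} \sim \|f\|_{W^{s_p,p}(\R^d)}$ for $f \in W^{s_p,p}(\R^d)$. Then, for such $f$,
$$
\|e^{it\sqrt{L}}f\|_{L^p(\R^d)} = \big\|(I+\sqrt{L})^{-s_p}e^{it\sqrt{L}}\,(I+\sqrt{L})^{s_p}f\big\|_{L^p(\R^d)} \lesssim \|(I+\sqrt{L})^{s_p}f\|_{L^p(\R^d)} \sim \|f\|_{W^{s_p,p}(\R^d)},
$$
using the first part of the theorem and the commutation of $e^{it\sqrt{L}}$ with $(I+\sqrt{L})^{\pm s_p}$ in the functional calculus of $L$; a density argument extends this to all of $W^{s_p,p}(\R^d)$.

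I expect the point needing the most care to be the duality step for $p\geq 2$: one must verify that $L^\ast$ genuinely lies in the admissible class so that Corollary \ref{cor:sob}, Theorem \ref{thm:main}, and Lemma \ref{lem:emb2} apply to it, and justify the adjoint identities $(\sqrt{L})^\ast = \sqrt{L^\ast}$ and $(e^{it\sqrt{L}})^\ast = e^{-it\sqrt{L^\ast}}$ at the Hilbert-space level. The rest is routine bookkeeping of bounded maps between the already-constructed spaces, together with density arguments to pass from $\mathcal{S}_p$ to $L^p(\R^d)$ and $W^{s_p,p}(\R^d)$; the restriction $s_p\leq 2$ in the second statement is precisely what guarantees that $W^{s_p,p}(\R^d)$ coincides, with equivalent norm, with the domain of $(I+\sqrt{L})^{s_p}$.
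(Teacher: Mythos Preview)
Your proposal is correct and follows essentially the same route as the paper's own proof: both split off half the smoothing on each side of the wave propagator, use Corollary~\ref{cor:sob} for $L^{p}\to H^{p}_{FIO,a}$, Theorem~\ref{thm:main} for invariance of $H^{p,s}_{FIO,a}$ under $e^{it\sqrt{L}}$, Proposition~\ref{prop:vertical-conical} to shift between $H^{p}_{FIO,a}$ and $H^{p,s_p/2}_{FIO,a}$, and Lemma~\ref{lem:emb2} for the embedding back into $L^{p}$, with duality handling $p>2$ and Theorem~\ref{thm:HpD} giving the Sobolev identification when $s_p\leq 2$. The only cosmetic difference is that the paper runs the wave group directly on $H^{p,s_p/2}_{FIO,a}$ (writing the argument as a chain of norm inequalities on $f$) whereas you factor the operator and run the wave group on $H^{p}_{FIO,a}$; since $e^{it\sqrt{L}}$ commutes with $(I+\sqrt{L})^{-s_p/2}$ these are equivalent, and your more explicit treatment of the duality step (via $D_a^\ast$ having the same structure with $a_j\leftrightarrow a_{j+d}$) merely unpacks what the paper records in one line.
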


\begin{proof}
By duality, it suffices to consider the case $p \in (1,2)$.
Let $f \in \mathcal{S}_{p}$. 
By Lemma \ref{lem:emb2} and Theorem \ref{thm:main}, we have that 
\begin{align*}
\|\exp(it\sqrt{L})f\|_{L^{p}(\R^{d})} &\lesssim
\|\exp(it\sqrt{L})f\|_{H^{p,\frac{s_{p}}{2}}_{FIO,a}(\R^{d})}
\lesssim \|f\|_{H^{p,\frac{s_{p}}{2}}_{FIO,a}(\R^{d})}.
\end{align*}
Using Proposition \ref{prop:vertical-conical}, and Corollary \ref{cor:sob}, we then have that
\begin{align*}
&\|\exp(it\sqrt{L})f\|_{L^{p}(\R^{d})} \lesssim \|(I+\sqrt{L})^{\frac{s_{p}}{2}}f\|_{H^{p}_{FIO,a}(\R^{d})} \lesssim \|(I+\sqrt{L})^{s_{p}}f\|_{L^{p}(\R^{d})}.
\end{align*}
For $s_{p} \leq 2$, Theorem \ref{thm:HpD} then gives
$\|f\|_{W^{s_{p},p}} \sim \|(I+\sqrt{L})^{s_{p}}f\|_{L^{p}(\R^{d})}$.
\end{proof}

\section{Lower order perturbations} 
\label{sec:perturb}

We consider the operators $L_{1}:=- \sum \limits _{j=1} ^{d} 
\widetilde{a_{j+d}}\partial_{j}\widetilde{a_{j}}\partial{j}$ and $L_{2}:=- \sum \limits _{j=1} ^{d} 
\widetilde{a_{j}}\partial_{j}\widetilde{a_{j+d}}\partial{j}$.
For a function $g: \R^{d} \to \R$, we denote by $M_g$ the multiplication operator $(f,F) \mapsto (gf,gF)$. 
We will evaluate the norm of $g$ in Besov spaces 
$\dot{B}_{\infty,\infty} ^{0,L_{k}}$ associated with the operators $L_{k}$, in the sense of \cite{bdy}, 
as well as in $BMO_{L_{k}}$ spaces, in the sense of \cite{dy}.

%Therefore, by \cite[Theorem 5.1]{bdy} and the Besov space characterisation of the homogeneous H\"older space $\dot{C}^{s_{p}}$
%(see e.g. \cite{bcd})
%, we have that
%$$
%\max_{k=1,2} \sup_{\tau>0} \|\tau^{-s_p}  \phi(\tau^{2} L_{k})g\|_{\infty} 
%\sim \max_{k=1,2} \|g\|_{\dot{B}_{\infty,\infty} ^{s_{p},L_{k}}}
%\sim \|g\|_{\dot{C}^{s_{p}}},
%$$
%whenever $s_{p}<1$. 

\begin{Theorem} 
\label{thm:perturb}
Let $p \in (1,\infty)$ and $s_p=(d-1)|\frac{1}{p}-\frac{1}{2}|$. 
Let $g \in L^{\infty}$  be such that $g \in \dot{B}_{\infty,\infty} ^{0,L_{m}}$, 
$\nabla L_{m} ^{-\frac{1}{2}} g \in \dot{B}_{\infty,\infty} ^{0,L_{m}}$ and $L_{m}^{s_{p}}g \in BMO_{L_{m}}$
for $m=1,2$.
Then $M_g \in B(H^{p}_{FIO,a}(\R^d))$.  
\end{Theorem}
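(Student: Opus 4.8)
The plan is to use the equivalent norm for $H^{p}_{FIO,a}(\R^{d})$ provided by Proposition \ref{prop:vertical-conical}, arguing first on the dense subspace $\mathcal{S}_{p}$ and then passing to the completion. Since $H^{2}_{FIO,a}(\R^{d})=L^{2}(\R^{d})$ and $M_{g}$ is trivially bounded on $L^{2}(\R^{d})$, we may assume $p\neq 2$, so $s_{p}>0$; this positivity is what makes several geometric series converge. Thus it suffices to bound
\[
\|q(D_{a})(gf)\|_{L^{p}(\R^{d})}+\Big(\int_{S^{d-1}}\|\varphi_{\omega}(D_{a})(gf)\|_{L^{p}(\R^{d})}^{p}\,d\omega\Big)^{1/p}
\]
by $\|f\|_{H^{p}_{FIO,a}(\R^{d})}$ for $f\in\mathcal{S}_{p}$.

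I would first pass to a "wave packet matrix" picture. With $\mathcal{A}f:=(q(D_{a})f,(\varphi_{\omega}(D_{a})f)_{\omega})$, Proposition \ref{prop:vertical-conical} gives $\|f\|_{H^{p}_{FIO,a}(\R^{d})}\simeq\|\mathcal{A}f\|_{L^{p}(\R^{d})\oplus L^{p}(S^{d-1};L^{p}(\R^{d}))}$, and the reproducing formula \eqref{eq:resol-identity} together with \eqref{eq:reproL2} and the symbol bounds \eqref{eq:cond-psi} (as in \cite{HPR}) furnish a bounded synthesis map $\mathcal{B}$ with $\mathcal{B}\mathcal{A}=\mathrm{Id}$, acting essentially as $(h_{0},(h_{\nu})_{\nu})\mapsto \tilde q(D_{a})h_{0}+P(D_{a})^{-1}(I-\tilde q(D_{a}))\int_{S^{d-1}}\varphi_{\nu}(D_{a})h_{\nu}\,d\nu$, where $\tilde q$ is a slightly fattened cut-off and $P(\zeta):=\int_{S^{d-1}}\varphi_{\nu}(\zeta)^{2}\,d\nu$ is, on $\{|\zeta|\gtrsim 1\}$, an order-$0$ symbol bounded above and below, so that $P(D_{a})^{-1}(I-\tilde q(D_{a}))$ is bounded by Theorems \ref{thm:mult}, \ref{thm:transf} and Proposition \ref{prop:bddgrp} (the boundedness of $\mathcal{B}$ itself being the case $g=1$ of the estimate below). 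Then $M_{g}\in B(H^{p}_{FIO,a}(\R^{d}))$ once $\mathcal{A}M_{g}\mathcal{B}$ is bounded. Its scalar and "cross" entries reduce, using the $L^{p}$-boundedness of $q(D_{a}),\varphi_{\omega}(D_{a})$ and the frequency gap between $\supp q$ and $\supp(1-\tilde q)$, to $\|gq_{1}(D_{a})f\|_{L^{p}}\lesssim\|g\|_{\infty}\|q_{1}(D_{a})f\|_{L^{p}}\lesssim\|g\|_{\infty}\|f\|_{H^{p}_{FIO,a}}$ (Proposition \ref{prop:vertical-conical} with cut-off $q_{1}$) plus a small smoothing term handled by a dyadic decomposition of the high-frequency part of $g$ with $\|\Delta_{j}^{L_{k}}g\|_{\infty}\lesssim 2^{-js_{p}}\|g\|_{\dot B_{\infty,\infty}^{s_{p},L_{k}}}$ and the embedding $H^{p}_{FIO,a}(\R^{d})\hookrightarrow(I+\sqrt L)^{s_{p}/2}L^{p}(\R^{d})$ coming from Lemma \ref{lem:emb2} and Proposition \ref{prop:vertical-conical}. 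The genuinely new object is the angular matrix block $(h_{\nu})_{\nu}\mapsto\big(\int_{S^{d-1}}\varphi_{\omega}(D_{a})M_{g}\widetilde\varphi_{\nu}(D_{a})h_{\nu}\,d\nu\big)_{\omega}$, where $\widetilde\varphi_{\nu}:=P(D_{a})^{-1}(I-\tilde q(D_{a}))\varphi_{\nu}$ is a harmless modification of $\varphi_{\nu}$ still adapted to the parabolic cone around $\nu$. By the operator-valued Schur test on $L^{p}(S^{d-1};L^{p}(\R^{d}))$ it then suffices to prove
\[
\|\varphi_{\omega}(D_{a})\,M_{g}\,\widetilde\varphi_{\nu}(D_{a})\|_{B(L^{p}(\R^{d}))}\leq C(\omega,\nu),\qquad \sup_{\omega}\int_{S^{d-1}}C(\omega,\nu)\,d\nu+\sup_{\nu}\int_{S^{d-1}}C(\omega,\nu)\,d\omega<\infty .
\]

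To prove this pointwise bound I would decompose $g$ dyadically with respect to the functional calculus of $L_{1}$ and of $L_{2}$, $g=g_{\mathrm{low}}+\sum_{j\geq j_{0}}\Delta_{j}^{L_{k}}g$, the two decompositions running in parallel because $M_{g}=\diag(g,g)$ and the $2\times2$ blocks of $e^{i\xi D_{a}}$, hence of $\varphi_{\omega}(D_{a})$, localise $L_{1}$- resp.\ $L_{2}$-frequencies; here $\|\Delta_{j}^{L_{k}}g\|_{\infty}\lesssim\min(2^{-js_{p}}\|g\|_{\dot B_{\infty,\infty}^{s_{p},L_{k}}},\|g\|_{\infty})$, with Bernstein-type bounds on the derivatives (using $D(L_{k})=W^{2,p}$ and the Gaussian kernel estimates of Theorem \ref{thm:AMcT}). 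Littlewood-Paley decomposing the wave packet $\widetilde\varphi_{\nu}(D_{a})h$ at frequency scales $R\geq\tfrac18$ (directionally localised near $\nu$), and the output similarly, one analyses the interaction of a $g$-piece at scale $2^{j}$ with an input piece at scale $R$ in three regimes. For $2^{j}\ll\sqrt R$ the relevant part of $g$ is essentially constant on the plank supporting the packet, so the output stays at scale $\sim R$ in a cone of opening $\sim R^{-1/2}$ about $\nu$ and $\varphi_{\omega}(D_{a})$ sees it only for $|\omega-\nu|\lesssim R^{-1/2}$, at cost $\|g\|_{\infty}$ with rapid spatial off-diagonal decay (Lemma \ref{lem:od}, Lemma \ref{lem:kernelpsi}). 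For $\sqrt R\lesssim 2^{j}\lesssim R$ the cone opens to $\sim 2^{j}/R$, but one gains the factor $2^{-js_{p}}$. For $2^{j}\gtrsim R$ the product lives at frequency scale $\sim 2^{j}$, only the direction-$\sim 2^{-j/2}$-cone of $\Delta_{j}^{L_{k}}g$ about $\omega$ contributes (no relation between $\omega$ and $\nu$ survives), and again one gains $2^{-js_{p}}$. Summing over the $g$-scales (convergent since $s_{p}>0$), using $L^{p}$ Littlewood-Paley theory for $L$ (available by Theorem \ref{thm:AMcT}) to recombine the input and output scales, and integrating over $\nu$, one obtains $C(\omega,\nu)\lesssim\|g\|_{\infty}+\max_{k=1,2}\|g\|_{\dot B_{\infty,\infty}^{s_{p},L_{k}}}$ uniformly, hence $\sup_{\omega}\int_{S^{d-1}}C(\omega,\nu)\,d\nu<\infty$ and symmetrically, and the Schur test concludes.

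The main obstacle is the third regime, $2^{j}\gtrsim R$: there, multiplying a wave packet by a high-frequency piece of $g$ destroys its directional structure and spreads it over all of $S^{d-1}$, so one must verify that the quantitative smallness $2^{-js_{p}}$ of these pieces — which is exactly what $g\in\dot B_{\infty,\infty}^{s_{p},L_{k}}$ buys, and which is summable precisely because $s_{p}>0$ — beats the $\sim 2^{j(d-1)/2}$ directions that are created, with no help from angular decay. A secondary source of difficulty is the $2\times2$ block structure forced by the vector-valued $D_{a}$: $M_{g}$ is diagonal, but written out each block couples $g$ to both $L_{1}$- and $L_{2}$-frequencies, which is why the hypothesis is imposed for $k=1$ and $k=2$; and the almost-orthogonality used to recombine frequency scales must be carried out in $L^{p}$ rather than $L^{2}$, relying on the $H^{\infty}$-calculus and kernel bounds of Theorem \ref{thm:AMcT}. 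The low-frequency term additionally uses the embedding $H^{p}_{FIO,a}(\R^{d})\hookrightarrow(I+\sqrt L)^{s_{p}/2}L^{p}(\R^{d})$ noted above. Assembling these estimates, Proposition \ref{prop:vertical-conical} yields $\|gf\|_{H^{p}_{FIO,a}(\R^{d})}\lesssim\|f\|_{H^{p}_{FIO,a}(\R^{d})}$ for $f\in\mathcal{S}_{p}$, and density finishes the proof.
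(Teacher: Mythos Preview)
Your reduction to a Schur test on $L^{p}(S^{d-1};L^{p}(\R^{d}))$ does not work as stated. The wave packet symbols $\varphi_{\omega}$ are of positive order: for $|\zeta|\sim R$ and $\hat\zeta\approx\omega$ one has $\varphi_{\omega}(\zeta)\sim R^{(d-1)/4}$ (this is the normalisation $c_{\tau}\sim\tau^{-(d-1)/4}$ that makes $\int_{S^{d-1}}\varphi_{\omega,\sigma}^{2}\,d\omega=1$). Hence your matrix entries $\varphi_{\omega}(D_{a})M_{g}\widetilde\varphi_{\nu}(D_{a})$ are operators of order $(d-1)/2$; already for $g\equiv 1$ and $\omega=\nu$ the symbol is $\sim|\zeta|^{(d-1)/2}$ and the operator is unbounded on $L^{p}(\R^{d})$. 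So the displayed bound $\|\varphi_{\omega}(D_{a})M_{g}\widetilde\varphi_{\nu}(D_{a})\|_{B(L^{p})}\leq C(\omega,\nu)$ fails, and with it both the Schur test and your proposed proof that $\mathcal{B}$ is bounded. A related problem is the three-regime frequency interaction analysis: you are implicitly assuming that multiplication by a $D_{a}$- (or $L_{k}$-) band-limited piece of $g$ shifts $D_{a}$-frequency supports additively, but there is no convolution structure in the $D_{a}$-calculus making this rigorous.

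The paper proceeds quite differently. It works in the tent space model of $H^{p}_{FIO,a}$ (Definition~\ref{def:space}) rather than the vertical model, and uses a paraproduct decomposition of $M_{g}f$ via the resolution of the identity~\eqref{eq:resol-identity}: one piece carries $M_{\phi(\tau^{2}L)g}$ against the high-frequency Littlewood--Paley pieces of $f$, the other carries $M_{\psi(\tau^{2}L)g}$ against averages of $f$ (after an integration by parts in $\tau$). The first paraproduct (Lemma~\ref{lem:para1}) is controlled using the weighted estimate of Remark~\ref{rem:tentbdd}, which builds in exactly the $\sigma^{s_{p}/2}$ compensation for the order of $\psi_{\omega,\sigma}$, together with Hardy's inequality on tent spaces and a genuine product rule $(e_{j}\!\cdot\! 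D_{a})M_{\phi(\tau^{2}L)g}=M_{\phi(\tau^{2}\underline L)g}(e_{j}\!\cdot\! D_{a})+M_{(e_{j}\cdot D_{a})\phi(\tau^{2}L)g}$ in place of heuristic frequency addition. The second paraproduct (Lemma~\ref{lem:para2}) is handled through the Cohn--Verbitsky tent space factorisation $T^{p,\infty}\cdot T^{\infty,2}\hookrightarrow T^{p,2}$ (Theorem~\ref{thm:tent-factor}) combined with the nontangential maximal bound of Proposition~\ref{prop:nontang}. These tools absorb the positive order of the packets and the lack of a Fourier convolution structure, and they have no counterpart in your Schur-test scheme.
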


\begin{proof}
For $p =2$, there is nothing to prove. For $p \neq 2$, this is a consequence of Lemma \ref{lem:para1} and Lemma  \ref{lem:para2} below. 
\end{proof}

\begin{Remark}
 If the coefficients $(a_{j})_{j=1,...,2d}$ are $C^{1,\alpha}$ for some $\alpha \in (0,1]$, then  \cite[Theorem 4.19]{AMcT} implies that 
$$
\underset{m=1,2}{\max} \|g\|_{\dot{B}_{\infty,\infty} ^{0,L_{m}}}+\underset{m=1,2}{\max} \|\nabla L_{m} ^{-\frac{1}{2}}g\|_{\dot{B}_{\infty,\infty} ^{0,L_{m}}}
\lesssim \|g\|_{\infty}.
$$
%This follows from \cite[Theorem 4.19]{AMcT}. 
%The latter also gives that \Bk the operators $L_{k}$ ($k=1,2$) and their %adjoints satisfy the assumptions (S), (K), and (H) from \cite{bdy}. 
If the coefficients $(a_{j})_{j=1,...,2d}$ are $C^{1,1}$, then, 
for all $t\geq 0$ and $m=1,2$,  $\exp(-tL_{m})(1)=1$ in $L^{\infty}$ by Feynman-Kac's formula. 
Therefore \cite[Proposition 6.7]{dy} gives that, for $m=1,2$,
$$
\|L_{m}^{s_{p}}g\|_{BMO_{L_{m}}} \lesssim \|L_{m}^{s_{p}}g\|_{BMO}.
$$
If the coefficients $(a_{j})_{j=1,...,2d}$ are constant, then the assumptions on $g$ reduce to $g \in W^{2s_{p},\infty}$.
In the special case where $L_{1}=L_{2}=-\Delta$, a more general result for pseudo-differential operators has been proven recently in \cite[Theorem 1.1]{R2} for symbols which are $C^{r}$ regular in the spatial variable, with $r>s_{p}$. Even just for multiplication operators, we do not fully recover this result, partly because our abstract setting prevents us from using arguments about the Fourier support of products. In this Section, we are merely demonstrating that adding lower perturbations with smooth enough coefficients is possible.  We intend to develop a more complete perturbation theory in subsequent work.
\end{Remark}

We state our perturbation result for first order perturbations of the wave equation under consideration.

\begin{Cor}
\label{cor:pert}
Let $p \in (1,\infty)$ and $s_p=(d-1)|\frac{1}{p}-\frac{1}{2}|$.  
 Assume that $s_{p} \leq 2$.
For $j=1,...,d$, let $g_{j} \in L^{\infty}$  be such that $g_{j} \in \dot{B}_{\infty,\infty} ^{0,L_{m}}$, 
$\nabla L_{m} ^{-\frac{1}{2}} g_{j} \in \dot{B}_{\infty,\infty} ^{0,L_{m}}$ and $L_{m}^{s_{p}}g_{j} \in BMO_{L_{m}}$
for $m=1,2$. 
Consider 
$$\tilde L: (f,F) \mapsto (L_{1}f,L_{2}F) + 
\sum _{j=1} ^{d} (g_{j}\partial_{j}f,g_{j}\partial_{j}F).$$  For each $t \in \R$, the operator  $(I+\sqrt{\tilde L})^{-s_p} \exp(it\sqrt{\tilde L})$ is bounded on $L^p(\R^d)$. \end{Cor}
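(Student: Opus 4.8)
The plan is to reduce Corollary~\ref{cor:pert} to the $H^{p,s_p/2}_{FIO,a}(\R^d)$-boundedness of the half-wave group of $\tilde L$, mirroring the proof of Theorem~\ref{thm:main1}. Write $\tilde L = L + P$ with $P = \sum_{j=1}^d M_{g_j}\partial_j$, acting diagonally on the two blocks of $L$. Since $g_j \in L^\infty$ and $D(L) = W^{2,p}(\R^d)$, $P$ is $L$-bounded with relative bound $0$; hence $\tilde L$ is again sectorial, $\sqrt{\tilde L}$ is well defined, and --- because $s_p/2 \le 1$, which is the only place the hypothesis $s_p \le 2$ is used --- the classical fractional-domain perturbation result gives $D\big((I+\sqrt{\tilde L})^{s_p}\big) = D\big((I+\sqrt L)^{s_p}\big)$ with equivalent norms, so $(I+\sqrt L)^{s_p}(I+\sqrt{\tilde L})^{-s_p} \in B(L^p(\R^d))$ and, by Theorem~\ref{thm:HpD}, this common domain is $W^{s_p,p}(\R^d)$. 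Since $\exp(it\sqrt{\tilde L})$ commutes with $(I+\sqrt{\tilde L})^{-s_p}$, it therefore suffices to prove $\|\exp(it\sqrt{\tilde L})f\|_{L^p(\R^d)} \lesssim \|(I+\sqrt L)^{s_p}f\|_{L^p(\R^d)}$ for $f$ in a dense class. Reducing to $p \in (1,2]$ by duality as in Theorem~\ref{thm:main1}, Lemma~\ref{lem:emb2} bounds the left-hand side by $\|\exp(it\sqrt{\tilde L})f\|_{H^{p,s_p/2}_{FIO,a}(\R^d)}$, and Proposition~\ref{prop:vertical-conical} together with Corollary~\ref{cor:sob} bounds $\|g\|_{H^{p,s_p/2}_{FIO,a}(\R^d)}$ by $\|(I+\sqrt L)^{s_p}g\|_{L^p(\R^d)}$; so everything reduces to the claim that $\exp(it\sqrt{\tilde L})$ is bounded on $H^{p,s_p/2}_{FIO,a}(\R^d)$.

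To prove that claim I would write $i\sqrt{\tilde L} = i\sqrt L + iR$ with $R := \sqrt{\tilde L} - \sqrt L$ and apply the bounded perturbation theorem for $C_0$-groups on the Banach space $H^{p,s_p/2}_{FIO,a}(\R^d)$ --- using Theorem~\ref{thm:main}, which makes $\exp(it\sqrt L)$ a bounded group there --- so that it is enough to show $R \in B\big(H^{p,s_p/2}_{FIO,a}(\R^d)\big)$. Starting from the resolvent-difference identity
\[ R = \frac{1}{\pi}\int_0^\infty \sqrt\lambda\,(\tilde L+\lambda)^{-1}\,P\,(L+\lambda)^{-1}\,d\lambda, \qquad P = \sum_{j=1}^d M_{g_j}\partial_j, \]
(up to a harmless low-frequency correction), and expanding $(\tilde L+\lambda)^{-1}$ in its Neumann series in $P(L+\lambda)^{-1}$, one extracts the leading term $\tfrac12\sum_j M_{g_j}\,\partial_j L^{-1/2}$ plus derivative-gaining remainders. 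The Riesz transforms $\partial_j L^{-1/2}$ commute with the functional calculus of $D_a$ and are $L^p$-bounded (Theorem~\ref{thm:AMcT}), hence bounded on every $H^{p,s}_{FIO,a}(\R^d)$ by Proposition~\ref{prop:vertical-conical}; the multiplications $M_{g_j}$ are bounded on $H^{p,s_p/2}_{FIO,a}(\R^d)$ by Theorem~\ref{thm:perturb} (at regularity $s_p/2$); and the remainders are controlled by these same ingredients together with commutator estimates exploiting $g_j \in \dot{B}^{s_p,L_1}_{\infty,\infty}\cap\dot{B}^{s_p,L_2}_{\infty,\infty}\cap C^{s_p}$. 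Assembling this gives $R \in B\big(H^{p,s_p/2}_{FIO,a}(\R^d)\big)$ and hence the boundedness of $\exp(it\sqrt{\tilde L})$.

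The main obstacle is precisely this last point: the perturbation must be made bounded on the Hardy--Sobolev space of the exact regularity $s_p/2$, while the multiplication operators are only controlled with the limited smoothness of the $g_j$, so one must check that all regularities occurring in the expansion of $R$ --- and in the commutator estimates for the remainders --- stay inside the window in which the paraproduct argument behind Theorem~\ref{thm:perturb} applies. An alternative that sidesteps $R$ is to solve the wave equation $\partial_t^2 u = -\tilde L u$ by Duhamel against the unperturbed propagators $\cos(t\sqrt L)$ and $\sin(t\sqrt L)/\sqrt L$, which by Theorem~\ref{thm:main} are bounded on every $H^{p,s}_{FIO,a}(\R^d)$: the Duhamel kernel $\sin((t-\tau)\sqrt L)/\sqrt L$ supplies exactly the one derivative needed to feed $Pu(\tau) = \sum_j M_{g_j}\partial_j u(\tau)$ back in, so the iteration closes on $C([0,t];H^{p,\sigma+1}_{FIO,a}(\R^d)) \cap C^1([0,t];H^{p,\sigma}_{FIO,a}(\R^d))$ for the admissible $\sigma$'s, and one reads off $\exp(it\sqrt{\tilde L})$ from the Cauchy data $(v_0, i\sqrt{\tilde L}v_0)$; the delicate regularity bookkeeping is the same. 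In either approach the conceptual content is that the first-order drift is absorbed at the level of the adapted Hardy spaces, where the wave propagators cost no derivatives, so that loss of derivatives occurs only in passing between those spaces and $L^p(\R^d)$.
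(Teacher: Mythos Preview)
Your overall strategy --- sandwich $\exp(it\sqrt{\tilde L})$ between the Sobolev embeddings of Lemma~\ref{lem:emb2} and Corollary~\ref{cor:sob}, reducing to its boundedness on an adapted Hardy space --- matches the paper. The gap is that you aim for boundedness on $H^{p,s_p/2}_{FIO,a}(\R^d)$ and then invoke Theorem~\ref{thm:perturb} ``at regularity $s_p/2$''. That theorem is stated and proved only for $H^p_{FIO,a}(\R^d)=H^{p,0}_{FIO,a}(\R^d)$: the paraproduct argument in Lemmas~\ref{lem:para1}--\ref{lem:para2} is tuned so that the Besov regularity $s_p$ of $g$ exactly balances the $\sigma^{s_p/2}$ weight arising from the wave-packet embedding at $s=0$, and it does not transfer to $s=s_p/2$ without a stronger hypothesis on $g$. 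Consequently your resolvent expansion of $R=\sqrt{\tilde L}-\sqrt{L}$ is not grounded by the available results, and the ``derivative-gaining remainders'' and commutator terms you allude to would require genuinely new estimates at that regularity.

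The paper sidesteps this by working on $H^p_{FIO,a}(\R^d)$ itself and using \emph{cosine family} perturbation rather than group perturbation. By Theorem~\ref{thm:main} (combined with \cite[Example~3.14.15]{ABHN}) and Proposition~\ref{prop:vertical-conical}, $L$ generates a cosine family on $H^p_{FIO,a}(\R^d)$ with Kisy\'nski space $H^{p,1}_{FIO,a}(\R^d)$. The drift $P=\sum_j M_{g_j}\partial_j$ then only needs to be bounded from the Kisy\'nski space into the base space: factor it as $\partial_j\colon H^{p,1}_{FIO,a}\to H^p_{FIO,a}$ (Riesz transform bounds \cite[Corollary~5.19]{AMcT} plus Proposition~\ref{prop:vertical-conical}) followed by $M_{g_j}\colon H^p_{FIO,a}\to H^p_{FIO,a}$, which is precisely Theorem~\ref{thm:perturb} as stated. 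Then \cite[Corollary~3.14.13]{ABHN} yields that $\tilde L$ generates a cosine family on $H^p_{FIO,a}(\R^d)$, hence $\exp(it\sqrt{\tilde L})\in B(H^p_{FIO,a}(\R^d))$, and the final chain of inequalities closes with $H^p_{FIO,a}$ in place of $H^{p,s_p/2}_{FIO,a}$. Your Duhamel alternative at $\sigma=0$ is essentially this argument; what you missed is that at $\sigma=0$ the ``delicate regularity bookkeeping'' evaporates --- the cosine family perturbation theorem supplies the one-derivative gain automatically, and $M_{g_j}$ is only ever needed on $H^p_{FIO,a}$.
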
 

\begin{proof}
 Without loss of generality, we assume that $p \leq 2$ (using duality to get the full result). 
By Theorem \ref{thm:main},  \cite[Example 3.14.15]{ABHN} and Proposition \ref{prop:vertical-conical}, the operator $L$ generates a cosine family on $H^{p}_{FIO,a}(\R^d)$,  with Kisy\'nski space $D(\sqrt{L}) = H^{p,1}_{FIO,a}(\R^d)$ (see \cite{ABHN} for the theory of cosine families).
By Theorem \ref{thm:perturb},  boundedness of Riesz transforms \cite[Corollary 5.19]{AMcT}, and Proposition \ref{prop:vertical-conical}, we have, for all $j=1,...,d$, that
$$\|M_{g_{j}}(\partial_{j} f,\partial_{j}F)\|_{H^{p}_{FIO,a}(\R^d)} \lesssim 
\|(\partial_{j} f,\partial_{j}F)\|_{H^{p}_{FIO,a}(\R^d)}
\lesssim \|(f,F)\|_{H^{p,1}_{FIO,a}(\R^d)} \quad \forall (f,F) \in H^{p,1}_{FIO,a}(\R^d).
$$

We thus obtain from  
\cite[Corollary 3.14.13]{ABHN} that $\exp(it\sqrt{\tilde L}) \in B(H^{p}_{FIO,a}(\R^d))$. 
 Another application of \cite[Corollary 5.19]{AMcT}, also gives that
$$
\|(I+\sqrt{\tilde L})^{-\frac{s_p}{2}} (f,F)\|_{L^p} \sim \|(I+\sqrt{ L})^{-\frac{s_p}{2}} (f,F)\|_{L^p} 
\quad \forall f,F \in W^{1,p},
$$
since $s_{p} \leq 2$. 
Using Lemma \ref{lem:emb2} and Corollary \ref{cor:sob},  we thus have that 
\begin{align*}	
	& 	\|(I+\sqrt{\tilde L})^{-\frac{s_p}{2}} \exp(it\sqrt{\tilde L}) f\|_{L^p}
		\lesssim 
		\|(I+\sqrt{L})^{-\frac{s_p}{2}} \exp(it\sqrt{\tilde L}) f\|_{L^p} \\
		\quad 
		& \lesssim  	\| \exp(it\sqrt{\tilde L}) f\|_{H^p_{FIO,a}(\R^d)} 
		\lesssim \|  f\|_{H^p_{FIO,a}(\R^d)}  \\
		\quad & 
		\lesssim \| (I+\sqrt{L})^{\frac{s_p}{2}}  f\|_{L^p} 
		\lesssim \| (I+\sqrt{\tilde L})^{\frac{s_p}{2}}  f\|_{L^p} \quad \forall f \in L^{p}(\R^{d};\C^{2}).
\end{align*}
\end{proof}

For the proof of Theorem \ref{thm:perturb}, we use the following paraproduct decomposition. 

Let $\Phi \in \calS(\R^d), \phi \in \calS(\R^d)$ with $\phi(0)=1$ and $\Phi_\sigma(\zeta) = \phi(\sigma^{2} |\zeta|^{2})$ for $\sigma>0$, $\zeta \in \R^d$. 
We denote by $M_{\phi(L)g}$ the multiplication operator 
$(f,F) \mapsto (\phi(L_{1})g.f,\phi(L_{2})g.F)$. We denote by $M_{\phi(\underline{L})g}$ the multiplication operator $(f,F) \mapsto (\phi(L_{2})g.f,\phi(L_{1})g.F)$.
\\

For  $f \in \calS_p$  and $g \in \calS(\R^d)$,  we use \eqref{eq:resol-identity} to 
 decompose the product $gf$ as follows. 
\begin{align*}
	M_{g} f 
	& = 
		   \int_1^\infty  M_{\phi(\tau L)g}  \Psi(\tau D_{a})^{2} f \,\frac{d\tau}{\tau} 
		   +  \int_1^\infty  ( M_{g} -M_{ \phi(\tau L)g})  \Psi(\tau D_{a})^{2} f \,\frac{d\tau}{\tau}  \\
		 & \quad  +\int_{S^{d-1}} \int_0^1  M_{\phi(\tau L)g}  \varphi_{\nu}(D_{a})^{2}\Psi(\tau D_{a})^{2} f \,\frac{d\tau}{\tau} d\nu \\
		 & \quad + \int_{S^{d-1}} \int_0^1  ( M_{g} -M_{\phi(\tau L)g}) \varphi_{\nu}(D_{a})^{2}\Psi(\tau D_{a})^{2} f \,\frac{d\tau}{\tau} d\nu.
\end{align*}

Since the two low-frequency terms in the first line are similar but simpler than the two high-frequency terms, we only consider the two latter in the following.  Moreover, note that we can choose $\Phi$ and $\Psi$ such that by integration by parts, the last integral is - up to a low-frequency term - equal to 
\begin{align*}
	 \int_{S^{d-1}} \int_0^1    M_{\Psi_\tau (L) g}   \varphi_{\nu}(D_{a})^{2}\Phi(\tau D_{a}) f \,\frac{d\tau}{\tau} d\nu,
\end{align*}
where $\Psi(\sigma \zeta) =: \psi(\sigma^{2} |\zeta|^{2})$ for $\sigma>0$, $\zeta \in \R^d$.

\begin{Lemma}
\label{lem:para1}
Let $p \in (1,\infty)$.  Let $g \in L^{\infty}$  be such that $g \in \dot{B}_{\infty,\infty} ^{0,L_{m}}$ and 
$\nabla L_{m} ^{-\frac{1}{2}} g \in \dot{B}_{\infty,\infty} ^{0,L_{m}}$ for $m=1,2$. For all $f \in H^p_{FIO,a}(\R^d)$, we have that 
\begin{align*}
	& \| (\omega, \sigma, \cdot) \mapsto \psi_{\omega,\sigma}(D_a) \int_{S^{d-1}} \int_0^1   M_{\phi(\tau L)g}   \varphi_{\nu}(D_{a})^{2}\Psi(\tau D_{a})^{2} f \,\frac{d\tau}{\tau} \,d\nu\|_{L^p(S^{d-1};T^{p,2}(\R^d))} \\
	& \qquad \lesssim  (\|g\|_{\infty}+\underset{m=1,2}{\max} \|g\|_{\dot{B}_{\infty,\infty} ^{0,L_{m}}}+\underset{m=1,2}{\max} \|\nabla L_{m} ^{-\frac{1}{2}}g\|_{\dot{B}_{\infty,\infty} ^{0,L_{m}}}) \|f\|_{H^p_{FIO,a}(\R^d)}. 
\end{align*}
\end{Lemma}

\begin{proof}
We split the integral in $\tau$ into two parts, corresponding to $\tau \in (0,\min(\sigma,1))$ and $\tau \in (\min(\sigma,1),1)$. 
We also split the integral over $S^{d-1}$ into two parts, corresponding to $| \nu\pm\omega| \leq \sqrt{\tau}$ and $| \nu\pm\omega| > \sqrt{\tau}$.
Consider first $\tau \in (0,\min(\sigma,1))$ and $| \nu\pm\omega| \leq \sqrt{\tau}$.
Using Lemma \ref{lem:kernelpsi}, and \cite[Theorem 5.2]{HvNP}, we have that
\begin{align*}
	& \| (\omega, \sigma, \cdot) \mapsto \psi_{\omega,\sigma}(D_a)  \int_0^{\min(1,\sigma)}  
	\int_{| \nu\pm\omega| \leq \sqrt{\tau}} M_{\phi(\tau L)g}  \varphi_{\nu}(D_{a})^{2}\Psi(\tau D_{a})^{2} f d\nu\,\frac{d\tau}{\tau} \|_{L^p(S^{d-1};T^{p,2}(\R^d))} \\
	& \qquad \lesssim \| (\omega, \sigma, \cdot) \mapsto 
	\sigma^{-\frac{d-1}{4}}
	\int_0^{\min(1,\sigma)} 
	\int_{| \nu\pm\omega| \leq \sqrt{\tau}}  M_{\phi(\tau L)g}  \varphi_{\nu}(D_{a})^{2}\Psi(\tau D_{a})^{2} f d\nu \, \frac{d\tau}{\tau} \|_{L^p(S^{d-1};T^{p,2}(\R^d))} \end{align*}
On the other hand, Hardy's inequality implies that 
$$
	(\sigma,\,.\,) \mapsto \int_0^\sigma (\frac{\tau}{\sigma})^{\frac{d-1}{4}}F(\tau,\,.\,)\,\frac{d\tau}{\tau}
$$
is bounded on $T^{p,2}(\R^d)$. We thus have that
\begin{align*}
	& \| (\omega, \sigma, \cdot) \mapsto 
	\sigma^{-\frac{d-1}{4}}
	\int_0^{\min(1,\sigma)} 
	\int_{| \nu\pm\omega| \leq \sqrt{\tau}}  M_{\phi(\tau L)g}  \varphi_{\nu}(D_{a})^{2}\Psi(\tau D_{a})^{2} f d\nu \, \frac{d\tau}{\tau} \|_{L^p(S^{d-1};T^{p,2}(\R^d))} \\
	&  \qquad \lesssim
	\sup_{\tau>0} \|\phi(\tau L) g\|_{\infty} 
 \| (\omega, \tau, \cdot) \mapsto 
	\tau^{-\frac{d-1}{4}}
	\int_{| \nu\pm\omega| \leq \sqrt{\tau}}  \varphi_{\nu}(D_{a})^{2}\Psi(\tau D_{a})^{2} f d\nu \|_{L^p(S^{d-1};T^{p,2}(\R^d))} \\
		&  \qquad \lesssim
	 \|g\|_{\infty} 
 \| (\omega, \tau, \cdot) \mapsto 
	\tau^{-\frac{d-1}{4}}
	\int_{| \nu\pm\omega| \leq \sqrt{\tau}}  \varphi_{\nu}(D_{a})\Psi(\tau D_{a})
	\widetilde{\psi}_{\omega,\tau}(D_{a})
	 f d\nu \|_{L^p(S^{d-1};T^{p,2}(\R^d))},
		\end{align*}

for some $\widetilde{\psi}_{\omega,\tau}$ that satisfies the same assumptions as $\psi_{\omega,\tau}$ in Section \ref{sec:wavepackets}.
Noting that 
$$
\tau^{-\frac{d-1}{4}}
	\int_{| \nu\pm\omega| \leq \sqrt{\tau}} \|\mathcal{F}^{-1}(\psi_{\nu,\tau})\|_{L^{1}}  d\nu \lesssim \tau^{-\frac{d-1}{2}}
	\int_{| \nu\pm\omega| \leq \sqrt{\tau}} d\nu \lesssim 1,
$$
uniformly in $\tau$, we can apply a slight modification of Lemma \ref{lem:kernelpsi}, together with \cite[Theorem 5.2]{HvNP}, and get that
\begin{align*}
& \| (\omega, \tau, \cdot) \mapsto 
	\tau^{-\frac{d-1}{4}}
	\int_{| \nu\pm\omega| \leq \sqrt{\tau}}  \varphi_{\nu}(D_{a})\Psi(\tau D_{a})
	\widetilde{\psi}_{\omega,\tau}(D_{a})
	 f d\nu \|_{L^p(S^{d-1};T^{p,2}(\R^d))},\\
	 	  & \qquad \lesssim 
 \| (\omega, \tau, \cdot) \mapsto 
	\widetilde{\psi}_{\omega,\tau}(D_{a})
	 f  \|_{L^p(S^{d-1};T^{p,2}(\R^d))}
	 	 \lesssim \|f\|_{H^{p}_{FIO,a}(\R^{d})}.
		\end{align*}

We now turn to the part where $\tau \in (0,\min(\sigma,1))$ and $| \nu\pm\omega| > \sqrt{\tau}$. Denoting by $(\omega,\omega_{1},...,\omega_{d-1})$ an orthonormal basis of $\R^{d}$, we remark that, in this region, 
\begin{align*}
\tau(\nu.D_{a}) \psi_{\omega,\sigma}(D_{a}) & = \frac{\tau}{\sigma}
(\nu.\omega) \sigma (\omega.D_{a}) \psi_{\omega,\sigma}(D_{a})
+
\sqrt{\tau} \sqrt{\frac{\tau}{\sigma}} \sum _{j=1} ^{d-1} (\nu.\omega_{j}) \sqrt{\sigma} (\omega_{j}.D_{a}) \psi_{\omega,\sigma}(D_{a})\\
& = \sqrt{\tau}(\frac{\tau}{\sigma}+\sqrt{\frac{\tau}{\sigma}})\widetilde{\psi_{\omega,\sigma}}(D_{a}),
\end{align*}
for some $\widetilde{\psi}_{\omega,\sigma}$ that satisfies the same assumptions as $\psi_{\omega,\sigma}$ in Section \ref{sec:wavepackets} (integrating by parts as in Lemma \ref{lem:kernelpsi}), since
$|\omega.\nu| \lesssim \sqrt{\tau} \leq \sqrt{\sigma}$.
We combine this fact with the following version of the product rule:
$$
M_{\phi(\tau  L)g}(e_{j}.D_{a}) = (e_{j}.D_{a})M_{\phi(\tau \underline{L})g}-M_{(e_{j}.D_{a})\phi(\tau \underline{L})g},
$$
for $j=1,..,d$, where $M_{(e_{j}.D_{a})\phi(\tau \underline{L})g}: (f,F) \mapsto
(-\widetilde{a_{j+d}}\partial_{j}\phi(\tau L_{1})g \cdot F, \widetilde{a_{j}}\partial_{j}\phi(\tau L_{2})g \cdot f)$.

We obtain that, for any $M \in \N$,
\begin{align*}
	& \| (\omega, \sigma, \cdot) \mapsto \psi_{\omega,\sigma}(D_a)  \int_0^{\min(1,\sigma)}  
	\int_{| \nu\pm\omega| > \sqrt{\tau}} M_{\phi(\tau L)g}  \varphi_{\nu}(D_{a})^{2}\Psi(\tau D_{a})^{2} f d\nu\,\frac{d\tau}{\tau} \|_{L^p(S^{d-1};T^{p,2}(\R^d))} \\
	&  \lesssim \underset{j=0,..,2M}{\max}
\| (\omega, \sigma, \cdot) \mapsto \tau^{M}
\widetilde{\psi}_{\omega,\sigma}(D_a) \int \int  M_{(\sqrt{\tau} \nu.D_{a})^{j}\phi(\tau L)g}  \varphi_{\nu}(D_{a})\Psi(\tau D_{a}) \underline{\psi}_{\nu,\tau}(D_a)  f d\nu\,\frac{d\tau}{\tau} \|\\
		&  \qquad + \underset{j=0,..,2M}{\max}
\| (\omega, \sigma, \cdot) \mapsto \tau^{M}
\widetilde{\psi}_{\omega,\sigma}(D_a) \int  \int  M_{(\sqrt{\tau} \nu.D_{a})^{j}\phi(\tau \underline{L})g}  \varphi_{\nu}(D_{a})\Psi(\tau D_{a}) \underline{\psi}_{\nu,\tau}(D_a)  f d\nu\,\frac{d\tau}{\tau} \|,
	\end{align*}
for some $\widetilde{\psi}_{\omega,\sigma}$ and $\underline{\psi}_{\omega,\sigma}$ that satisfy the same assumptions as $\psi_{\omega,\sigma}$ in Section \ref{sec:wavepackets}.
From   Remark \ref{rem:tentbdd}    we know that 
\begin{align*}
	\|(\omega,\sigma,\,.\,) \mapsto   \sigma^{\frac{s_p}{2}} \psi_{\omega,\sigma}(D_a)F(\sigma,\,.\,)\|_{L^p(S^{d-1};T^{p,2}(\R^d))} \lesssim \|F\|_{T^{p,2}(\R^d)}.
\end{align*}
Picking $M>\frac{d-1}{4}+\frac{s_{p}}{2}$, and using Hardy's inequality again, we thus get that  - suppressing a similar estimate with $L$ replaced by $\underline{L}$ - 
\begin{align*}
%& \| (\omega, \sigma, \cdot) \mapsto \psi_{\omega,\sigma}(D_a)  \int_0^{\min(1,\sigma)}  
%	\int_{| \nu\pm\omega| > \sqrt{\tau}} M_{\phi(\tau L)g}  \varphi_{\nu}(D_{a})^{2}\Psi(\tau D_{a})^{2} f d\nu\,\frac{d\tau}{\tau} \|_{L^p(S^{d-1};T^{p,2}(\R^d))} \\
 &  \underset{j=0,..,2M}{\max}
\| (\omega, \sigma, \cdot) \mapsto \tau^{M}
\widetilde{\psi}_{\omega,\sigma}(D_a) \int \int  M_{(\sqrt{\tau} \nu.D_{a})^{j}\phi(\tau L)g}  \varphi_{\nu}(D_{a})\Psi(\tau D_{a}) \underline{\psi}_{\nu,\tau}(D_a)  f d\nu\,\frac{d\tau}{\tau} \|\\
	& \quad \lesssim 
\underset{j=0,..,2M}{\max}
	\int_{S^{d-1}}\|(\tau,\,.\,) \mapsto \tau^{\frac{d-1}{4}} M_{(\sqrt{\tau} \nu.D_{a})^{j}\phi(\tau L)g} \varphi_{\nu}(D_{a})\Psi(\tau D_{a}) \underline{\psi}_{\nu,\tau}(D_a) f \|_{T^{p,2}(\R^d)} \,d\nu \\
&\quad \lesssim	 (\|g\|_{\infty}+\underset{m=1,2}{\max} \|g\|_{\dot{B}_{\infty,\infty} ^{0,L_{m}}}+\underset{m=1,2}{\max} \|\nabla L_{m} ^{-\frac{1}{2}}g\|_{\dot{B}_{\infty,\infty} ^{0,L_{m}}}) 
	\int_{S^{d-1}}\|(\tau, \cdot)\mapsto  \underline{\psi}_{\nu,\tau}(D_a)f \|_{T^{p,2}(\R^d)} \,d\nu \\
	& \quad  \lesssim  (\|g\|_{\infty}+\underset{m=1,2}{\max} \|g\|_{\dot{B}_{\infty,\infty} ^{0,L_{m}}}+\underset{m=1,2}{\max} \|\nabla L_{m} ^{-\frac{1}{2}}g\|_{\dot{B}_{\infty,\infty} ^{0,L_{m}}})  \|f\|_{H^p_{FIO,a}(\R^d)}.
\end{align*}
For the integral over $\tau \in (\min(\sigma,1),1)$,  we slightly rewrite the above argument, by picking $M \in \N$ such that $M>\frac{d-1}{8}$, and using that $\widetilde{\psi}_{\omega,\sigma}(D_{a}):= \psi_{\omega,\sigma}(D_a) (\sigma^{2}L)^{-M} $ satisfies the same assumptions as $\psi_{\omega,\sigma}$ in Section \ref{sec:wavepackets}. In the region where $| \nu\pm\omega| \leq \sqrt{\tau}$, we first use Lemma \ref{lem:kernelpsi}, \cite[Theorem 5.2]{HvNP}, and Hardy's inequality as before to obtain that 
\begin{align*}
	& \| (\omega, \sigma, \cdot) \mapsto \psi_{\omega,\sigma}(D_a)  \int_{\min(1,\sigma)}  ^{1}
	\int_{| \nu\pm\omega| \leq \sqrt{\tau}} M_{\phi(\tau L)g}  \varphi_{\nu}(D_{a})^{2}\Psi(\tau D_{a})^{2} f d\nu\,\frac{d\tau}{\tau} \|_{L^p(S^{d-1};T^{p,2}(\R^d))} \\
	& \,\lesssim \| (\omega, \sigma, \cdot) \mapsto 
	\sigma^{2M-\frac{d-1}{4}} \sigma^{\frac{d-1}{4}} 
	\widetilde{\psi}_{\omega,\sigma}(D_a)  \int_{\min(1,\sigma)}  ^{1}
	\int_{| \nu\pm\omega| \leq \sqrt{\tau}} L^{M}[M_{\phi(\tau L)g}  \varphi_{\nu}(D_{a})^{2}\Psi(\tau D_{a})^{2} f] d\nu\,\frac{d\tau}{\tau} \| \\	
	& \, \lesssim
		\| (\omega, \tau, \cdot) \mapsto 
	\tau^{2M-\frac{d-1}{4}}
	\int_{| \nu\pm\omega| \leq \sqrt{\tau}}  L^{M}   [ M_{\phi(\tau L)g}\varphi_{\nu}(D_{a})^{2}\Psi(\tau D_{a})^{2} f ]d\nu \|_{L^p(S^{d-1};T^{p,2}(\R^d))} 
 \end{align*}

For $j=1,..,d$, we now use the following version of the product rule:
$$
(e_{j}.D_{a})M_{\phi(\tau L)g} = M_{\phi(\tau  \underline{L})g}(e_{j}.D_{a})+M_{(e_{j}.D_{a})\phi(\tau L)g}.
$$
Let $k \in \{0,...,2M\}$ be even, and $j=1,...,d$.
Letting $\phi_{k}: x \mapsto x^{\frac{k}{2}}\phi(x)$, $m=1,2$, and $\delta \in \{0,1\}$,
we can estimate further by multiples of terms of the form 
\begin{align*}
	&	\| (\omega, \tau, \cdot) \mapsto 
	\tau^{-\frac{d-1}{4}}
	\int_{| \nu\pm\omega| \leq \sqrt{\tau}} M_{\tau^{\delta}(e_{j}.D_{a})^{\delta}\phi_{k}(\tau  L_{m} ) g}   (\tau D_a)^{2M-k} \varphi_{\nu}(D_{a})^{2}\Psi(\tau D_{a})^{2} f d\nu \|_{L^p(S^{d-1};T^{p,2}(\R^d))} \\
& \lesssim 	 
	   \sup_{\tau \in [0,1]} \|(\tau, \cdot) \mapsto  (\tau \partial_{j})^{\delta}(\tau L_{m})^{\frac{k}{2}} \phi(\tau L_{m}) g\|_{L^\infty(\R^d)}\\
& \qquad  \cdot 
 \| (\omega, \tau, \cdot) \mapsto 
	\tau^{-\frac{d-1}{4}} \tau^{\frac{k}{2}}
	\int_{| \nu\pm\omega| \leq \sqrt{\tau}}  \varphi_{\nu}(D_{a})\Psi(\tau D_{a})
	\widetilde{\psi}_{\omega,\tau}(D_{a})
	 f d\nu \|_{L^p(S^{d-1};T^{p,2}(\R^d))},
\end{align*}

for some $\widetilde{\psi}_{\omega,\tau}$ that satisfies the same assumptions as $\psi_{\omega,\tau}$ in Section \ref{sec:wavepackets}. 

For $k \in \{0,...,2M-1\}$ even,   $m=1,2$,  and $j=1,...,d$,  we also obtain multiples of terms of the form
\begin{align*}
	&	\| (\omega, \tau, \cdot) \mapsto 
	\tau^{-\frac{d-1}{4}}
	\int_{| \nu\pm\omega| \leq \sqrt{\tau}} M_{\tau^{\delta}(e_{j}.D_{a})^{\delta}\phi_{k}(\tau  L_{m} ) g}   (\tau D_a)^{2M-k-1} \varphi_{\nu}(D_{a})^{2}\Psi(\tau D_{a})^{2} f d\nu \|_{L^p(S^{d-1};T^{p,2}(\R^d))} \\
& \lesssim 	 
	  \sup_{\tau \in [0,1]} \|(\tau, \cdot) \mapsto  (\tau \partial_{j})^{\delta}(\tau L_{m})^{\frac{k}{2}} \phi(\tau L_{m}) g\|_{L^\infty(\R^d)}\\
& \qquad  \cdot 
 \| (\omega, \tau, \cdot) \mapsto 
	\tau^{-\frac{d-1}{4}} \tau^{\frac{k}{2}}
	\int_{| \nu\pm\omega| \leq \sqrt{\tau}}  (\tau^{2}L)^{M-\frac{k+2}{2}}  (\tau e_{j}.D_{a})^{1-\delta} \tau D_{a}\varphi_{\nu}(D_{a})^{2}\Psi(\tau D_{a})^{2}f
	  d\nu \|_{L^p(S^{d-1};T^{p,2}(\R^d))}.
\end{align*}

The result for the region where $\tau \in (\min(\sigma,1),1)$ and $| \nu\pm\omega| \leq \sqrt{\tau}$
then follows as in the case of the region where $\tau \in (0,\min(\sigma,1))$ and $| \nu\pm\omega| \leq \sqrt{\tau}$.
Finally, we consider the region where $\tau \in (\min(\sigma,1),1)$ and $| \nu\pm\omega| > \sqrt{\tau}$. We first apply the product rule as we did in the region where $\tau \in (0,\min(\sigma,1))$ and $| \nu\pm\omega| > \sqrt{\tau}$ to obtain that,  for any $M' \in \N$, 
\begin{align*}
	& \| (\omega, \sigma, \cdot) \mapsto \psi_{\omega,\sigma}(D_a)  \int_{\min(1,\sigma)}  ^{1}
	\int_{| \nu\pm\omega| > \sqrt{\tau}} M_{\phi(\tau L)g}  \varphi_{\nu}(D_{a})^{2}\Psi(\tau D_{a})^{2} f d\nu\,\frac{d\tau}{\tau} \|_{L^p(S^{d-1};T^{p,2}(\R^d))} \\
	&  \lesssim \underset{j=0,..,2M'}{\max}
\| (\omega, \sigma, \cdot) \mapsto \tau^{\frac{M'}{2}}(\frac{\tau}{\sigma}+\sqrt{\frac{\tau}{\sigma}})^{M'}
\widetilde{\psi}_{\omega,\sigma}(D_a)  \int  M_{(\sqrt{\tau} \nu.D_{a})^{j}\phi(\tau L)g}  \varphi_{\nu}(D_{a})\Psi(\tau D_{a}) \underline{\psi}_{\nu,\tau}(D_a)  f d\nu\,\frac{d\tau}{\tau} \|,	
\\
	&   \quad + \underset{j=0,..,2M'}{\max}
\| (\omega, \sigma, \cdot) \mapsto \tau^{\frac{M'}{2}}(\frac{\tau}{\sigma}+\sqrt{\frac{\tau}{\sigma}})^{M'}
\widetilde{\psi}_{\omega,\sigma}(D_a)  \int  M_{(\sqrt{\tau} \nu.D_{a})^{j}\phi(\tau \underline{L})g}  \varphi_{\nu}(D_{a})\Psi(\tau D_{a}) \underline{\psi}_{\nu,\tau}(D_a)  f d\nu\,\frac{d\tau}{\tau} \|,
	\end{align*}
for some $\widetilde{\psi}_{\omega,\sigma}$ and $\underline{\psi}_{\omega,\sigma}$ that satisfy the same assumptions as $\psi_{\omega,\sigma}$ in Section \ref{sec:wavepackets}. We then fix $M'>s_{p}+\frac{d-1}{2}$, and argue as we did in the region $\tau \in (\min(\sigma,1),1)$ and $| \nu\pm\omega| \leq \sqrt{\tau}$, to obtain that, for all $M>\frac{M'}{2}$, again suppressing  similar terms with $L$ replaced by $\underline{L}$, 
\begin{align*}
&  \underset{j=0,..,2M'}{\max}
\| (\omega, \sigma, \cdot) \mapsto \tau^{\frac{M'}{2}}(\frac{\tau}{\sigma}+\sqrt{\frac{\tau}{\sigma}})^{M'}
\widetilde{\psi}_{\omega,\sigma}(D_a)  \int  M_{(\sqrt{\tau} \nu.D_{a})^{j}\phi(\tau L)g}  \varphi_{\nu}(D_{a})\Psi(\tau D_{a}) \underline{\psi}_{\nu,\tau}(D_a)  f d\nu\,\frac{d\tau}{\tau} \|,	\\
&  \lesssim \underset{j=0,..,2M'}{\max}
\|\tau^{\frac{M'}{2}}(\frac{\tau}{\sigma}+\sqrt{\frac{\tau}{\sigma}})^{M'}\sigma^{2M}
\underline{\widetilde{\psi}}_{\omega,\sigma}(D_a)  \int  L^{M}[M_{(\sqrt{\tau} \nu.D_{a})^{j}\phi(\tau L)g}  \varphi_{\nu}(D_{a})\Psi(\tau D_{a}) \underline{\psi}_{\nu,\tau}(D_a)  f ]d\nu\,\frac{d\tau}{\tau} \|,	\\
	& \lesssim \underset{j=0,..,2M'}{\max}
	\int_{S^{d-1}}\|(\tau,\,.\,) \mapsto \tau^{2M+\frac{M'}{2}-\frac{s_{p}}{2}} L^{M}[M_{(\sqrt{\tau} \nu.D_{a})^{j}\phi(\tau L)g} \varphi_{\nu}(D_{a})\Psi(\tau D_{a}) \underline{\psi}_{\nu,\tau}(D_a) f] \|_{T^{p,2}(\R^d)} \,d\nu \\
	& \lesssim \underset{j=0,..,2M'}{\max}
	\int_{S^{d-1}}\|(\tau,\,.\,) \mapsto \tau^{\frac{d-1}{4}} (\tau^{2}L^{M})[M_{(\sqrt{\tau} \nu.D_{a})^{j}\phi(\tau L)g} \varphi_{\nu}(D_{a})\Psi(\tau D_{a}) \underline{\psi}_{\nu,\tau}(D_a) f] \|_{T^{p,2}(\R^d)} \,d\nu.
\end{align*}
Finally, using the product rule as we did in the region where $\tau \in (\min(\sigma,1),1)$ and $| \nu\pm\omega| \leq \sqrt{\tau}$, we estimate further by terms of the form
\begin{align*}
& \int_{S^{d-1}}\|(\tau,\,.\,) \mapsto \tau^{\frac{d-1}{4}} \varphi_{\nu}(D_{a})\Psi(\tau D_{a}) \underline{\psi}_{\nu,\tau}(D_a) f \|_{T^{p,2}(\R^d)} \,d\nu \\
&\quad \lesssim	
	\int_{S^{d-1}}\|(\tau, \cdot)\mapsto  \underline{\psi}_{\nu,\tau}(D_a)f \|_{T^{p,2}(\R^d)} \,d\nu \lesssim    \|f\|_{H^p_{FIO,a}(\R^d)},
\end{align*}
multiplied by $(\|g\|_{\infty}+\underset{m=1,2}{\max} \|g\|_{\dot{B}_{\infty,\infty} ^{0,L_{m}}}+\underset{m=1,2}{\max} \|\nabla L_{m} ^{-\frac{1}{2}}g\|_{\dot{B}_{\infty,\infty} ^{0,L_{m}}})$.
\end{proof}

For the second paraproduct,  we make use of the following factorisation result for tent spaces (see \cite{cms} for the definition of the tent spaces $T^{p,q}$ when $p = \infty$ or $q \neq 2$).

\begin{Theorem}[{\cite[Theorem 1.1]{CV}}]
\label{thm:tent-factor}
Let $p,q \in (1,\infty)$. If $F \in T^{p,\infty}(\R^d)$ and $G \in T^{\infty,q}(\R^d)$, then $FG \in T^{p,q}(\R^d)$ and 
$$
		\|F \cdot G\|_{T^{p,q}(\R^d)} \leq C \|F\|_{T^{p,\infty}(\R^d)} \|G\|_{T^{\infty,q}(\R^d)},
$$
with a constant $C>0$ which is independent of $F$ and $G$. 
\end{Theorem}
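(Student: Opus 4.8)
The statement is quoted from \cite{CV}; I sketch how one may establish it. I use freely that the $\calA$-functionals built from the balls $B(x,\sigma)$, as in the definition of $T^{p,2}(\R^d)$ above, are equivalent to the ones built from the cones $\Gamma(x):=\{(y,\sigma):y\in B(x,\sigma)\}$, so that the structure theory of \cite{cms} is at my disposal; in particular $\|F\|_{T^{p,\infty}}=\|NF\|_{L^p(\R^d)}$ with $NF(x):=\esssup\{|F(y,\sigma)|:y\in B(x,\sigma)\}$, one has $\|G\|_{T^{\infty,q}}^q=\sup_B\tfrac{1}{|B|}\iint_{\widehat B}|G(y,\sigma)|^q\,dy\tfrac{d\sigma}{\sigma}$ over balls $B\subset\R^d$ with $\widehat B:=\{(y,\sigma):B(y,\sigma)\subseteq B\}$, and $T^{1,q}(\R^d)$ admits an atomic decomposition into atoms $A$ supported in tents $\widehat B$ with $\iint_{\widehat B}|A|^q\,dy\tfrac{d\sigma}{\sigma}\le|B|^{1-q}$. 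My plan is to prove the estimate at $p=1$ by directly decomposing $FG$ into $T^{1,q}$-atoms, and then to reach all $p\in(1,\infty)$ by interpolation.

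For $p=1$, let $F\in T^{1,\infty}$, $G\in T^{\infty,q}$, and $u:=NF\in L^1(\R^d)$. The key geometric observation is the inclusion $\{(y,\sigma):|F(y,\sigma)|>\lambda\}\subseteq\widehat{\{u>\lambda\}}$ for every $\lambda>0$, which holds because $(y,\sigma)\in\Gamma(z)$ for all $z\in B(y,\sigma)$, so that $|F(y,\sigma)|\le u(z)$ there and hence $B(y,\sigma)\subseteq\{u>\lambda\}$. For $k\in\Z$ set $O_k:=\{u>2^k\}$ and fix a Whitney decomposition $O_k=\bigcup_i Q_k^i$, so that $\widehat{O_k}\subseteq\bigcup_i\widehat{CQ_k^i}$ for a dimensional constant $C$ and $\sum_i|CQ_k^i|\lesssim|O_k|$. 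Since the layers $E_k:=\{2^k<|F|\le 2^{k+1}\}$ partition $\{F\neq 0\}$ up to a null set and $E_k\subseteq\widehat{O_k}$, disjointifying over $i$ in the usual way yields $FG=\sum_{k,i}\lambda_k^i A_k^i$, where $A_k^i$ is a normalisation of the restriction of $FG$ to $\widehat{CQ_k^i}\cap E_k$ and $\lambda_k^i:=2^{k+1}\|G\|_{T^{\infty,q}}|CQ_k^i|$. Here the bound $|F|\le 2^{k+1}$ on the support, combined with $\iint_{\widehat{CQ_k^i}}|G|^q\,dy\tfrac{d\sigma}{\sigma}\le\|G\|_{T^{\infty,q}}^q|CQ_k^i|$ (which is the definition of the $T^{\infty,q}$-norm), is exactly what makes each $A_k^i/\lambda_k^i$ a $T^{1,q}$-atom. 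Since $T^{1,q}$-atoms have norm $\lesssim 1$ and $\sum_{k,i}|\lambda_k^i|\lesssim\|G\|_{T^{\infty,q}}\sum_k 2^k|O_k|\lesssim\|G\|_{T^{\infty,q}}\|u\|_{L^1}=\|G\|_{T^{\infty,q}}\|F\|_{T^{1,\infty}}$, I conclude $\|FG\|_{T^{1,q}}\lesssim\|F\|_{T^{1,\infty}}\|G\|_{T^{\infty,q}}$.

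To pass to $p\in(1,\infty)$, I would interpolate the multiplication operator $M_G:F\mapsto FG$ between the bound just proved and the trivial endpoint $M_G:T^{\infty,\infty}(\R^d)=L^\infty(\R^d\times(0,\infty))\to T^{\infty,q}(\R^d)$, using the complex interpolation identities $[T^{1,\infty},T^{\infty,\infty}]_\theta=T^{p,\infty}$ and $[T^{1,q},T^{\infty,q}]_\theta=T^{p,q}$ with $\tfrac{1}{p}=1-\theta$ from \cite{cms}; this gives $M_G\in B(T^{p,\infty}(\R^d),T^{p,q}(\R^d))$ with norm $\lesssim\|G\|_{T^{\infty,q}}$. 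I expect the main difficulty to be the $p=1$ step --- the Whitney covering combinatorics and the precise atom normalisation --- together with justifying tent-space interpolation up to the $L^\infty$ endpoint. The latter can be sidestepped by instead establishing a finite second endpoint, for instance $\|FG\|_{T^{q,q}}\lesssim\|F\|_{T^{q,\infty}}\|G\|_{T^{\infty,q}}$, which follows quickly by applying the averaging identity $\iint H\,dy\tfrac{d\sigma}{\sigma}=\int_{\R^d}\calA_1(H)(x)\,dx$ to $H=|FG|^q$, using the inclusion above and the Carleson bound $\iint_{\widehat{\{u>\lambda\}}}|G|^q\,dy\tfrac{d\sigma}{\sigma}\lesssim\|G\|_{T^{\infty,q}}^q|\{u>\lambda\}|$, and then interpolating between $p=q$ and $p=1$.
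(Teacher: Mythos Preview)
The paper does not prove this theorem; it merely quotes it from \cite{CV} and uses it as a black box in Lemma~\ref{lem:para2}. So there is no ``paper's own proof'' to compare against, and your sketch should be assessed on its own merits.

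Your $p=1$ argument is correct and is essentially the standard one: the inclusion $\{|F|>\lambda\}\subseteq\widehat{\{NF>\lambda\}}$ is the right geometric input, the Whitney decomposition of the level sets of $NF$ produces the supports of the atoms, and the $T^{\infty,q}$ bound on $G$ is exactly the Carleson condition needed to normalise them. The coefficient sum $\sum_{k,i}\lambda_k^i\lesssim\|G\|_{T^{\infty,q}}\sum_k 2^k|O_k|\sim\|G\|_{T^{\infty,q}}\|NF\|_{L^1}$ is clean.

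The interpolation step is where care is needed, and you rightly flag it. Your proposed workaround---replacing the $L^\infty$ endpoint by the $p=q$ endpoint, which follows from Fubini and the Carleson condition---is correct as far as it goes, but it does not fully sidestep the issue: you still need to interpolate the \emph{domain} spaces, i.e.\ $[T^{1,\infty},T^{q,\infty}]_\theta=T^{p,\infty}$, and this identity is not in \cite{cms} either (the $\calA$-functional defining $T^{p,\infty}$ is a sup, not an $L^r$-average, so the usual arguments do not apply directly). This interpolation does in fact hold, but justifying it is nontrivial; one route is via real interpolation and the atomic decomposition of $T^{p,\infty}$, another is to bypass interpolation entirely and run the level-set decomposition directly for each $p\in[1,\infty)$, estimating $\calA_q(FG)$ pointwise by splitting the cone $\Gamma(x)$ along the layers $E_k$ and invoking the Carleson bound on each tent $\widehat{O_k}$. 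The latter is closer in spirit to what \cite{CV} actually does.
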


\begin{Lemma}
\label{lem:para2}
Let $p \in (1,\infty)$.  Let $g \in L^{\infty}$ be such that
$L_{m}^{s_{p}}g \in BMO_{L_{m}}$ for $m=1,2$, and let $f \in H^p_{FIO,a}(\R^d)$. Then  
\begin{align*}
	&\|(\omega,\sigma,\cdot)\mapsto\psi_{\omega,\sigma}(D_a)  \int_{S^{d-1}} \int_0^1   M_{\Psi_\tau (L) g}    \cdot \varphi_{\nu}(D_{a})^{2}\Phi(\tau D_{a}) f  \,\frac{d\tau}{\tau} d\nu \|_{L^p(T^{p,2})} \\
& \qquad 	\lesssim   \max_{m=1,2} \|L_{m}^{s_{p}}g\|_{BMO_{L_m}}  \|f\|_{H^p_{FIO,a}(\R^d)}. 
\end{align*}
\end{Lemma}

\begin{proof}
Using Remark \ref{rem:tentbdd} and Hardy's inequality as in the proof of Lemma \ref{lem:para1}, we have that
\begin{align*}
	& \|(\omega,\sigma,\cdot)\mapsto\psi_{\omega,\sigma}(D_a)  \int_{S^{d-1}} \int_0^{\min(\sigma,1)}  M_{\Psi_\tau (L) g}\varphi_{\nu}(D_{a})^{2}\Phi(\tau D_{a}) f \,\frac{d\tau}{\tau} d\nu \|_{L^p(T^{p,2})} \\
	& \quad \lesssim 
	\int_{S^{d-1}}\| (\tau,\cdot)\mapsto\tau^{-\frac{s_p}{2}} M_{\Psi_\tau (L) g}\varphi_{\nu}(D_{a})^{2}\Phi(\tau D_{a}) f \|_{T^{p,2}(\R^d)} \,d\nu.
\end{align*}

Applying Theorem \ref{thm:tent-factor}, the above is bounded by a constant times 
\begin{align*}
	& \|(\tau,\cdot)\mapsto \tau^{-s_p} \Psi_\tau (L) g\|_{T^{\infty,2}(\R^d)}
	\int_{S^{d-1}} \| (\tau,\cdot)\mapsto \tau^{\frac{s_p}{2}}  \varphi_{\nu}(D_{a})^{2}\Phi(\tau D_{a}) f \|_{T^{p,\infty }(\R^d)} \,d\nu \\
	& \quad \lesssim   \max_{m=1,2} \|L_{m}^{s_{p}}g\|_{BMO_{L_m}}\|f\|_{H^p_{FIO,a}(\R^d)},
\end{align*}
where we use \cite[Lemma 4.3]{dy}, and Proposition \ref{prop:nontang} in the last line (together with the fact that $s_{p} \geq \frac{d-1}{2}$).

For the integral over $\tau \in (\min(\sigma,1),1)$, we again have to use the product rule.  With the same arguments as in the proof of Lemma \ref{lem:para1}, we end up with terms of the form 
\begin{align*}
&\|(\tau,\cdot)\mapsto \tau^{-s_p} \Psi_\tau (L) g\|_{T^{\infty,2}(\R^d)}
\\	  & \qquad \qquad \cdot
	\int_{S^{d-1}}\|(\tau,\cdot) \mapsto \tau^{\frac{s_p}{2}}   (\tau^{2}L)^{M-\frac{k}{2}}  (\tau e_{j}.D_{a})^{1-\delta} \varphi_{\nu}(D_{a})^{2}\Phi(\tau D_{a})^{2}f \|_{T^{p,\infty}(\R^d)} \,d\nu \\
& \quad \lesssim   \max_{m=1,2} \|L_{m}^{s_{p}}g\|_{BMO_{L_m}}\|f\|_{H^p_{FIO,a}(\R^d)},
\end{align*}
for $k \in \{0,\ldots,2M\}$ even, and $\delta \in \{0,1\}$ (and similar terms for $k$ odd, as in Lemma \ref{lem:para1}).
\end{proof}

\end{document}